\documentclass[sn-mathphys-num]{sn-jnl}


\usepackage{graphicx}%
\usepackage{multirow}%
\usepackage{amsmath,amssymb,amsfonts}%
\usepackage{amsthm}%
\usepackage{mathrsfs}%
\usepackage[title]{appendix}%
\usepackage{xcolor}%
\usepackage{textcomp}%
\usepackage{manyfoot}%
\usepackage{booktabs}%
\usepackage{algorithm}%
\usepackage{algorithmicx}%
\usepackage{algpseudocode}%
\usepackage{listings}%
\usepackage{enumitem}


\theoremstyle{thmstyleone}%
\newtheorem{theorem}{Theorem}[section]
\newtheorem{proposition}[theorem]{Proposition}%
\newtheorem{lemma}[theorem]{Lemma}
\newtheorem{corollary}[theorem]{Corollary}
\newtheorem{assumption}[theorem]{Assumption}

\newtheorem{definition}[theorem]{Definition}

\theoremstyle{thmstyletwo}%

\theoremstyle{thmstylethree}
\newtheorem{remark}[theorem]{Remark}

\newcommand{\R}{\mathbb{R}}
\newcommand{\EX}{\mathbb{E}^X}
\newcommand{\E}{\mathbb{E}}
\newcommand{\D}{D[0,\infty)}

\makeatletter
	\@addtoreset{equation}{section}

\allowdisplaybreaks[4]

\raggedbottom

\begin{document}

\title[Convergence of processes time-changed by GMC]{Convergence of processes time-changed by Gaussian multiplicative chaos}


\author*[1]{\fnm{Takumu} \sur{Ooi}}\email{ooitaku@rs.tus.ac.jp}

\affil*[1]{\orgdiv{Department of Mathematics, Faculty of Science and Technology}, \orgname{Tokyo University of Science}, \orgaddress{\street{2641 Yamazaki}, \city{Noda-shi}, \postcode{2788510}, \state{Chiba-prefecture}, \country{Japan}}}


\abstract{As represented by the Liouville measure, Gaussian multiplicative chaos is a random measure constructed from a Gaussian field. Under certain technical assumptions, we prove the convergence of a process time-changed by Gaussian multiplicative chaos in the case the latter object is square integrable (the \(L^2\)-regime). As examples of the main result, we prove that, in the whole \(L^2\)-regime, the scaling limit of the Liouville simple random walk on \(\mathbb{Z}^2\) is Liouville Brownian motion and, as \(\alpha \to 1\), Liouville \(\alpha\)-stable processes on \(\mathbb{R}\) converge weakly to the Liouville Cauchy process.}

\keywords{Convergence of processes, Scaling limit, time-change, Dirichlet form}

\pacs[MSC Classification]{60K37, 31C25, 60J55, 60G57, 60G60}
\maketitle

\section{Introduction}\label{intro}
There has been intensive research on Gaussian free fields, the Liouville measure and Liouville Brownian motion, due to their close relationship with important concepts in areas such as statistical mechanics, conformal field theory, random geometry and quantum gravity. Here, a massive Gaussian free field \(X=\{X_f\}_{f\in \mathcal{S}(\R^d)}\) on \(\R^d\) is a centred Gaussian system on the Schwartz space \(\mathcal{S}(\R^d)\) whose covariance kernel is \(\pi\) times the \(\lambda\)-order Green's function of Brownian motion on \(\R^d\). 

For the massive Gaussian free field \(X\) on \(\R^2\) and \(\gamma \in (0,2)\), a random measure formally represented by
\begin{equation}
\exp(\gamma X(x)-\frac{\gamma^2}{2}\E[X(x)^2])dx
\label{eq:I1}
\end{equation}
is called the Liouville measure on \(\R^2\). This was first constructed rigorously by Kahane \cite{K} and, since then, has been studied in many other articles, see \cite{B,S}, for example. It is known that the Liouville measure is singular with respect to the Lebesgue measure for \(\gamma\in (0,2)\), see \cite{K}. Moreover, it corresponds to the Riemann metric tensor of two-dimensional Liouville quantum gravity. See \cite{BP, DS} for details.

Liouville Brownian motion is a time-changed process of Brownian motion on \(\R^2\) by the Liouville measure. This was constructed in \cite{GRV} and similar results were proved in \cite{B2} simultaneously. We also refer the reader to \cite[Appendix A]{AK} for the construction of Liouville Brownian motion, or Appendix \ref{AppPCAF} of this article for the definition. Liouville Brownian motion is the canonical diffusion process under Liouville quantum gravity, and it is conjectured that Liouville Brownian motion is the scaling limit of simple random walks on various random planar maps, see \cite{DS, GRV}. In \cite{BeG, GMS}, it is proved that the scaling limit of random walks on random graphs called mated-CRT planar maps is Liouville Brownian motion. 

As a 1-dimensional counterpart of Liouville Brownian motion, the Liouville Cauchy process is considered in \cite{Bav} from the point of view of the trace process of Liouville Brownian motion. 
One of the reasons for the various distinct properties of Liouville Brownian motion is that the Green's function of Brownian motion on \(\R^2\) diverges logarithmically. Similarly, the Green's function of the Cauchy process on \(\R\) diverges logarithmically. So, when developing techniques for time-changed processes with a log-divergent Green's function, in the 1-dimensional case, it makes sense to consider the Liouville Cauchy process on \(\R\).

More general models of Gaussian free fields and the Liouville measure have been studied. For example, fractional Gaussian fields are Gaussian fields whose covariance kernels can be represented in terms of fractional order Laplacians, and properties of these fields are summarized in \cite{LSSW}. Gaussian multiplicative chaos is a random measure formally represented as \((\ref{eq:I1})\), with the Gaussian free field replaced by a more general Gaussian field. Berestycki \cite{B} constructed Gaussian multiplicative chaos for log-correlated Gaussian fields using an elementary approach.  Shamov \cite{S} constructed Gaussian multiplicative chaos for Gaussian fields indexed by elements of Hilbert spaces, and also proved the convergence of Gaussian multiplicative chaos under some assumptions by introducing a randomized shift and a new definition of Gaussian multiplicative chaos. Hager and Neuman \cite{HN} proved the convergence of Gaussian multiplicative chaos for fractional Brownian fields by using Berestycki's method \cite{B}.
In summary, convergence of Gaussian fields and Gaussian multiplicative chaos has been proved in various specific situations. This leads to the following general question.

\textit{If a sequence of processes and Gaussian multiplicative chaoses converge respectively, then is it the case that the associated time-changed processes also converge?}

In this paper, we answer this question under some assumptions on the Green's functions of the approximating processes on subsets of \(\R\) and \(\R^2\). In short, under Assumption \ref{assumption} below, we prove the process time-changed by a corresponding Gaussian multiplicative chaos on \(\R^2\) converges to Liouville Brownian motion on \(\R^2\) and the process time-changed by a corresponding Gaussian multiplicative chaos on \(\R\) converges to Liouville Cauchy process on \(\R\). In \cite{CHK}, the convergence of time-changed processes is studied in general cases where the limit process has positive capacity of a point. The novelty of our result is to establish general conditions for the convergence of time-changed processes by Gaussian multiplicative chaos to Liouville Brownian motion and Liouville Cauchy process, which are the case where the capacity of each point for the limit process is \(0\).

The rest of this paper is organized as follows. In Section \ref{main}, we introduce the settings and assumptions in detail, and then describe the main result concerning the weak convergence of time-changed processes of Hunt processes by Gaussian multiplicative chaos. Hunt processes are special cases of strong Markov processes and, since we consider time-changed processes in the sense of the theory of Dirichlet forms, we consider Hunt processes. The main theorem, Theorems \ref{mainconvergence}, includes cases of some limiting processes on both discrete and continuum spaces. Theorem \ref{mainconvergence} covers the 2-dimensional case about convergence to Liouville Brownian motion and the 1-dimensional case about convergence to the Liouville Cauchy process. We prove the main results in Section \ref{mainproof}, with a proof of one of the intermediate propositions postponed to Section \ref{secPCAFlem}. To prove the main results on the convergence of time-changed processes, we prove the convergence of Gaussian multiplicative chaos, the tightness of the pairs of original processes and PCAFs (positive continuous additive functionals), and convergence of finite-dimensional distributions. This strategy is basic, but one of the novelties lies in the fact that we regard PCAFs as Gaussian multiplicative chaoses of occupation measures of the relevant processes and apply methods for Gaussian multiplicative chaos to PCAFs. In Section \ref{examples}, we present two examples. The first one is the convergence of Liouville \(\alpha\)-stable processes on \(\R\) and the second one is the convergence of Liouville simple random walk on \(\frac{1}{\sqrt{n}}\mathbb{Z}^2\) to Liouville Brownian motion. We also include two appendices. Appendix \ref{AppPCAF} contains definitions and properties of a PCAF. Appendix \ref{AppSko} contains definitions and properties of Skorokhod's topology, in particular, properties for the weak convergence with respect to Skorokhod's topologies.

\section{Setting and Main result}\label{main}
In this section, we present the main results, in which we examine weak convergence of processes time-changed by Gaussian multiplicative chaos on both continuum and discrete spaces.

Throughout this paper, we fix \(d=1\) or \(d=2\). When \(d=2\), we consider convergence to Liouville Brownian motion on \(\R^2\), which is a time-changed process of Brownian motion on \(\mathbb{R}^2\) by Liouville measure, and, when \(d=1\), we consider convergence to Liouville Cauchy process on \(\R\), which is a time-changed process of the Cauchy process on \(\mathbb{R}\) by the corresponding Gaussian multiplicative chaos. Both Brownian motion on \(\mathbb{R}^2\) and the Cauchy process on \(\mathbb{R}\) have Green's functions that diverge logarithmically (see Lemma \ref{Green'slemma}, for example), which means the Liouville Cauchy process is a natural counterpart of Liouville Brownian motion when \(d=1.\) See \cite{Bav} for further background on the Liouville Cauchy process. For \(d\geq 3,\) there do not exist symmetric \(\alpha\)-stable processes on \(\R^d\) having Green's functions that diverge logarithmically, and so we cannot construct corresponding counterpart processes of Liouville Brownian motion in theses dimensions. See Example \ref{example1}, Appendix \ref{AppPCAF} or \cite{LSSW} for details. This is the reason why we only consider the cases \(d=1,2\).

We now state the setting and main theorems that includes both cases \(d=2\) and \(d=1\). Denote by \(m\) the Lebesgue measure on \((\R^d, \mathcal{B}(\R^d))\) and by \(Z^{\infty}=(\{Z_t^{\infty}\}_{t\geq 0}, \{\mathbb{P}_x^{Z^{\infty}}\}_{x\in \R^d})\) \(m\)-symmetric \(d\)-stable process on \(\R^d\). We remark that, when \(d=2\), \(Z^{\infty}\) is Brownian motion on \(\R^2\) and, when \(d=1\), \(Z^{\infty}\) is Cauchy process on \(\R\). For the heat kernel \(p^{\infty}\) of \(Z^{\infty}\) on \(\mathbb{R}^d\) and any fixed \(\lambda >0,\) we define the \(\lambda\)-order Green's function \(g_{\lambda}^{\infty}\) by setting
\[g_{\lambda}^{\infty}(x,y):=\int_0^{\infty} e^{-\lambda t} p^{\infty}(t,x,y)dt\]
for \(x,y \in \R^d\).

For \(n\in \mathbb{N}\), suppose \(Z^n=(\{Z_t^n\}_{t\geq 0}, \{\mathbb{P}_x^{Z^n}\}_{x\in \R^d})\) is a strongly recurrent \(m\)-symmetric Hunt process on \(\R^d\). Here, \(Z^n\) is called strongly recurrent if there exist local times at \(x\) for any \(x\in \R^d\). See \cite{CHK} for details of this term.

We consider the situation where one of the following conditions hold.

\begin{enumerate}[label=(\Alph*)]
  \item For any \(n\in \mathbb{N}\), \(Z^n\) has a continuous transition probability density function \(p^n(t,x,y)\) with respect to \(m\).
  \item For any \(n\in \mathbb{N}\), there exist a discrete subset \(D_n \subset \R^d\) such that \(\mathbb{P}_x^{Z^n}(Z^n_t-x \in D_n \text{\ for\ any\ }t)=1\), and a measurable map \(i_n:\R^d \to D_n\) satisfying \(\lim_{n\to \infty}|i_n(x)-x|=0\) for any \(x\in \R^d\), \(i_n(x)=x\) and \(C_n:=1/m(i^{-1}_n(\{x\}))\) is positive constant for any \(x\in D_n\). We also represent \(i_n(x)\) as \(x_n\). In this case, we define \(p^n(t,x,y):=\mathbb{P}_{x_n}^{Z^n}(Z^n_t =y_n)\).
\end{enumerate}

The case (A) corresponds to the case where the state space of each \(Z^n\) is the continuum space \(\mathbb{R}^d\), while the case (B) corresponds to the case where the state spaces of \(Z^n\) are discrete subsets of \(\mathbb{R}^d\). Example \ref{example1} corresponds to the case of (A) and Example \ref{example2} to the case of (B). We will use the situations discribed in (A) and (B) to construct Gaussian fields corresponding to \(Z^n\) which are the analogies of Gaussian free fields.

\begin{remark}
In case (B), the processes \(Z^n\) are essentially processes on the discrete sets \(D_n\). We have simply formulated  our setting so that it is easier to consider convergence to a process on a continuum space. In this setting, we will be able to treat discrete Gaussian fields and Gaussian multiplicative chaoses on \(D_n\) as Gaussian fields and Gaussian multiplicative chaoses on \(\R^d\), respectively, and so we can consider convergence of them on this larger common space.
\end{remark}

The quantity \(C_n\) captures the scaling of the measure. For example, it holds that \(C_n \int_{\mathbb{R}^d} p^n(t,x,y)dm(y)=1\) for any \(t>0\) and \(x\in \mathbb{R}^d\). By setting \(C_n=1\) and \(x_n=x\) for any \(x\in \R^d\) in the case of (A), for \(\lambda>0\), we define the \(\lambda\)-order Green's kernel \(g_{\lambda}^n\) of \(Z^n\) in both cases (A) and (B) by
\[g_{\lambda}^n(x,y):=C_n\int_0^{\infty} e^{-\lambda t} p^n(t,x_n,y_n)dt\]
for \(x,y \in \R^d\).

\begin{remark}\label{capG}
Since the processes \(Z^n\) are strongly recurrent for each \(n\in \mathbb{N}\), by using \cite[Lemma 4.1.5]{CF}, it holds that \(g_{\lambda}^n(x,x)<\infty\) for any \(n\in \mathbb{N}\) and \(x\in \R^d\). On the other hand, since \(Z^{\infty}\) is not strongly recurrent, but recurrent, \(g_{\lambda}^{\infty}(x,x)=\infty\) for any \(x\in \R^d.\)
\end{remark}

Next, we define a Gaussian field \(X^n\) with covariance kernel \(\pi g_{\lambda}^n\) as follows.
\begin{definition}\label{GFdef}
For \(n\in \mathbb{N}\cup \{\infty\}\), we define \(X^n=\{X^n_f\}_{f\in \mathcal{S}(\R^d)}\) as the Gaussian system on a probability space \((\Omega^{X^n}, \mathcal{M}^{X^{n}}, \mathbb{P}^{X^n})\) satisfying \(\mathbb{E}^{X^n}[X^n_f]=0\) and \[\mathbb{E}^{X^n}[X^n_fX_g^n]=\pi \int_{\R^d} \int_{\R^d} g_{\lambda}^n(x_n,y_n)f(x)g(y)dm(x)dm(y) \] for \(f,g \in \mathcal{S}(\R^d).\)
\end{definition}

\begin{remark}
For \(n\in \mathbb{N}\cup \{\infty\}\), let \((\mathcal{E}^n, \mathcal{F}^n)\) be Dirichlet forms associated with \(Z^n\), then the above fields \(X^n\) can be realized as Gaussian fields associated with Dirichlet forms \((\mathcal{E}^n, \mathcal{F}^n)\). See \cite{FO,O,Ro} for the existence and properties of  Gaussian fields associated with Dirichlet forms. In particular, \(X^{\infty}\) is called (massive) Gaussian free field on \(\mathbb{R}^2\).
\end{remark}

Throughout this paper, we make the following assumption. Condition (1) is a natural one on the convergence of the original processes and Gaussian fields. Condition (2) is an estimate on the Green's functions that will be used to guarantee the uniformly integrability for the corresponding Gaussian multiplicative chaos. The uniformly integrability is often used when considering the convergence of Gaussian multiplicative chaos in a specific situation.

\begin{assumption}\label{assumption}\  \\
\((1)\) The pair \((Z^n, X^n)\) converges to \((Z^{\infty}, X^{\infty})\) as \(n\to \infty\) in the sense that the processes \(Z^n\) under \(\mathbb{P}_{x_n}^{Z^n}\) converge weakly to \(Z^{\infty}\) under \(\mathbb{P}_x^{Z^{\infty}}\) in the Skorokhod space \(D[0,\infty)\) equipped with \(J_1\)-topology for any \(x\in \mathbb{R}^d\), and the covariance kernels \(\pi g_{\lambda}^{n}\) converge pointwise to \(\pi g_{\lambda}^{\infty}\).\\
\((2)\) There exist positive constants \(C^*\) and \(C\) such that, for any \(n \in \mathbb{N}\) and \(x, y \in \R^d\), it holds that \[g_{\lambda}^n(x,y) \leq C^*g_{\lambda}^{\infty}(x,y) +C.\]
\end{assumption}

See Appendix \ref{AppSko} for the definition of Skorokhod's \(J_1\)-topology. We remark that a positive constant \(C\) may change from line to line, but \(C^*\) will be fixed throughout.

By Remark \ref{capG}, for \(n\in \mathbb{N}\), we can define \(X^n\) not only as a random distribution, but also as a random function. In case (A), for each \(n\in \mathbb{N}\), we obtain that \(\lim_{|x-y|\to 0}\mathbb{E}^{X^n}[|X^n(x)-X^n(y)|^2]=0 \) by the continuity of \(p^n\). So, by \cite[Proposition 2.1.12]{GN}, there exists a Borel measurable version of \(X^n\) for \(n\in \mathbb{N}\). In case (B), \(X^n\) is a simple function almost surely for \(n\in \mathbb{N}\). Hence, we can define Gaussian multiplicative chaos (GMC in abbreviation) \(\mu^n\), a random measure on \(\R^d\), by setting
\begin{equation}
d\mu^n(x):= \exp{(\gamma X^n(x)-\frac{\gamma^2}{2}\mathbb{E}^{X^n}[X^n(x)^2])}dm(x), \label{eq:GMCdef}
\end{equation}
for \(n\in \mathbb{N}.\) We remark that \(X^n(x)=X^n(y)\) for \(x,y\in \R^d\) with \(x_n=y_n\).

By Lemma \ref{Green'slemma} below, there exists a positive constant \(C\) such that 
\[\pi g_{\lambda}^{\infty}(x,y) \leq \log_+{\frac{1}{|x-y|}}+C\]
for any \(x,y \in \R^d,\) where \(\log_+{\frac{1}{|x-y|}}:=\max{\{\log{\frac{1}{|x-y|}}, 0\}}\). So we can define GMC \(\mu^{\infty}\) of \(X^{\infty}\) for \(\gamma<\sqrt{2d}\) by applying results of \cite{K}, \cite{B} or \cite{S}. Formally, \(\mu^{\infty}\) can be represented as the replacement of \(n\) by \(\infty\) in \((\ref{eq:GMCdef})\). For \(d=2\), this GMC \(\mu^{\infty}\) is also called \textit{Liouville measure} on \(\mathbb{R}^2\).

Since \(\mu^n\) is a smooth measure of \(Z^n\) for \(n\in \mathbb{N}\), there exists a positive continuous additive functional (PCAF in abbreviation) \(A^n=\{A_t^n\}_t\) in the strict sense such that \(A^n\) corresponds to \(\mu^n.\) Details on smooth measures and PCAFs and what it means that a PCAF in the strict sense are presented in Appendix \ref{AppPCAF}. Since, for \(n\in \mathbb{N}\), \(\mu^n\) is absolutely continuous with respect to \(m\), we have the representation
\begin{equation}
A_t^n=\int_0^{t} \exp{(\gamma X^n(Z^n_s)-\frac{\gamma^2}{2}\mathbb{E}^{X^n}[X^n(Z^n_s)^2])}ds, \label{eq:PCAFdef}
\end{equation}
for \(t\geq 0\) and \(n\in \mathbb{N}.\)

Furthermore, by \cite{GRV} and \cite[Appendix A]{AK} for the case \(d=2\) and by \cite{Bav} or similarly to \cite{GRV, AK} for the case \(d=1\), GMC \(\mu^{\infty}\) is a smooth measure on \(\R^d\) and there exist \(A^{\infty}\) and \(\Lambda \in \mathcal{M}^{X^{\infty}}\otimes \mathcal{F}_{\infty}^{\infty}\) such that the following conditions hold.
\begin{enumerate}
  \item For \(\mathbb{P}^{X^{\infty}}\)-almost every \(\omega \in \Omega^{X^{\infty}}\), \(\mathbb{P}_x^{Z^{\infty}}(\Lambda^{\omega})=1\) holds for any \(x\in \R^d\), where \(\Lambda^{\omega}:=\{\omega'\in \Omega^{Z^{\infty}} : (\omega,\omega')\in \Lambda\}.\)
  \item For \(\mathbb{P}^{X^\infty}\)-almost every \(\omega \in \Omega^{X^{\infty}}\), \(A^{\infty}(\omega, \cdot)\) is a PCAF in the strict sense with defining set \(\Lambda^{\omega}\) such that \(A^{\infty}(\omega,\cdot)\) corresponds to \(\mu^{\infty}(\omega)\). 
\end{enumerate}

Here, \(\{\mathcal{F}_{t}^{\infty}\}\) is a filtration to which \(Z^{\infty}\) is adapted. Formally, \(A^{\infty}_t\) can be represented as the replacement of \(n\) by \(\infty\) in \((\ref{eq:PCAFdef})\).

We define the time-changed process \(\hat{Z}^n_t:=Z^n_{(A^n)^{-1}_t}\), where
\[(A^n)^{-1}_t:=\inf\{s>0: A_s^n>t\}\]
for \(n\in \mathbb{N}\cup\{\infty\}.\)
We say that \(\hat{Z}^n=\{\hat{Z}^n_t\}_t\) is the time-changed process of \(Z^n\) by \(\mu^n\). By \cite[Theorem 5.2.13]{CF} this process \(\hat{Z}^n\) is also a Hunt process. For \(d=2\), \(\hat{Z}^{\infty}\) is called \textit{Liouville Brownian motion} on \(\R^2\), and, for \(d=1\), we call \(\hat{Z}^{\infty}\) the \textit{Liouville Cauchy process} on \(\R\).

The main theorem of this paper is stated as follows:

\begin{theorem}\label{mainconvergence}
Suppose Assumption \ref{assumption} holds. Then, for any \(x\in \R^d\) and \(\gamma \in (0,\sqrt{d/C^*})\), as \(n\to \infty\), the law of the time-changed process \(\hat{Z}^n\) under \(\mathbb{P}_{x_n}^{Z^n} \otimes \mathbb{P}^{X^n}\) converges to the law of \(\hat{Z}^{\infty}\) under \(\mathbb{P}_{x}^{Z^{\infty}} \otimes \mathbb{P}^{X^{\infty}}\) as probability measures on the Skorokhod space \(D[0,\infty)\) equipped with Skorokhod's \(J_1\)-topology.
\end{theorem}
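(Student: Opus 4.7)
The approach follows the three-step strategy outlined in the introduction: (i) prove convergence of the GMC random measures $\mu^n \to \mu^\infty$; (ii) prove joint tightness in $\D$ and convergence of finite-dimensional distributions of the pairs $(Z^n, A^n)$; (iii) derive the stated convergence of $\hat Z^n$ from the continuity of the time-change map $(z,a)\mapsto z\circ a^{-1}$ applied to the limit. The novelty lies in step (ii), where $A^n$ is treated as a GMC built on the occupation measure of $Z^n$, allowing the same second-moment machinery to be reused.

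For step (i), given $f\in C_c(\R^d)$ compute
\[
\mathbb{E}^{X^n}\bigl[\mu^n(f)^2\bigr] = \int_{\R^d}\!\int_{\R^d} f(x)f(y)\exp\!\bigl(\gamma^2\pi g_\lambda^n(x_n,y_n)\bigr)\,dm(x)\,dm(y).
\]
By Assumption \ref{assumption}(2) this is bounded by $e^{\gamma^2\pi C}$ times the same integral with $g_\lambda^n$ replaced by $C^* g_\lambda^\infty$, and by Lemma \ref{Green'slemma} the latter exponential is controlled locally by $C'|x-y|^{-\gamma^2 C^*}$, which is integrable in $\R^d$ precisely when $\gamma^2 C^* < d$. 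Hence $\{\mu^n(f)\}$ is uniformly $L^2$-bounded. Combining this with the pointwise convergence of covariance kernels from Assumption \ref{assumption}(1) and a common white-noise coupling of the Gaussian systems $X^n$, one upgrades finite-dimensional convergence of $X^n$ to convergence of $\mu^n(f)\to \mu^\infty(f)$ in $L^2$, hence vague convergence of the random measures.

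For step (ii), $\{Z^n\}$ is tight in $\D$ by Assumption \ref{assumption}(1). To control $A^n$, view
\[
A_t^n = \int_{\R^d}\exp\!\Bigl(\gamma X^n(x) - \tfrac{\gamma^2}{2}\mathbb{E}^{X^n}[X^n(x)^2]\Bigr)\,dL_t^n(x),
\]
with $L_t^n(dx) := \int_0^t \mathbf{1}_{\{Z_s^n\in dx\}}\,ds$ the occupation measure, and compute the conditional second moment:
\[
\mathbb{E}^{X^n}\bigl[(A_t^n-A_s^n)^2\,\big|\,Z^n\bigr] = \int_s^t\!\int_s^t \exp\!\bigl(\gamma^2\pi g_\lambda^n(Z_u^n, Z_v^n)\bigr)\,du\,dv.
\]
Averaging over $Z^n$ and again invoking Assumption \ref{assumption}(2) plus Lemma \ref{Green'slemma} yields a uniform bound in terms of $\mathbb{E}^{Z^n}\bigl[|Z_u^n - Z_v^n|^{-\gamma^2 C^*}\bigr]$, which is locally integrable in $(u,v)$ because $\gamma^2 C^* < d$ and by heat-kernel estimates transferred from the limit $d$-stable process via Assumption \ref{assumption}(1). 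Since each $A^n$ is monotone and continuous, moment estimates on increments give $J_1$-tightness in $\D$, and joint finite-dimensional distributions of $(Z^n, A^n)$ converge via a characteristic-functional computation that feeds the $L^2$-convergence of step (i) into the annealed integrals against $Z^n$.

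Step (iii) is then standard: $A^\infty$ is almost surely continuous and strictly increasing on the support of $\mu^\infty$, so $(A^\infty)^{-1}$ is almost surely continuous and the composition map is continuous at the limit pair (cf.\ Appendix \ref{AppSko}), hence $\hat Z^n \Rightarrow \hat Z^\infty$ in $(\D, J_1)$. The main obstacle is step (ii): $A^n$ couples the path of $Z^n$ with an evaluation of $X^n$ along that path, lives on the product space $\mathbb{P}_{x_n}^{Z^n}\otimes\mathbb{P}^{X^n}$, and in the limit $X^\infty$ is only a random distribution so that pointwise evaluation along the path is not literally defined. The GMC-of-occupation-measure viewpoint emphasized in the introduction is exactly what bypasses this difficulty, and the $L^2$-regime condition $\gamma<\sqrt{d/C^*}$ is precisely what makes the second-moment computations converge uniformly in $n$. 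The technical propositions underlying these PCAF convergence statements are deferred to Section \ref{secPCAFlem}.
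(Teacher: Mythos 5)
Your high-level strategy coincides with the paper's (GMC convergence, then tightness and finite-dimensional convergence of the pairs \((Z^n,A^n)\), then the continuous-mapping step for \((z,a)\mapsto z\circ a^{-1}\)), and your second-moment bookkeeping correctly locates the \(L^2\)-regime condition \(\gamma^2C^*<d\). However, there is a genuine gap in your tightness argument for \(A^n\). You propose to deduce \(J_1\)-tightness from "moment estimates on increments," i.e.\ from a bound of the form \(\mathbb{E}[(A^n_t-A^n_s)^2]\leq C|t-s|^{1+\epsilon}\), which would require the uniform-in-\(n\) estimate \(\mathbb{E}^{Z^n}[|Z^n_u-Z^n_v|^{-\gamma^2C^*}]\leq C|u-v|^{-\gamma^2C^*/d}\). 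You justify this by "heat-kernel estimates transferred from the limit via Assumption \ref{assumption}(1)," but Assumption \ref{assumption} provides only weak convergence of the processes and control of the \emph{time-integrated} kernels \(g^n_\lambda\); it gives no on-diagonal decay of \(p^n(t,\cdot,\cdot)\) in \(t\), and no such bound can be transferred from weak convergence alone. The paper's own computation (the chain leading to \((\ref{eq:dif1})\)) bounds the double time integral by the product \(e^{\lambda T}e^{\gamma^2\pi g^n_\lambda(y,z)}g^n_\lambda(x,y)g^n_\lambda(y,z)\), which yields uniform boundedness of \(\mathbb{E}[(A^n_T)^2]\) but no modulus of continuity in \(|t-s|\). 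The paper circumvents this by proving only \(M_1\)-tightness of \(A^n\) and \((A^n)^{-1}\) (which for monotone processes needs only one-dimensional marginal control, e.g.\ \(\mathbb{P}(A^n_T\geq\eta)\leq T/\eta\)), and then upgrading \(M_1\) to local-uniform convergence by showing every subsequential limit is continuous and strictly increasing (Lemmas \ref{lemma11}, \ref{lemma14}, Proposition \ref{lemma15}). Your conclusion is recoverable, but not by the mechanism you state.

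Two secondary points. First, for the GMC convergence you assert that uniform \(L^2\)-boundedness plus finite-dimensional convergence of the fields "upgrades" to \(L^2\)-convergence of \(\mu^n(f)\); this implication is not valid as stated, and the nontrivial content is precisely the passage from convergence of covariances to convergence of the exponentiated measures. The paper realizes all \(X^n\) on one space via the operators \(Y^n\) on \(\mathcal{H}=L^2([0,\infty)\times\R^d)\) and invokes Shamov's convergence theorem; a direct Cauchy-in-\(L^2\) computation could also work in the \(L^2\)-regime, but it requires controlling the \emph{cross}-covariances \(\langle Y^n(x),Y^{n'}(y)\rangle\) under the coupling, which you do not address. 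Second, the identification of the weak limit of \(A^n\) with the PCAF \(A^\infty\) Revuz-associated to \(\mu^\infty\) is a substantial step (all of Section \ref{secPCAFlem}); you defer to that section, which is acceptable for a plan, but your step (iii) tacitly assumes the limit pair is \((Z^\infty,A^\infty)\) rather than \((Z^\infty,\tilde A)\) for some unidentified \(\tilde A\), and this identification is where much of the work lies.
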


\begin{remark}
When the limit function is continuous, convergence with Skorokhod's \(J_1\)-topology implies convergence with local uniform topology, so, for \(d=2\), Theorem \ref{mainconvergence} means the weak convergence of \(\hat{Z}^n\) to Liouville Brownian motion with local uniform topology.
\end{remark}

\begin{remark}
 Since the Green's function of the Cauchy process on \(\R\) satisfies the same estimates as that of Brownian motion on \(\R^2\), it is natural that we consider the Liouville Cauchy process on \(\R\) when \(d=1\), not Liouville Brownian motion on \(\R\). Moreover, due to this, Gaussian fields \(X^{\infty}\) and Gaussian multiplicative chaos \(\mu^{\infty}\) corresponding to the Cauchy process on \(\R\) have similar properties as the 2-dimensional Gaussian free field and Liouville measure. See Example \ref{example1} for details.

The Liouville-Cauchy process on the unit circle \(\mathbb{S}^1\) is constructed in \cite{Bav} in the same way as \cite{GRV}. In \cite{Bav}, the Liouville-Cauchy process on \(\mathbb{S}^1\) is used to consider the trace process of Liouville Brownian motion on the unit ball \(\mathbb{D}^2\), the analogous to Spitzer's embedding theorem \cite{Spi}. We recall that, by Spitzer's embedding theorem, the Cauchy process on \(\R\) (resp. \(\mathbb{S}^1\)) is the trace process of Brownian motion on \(\R^2\) (resp. \(\mathbb{D}^2\)). Thus, there is no essential difference between the Liouville Cauchy process on \(\R\) in this paper and the Liouville-Cauchy process on \(\mathbb{S}^1\) in \cite{Bav}.
\end{remark}

\section{Proof of the main result}\label{mainproof}
In this section, we prove Theorem \ref{mainconvergence} with the following strategy. First, we prove convergence of GMC \(\mu^n\) to \(\mu^{\infty}\) in Subsection \ref{sec:3-2}. Convergence of GMC will be used to prove corresponding PCAFs. By considering continuities under Skorokhod's topologies(see Subsection \ref{seccjd} for details), to prove convergence of time-changed process of \(Z^n\) by \(A^n\), in our case, it is enough to show that \((Z^n, A^n)\) converges weakly to \((Z^{\infty}, A^{\infty})\) with respect to \(J_1\times U\)-topology, where \(U\)-topology is the local uniform topology. It is known that the weak convergence is equivalent to the tightness and convergence in finite-dimensional distribution, so we prove the tightness of the collection of the distributions of \((Z^n, A^n)\) in Subsection \ref{sectight} and the finite-dimensional distributional convergence of \((Z^n, A^n)\) to \((Z^{\infty}, A^{\infty})\) in Subsection \ref{seccfdd}.

Throughout this paper, a positive constant \(C\) may change line to line, but the \(C^*\) that appeared in Assumption \ref{assumption} (2) does not change.
\subsection{Convergence of GMC}\label{sec:3-2}
We defined \(g_{\lambda}^{\infty}(x,y)\) to be the \(\lambda\)-order Green's functions of \(Z^{\infty}.\) These \(\lambda\)-order Green's functions have logarithmically divergence as follows.
\begin{lemma}\label{Green'slemma}
Let \(d=1\) or \(d=2.\) There exists a bounded continuous function \(h\) such that, for any \(x,y\in \R^d\),
\[\pi g_{\lambda}^{\infty}(x,y)=\log_+{\frac{1}{|x-y|}}+h(|x-y|).\]
\end{lemma}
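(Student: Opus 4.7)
The plan is to compute $g_\lambda^\infty$ explicitly (via an integral representation for modified Bessel functions in the 2D case and via Fourier inversion in the 1D case), isolate the logarithmic singularity at the diagonal, and verify that the remainder is bounded and continuous.

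For $d=2$, I would start from $p^\infty(t,x,y)=(2\pi t)^{-1}\exp(-|x-y|^2/(2t))$ and recognize a standard integral representation of the modified Bessel function $K_0$ to obtain
\[
g_\lambda^\infty(x,y)=\frac{1}{\pi}K_0\bigl(\sqrt{2\lambda}\,|x-y|\bigr).
\]
The classical asymptotics $K_0(z)=-\log(z/2)-\gamma+O(z^2|\log z|)$ as $z\downarrow 0$ and $K_0(z)=O(z^{-1/2}e^{-z})$ as $z\to\infty$ directly yield that $\pi g_\lambda^\infty(x,y)+\log|x-y|$ extends to a bounded continuous function of $|x-y|$ on $[0,\infty)$.

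For $d=1$ (Cauchy process), since $\widehat{p^\infty(t,\cdot)}(\xi)=e^{-t|\xi|}$, Fubini and Fourier inversion give
\[
g_\lambda^\infty(x,y)=\frac{1}{2\pi}\int_{\R}\frac{e^{i\xi(x-y)}}{\lambda+|\xi|}\,d\xi=\frac{1}{\pi}\int_0^\infty\frac{\cos(\xi|x-y|)}{\lambda+\xi}\,d\xi.
\]
After the substitution $u=\xi|x-y|$ (for $x\neq y$) and writing the resulting integral via the cosine and sine integrals, one obtains
\[
\pi g_\lambda^\infty(x,y)=-\cos\bigl(\lambda|x-y|\bigr)\mathrm{Ci}\bigl(\lambda|x-y|\bigr)+\sin\bigl(\lambda|x-y|\bigr)\Bigl(\tfrac{\pi}{2}-\mathrm{Si}\bigl(\lambda|x-y|\bigr)\Bigr),
\]
and the expansion $\mathrm{Ci}(a)=\gamma+\log a+\int_0^a(\cos v-1)/v\,dv$ again isolates the logarithm, with the remainder bounded and continuous near $0$. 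Decay of $g_\lambda^\infty$ at infinity (an elementary bound $g_\lambda^\infty(x,y)\leq C(\lambda)/|x-y|^2$ from the explicit heat-kernel formula suffices) controls the behaviour for large $|x-y|$.

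In both cases one arrives at the intermediate form $\pi g_\lambda^\infty(x,y)=\log(1/|x-y|)+h_0(|x-y|)$, where $h_0$ is continuous on $(0,\infty)$, bounded near $0$, and satisfies $h_0(r)+\log r\to 0$ as $r\to\infty$. To finish, I would use the identity $\log(1/r)=\log_+(1/r)-\log_+(r)$ and set $h(r):=h_0(r)-\log_+(r)$. On $[0,1]$ this equals $h_0$, hence bounded continuous; on $[1,\infty)$ it equals $\pi g_\lambda^\infty$ itself, which tends to $0$ and is continuous; continuity at $r=1$ is automatic by matching. Therefore $h$ is bounded and continuous on $[0,\infty)$, giving the stated representation. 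The main obstacle is the explicit evaluation in the 1D case — handling the conditionally convergent cosine integral and confirming boundedness of the remainder term uniformly in $|x-y|$; the 2D case is then essentially a lookup of Bessel-function asymptotics.
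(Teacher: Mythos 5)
Your proposal is correct, but it takes a genuinely different route from the paper. The paper works directly with the time integral $\int_0^\infty e^{-\lambda t}p^\infty(t,x,y)\,dt$: it adds and subtracts reference kernels ($e^{-1/2t}$ in $d=2$, $e^{-\lambda t}$ in $d=1$) so that the singular part becomes a Frullani-type integral, which after writing it as a double integral and exchanging the order of integration evaluates \emph{exactly} to $\frac{1}{\pi}\log_+\frac{1}{|x-y|}$ on $\{|x-y|\le 1\}$; all remaining pieces are collected into an explicit $h$ whose boundedness is checked elementarily via $\lim_{t\to 0}(e^{-\lambda t}-1)/t=-\lambda$. Your route instead identifies the closed forms $\pi g_\lambda^\infty=K_0(\sqrt{2\lambda}\,|x-y|)$ (for $d=2$) and the $\mathrm{Ci}/\mathrm{Si}$ expression (for $d=1$) and reads off the logarithmic singularity from classical asymptotics; the final $\log_+$ bookkeeping via $h(r)=h_0(r)-\log_+(r)$ is clean and correct. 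What your approach buys is sharper information (explicit constants and an $O(r^2|\log r|)$ error near the diagonal); what it costs is reliance on special-function asymptotics and, in $d=1$, a Fourier-inversion/Fubini step that needs care since $\int_0^\infty e^{-\lambda t}\int_{\R}e^{-t|\xi|}\,d\xi\,dt$ diverges, so the cosine integral is only conditionally convergent and must be justified by a cutoff or integration by parts --- a point you rightly flag, and one the paper itself handles by exactly this integration-by-parts device in its Section 5.1 treatment of $g_\lambda^\alpha$. Two trivial slips: the substitution producing the $\mathrm{Ci}/\mathrm{Si}$ form is $u=(\lambda+\xi)|x-y|$ rather than $u=\xi|x-y|$, and the condition on $h_0$ at infinity should read $h_0(r)-\log r\to 0$ (equivalently $\pi g_\lambda^\infty\to 0$), consistent with the construction you then carry out.
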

\begin{proof}
For \(d=2,\) we have
\begin{eqnarray*}
\lefteqn{g_{\lambda}^{\infty}(x,y)}\\
&=&\int_0^{\infty} e^{-\lambda t} \frac{e^{-|x-y|^2/2t}}{2\pi t} dt\\
&=&\int_0^{\infty} e^{-\lambda t} \frac{e^{-|x-y|^2/2t}}{2\pi t} {\bf 1}_{\{|x-y|\leq 1\}}dt + \int_0^{\infty} e^{-\lambda t} \frac{e^{-|x-y|^2/2t}}{2\pi t} {\bf 1}_{\{|x-y|> 1\}}dt\\
&=&\int_0^{\infty} \frac{e^{-|x-y|^2/2t}-e^{-1/2t}}{2\pi t} {\bf 1}_{\{|x-y|\leq 1\}} dt +\int_0^{\infty}e^{-\lambda t} \frac{e^{-1/2t}}{2\pi t}{\bf 1}_{\{|x-y|\leq 1\}} dt \\
&&+\int_0^{\infty}(e^{-\lambda t}-1)\frac{e^{-|x-y|^2/2t}-e^{-1/2t}}{2\pi t} {\bf 1}_{\{|x-y|\leq 1\}}dt\\
&& \hspace{10mm} + \int_0^{\infty} e^{-\lambda t} \frac{e^{-|x-y|^2/2t}}{2\pi t} {\bf 1}_{\{|x-y|> 1\}}dt\\
&=& \frac{{\bf 1}_{\{|x-y|\leq 1\}}}{2\pi} \int_0^{\infty} \int_{\frac{|x-y|^2}{2t}}^{\frac{1}{2t}}\frac{1}{t} e^{-s}ds dt +\frac{h(|x-y|)}{\pi}\\
&=& \frac{{\bf 1}_{\{|x-y|\leq 1\}}}{2\pi} \int_0^{\infty} \int_{\frac{|x-y|^2}{2s}}^{\frac{1}{2s}} \frac{1}{t} e^{-s}dtds +\frac{h(|x-y|)}{\pi}\\
&=& \frac{1}{\pi} \log_+{\frac{1}{|x-y|}} +\frac{h(|x-y|)}{\pi}.
\end{eqnarray*}
Here, we defined a bounded continuous function \(h\) by
\begin{eqnarray*}
h(w)&:=&\pi \int_0^{\infty}e^{-\lambda t} \frac{e^{-1/2t}}{2\pi t} {\bf 1}_{\{|w|\leq 1\}}dt \\
&&+\pi \int_0^{\infty}(e^{-\lambda t}-1)\frac{e^{-|w|^2/2t}-e^{-1/2t}}{2\pi t} {\bf 1}_{\{|w|\leq 1\}}dt\\
&& \hspace{5mm} + \pi \int_0^{\infty} e^{-\lambda t} \frac{e^{-|w|^2/2t}}{2\pi t} {\bf 1}_{\{|w|> 1\}}dt.
\end{eqnarray*}
By using \(\lim_{t\to 0} (e^{-\lambda t}-1)/t = -\lambda\), we can check that \(h\) is bounded.

For \(d=1,\) we have
\begin{eqnarray*}
\lefteqn{g_{\lambda}^{\infty}(x,y)}\\
&= &\int_0^{\infty} \frac{e^{-\lambda t}}{\pi} \frac{t}{|x-y|^2+t^2} dt\\
&= &\int_0^{\infty} \frac{e^{-\lambda |x-y|t}}{\pi} \frac{t}{1+t^2} dt\\
&= &\int_0^{\infty} \frac{e^{-\lambda |x-y|t}-e^{-\lambda t}}{\pi} \frac{{\bf 1}_{\{|x-y|\leq 1\}}}{t} dt+ \int_0^{\infty} \frac{e^{-\lambda t}}{\pi} \frac{t}{1+t^2}{\bf 1}_{\{|x-y|\leq 1\}}  dt\\
&&+\int_0^{\infty} \frac{e^{-\lambda |x-y|t}-e^{-\lambda t}}{\pi} \left(\frac{t}{1+t^2}-\frac{1}{t}\right) {\bf 1}_{\{|x-y|\leq 1\}} dt\\
&& \hspace{10mm} + \int_0^{\infty} \frac{e^{-\lambda |x-y|t}}{\pi} \frac{t}{1+t^2} {\bf 1}_{\{|x-y|> 1\}}dt \\
&= &\int_0^{\infty} \int_{\lambda |x-y|t}^{\lambda t} \frac{e^{-s}}{\pi} \frac{1}{t}{\bf 1}_{\{|x-y|\leq 1\}} dsdt+ \frac{h(|x-y|)}{\pi}\\
&= &\int_0^{\infty} \int^{\frac{s}{\lambda |x-y|}}_{\frac{s}{\lambda }} \frac{e^{-s}}{\pi} \frac{1}{t}{\bf 1}_{\{|x-y|\leq 1\}}dt ds+ \frac{h(|x-y|)}{\pi}\\
&=& \frac{1}{\pi} \log_+{\frac{1}{|x-y|}}+\frac{h(|x-y|)}{\pi}.
\end{eqnarray*}
Here, we defined a bounded continuous function \(h\) by
\begin{eqnarray*}
h(w)&:=&\pi \int_0^{\infty} \frac{e^{-\lambda t}}{\pi} \frac{t}{1+t^2} {\bf 1}_{\{|w|\leq 1\}} dt\\
&&+\pi \int_0^{\infty} \frac{e^{-\lambda |w|t}-e^{-\lambda t}}{\pi} \left(\frac{t}{1+t^2}-\frac{1}{t}\right) {\bf 1}_{\{|w|\leq 1\}} dt\\
&&\hspace{5mm} +\pi  \int_0^{\infty} \frac{e^{-\lambda |w|t}}{\pi} \frac{t}{1+t^2} {\bf 1}_{\{|w|> 1\}}dt.
\end{eqnarray*}
By using \(\lim_{t\to 0} (e^{-\lambda t}-1)/t = -\lambda\), we can check that \(h\) is bounded.
\end{proof}

In this subsection, we prove convergence of Gaussian multiplicative chaos \(\mu^n\). By Lemma \ref{Green'slemma}, Gaussian multiplicative chaos \(\mu^{\infty}\) cannot be represented by the form (\ref{eq:I1}) rigorously, and so we need to use Shamov's general result \cite{S} concerning convergence of Gaussian multiplicative chaos in order to prove convergence of \(\mu^n\) to \(\mu^{\infty}\).

First, we will construct \(X^n\) for any \(n\in \mathbb{N}\cup \{\infty\}\) on a common probability space \((\Omega^X, \mathscr{F}^X, \mathbb{P}^X)\). Let \(\mathcal{H}\) be \(L^2([0,\infty)\times \R^d ; dt\otimes dm)\) and \(L^0(\R^d;dm(x))\) be the collection of all measurable functions on \(\R^d\) which is equipped with a topology of the convergence in measure \(m\) locally. For each \(n\in \mathbb{N}\cup\{\infty\}\), we define continuous linear operators \(Y^{n}:\mathcal{H}\to L^0(\R^d;dm(x))\) by setting
\[Y^n(\xi)(x):=\pi^{1/2} \int_0^{\infty}\int_{\R^d}C_n(z)e^{-\lambda t/2}p^{n}(t/2, x_n, z_n)\xi(t,z)dm(z)dt\]
for \(\xi \in \mathcal{H}\) and \(x \in \R^d.\) Indeed, \(Y^n:\mathcal{H}\cap L^{\infty}([0,\infty)\times \R^d)\to L^0(\R^d;dm(x))\) is well-defined since \(|Y^n(\xi)(x)|\leq 2\pi^{1/2}\|\xi\|_{\infty}/\lambda \) for any \(\xi \in \mathcal{H} \cap L^{\infty}([0,\infty)\times \R^d)\), and we can construct the continuous linear operator \(Y^{n}:\mathcal{H}\to L^0(\R^d;dm(x))\) because it holds that
\begin{eqnarray}
\nonumber \lefteqn{ \int_{\R^d}|Y^n(\xi)-Y^n(\eta)|^2(x)dm(x)}\\
\nonumber &=& \pi \int_{\R^d} \left( \int_0^{\infty}\int_{\R^d}C_n e^{-\lambda t/2}p^n(t/2,x_n,z_n)(\xi-\eta)(t,z)dm(z)dt  \right)^2  dm(x)\\
\nonumber &  \leq &  \pi \int_{\R^d}\left \{ \left( \int_0^{\infty} \int_{\R^d}C_n e^{-\lambda t/2} p^n(t/2,x_n,z_n)|\xi-\eta|^2(t,z)dm(z)dt\right)\right.\\
\nonumber &&\left. \hspace{10mm} \times \left(\int_0^{\infty} \int_{\R^d} C_n e^{-\lambda t/2} p^n(t/2,x_n,z_n)dm(z)dt\right)\right \}dm(x)  \\
\nonumber & = &\frac{2\pi }{\lambda} \int_{\R^d} \int_0^{\infty} \int_{\R^d}C_n e^{-\lambda t/2} p^n(t/2,x_n,z_n)|\xi-\eta|^2(t,z)dm(z)dtdm(x)\\
\nonumber &=& \frac{2\pi }{\lambda} \int_0^{\infty} \int_{\R^d} e^{-\lambda t/2} |\xi-\eta|^2(t,z)dm(z)dt\\
& \leq & \frac{2\pi }{\lambda} \|\xi-\eta \|_{\mathcal{H}}^2 \label{eq:GMCconst1}
\end{eqnarray}
for any \(\xi, \eta\in \mathcal{H}\). In the first inequality above, we used the Cauchy-Schwarz inequality.

We remark that \(Y^n(x):=\pi^{1/2} C_n(z)e^{-\lambda t/2}p^n(t/2,x_n,z_n)\) belongs to \(\mathcal{H}\) for \(n \in \mathbb{N}\), and it holds that \(Y^n(\xi)(x)=\langle Y^n(x), \xi \rangle_{\mathcal{H}}\) for \(n\in \mathbb{N}\). However, \(Y^{\infty}(x):=\pi^{1/2} C_n(z)e^{-\lambda t/2}p^{\infty}(t/2,x,z)\) does not belongs to \(\mathcal{H}\), so the above constructions of operators \(Y^{n}\) were necessary.

We can construct a Gaussian field \(X\) indexed by elements in \(\mathcal{H}\) on a probability space \((\Omega^X, \mathscr{F}^X, \mathbb{P}^X)\) by \cite[Theorem 12.1.4]{D}. More precisely, for any \(k\in \mathbb{N}\) and \(\xi_1, \cdots, \xi_k \in \mathcal{H}\), the random vector \((X_{\xi_1},\cdots , X_{\xi_k})\) has a multivariate Gaussian distribution on \((\Omega^X, \mathscr{F}^X, \mathbb{P}^X)\) satisfying \(\EX [X_{\xi_i}]=0\) and \(\EX [X_{\xi_i}X_{\xi_j}]=\langle \xi_i, \xi_j \rangle_{\mathcal{H}} \) for \(1\leq i, j\leq k.\) For \(n\in \mathbb{N}\cup\{\infty\}\), \(\xi \in \mathcal{H}\) and \(f\in L^2(\R^d;dm)\), the Cauchy-Schwarz inequality and (\ref{eq:GMCconst1}) yield that
\[\int_{\R^d}|Y^n(\xi)(x)f(x)|dm(x) \leq \sqrt{\frac{2\pi}{\lambda}} \|\xi\|_{\mathcal{H}}\cdot \|f\|_{L^2}.\]
So the adjoint operator 
\[(Y^n)^*:L^2(\R^d;m)\ni f \mapsto \pi^{1/2}\int_{\R^d}e^{-\lambda t/2}C_n(y)p^n(t/2,x_n,y_n)f(x)dm(x) \in \mathcal{H}\] is continuous. Moreover, for \(f,g\in L^2(\R^d;dm)\), it holds that
\[\left\langle (Y^n)^*f, \xi \right\rangle_{\mathcal{H}}=\int_{\R^d}Y^n(\xi)(x)f(x)dm(x)\]
and  
\[\left\langle (Y^n)^*f, (Y^n)^*g\right\rangle_{\mathcal{H}}=\int_{\R^d}\int_{\R^d}\langle Y^n(x), Y^n(y)\rangle f(x)g(y)dm(x)dm(y),\]
where \(\langle Y^n(x), Y^n(y) \rangle =\pi g_{\lambda}^n(x,y)\). This means that \(X_{(Y^n)^*}\) can be identified with \(X^n\) in distribution, so we have realized \(X^n\) for any \(n\in \mathbb{N}\cup \{\infty\}\) on the same probability space \(\Omega^X\). By \cite[Theorem 17]{S}, GMC \(\mu^n\) is realized on \(\Omega^X\) for all \(n\in \mathbb{N}\cup\{\infty\}.\) In this paper, we use the above realizations of \(X^n\) and \(\mu^n\) without loss of generality since the main result is stated with respect to the weak convergence.

By checking convergence of \(Y^n\) to \(Y^{\infty}\) in the sense of \cite{S}, the following theorem concerning convergence of GMC holds.

\begin{theorem}\label{Shamov}
For any \(\gamma \in (0,\sqrt{2d/C^*}),\) GMC \(\mu^n\) converges vaguely to \(\mu^{\infty}\) in \(L^1(\mathbb{P}^X)\) as \(n \to \infty\) in the sense that, for \(f \in C_c(\R^d),\) 
\[\lim_{n \to \infty}\EX\left|\int_{\R^d}fd\mu^n-\int_{\R^d}fd\mu^{\infty}\right|=0. \]
\end{theorem}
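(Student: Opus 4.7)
The plan is to apply Shamov's general convergence theorem for Gaussian multiplicative chaos from \cite{S}. Since all of the $X^n$ and $\mu^n$ have been realised on the common probability space $\Omega^X$ through the operators $Y^n$, the task reduces to verifying two ingredients: (i) convergence of the covariance kernels associated with $Y^n$, and (ii) a uniform integrability bound for the sequence $\{\int f\,d\mu^n\}_n$ for each $f \in C_c(\R^d)$.

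Kernel convergence is immediate from Assumption \ref{assumption}(1): the covariance function $\pi g_\lambda^n(x_n, y_n) = \langle Y^n(x), Y^n(y)\rangle_\mathcal{H}$ converges pointwise to $\pi g_\lambda^\infty(x,y) = \langle Y^\infty(x), Y^\infty(y)\rangle_\mathcal{H}$. Within Shamov's framework of randomised shifts, this is enough to guarantee convergence of $\mu^n$ to $\mu^\infty$ in distribution, and when combined with uniform integrability it upgrades to the vague $L^1(\mathbb{P}^X)$ convergence asserted in the theorem.

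Uniform integrability is secured through the second-moment computation
\begin{align*}
\EX\!\left[\left(\int_{\R^d} f\,d\mu^n\right)^2\right]
&= \int_{\R^d}\!\int_{\R^d} f(x)f(y)\, e^{\gamma^2 \pi g_\lambda^n(x_n, y_n)}\,dm(x)\,dm(y) \\
&\leq e^{\gamma^2 \pi C} \int_{\R^d}\!\int_{\R^d} f(x)f(y)\, e^{\gamma^2 \pi C^* g_\lambda^\infty(x,y)}\,dm(x)\,dm(y),
\end{align*}
where the inequality uses Assumption \ref{assumption}(2). By Lemma \ref{Green'slemma} the right-hand side is dominated by a constant times $\int_{\R^d}\int_{\R^d} f(x)f(y)|x-y|^{-\gamma^2 C^*}\,dm(x)\,dm(y)$ on the support of $f$, which is uniformly finite in $n$ whenever $\gamma^2 C^* < d$, giving uniform $L^2$-boundedness and hence uniform integrability.

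This direct second-moment argument already covers the range $\gamma < \sqrt{d/C^*}$, so the main obstacle is to stretch the conclusion to the full subcritical regime $\gamma < \sqrt{2d/C^*}$ stated in the theorem. In that larger range second moments may diverge and uniform integrability must instead be extracted from Shamov's machinery itself: one truncates $Y^n$ at a mollification scale $\varepsilon$ to obtain $L^2$-bounded approximants $\mu^{n,\varepsilon}$ for which the previous bound applies uniformly in $n$, and then interchanges $n \to \infty$ and $\varepsilon \to 0$ using Kahane's convexity inequality together with the monotone martingale structure of the subcritical chaos. This interchange of limits is the only essentially non-routine step of the argument; once it is in place Shamov's theorem delivers the required vague $L^1$-convergence.
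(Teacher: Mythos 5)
There are two genuine gaps. First, you verify only two of the three hypotheses of Shamov's convergence theorem (\cite[Theorem 25]{S}): uniform integrability and convergence of the covariance kernels. The missing hypothesis is convergence of the \emph{operators} themselves, i.e.\ that \(Y^n(\xi)\to Y^{\infty}(\xi)\) in measure \(m\) for every \(\xi\in\mathcal{H}\). This cannot be skipped: kernel convergence plus uniform integrability yields at best convergence in distribution of \(\int f\,d\mu^n\), whereas the theorem asserts \(L^1(\mathbb{P}^X)\) convergence of the specific realizations coupled on the common space \(\Omega^X\); your sentence claiming that distributional convergence "upgrades" to \(L^1\) convergence is not correct, since the coupling is exactly what the operator convergence controls. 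The paper proves this step by a density argument: for \(\xi\in C_c([0,\infty)\times\R^d)\) it splits \(Y^n(\xi)(x)\) into a term controlled by uniform continuity of \(\xi\) and a term \(\pi^{1/2}\int_0^\infty e^{-\lambda t/2}\mathbb{E}_{x_n}^{Z^n}[\xi(t,Z^n_{t/2})]dt\) that converges by the weak convergence of \(Z^n\) in Assumption \ref{assumption}(1), and then extends to general \(\xi\in\mathcal{H}\) using the uniform operator bound \((\ref{eq:GMCconst1})\).

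Second, your uniform integrability argument via second moments only covers \(\gamma^2C^*<d\), while the theorem is stated for the full subcritical range \(\gamma^2C^*<2d\). Your proposed repair (truncating at a mollification scale and interchanging limits via a "monotone martingale structure") is not carried out and is in any case unnecessary: the paper obtains uniform integrability of \(\{\mu^n(D)\}_n\) in the whole range directly from Kahane's convexity inequality together with Assumption \ref{assumption}(2), by comparing each \(\mu^n\) with the chaos of a single dominating field with covariance \(\gamma^2(C^*\pi g_{\lambda}^{\infty}+C)\), which is subcritical precisely when \(\gamma\sqrt{C^*}<\sqrt{2d}\) and hence has uniformly bounded moments of some order strictly greater than one. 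You should replace the second-moment computation by this comparison argument and add the verification of the operator convergence; as written, the proposal does not establish the stated theorem.
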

\begin{proof}
We check the three assumptions of \cite[Theorem 25]{S} to apply to this theorem. For any \(f\in C_c(\R^d)\), let \(D\) be the support of \(f\). 

Firstly, by Kahane's convexity inequality (see \cite[Theorem 28]{S} or \cite{K}) and Assumption \ref{assumption} (2), \(\{\mu^n(D)\}_{n}\) is uniformly integrable.

Secondly, we will prove that \(Y^n\) converges to \(Y^{\infty}\) as \(n \to \infty\) in the sense that \(Y^n(\xi)\) converges to \(Y^{\infty}(\xi)\) in measure \(m\) for any \(\xi \in \mathcal{H}.\) For any \(\xi \in \mathcal{H}\) and \(\varepsilon>0\), if there exists \(\xi_{\varepsilon} \in \mathcal{H}\) such that \(\|\xi -\xi_{\varepsilon}\|_{\mathcal{H}}<\varepsilon\), then \((\ref{eq:GMCconst1})\) yields that
\begin{eqnarray}
\nonumber \lefteqn{ m(\{x\in \R^d : |Y^n(\xi)(x)-Y^{\infty}(\xi)(x)|>\delta\})}\\
\nonumber &\leq & m(\{x\in \R^d : |Y^n(\xi)(x)-Y^{n}(\xi_{\varepsilon})(x)|>\delta/3\})\\
\nonumber &&\hspace{15mm}+m(\{x\in \R^d : |Y^n(\xi_{\varepsilon})(x)-Y^{\infty}(\xi_{\varepsilon})(x)|>\delta/3\})\\
\nonumber &&\hspace{25mm}+m(\{x\in \R^d : |Y^{\infty}(\xi_{\varepsilon})(x)-Y^{\infty}(\xi)(x)|>\delta/3\})\\
&\leq & \frac{3}{\delta}2\sqrt{\frac{2\pi}{\lambda}}\varepsilon + m(\{x\in \R^d : |Y^n(\xi_{\varepsilon})(x)-Y^{\infty}(\xi_{\varepsilon})(x)|>\delta/3\}) \label{eq:Shamov1}
\end{eqnarray}
for any \(\delta>0.\) By \cite[Theorem 3.3.14]{R}, \(C_c([0,\infty)\times \R^d)\) is dense in \(\mathcal{H}\), so we may assume that \(\xi \in C_c([0,\infty)\times \R^d)\). Then we have
\begin{eqnarray}
\nonumber Y^n(\xi)(x)&=&\pi^{1/2} \int_0^{\infty}\int_{\R^d}C_n(z)e^{-\lambda t/2}p^{n}(t/2, x_n, z_n)\xi(t,z)dm(z)dt\\
\nonumber &=& \pi^{1/2} \int_0^{\infty}\int_{\R^d}C_n(z)e^{-\lambda t/2}p^{n}(t/2, x_n, z_n)(\xi(t,z)-\xi(t,z_n))dm(z)dt\\
&&+\pi^{1/2} \int_0^{\infty}e^{-\lambda t/2}\mathbb{E}_{x_n}^{Z^n}[\xi(t, Z^n_{t/2})]dt\label{eq:Shamov2}
\end{eqnarray}
By uniform continuity of \(\xi\), the first term of \((\ref{eq:Shamov2})\) converges to \(0\) as \(n\to \infty\). By Lebesgue's convergence theorem and Assumption \ref{assumption} (1), the second term of \((\ref{eq:Shamov2})\) converges to \(Y^{\infty}(\xi)(x)\) for any \(x\), and so does in measure \(m\).

Finally, by Assumption \ref{assumption}, the kernel \(\langle Y^n(x),Y^n(y) \rangle  =\pi g_{\lambda}^n(x,y)\) converges to \(\langle Y^{\infty}(x),Y^{\infty}(y) \rangle= \pi g_{\lambda}^{\infty}(x,y)\) pointwisely, and so does in measure \(dm \times dm\) on \(D\times D.\)

Thus we can apply \cite[Theorem 25]{S} to our case, which completes the proof. 
\end{proof}

By Theorem \ref{Shamov}, \(\int fd\mu^n\) converges in law to \(\int fd\mu^{\infty}\) for any \(f\in C_c(\R^d)\). Hence, by \cite[Theorem 23.16]{Ka}, the following statement holds.
\begin{corollary}
For \(\gamma \in (0,\sqrt{2d/C^*}),\) GMC \(\mu^n\) under \(\mathbb{P}^{X^n}\) converges in law to \(\mu^{\infty}\) under \(\mathbb{P}^{X^{\infty}}\) in the space of Radon measures with vague topology as \(n\to \infty.\)
\end{corollary}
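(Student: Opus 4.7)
The plan is to deduce the corollary essentially for free from Theorem \ref{Shamov} together with the standard criterion for weak convergence of random measures in the vague topology (Kallenberg \cite{Ka}, Theorem 23.16). Since the construction earlier in the section realizes every \(\mu^n\), \(n\in \mathbb{N}\cup\{\infty\}\), on the common probability space \((\Omega^X, \mathscr{F}^X, \mathbb{P}^X)\), and convergence in law does not depend on this choice of coupling, I may work throughout on \((\Omega^X, \mathscr{F}^X, \mathbb{P}^X)\).

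First, I would observe that Theorem \ref{Shamov} gives, for each \(f\in C_c(\R^d)\), that \(\int f\,d\mu^n \to \int f\,d\mu^{\infty}\) in \(L^1(\mathbb{P}^X)\), hence in \(\mathbb{P}^X\)-probability. Since convergence in probability is stable under finite Cartesian products, for any finite family \(f_1,\ldots,f_k\in C_c(\R^d)\) the vector \(\bigl(\int f_1\,d\mu^n,\ldots,\int f_k\,d\mu^n\bigr)\) converges to \(\bigl(\int f_1\,d\mu^{\infty},\ldots,\int f_k\,d\mu^{\infty}\bigr)\) in \(\mathbb{P}^X\)-probability, and therefore in distribution on \(\R^k\).

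Next, I would invoke Kallenberg \cite[Theorem 23.16]{Ka}, which characterizes weak convergence of random Radon measures on a locally compact second countable Hausdorff space (here \(\R^d\)) in the vague topology as equivalent to convergence in law of the finite-dimensional functionals \(\bigl(\int f_1\,d\mu^n,\ldots,\int f_k\,d\mu^n\bigr)\) for every \(k\in\mathbb{N}\) and every \(f_1,\ldots,f_k\in C_c(\R^d)\). The previous step verifies exactly this criterion, which immediately yields the stated convergence in law of \(\mu^n\) to \(\mu^{\infty}\) in the space of Radon measures on \(\R^d\) equipped with the vague topology.

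There is no real obstacle, since both the coupling on \((\Omega^X, \mathscr{F}^X, \mathbb{P}^X)\) and the quantitative \(L^1\)-bound of Theorem \ref{Shamov} are already in place. The only point requiring minimal care is to note that \(L^1\)-convergence of each coordinate implies joint convergence in probability (true because each coordinate tends to its limit in probability, and convergence in probability is preserved under continuous maps of finitely many coordinates), so no additional tightness argument is needed before applying Kallenberg's criterion.
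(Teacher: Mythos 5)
Your proposal is correct and follows essentially the same route as the paper: deduce from Theorem \ref{Shamov} that \(\int f\,d\mu^n \to \int f\,d\mu^{\infty}\) in \(L^1(\mathbb{P}^X)\) (hence in law) for each \(f\in C_c(\R^d)\), and then apply \cite[Theorem 23.16]{Ka}. The only difference is that you spell out the passage from coordinatewise \(L^1\)-convergence to joint convergence in probability of finite families of functionals, a detail the paper leaves implicit.
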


\subsection{Tightness}\label{sectight}
The goal of this subsection is to prove the following theorem concerning the tightness of the collection of the distributions of \((Z^n, A^n)\). We remind that \(U\)-topology means the local uniform topology. Throughout of this subsection, we take  \(\gamma \in (0,\sqrt{d/C^*})\).
\begin{theorem}\label{maintight}
The collection of the distributions of \((Z^n, A^n)\) under \(\mathbb{P}_{x_n}^{Z^n}\otimes \mathbb{P}^{X^n}\) is tight with respect to \(J_1\times U\)-topology for any starting point \(x\in \R^d\).
\end{theorem}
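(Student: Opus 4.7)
The plan is to establish tightness of the two marginals $\{Z^n\}$ and $\{A^n\}$ separately and then invoke the standard fact that, on a product of Polish spaces, marginal tightness yields joint tightness. Tightness of $\{Z^n\}$ in the $J_1$-topology on $\D$ is immediate from the weak-convergence hypothesis in Assumption \ref{assumption}(1). The substantive work lies in showing that $\{A^n\}$ is tight in the local uniform topology on $C([0,\infty),\R)$.

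Since every $A^n$ is continuous, nondecreasing and vanishes at $0$, its tightness on each interval $[0,T]$ reduces to pointwise tightness at the endpoint plus a uniform modulus-of-continuity estimate. The former is trivial because, writing $f_n(x):=\exp(\gamma X^n(x)-\tfrac{\gamma^2}{2}\EX[X^n(x)^2])$, Fubini and the unit-mean property $\EX[f_n(x)]=1$ give $\E[A^n_T]=T$. For the modulus, monotonicity gives $w_{A^n}(\delta;T)\leq 2\max_{k}(A^n_{(k+1)\delta}-A^n_{k\delta})$; Chebyshev, a union bound, and the Markov property of $Z^n$ then reduce the problem to proving
\[
\sup_{n}\ \sup_{y\in\R^d}\ \E^{Z^n\otimes X^n}_{y}\bigl[(A^n_\delta)^2\bigr] = o(\delta) \qquad\text{as } \delta\to 0.
\]

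To bound the second moment, I would apply Fubini together with the two-point identity $\EX[f_n(u)f_n(v)]=\exp(\gamma^2\pi g^n_\lambda(u,v))$ to obtain
\[
\E^{Z^n\otimes X^n}_{y}\bigl[(A^n_\delta)^2\bigr] = 2\int_0^\delta\!\!\int_0^v \E^{Z^n}_{y}\bigl[\exp(\gamma^2\pi g^n_\lambda(Z^n_u,Z^n_v))\bigr]\,du\,dv,
\]
and then apply the Markov property at time $u$ together with Assumption \ref{assumption}(2) and Lemma \ref{Green'slemma}, which yield $\exp(\gamma^2\pi g^n_\lambda(z,w))\leq C(1\vee|z-w|^{-\gamma^2 C^*})$. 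The time integration is handled by $\int_0^\delta p^n(r,z_n,w_n)C_n\,dr \leq e^{\lambda\delta}g^n_\lambda(z,w)$, collapsing everything onto the Green-function estimate
\[
\sup_{n}\ \sup_{z\in\R^d}\ \int_{\R^d} |z-w|^{-\gamma^2 C^*}\, g^n_\lambda(z,w)\, dm(w) <\infty,
\]
which holds uniformly because $\gamma^2 C^* < d$ places the singularity in the locally integrable regime (using Assumption \ref{assumption}(2), Lemma \ref{Green'slemma}, and the uniform mass $\int g^n_\lambda(z,\cdot)\,dm = 1/\lambda$). To extract $o(\delta)$ rather than only $O(\delta)$, I would split the spatial integral at $|z-w|=\epsilon$: on the outer region the singular factor is bounded by $\epsilon^{-\gamma^2 C^*}$ while the remaining time-integrated mass is at most $\delta$, contributing $\epsilon^{-\gamma^2 C^*}\delta$; on the inner region the local integrability of $|z-w|^{-\gamma^2 C^*}g^n_\lambda(z,w)$ forces the contribution to vanish as $\epsilon\to 0$ uniformly in $n$ and $\delta$. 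Sending first $\delta\to 0$ and then $\epsilon\to 0$ closes the modulus estimate.

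The principal obstacle is uniformity in $n$: because the processes $Z^n$ are only known to converge weakly, with no direct quantitative bound on their transition densities, one cannot estimate $\E_y[|y-Z^n_r|^{-\gamma^2 C^*}]$ head-on and must access $Z^n$ solely through its $\lambda$-resolvent. Assumption \ref{assumption}(2) is precisely the device that transfers the logarithmic Green-function estimate of Lemma \ref{Green'slemma} to every $Z^n$ simultaneously, and the hypothesis $\gamma\in(0,\sqrt{d/C^*})$ is exactly the $L^2$-regime in which the resulting singular integrals are locally integrable. Once both marginal tightnesses are in hand, joint tightness in $J_1\times U$ follows by the standard product argument.
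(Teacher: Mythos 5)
Your proposal is correct, and it reaches the \(U\)-tightness of \(\{A^n\}_n\) — the only substantive point — by a genuinely different route from the paper. The paper does not prove tightness directly: it deduces Theorem \ref{maintight} from the full weak convergence of \(A^n\) and \((A^n)^{-1}\) in the \(U\)-topology (Theorem \ref{PCAFconvergence}), whose proof runs through \(M_1\)-tightness of the monotone processes \(A^n\) and \((A^n)^{-1}\) (Lemma \ref{AM1tight}, Theorem \ref{AinvM1tight}, both essentially free from \(\mathbb{E}[A^n_T]=T\) and Theorem \ref{M1cri2}), the one-dimensional convergence \(A^n_T\Rightarrow\tilde A_T\) obtained from the GMC machinery of Shamov and Kahane (Lemmas \ref{PCAFcor}, \ref{newlemma}), the strict increase of subsequential limits (Lemmas \ref{lemma11}, \ref{lemma14}), and finally the continuity of the inversion map at strictly increasing functions to upgrade \(M_1\) to \(U\) (Proposition \ref{lemma15}). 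You instead prove a uniform Kolmogorov-type modulus bound \(\sup_n\sup_y\mathbb{E}_y[(A^n_\delta)^2]=o(\delta)\) and conclude by Arzel\`a--Ascoli; the key integral estimate \(e^{\gamma^2\pi g^n_\lambda(z,w)}g^n_\lambda(z,w)\le C|z-w|^{-(d-\varepsilon)}\) with \(\varepsilon=d-\gamma^2C^*-\varepsilon_1/\pi>0\) is exactly the computation the paper performs in (\ref{eq:dif1})--(\ref{eq:new2-2}) inside Lemma \ref{newlemma}, deployed there for a different purpose. Your argument is more self-contained and quantitative for the tightness statement alone — it avoids Shamov's theorem, the \(M_1\)-topology, and the inversion trick entirely, and in particular does not presuppose identification or even existence of a limit — whereas the paper's softer route produces, as a by-product of the same effort, the actual weak limits of \(A^n\) and \((A^n)^{-1}\), which are needed anyway in the subsequent steps of the main proof. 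One small point to make explicit when writing this up: in the union bound over the blocks \([k\delta,(k+1)\delta]\), the Markov property must be applied conditionally on the field \(X^n\) (which is independent of \(Z^n\)), and the expectation over the field then exchanged with \(\mathbb{E}^{Z^n}_x\) by Tonelli before taking the supremum over starting points; this is routine but is where the \(\sup_y\) in your displayed estimate is actually consumed.
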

This theorem follows from the following and Assumption \ref{assumption} (1) immediately.
\begin{theorem}\label{PCAFconvergence}
\((1)\) \(A^n\) converges weakly to \(A^{\infty}\) with respect to \(U\)-topology.\\
\((2)\) \((A^n)^{-1}\) converges weakly to \((A^{\infty})^{-1}\) with respect to \(U\)-topology.
\end{theorem}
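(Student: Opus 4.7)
The plan is to regard the PCAF $A^n$ as a pairing of Gaussian multiplicative chaos $\mu^n$ with the occupation measure of $Z^n$, and then combine the GMC convergence from Theorem \ref{Shamov} with weak convergence of $Z^n$. Part (2) will follow from part (1) by the continuous mapping theorem once one knows that $A^\infty$ is strictly increasing, so the essential work is in part (1). For part (1), I would reduce weak convergence in the $U$-topology to two pieces: tightness of $\{A^n\}$ as random elements of $C[0,\infty)$, and convergence of finite-dimensional distributions to those of $A^\infty$. Since each $A^n$ is continuous and non-decreasing with $A^n_0 = 0$, tightness in $U$-topology on $[0,T]$ can be established via a Kolmogorov-type moment condition applied to increments together with tightness of the marginals $\{A^n_T\}$.

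The key technical estimate would be the second-moment bound
\begin{equation*}
\E\bigl[(A^n_t - A^n_s)^2\bigr] = \int_s^t\!\!\int_s^t \E_{x_n}^{Z^n}\!\!\left[\exp\bigl(\gamma^2 \pi g_\lambda^n(Z^n_u, Z^n_v)\bigr)\right] du\,dv,
\end{equation*}
obtained by applying Fubini and the explicit Gaussian integral $\E^{X^n}[e^{\gamma X^n(x) + \gamma X^n(y) - \frac{\gamma^2}{2}(\E[X^n(x)^2]+\E[X^n(y)^2])}] = e^{\gamma^2 \pi g_\lambda^n(x,y)}$. Using Assumption \ref{assumption} (2) and Lemma \ref{Green'slemma}, the right-hand side is bounded by $C \int_s^t\!\int_s^t \E^{Z^n}_{x_n}[|Z^n_u - Z^n_v|^{-\gamma^2 C^*}]\,du\,dv$, and since $\gamma^2 C^* < d$ the singularity is locally integrable against the transition kernels of the $d$-stable processes; a scaling/weak-convergence argument then gives a uniform-in-$n$ bound of the form $C|t-s|^\beta$ with $\beta > 1$, from which tightness follows.

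For the finite-dimensional convergence, the essential observation is that if $\rho^n_t := \int_0^t \delta_{Z^n_s}\,ds$ denotes the occupation measure, then $A^n_t = \int_{\R^d}\exp(\gamma X^n(x) - \tfrac{\gamma^2}{2}\E^{X^n}[X^n(x)^2])\,d\rho^n_t(x)$ is precisely the GMC of $X^n$ integrated against $\rho^n_t$. On the common probability space $\Omega^X$ constructed in Subsection \ref{sec:3-2}, I would conditionally fix the paths $Z^n$ and run the Shamov convergence argument (checking the three hypotheses of \cite[Theorem 25]{S}) with the random reference measures $\rho^n_t$ in place of $m$; the required uniform integrability of $\int e^{\gamma X^n - \frac{\gamma^2}{2}\E[(X^n)^2]}\,d\rho^n_t$ comes from Kahane's convexity inequality combined with the moment bound above. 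Joint weak convergence of $(Z^n, X^n) \to (Z^\infty, X^\infty)$ in Assumption \ref{assumption} (1) then upgrades this to the required joint convergence of the pair $(Z^n, A^n)$ at finitely many times.

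The main obstacle will be the interplay between the two sources of randomness in the finite-dimensional convergence argument: one cannot simply apply Theorem \ref{Shamov} because the occupation measure $\rho^n_t$ is itself random, and the same realization of the Gaussian field drives $\mu^n$ at every time. Handling this cleanly requires a conditional/Fubini argument that isolates the $X^n$-averaging, and the delicate uniform integrability step in the $L^2$-regime is where Assumption \ref{assumption} (2) and the range $\gamma < \sqrt{d/C^*}$ are both used in a non-trivial way. Once part (1) is established, part (2) is immediate: on the event that $A^\infty$ is strictly increasing with $A^\infty_t \to \infty$ (which holds almost surely for Liouville Brownian motion and its Cauchy analogue), the right-continuous inverse is a continuous functional in $U$-topology on $C[0,\infty)$, and the continuous mapping theorem yields weak convergence of $(A^n)^{-1}$ to $(A^\infty)^{-1}$.
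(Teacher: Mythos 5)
Your overall strategy (view $A^n$ as a GMC integrated against an occupation measure, combine Shamov-type convergence with tightness plus finite-dimensional convergence, and recover part (2) from part (1) by continuity of the inversion map at strictly increasing functions) is in the right spirit, and the last step for part (2) matches the paper. However, two of your key steps have genuine gaps. First, the tightness argument: the bound $\E^{Z^n}_{x_n}[\exp(\gamma^2\pi g^n_\lambda(Z^n_u,Z^n_v))]\le C\,\E^{Z^n}_{x_n}[|Z^n_u-Z^n_v|^{-\gamma^2C^*}]$ is literally infinite in case (B), since a discrete-state process satisfies $\mathbb{P}(Z^n_u=Z^n_v)>0$; moreover Lemma \ref{selfsimlemma} (self-similarity) applies only to $Z^\infty$, so there is no uniform-in-$n$ control of $\E^{Z^n}[|Z^n_u-Z^n_v|^{-r}]$ available from the assumptions. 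The finiteness of the relevant second moments comes only after integrating in time against the transition kernels (turning them into Green's functions), which is exactly the computation in Lemma \ref{newlemma}; a clean uniform Kolmogorov bound $C|t-s|^{\beta}$, $\beta>1$, is not what the paper establishes. Instead the paper exploits monotonicity: it verifies the $M_1$-tightness criterion (Theorem \ref{M1cri2}) for $(A^n)^{-1}$ and $A^n$ using only one-dimensional marginal convergence (Lemmas \ref{lemma7}--\ref{lemma9}, \ref{AM1tight}), proves that every subsequential limit is strictly increasing (Lemmas \ref{lemma11}, \ref{lemma14}), and then upgrades to $U$-tightness via the continuity of the inversion map (Proposition \ref{lemma15}).

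Second, and more seriously, your finite-dimensional step applies Shamov's theorem with the reference measure $\rho^n_t$ (the occupation measure of $Z^n$) changing with $n$; \cite[Theorem 25]{S} is a statement about a sequence of GMCs over a \emph{fixed} reference measure, and you acknowledge but do not resolve this. The paper's resolution is the decomposition through the intermediate object $A^{n,\infty}_T=\int_0^T e^{\gamma X^n(Z^\infty_s)-\frac{\gamma^2}{2}\E[X^n(Z^\infty_s)^2]}ds$: Shamov is applied conditionally on the path of $Z^\infty$ with the fixed occupation measure $\nu^\infty_T$ (Lemma \ref{PCAFcor}), and the discrepancy $|A^n_T-A^{n,\infty}_T|$ is killed separately by an explicit $L^2$ computation after a Skorokhod representation of the paths (Lemma \ref{newlemma}). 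This intermediate object is the missing idea in your proposal. Finally, you identify the limit with $A^\infty$ without argument; since $\mu^\infty$ is singular and $A^\infty$ is defined only through the Revuz correspondence, showing that the limit $\tilde A$ of the $A^{n,\infty}$ has the law of $A^\infty$ is nontrivial and occupies all of Section \ref{secPCAFlem} (construction of a PCAF version $\bar A$ of $\tilde A$ and verification that its Revuz measure is $\mu^\infty$). As written, your proof would not close without supplying these three pieces.
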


So, we prove Theorem \ref{PCAFconvergence} in this subsection. Before beginning the proof, we state the following corollary.
\begin{corollary}\label{cor4.3}
For any \(x\in \R^d\), the collection of distributions of time-changed processes \(\{\hat{Z}^n\}_{n}\) under \(\mathbb{P}_{x_n}^{Z^n} \otimes \mathbb{P}^{X^n}\) is tight with respect to \(J_1\)-topology.
\end{corollary}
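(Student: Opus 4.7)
The plan is to deduce tightness of $\{\hat{Z}^n\}$ in $J_1$-topology from tightness of the pair $\{(Z^n, (A^n)^{-1})\}$ in $J_1 \times U$-topology via a continuous-mapping argument applied to the composition $\hat{Z}^n = Z^n \circ (A^n)^{-1}$. The key point is that the time-change operation is continuous at sufficiently nice pairs, so once the pair is tight the image inherits tightness automatically.

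First I would verify tightness of $\{(Z^n, (A^n)^{-1})\}$. Marginal tightness of $\{Z^n\}$ in $J_1$-topology is a consequence of the weak convergence asserted in Assumption \ref{assumption}(1), while marginal tightness of $\{(A^n)^{-1}\}$ in $U$-topology is an immediate consequence of Theorem \ref{PCAFconvergence}(2), which even gives weak convergence. Tightness in the product topology then follows from marginal tightness.

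Next I would invoke the continuous mapping theorem. Standard results on Skorokhod's topology (collected in Appendix \ref{AppSko}) show that the composition map $(f, g) \mapsto f \circ g$ is $J_1 \times U \to J_1$ continuous at pairs $(f, g)$ for which $g$ is continuous and strictly increasing. The limiting time-change $(A^{\infty})^{-1}$ enjoys both properties almost surely, because $A^{\infty}$ is a continuous, strictly increasing PCAF, a standard property of Liouville Brownian motion when $d=2$ and of the Liouville Cauchy process when $d=1$. Given any subsequence, tightness of the pair combined with the identification of the second marginal supplied by Theorem \ref{PCAFconvergence}(2) provides a further subsequence along which $(Z^n, (A^n)^{-1})$ converges weakly, with second component distributed as $(A^{\infty})^{-1}$. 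The continuous mapping theorem then yields weak convergence of $\hat{Z}^n = Z^n \circ (A^n)^{-1}$ along this subsequence, which is exactly the tightness claim.

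The principal subtlety is justifying that composition continuity applies at the relevant limit. This reduces to two facts: continuity of $(A^{\infty})^{-1}$ and its strict monotonicity, which rule out flat intervals that would otherwise allow jumps of $Z^{\infty}$ to cause discontinuity of the composition. Both are inherited from the continuity and strict monotonicity of $A^{\infty}$ established in the construction of Liouville Brownian motion and the Liouville Cauchy process, so the argument closes cleanly.
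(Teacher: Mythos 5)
Your proposal is correct and follows essentially the same route as the paper: the paper's proof of this corollary likewise combines Assumption \ref{assumption}(1), Theorem \ref{PCAFconvergence}(2), and the continuity of the composition map (Theorem \ref{J1UJ1}) at points of \(D^d\times C_{\upuparrows}\), using that \((A^{\infty})^{-1}\) is almost surely continuous and strictly increasing. The only cosmetic difference is that you spell out the subsequence/continuous-mapping bookkeeping that the paper leaves implicit.
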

\begin{proof}
Since \((A^n)^{-1}\) is continuous \(\mathbb{P}_{x_n}^{Z^n}\otimes \mathbb{P}^{X^n}\)-almost surely, and the family of continuous functions is the closed set of the family of all c\`{a}dl\`{a}g functions with respect to \(J_1\)-topology, Corollary \ref{cor4.3} follows from Proposition \ref{J1UJ1}, Assumption \ref{assumption} (1) and Theorem \ref{PCAFconvergence} (2).
\end{proof}

We fix \(x\in \R^d\) and \(T>0\). For \(n\in \mathbb{N}\), we define the PCAF \(A^{n,\infty}\) corresponding to \(\mu^n\) with respect to the Dirichlet form of \(Z^{\infty}\) by 
\[A^{n,\infty}_T:= \int_0^T e^{\gamma X^n(Z_s^{\infty})-\frac{\gamma^2}{2} \E^{X^n}[X^n(Z^{\infty}_s)^2]}ds. \] 

To prove Theorem \ref{PCAFconvergence}, we will prove convergence of \(A^{n, \infty}\) to \(A^{\infty}\) and \(|A^{n, \infty}-A^n|\) to \(0\). We need the following lemma.
\begin{lemma} \label{selfsimlemma}
For any \(t,s \geq 0\), \(d=1,2\), \(x\in \R^d\) and \(r \in (0,d)\), it holds that
\begin{eqnarray*}
\E^{Z^{\infty}}_x \left[\frac{1}{|Z_t^{\infty}-Z_s^{\infty}|^{r}}\right] \leq \frac{C}{|t-s|^{r/d}}.
\end{eqnarray*}
\end{lemma}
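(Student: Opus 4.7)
The plan is to exploit two structural properties of $Z^\infty$ that hold uniformly across the cases $d=1$ and $d=2$: stationary independent increments (so the joint law of $(Z_s,Z_t)$ only depends on $|t-s|$ after centering) and self-similarity with Hurst index $1/d$. For $d=2$, Brownian motion satisfies $Z_t^\infty \stackrel{d}{=} t^{1/2}Z_1^\infty$ under $\mathbb{P}_0^{Z^\infty}$, and for $d=1$, the Cauchy process satisfies $Z_t^\infty \stackrel{d}{=} t \, Z_1^\infty$ under $\mathbb{P}_0^{Z^\infty}$; in both cases the scaling exponent is precisely $1/d$.

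First I would assume without loss of generality $t>s$ and use the translation invariance of the transition densities to rewrite
\[
\mathbb{E}^{Z^\infty}_x\!\left[\frac{1}{|Z_t^\infty-Z_s^\infty|^r}\right]
= \mathbb{E}^{Z^\infty}_0\!\left[\frac{1}{|Z_{t-s}^\infty|^r}\right],
\]
using the Markov property at time $s$ together with the fact that $Z^\infty$ has stationary independent increments (the conditional law of $Z_t^\infty - Z_s^\infty$ given $\mathcal{F}_s^\infty$ equals the law of $Z_{t-s}^\infty$ under $\mathbb{P}_0^{Z^\infty}$). Next, the scaling property yields $Z_{t-s}^\infty \stackrel{d}{=}(t-s)^{1/d} Z_1^\infty$ under $\mathbb{P}_0^{Z^\infty}$, so pulling the scalar out of the norm gives
\[
\mathbb{E}^{Z^\infty}_0\!\left[\frac{1}{|Z_{t-s}^\infty|^r}\right]
= \frac{1}{|t-s|^{r/d}}\, \mathbb{E}^{Z^\infty}_0\!\left[\frac{1}{|Z_1^\infty|^r}\right].
\]

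It then remains to check that $C := \mathbb{E}^{Z^\infty}_0[|Z_1^\infty|^{-r}]<\infty$ whenever $r\in(0,d)$. For $d=2$, the density of $Z_1^\infty$ is $(2\pi)^{-1}e^{-|y|^2/2}$; passing to polar coordinates reduces the question to the finiteness of $\int_0^\infty u^{1-r}e^{-u^2/2}\,du$, which holds exactly when $r<2=d$. For $d=1$, the density is $\pi^{-1}(1+y^2)^{-1}$ and the integral $\int_\mathbb{R} |y|^{-r}\pi^{-1}(1+y^2)^{-1}dy$ is finite precisely when $r<1=d$, since the only possible issue is integrability near the origin. Collecting everything gives the claimed bound with the constant $C$ depending only on $d$ and $r$.

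The proof is essentially a one-shot scaling argument, so there is no real obstacle; the only small point that needs care is correctly identifying the self-similarity exponent, which happens to equal $1/d$ in both regimes considered (Brownian motion on $\mathbb{R}^2$ and Cauchy process on $\mathbb{R}$). This coincidence is exactly what produces the unified exponent $r/d$ appearing on the right-hand side, and it is also the reason the statement fits together cleanly with the logarithmic Green's function estimate of Lemma \ref{Green'slemma}.
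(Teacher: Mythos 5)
Your proposal is correct and follows essentially the same route as the paper: reduce to $\mathbb{E}_0^{Z^\infty}[|Z_{t-s}^\infty|^{-r}]$ via stationary increments, pull out the factor $|t-s|^{r/d}$ by self-similarity, and verify that $\mathbb{E}_0^{Z^\infty}[|Z_1^\infty|^{-r}]$ is finite for $r<d$ using the explicit densities. The only difference is that you spell out the final integrability check, which the paper leaves as "by calculation using heat kernels."
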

\begin{proof}
By the stationary increments and the self-similarity of \(Z^{\infty}\), we have
\begin{eqnarray*}
\E^{Z^{\infty}}_x \left[ \frac{1}{|Z_t^{\infty}-Z_s^{\infty}|^{r}}\right] =
\E^{Z^{\infty}}_0\left[ \frac{1}{|Z_{t-s}^{\infty}|^{r}}\right] =\frac{1}{|t-s|^{r/d}} \E^{Z^{\infty}}_0 \left[\frac{1}{|Z_{1}^{\infty}|^{r}}\right]
\end{eqnarray*}
and, by calculation using heat kernels, \(\E^{Z^{\infty}}_0\left[|Z_{1}^{\infty}|^{-r}\right]\) is constant for \(r<d\).
\end{proof}

\begin{lemma}\label{PCAFcor}
For any \(T>0,\) \(A^{n, \infty}_T\) converges to some \(\tilde{A}_T\) in \(L^1(\mathbb{P}^X)\) as \(n \to \infty\) \(\mathbb{P}_x^{Z^{\infty}}\)-almost surely.
\end{lemma}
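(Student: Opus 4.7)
The plan is to view $A^{n,\infty}_T$ as the Gaussian multiplicative chaos $\mu^n$ paired against the random occupation measure $\nu_T^\omega := \int_0^T \delta_{Z_s^\infty(\omega)}\,ds$ of $Z^\infty$, i.e.
\[
A^{n,\infty}_T(\omega,\cdot) = \int_{\R^d} \exp\!\Big(\gamma X^n(z) - \tfrac{\gamma^2}{2}\EX[X^n(z)^2]\Big)\,d\nu_T^\omega(z).
\]
Conditionally on $\omega$, the lemma thus becomes a GMC-convergence statement in the spirit of Theorem~\ref{Shamov}, but with Lebesgue measure on $\R^d$ replaced by the random measure $\nu_T^\omega$ and with test function $\mathbf{1}$. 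Accordingly, I would fix a ``good'' $\omega$ in a $\mathbb{P}_x^{Z^\infty}$-full-measure event and re-run, with $\nu_T^\omega$ in place of $dm$, the three-step argument used to prove Theorem~\ref{Shamov} via \cite[Theorem 25]{S}.

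The first step is a uniform $L^2(\mathbb{P}^X)$ bound. By Fubini and the Gaussian moment-generating formula,
\[
\EX\big[(A^{n,\infty}_T)^2\big] = \int_0^T\!\!\int_0^T e^{\gamma^2 \pi g_\lambda^n(Z_s^\infty, Z_u^\infty)}\,ds\,du,
\]
which by Assumption~\ref{assumption}(2) and Lemma~\ref{Green'slemma} is at most a constant times $\int_0^T\!\!\int_0^T \max\{|Z_s^\infty-Z_u^\infty|^{-\gamma^2 C^*},1\}\,ds\,du$. Since $\gamma^2 C^*<d$ (as $\gamma\in(0,\sqrt{d/C^*})$), Lemma~\ref{selfsimlemma} combined with Fubini shows that the latter has finite $\mathbb{P}_x^{Z^\infty}$-expectation and is therefore $\mathbb{P}_x^{Z^\infty}$-a.s.\ finite. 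On this full-measure event one then has $\sup_n\EX[(A^{n,\infty}_T)^2]<\infty$, yielding (via Kahane's convexity inequality, as in the proof of Theorem~\ref{Shamov}) the uniform-integrability hypothesis of \cite[Theorem 25]{S}.

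I would then verify the remaining hypotheses of \cite[Theorem 25]{S} relative to the measure $\nu_T^\omega$: (i) convergence of the seed operators $Y^n\to Y^\infty$ in $\nu_T^\omega$-measure, which reduces to pointwise convergence for $\xi\in C_c([0,\infty)\times\R^d)$ (the first term of \eqref{eq:Shamov2} vanishes by uniform continuity, the second converges by Assumption~\ref{assumption}(1) and dominated convergence) followed by density of $C_c$ in $\mathcal{H}$ combined with the estimate \eqref{eq:GMCconst1}; and (ii) convergence of the covariance kernels $\pi g_\lambda^n\to\pi g_\lambda^\infty$ in $\nu_T^\omega\otimes\nu_T^\omega$-measure. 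The main obstacle is (ii): the limiting kernel satisfies $g_\lambda^\infty(x,x)=\infty$ while the approximants satisfy $g_\lambda^n(x,x)<\infty$ (cf.\ Remark~\ref{capG}), so pointwise convergence fails on the diagonal and one must check that $\{x=y\}$ is $\nu_T^\omega\otimes\nu_T^\omega$-null. Via the identity
\[
\nu_T^\omega\otimes\nu_T^\omega(\{x=y\}) = \int_0^T\!\!\int_0^T\mathbf{1}_{\{Z_s^\infty=Z_u^\infty\}}\,ds\,du,
\]
this reduces by Fubini to the statement that $\mathbb{P}_x^{Z^\infty}(Z_s^\infty = Z_u^\infty)=0$ for every fixed $s\ne u$, which holds because $Z_s^\infty - Z_u^\infty$ admits a density on $\R^d$. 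The energy bound from the previous paragraph then supplies the integrable majorant needed to pass the pointwise convergence of covariances through the exponential integral, and \cite[Theorem 25]{S} delivers the desired $L^1(\mathbb{P}^X)$ convergence $A^{n,\infty}_T\to\tilde A_T$, with $\tilde A_T$ the GMC of $X^\infty$ against $\nu_T^\omega$.
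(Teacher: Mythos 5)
Your proposal is correct and follows essentially the same route as the paper: both regard \(A^{n,\infty}_T\) as the GMC of \(X^n\) integrated against the occupation measure \(\nu_T^{\infty}\), fix a good \(\omega\) on which \(\int_0^T\!\!\int_0^T |Z_s^{\infty}-Z_u^{\infty}|^{-r}\,ds\,du<\infty\) via Lemma \ref{selfsimlemma}, verify the uniform integrability, the convergence of the seed operators in \(\nu_T^{\infty}\)-measure, and the convergence of the covariance kernels, and then invoke \cite[Theorem 25]{S}. The only small imprecision is your citation of \((\ref{eq:GMCconst1})\), which is the operator bound with respect to \(m\); what is needed is its \(\nu_T^{\omega}\)-analogue, namely the uniform-in-\(n\) bound \(\int\!\!\int g_{\lambda}^n(x,y)^2\,d\nu_T^{\infty}d\nu_T^{\infty}<\infty\), which however follows from Assumption \ref{assumption}(2) and the same almost-sure finiteness you establish in your first step (and your explicit treatment of the diagonal for the kernel convergence is a point the paper leaves implicit).
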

\begin{proof}
We prove this lemma in a similar way to Theorem \ref{Shamov}. Let \(\nu^{\infty}_T\) be an occupation measure of \(Z^{\infty}\) at \(T.\) Then we have
\[A^{n,\infty}_T= \int_{\R^d} e^{\gamma X^n(x)-\frac{\gamma^2}{2} \E^{X^n}[X^n(x)^2]}d\nu^{\infty}_T(x)\]
for \(n\in \mathbb{N}\), so we may consider \(A^{n,\infty}_T\) as GMC of \(X^n\) under \(\nu^{\infty}_T\).

By Lemma \ref{selfsimlemma}, it holds that
\begin{eqnarray*}
\int_0^T \int_0^T \mathbb{E}^{Z^{\infty}}_x \left[\frac{1}{|Z^{\infty}_t-Z^{\infty}_s|^{1/2}}\right]dtds &\leq & \int_0^T \int_0^T \frac{C}{|t-s|^{1/2d}}dtds < \infty,
\end{eqnarray*}
so 
\begin{equation}
\int_0^T \int_0^T \frac{1}{|Z^{\infty}_t(\omega)-Z^{\infty}_s(\omega)|^{1/2}}dtds <\infty.
\label{eq:new1}
\end{equation}
holds \(\mathbb{P}_x^{Z^{\infty}}\)-almost surely. Fix \(\omega \in \Omega^{Z^{\infty}}\) satisfying \((\ref{eq:new1})\).

Let \(\mathcal{H}:=L^2([0,\infty)\times \R^d; dt\otimes dm(w))\). For \(n\in \mathbb{N}\cup \{\infty\}\), we define the linear operator \(B^n:L^2(\R^d;d\nu_T^{\infty})\to \mathcal{H}\) by
\[B^n(f)(t,w):= \int_{\R^d} e^{-\lambda t/2}C_n p^n(t/2,x,w)f(x)d\nu_T^{\infty}(x)\]
for \(f\in L^2(\R^d;d\nu_T^{\infty}).\) Then, for any \(f\in L^2(\R^d;d\nu_T^{\infty}),\) we have
\begin{eqnarray*}
\lefteqn{\|B^n(f)\|_{\mathcal{H}}^2 } \\
&= & \int_0^{\infty}\int_{\R^d}\int_{\R^d}\int_{\R^d} e^{-\lambda t}C_n^2 p^n(t/2,x,w)p^n(t/2,y,w)f(x)f(y)d\nu_T^{\infty}(x)d\nu_T^{\infty}(y)dm(w) dt\\
&= & \int_0^{\infty}\int_{\R^d}\int_{\R^d} e^{-\lambda t}C_n p^n(t,x,y)f(x)f(y)d\nu_T^{\infty}(x)d\nu_T^{\infty}(y) dt\\
&=&  \int_{\R^d}\int_{\R^d} g_{\lambda}^n(x,y)f(x)f(y)d\nu_T^{\infty}(x)d\nu_T^{\infty}(y)\\
&\leq & \left(\int_{\R^d}\int_{\R^d} g_{\lambda}^n(x,y)^2 d\nu_T^{\infty}(x)d\nu_T^{\infty}(y) \right)^{1/2} \times \|f\|_{L^2(d\nu^{\infty}_T)}\\
&\leq & \left(\int_{\R^d}\int_{\R^d} C\left(\log_+{\frac{1}{|x-y|}}\right)^2 d\nu_T^{\infty}(x)d\nu_T^{\infty}(y) +C \right)^{1/2} \times \|f\|_{L^2(d\nu^{\infty}_T)}\\
&\leq & \left(\int_{\R^d}\int_{\R^d} C\frac{1}{|x-y|^{1/2}} d\nu_T^{\infty}(x)d\nu_T^{\infty}(y) +C \right)^{1/2} \times \|f\|_{L^2(d\nu^{\infty}_T)}\\
&\leq & \left(C\int_0^T \int_0^T \frac{1}{|Z^{\infty}_t-Z^{\infty}_s|^{1/2}}dtds +C \right)^{1/2} \times \|f\|_{L^2(d\nu^{\infty}_T)}.
\end{eqnarray*}
In the second equality, we used the Markov property for \(Z^n\) and, in the second inequality, we used Assumption \ref{assumption} (2). Hence, by \((\ref{eq:new1})\), \(B^n\) is a continuous linear operator. So, the adjoint operator \(Y^n:=(B^n)^*:\mathcal{H}\to L^2(\R^d;d\nu_T^{\infty})\) of \(B^n\) is continuous. For \(\xi \in \mathcal{H}\), we can represent \(Y^n(\xi)\) by
\[Y^n(\xi)(x)= \int_0^{\infty}\int_{\R^d}e^{-\lambda t/2}C_n p^n(t/2,x,w)\xi(t,w)dm(w)dt. \]
By Kahane's convexity theorem \cite{K}, \(\{A_T^{n,\infty}(\omega)\}_n\) is uniformly integrable. By the same way as the proof of Theorem \ref{Shamov}, \(Y^n(\xi)\) converges to \(Y^{\infty}(\xi)\) in measure \(\nu_T^{\infty}\) for any \(\xi \in \mathcal{H}\). The covariance kernel \(\pi g_{\lambda}^n\) converges to \(\pi g_{\lambda}^{\infty}\) by Assumption \ref{assumption} (1). Hence, by \cite[Theorem 25]{S}, \(A^{n,\infty}_T(\omega)\) converges to \(\tilde{A}_T^{\infty}(\omega)\) in \(L^1(\mathbb{P}^X)\) for \(\mathbb{P}_x^{Z^{\infty}}\)-almost sure \(\omega\), where \(\tilde{A}_T^{\infty}(\omega)\) is GMC of \(X^{\infty}\) under \(\nu^{\infty}_T\).
\end{proof}

Next, we prove the difference between \(A^n_T\) and \(A^{n,\infty}_T\) converges to 0. By Skorokhod's representation theorem, there exist a probability space \((\Omega^Z, \mathbb{P}_x^Z)\) and a process \(\tilde{Z}^n\) for \(n\in \mathbb{N}\cup \{\infty\}\) such that \(\tilde{Z}^n\) has the same distribution as that of \(Z^n\), and \(\tilde{Z}^n(\omega)\) converges to \(\tilde{Z}^{\infty}(\omega)\) with respect to \(J_1\)-topology as \(n \to \infty\) for any \(\omega \in \Omega^Z.\) We define 
\[\tilde{A}^{n,\infty}_T:= \int_0^T e^{\gamma X^n(\tilde{Z}_s^{\infty})-\frac{\gamma^2}{2} \E^{X^n}[X^n(\tilde{Z}^{\infty}_s)^2]}ds, \ \ \tilde{A}^{n}_T:= \int_0^T e^{\gamma X^n(\tilde{Z}_s^{n})-\frac{\gamma^2}{2} \E^{X^n}[X^n(\tilde{Z}^{n}_s)^2]}ds \] 
for any \(T>0.\)

\begin{lemma}\label{newlemma}
\(|\tilde{A}^{n}_T-\tilde{A}^{n,\infty}_T|\) converges to \(0\) in \(L^2(\mathbb{P}_x^Z \otimes \mathbb{P}^X)\).
\end{lemma}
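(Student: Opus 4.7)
The strategy is to expand the \(L^2\)-norm squared of the difference via the log-normal second-moment identity, reducing the problem to a cancellation of four double integrals that share a common limit.

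Set \(M^n(x) := \exp(\gamma X^n(x) - (\gamma^2/2)\mathbb{E}^X[X^n(x)^2])\), so that \(\mathbb{E}^X[M^n(u) M^n(v)] = e^{\gamma^2 \pi g_\lambda^n(u,v)}\). Writing \((\tilde{A}^n_T - \tilde{A}^{n,\infty}_T)^2\) as a double time integral and applying Fubini yields
\[
\mathbb{E}_x^{Z} \mathbb{E}^X |\tilde{A}^n_T - \tilde{A}^{n,\infty}_T|^2 = \int_0^T\!\!\int_0^T \mathbb{E}_x^{Z}\bigl[e^{\gamma^2 \pi g_\lambda^n(\tilde{Z}^n_s, \tilde{Z}^n_t)} + e^{\gamma^2 \pi g_\lambda^n(\tilde{Z}^\infty_s, \tilde{Z}^\infty_t)} - e^{\gamma^2 \pi g_\lambda^n(\tilde{Z}^n_s, \tilde{Z}^\infty_t)} - e^{\gamma^2 \pi g_\lambda^n(\tilde{Z}^\infty_s, \tilde{Z}^n_t)}\bigr]\, ds\, dt.
\]
The plan is to show that each of the four terms converges to the common limit \(L := \int_0^T\!\!\int_0^T \mathbb{E}_x^{Z}\bigl[e^{\gamma^2 \pi g_\lambda^\infty(\tilde{Z}^\infty_s, \tilde{Z}^\infty_t)}\bigr]\, ds\, dt\), so the alternating sum telescopes to zero.

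Each of the four convergences is established by dominated convergence. A uniform dominating function comes from Assumption \ref{assumption}(2):
\[e^{\gamma^2 \pi g_\lambda^n(u,v)} \leq e^{\gamma^2 \pi C}\, e^{\gamma^2 \pi C^* g_\lambda^\infty(u,v)} \leq C\bigl(|u-v|^{-\gamma^2 C^*} + 1\bigr),\]
where the second inequality uses Lemma \ref{Green'slemma}. Since \(\gamma \in (0, \sqrt{d/C^*})\) forces \(\gamma^2 C^* < d\), Lemma \ref{selfsimlemma} gives \(\mathbb{E}_x^{Z}[|\tilde{Z}^\infty_s - \tilde{Z}^\infty_t|^{-\gamma^2 C^*}] \leq C|s-t|^{-\gamma^2 C^*/d}\), which is integrable over \([0,T]^2\) since \(\gamma^2 C^*/d < 1\). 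For the mixed terms, the Skorokhod coupling \(\tilde{Z}^n \to \tilde{Z}^\infty\) a.s.\ in \(J_1\) supplies a comparable integrable majorant via the triangle inequality applied to the differences \(|\tilde{Z}^n_s - \tilde{Z}^\infty_t|\).

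For pointwise convergence of the integrands, the Skorokhod representation gives \(\tilde{Z}^n_s \to \tilde{Z}^\infty_s\) a.s.\ at every continuity point of \(\tilde{Z}^\infty\), hence for a.e.\ \(s\). For Lebesgue-a.e.\ \((s,t) \in [0,T]^2\) the limit points \(\tilde{Z}^\infty_s\) and \(\tilde{Z}^\infty_t\) are a.s.\ distinct, so \(g_\lambda^\infty\) is continuous there by Lemma \ref{Green'slemma}. Combined with Assumption \ref{assumption}(1), the integrand in each of the four cases converges to \(e^{\gamma^2 \pi g_\lambda^\infty(\tilde{Z}^\infty_s, \tilde{Z}^\infty_t)}\). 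The main technical obstacle is to upgrade Assumption \ref{assumption}(1)'s pointwise convergence of \(g_\lambda^n\) at fixed pairs to convergence along moving sequences \((u_n, v_n) \to (u, v)\) with \(u \ne v\); this amounts to local uniform convergence of \(g_\lambda^n\) on compact subsets of the off-diagonal set \(\{(x,y) : x \ne y\}\), which can be deduced by combining Assumption \ref{assumption}(2) with the explicit logarithmic form of \(g_\lambda^\infty\) in Lemma \ref{Green'slemma} and an equicontinuity argument based on the heat-kernel representation of the resolvent.
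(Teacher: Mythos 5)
Your expansion into four double integrals is exactly the paper's starting point, and your treatment of the $(\infty,\infty)$ term (domination via Assumption \ref{assumption}(2), Lemma \ref{Green'slemma} and Lemma \ref{selfsimlemma}) matches the paper. But the plan to push \emph{all four} terms through dominated convergence to the common limit $L$ has two genuine gaps in the dominating functions. First, for the $(n,n)$ term your majorant requires a bound on $\mathbb{E}_x^Z\bigl[|\tilde{Z}^n_s-\tilde{Z}^n_t|^{-\gamma^2 C^*}\bigr]$ that is integrable over $[0,T]^2$ uniformly in $n$; Lemma \ref{selfsimlemma} only covers $Z^{\infty}$ (it uses stationarity of increments and self-similarity), and no such estimate for $Z^n$ is assumed. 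The paper instead splits this term at $\{|\tilde{Z}^n_s-\tilde{Z}^n_t|\le\eta\}$ and controls the near-diagonal piece by $C\eta^{\varepsilon}$ uniformly in $n$ via the Markov property and the resolvent computation $(\ref{eq:dif1})$--$(\ref{eq:dif2})$, then lets $n\to\infty$ before $\eta\to 0$. Second, and more seriously, for the mixed terms your proposed majorant for $|\tilde{Z}^n_s-\tilde{Z}^{\infty}_t|^{-\gamma^2 C^*}$ ``via the triangle inequality'' does not exist: the triangle inequality gives only $|\tilde{Z}^n_s-\tilde{Z}^{\infty}_t|\ge |\tilde{Z}^{\infty}_s-\tilde{Z}^{\infty}_t|-|\tilde{Z}^n_s-\tilde{Z}^{\infty}_s|$, whose right-hand side can be arbitrarily small or negative, and the Skorokhod coupling gives no quantitative control on the joint law of $(\tilde{Z}^n_s,\tilde{Z}^{\infty}_t)$. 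This is precisely why the paper does \emph{not} prove convergence of the mixed terms: since they enter with a minus sign and the whole expression is a priori nonnegative, it suffices to show $\varliminf_n$ of each mixed term is $\ge L$, which follows from Fatou's lemma with no dominating function at all. That one-sided argument is the key structural idea your proposal is missing.

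A smaller point: you correctly identify that pointwise convergence $g_{\lambda}^n\to g_{\lambda}^{\infty}$ must be upgraded to convergence along moving arguments $(u_n,v_n)\to(u,v)$ off the diagonal, but the ``equicontinuity argument'' you invoke is not supplied by the stated hypotheses (in case (B) the kernels $g_{\lambda}^n$ are step functions), so as written this remains an unproven claim rather than a resolved step.
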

\begin{proof}
We remark that, since \(\tilde{Z}^n\) converges to \(\tilde{Z}^{\infty}\) with respect to \(J_1\)-topology \(\mathbb{P}_x^Z\)-almost surely, \(\tilde{Z}^n_t\) converges to \(\tilde{Z}^{\infty}_t\) for almost every \(t\), \(\mathbb{P}_x^Z\)-almost surely.

Let \(\tilde{X}^n(x):=\gamma X^n(x)-\gamma^2\mathbb{E}^X[X^n(x)]/2\), then we have
\begin{align}
\nonumber &\mathbb{E}_x^Z\mathbb{E}^X[|\tilde{A}^{n}_T-\tilde{A}^{n,\infty}_T|^2] = \mathbb{E}_x^Z\mathbb{E}^X\left[\left|\int_0^T(e^{\tilde{X}^n(\tilde{Z}^n_s)}-e^{\tilde{X}^n(\tilde{Z}^{\infty}_s)})ds\right|^2\right] \hspace{20mm}\\
\nonumber =& \mathbb{E}_x^Z\left[\int_0^T\int_0^Te^{\gamma ^2 \pi g_{\lambda}^n(\tilde{Z}_s^n,\tilde{Z}_t^n)}+ e^{\gamma ^2 \pi g_{\lambda}^n(\tilde{Z}_s^{\infty},\tilde{Z}_t^{\infty})} \right. -e^{\gamma ^2 \pi g_{\lambda}^n(\tilde{Z}_s^{\infty},\tilde{Z}_t^n)}-e^{\gamma ^2 \pi g_{\lambda}^n(\tilde{Z}_s^n,\tilde{Z}_t^{\infty})} dsdt\Biggr]\\
= &2\mathbb{E}_x^Z\left[\int_0^T\int_0^t e^{\gamma ^2 \pi g_{\lambda}^n(\tilde{Z}_s^n,\tilde{Z}_t^n)}+ e^{\gamma ^2 \pi g_{\lambda}^n(\tilde{Z}_s^{\infty},\tilde{Z}_t^{\infty})}\right. -e^{\gamma ^2 \pi g_{\lambda}^n(\tilde{Z}_s^{\infty},\tilde{Z}_t^n)}-e^{\gamma ^2 \pi g_{\lambda}^n(\tilde{Z}_s^n,\tilde{Z}_t^{\infty})} dsdt\Biggr] \label{eq:new2}
\end{align}
We consider each terms of (\ref{eq:new2}). For the first term of (\ref{eq:new2}), we split into two terms as following. For any \(\eta >0\), we have
\begin{eqnarray*}
\mathbb{E}_x^Z\left[\int_0^T\int_0^t  e^{\gamma ^2 \pi g_{\lambda}^n(\tilde{Z}_s^n,\tilde{Z}_t^n)}dsdt\right]&=& \mathbb{E}_x^Z\left[\int_0^T\int_0^t  e^{\gamma ^2 \pi g_{\lambda}^n(\tilde{Z}_s^n,\tilde{Z}_t^n)}{\bf 1}_{\{|\tilde{Z}_s^n-\tilde{Z}_t^n|> \eta\}} dsdt \right]\\
 &&+ \mathbb{E}_x^Z\left[\int_0^T\int_0^t  e^{\gamma ^2 \pi g_{\lambda}^n(\tilde{Z}_s^n,\tilde{Z}_t^n)} {\bf 1}_{\{|\tilde{Z}_s^n-\tilde{Z}_t^n|\leq \eta\}} dsdt \right].
\end{eqnarray*}
We set \(B(z;\eta):=\{y\in \R^d\ ;\ |y-z|\leq \eta\}\). Then we have
\begin{eqnarray}
\lefteqn{ \mathbb{E}_x^Z\left[\int_0^T\int_0^t  e^{\gamma ^2 \pi g_{\lambda}^n(\tilde{Z}_s^n,\tilde{Z}_t^n)}{\bf 1}_{\{|\tilde{Z}_s^n-\tilde{Z}_t^n|\leq \eta\}} dsdt \right]} \label{eq:dif3}\\
\nonumber  &=& \int_{\mathbb{R}^d} \int_{B(z;\eta)}  \int_0^T \int_0^t e^{\gamma ^2 \pi g_{\lambda}^n(y,z)}C_n^2 p^n(s,x,y)p^n(t-s,y,z) dsdt dm(y)dm(z).
\end{eqnarray}

By some computation, we have
\begin{eqnarray}
\nonumber \lefteqn{\int_0^T \int_0^t e^{\gamma ^2 \pi g_{\lambda}^n(y,z)}C_n^2 p^n(s,x,y)p^n(t-s,y,z) dsdt}\\
\nonumber &=& \int_0^T \int_s^T e^{\gamma ^2 \pi g_{\lambda}^n(y,z)}C_n^2 p^n(s,x,y)p^n(t-s,y,z) dtds \\
\nonumber &\leq & e^{\lambda T}  e^{\gamma ^2 \pi g_{\lambda}^n(y,z)}\int_0^T C_n e^{-\lambda s} p^n(s,x,y)\int_s^T C_n e^{-\lambda(t-s)}p^n(t-s,y,z) dtds \\
\nonumber &\leq & e^{\lambda T}  e^{\gamma ^2 \pi g_{\lambda}^n(y,z)}\int_0^{\infty} C_n e^{-\lambda s} p^n(s,x,y)\int_s^{\infty} C_n e^{-\lambda(t-s)}p^n(t-s,y,z) dtds \\
&= & e^{\lambda T}e^{\gamma ^2 \pi g_{\lambda}^n(y,z)} g_{\lambda}^n(x,y) g_{\lambda}^n(y,z).\label{eq:dif1}
\end{eqnarray}

For small \(\varepsilon_1>0\) satisfying \(\gamma^2 C^* + \varepsilon_1/\pi <d,\) we set \(\varepsilon := d-\gamma^2 C^* - \varepsilon_1/\pi\). Then, by Assumption \ref{assumption} (2) and Lemma \ref{Green'slemma}, we have 
\begin{eqnarray}
\nonumber \lefteqn{e^{\lambda T}e^{\gamma ^2 \pi g_{\lambda}^n(y,z)} g_{\lambda}^n(x,y) g_{\lambda}^n(y,z)\leq e^{\lambda T}e^{\gamma ^2 C^*\pi g_{\lambda}^{\infty}(y,z)} g_{\lambda}^n(x,y) (C^* g_{\lambda}^{\infty}(y,z) +C)}\\
\nonumber &\leq & C^* e^{\lambda T}\frac{1}{\varepsilon_1} e^{(\gamma ^2 C^*\pi +\varepsilon_1) g_{\lambda}^{\infty}(y,z)} g_{\lambda}^n(x,y)  +C e^{\lambda T}e^{\gamma ^2 C^*\pi g_{\lambda}^{\infty}(y,z)}g_{\lambda}^n(x,y)\\
&\leq & C \frac{1}{|y-z|^{d-\varepsilon}} g_{\lambda}^n(x,y). \label{eq:dif2}
\end{eqnarray}
By (\ref{eq:dif1}), (\ref{eq:dif2}) and (\ref{eq:dif3}), we have
\begin{eqnarray}
\nonumber \mathbb{E}_x^Z\left[\int_0^T\int_0^t  e^{\gamma ^2 \pi g_{\lambda}^n(\tilde{Z}_s^n,\tilde{Z}_t^n)}{\bf 1}_{\{|\tilde{Z}_s^n-\tilde{Z}_t^n|\leq \eta\}} dsdt \right]&\leq &\int_{\mathbb{R}^d} \int_{B(z;\eta)}C \frac{g_{\lambda}^n(x,y)}{|y-z|^{d-\varepsilon}}  dm(y)dm(z)\\
\nonumber &=& \int_{\mathbb{R}^d} \int_{B(y;\eta)}C\frac{g_{\lambda}^n(x,y)}{|y-z|^{d-\varepsilon}}  dm(z)dm(y)\\
\nonumber &=&C \int_{\mathbb{R}^d} \int_0^{\eta} \frac{r^{d-1}}{r^{d-\varepsilon}}dr g_{\lambda}^n(x,y) dm(y)\\
&=&C \eta^{\varepsilon}. \label{eq:new2-2}
\end{eqnarray}

By Assumption \ref{assumption} (2) and Lemma \ref{Green'slemma}, we have 
\begin{eqnarray*}
e^{\gamma ^2 \pi g_{\lambda}^n(\tilde{Z}_s^n,\tilde{Z}_t^n)}{\bf 1}_{\{|\tilde{Z}_s^n-\tilde{Z}_t^n|> \eta\}} & \leq & Ce^{\gamma ^2 \pi C^* g_{\lambda}^{\infty}(\tilde{Z}_s^n, \tilde{Z}_t^n)}{\bf 1}_{\{|\tilde{Z}_s^n-\tilde{Z}_t^n|> \eta\}}\\
&\leq & Ce^{-\gamma ^2 C^* \log_{+}{|\tilde{Z}_s^n- \tilde{Z}_t^n|}}{\bf 1}_{\{|\tilde{Z}_s^n-\tilde{Z}_t^n|> \eta\}}\\
&\leq & C\left(\frac{1}{\eta^{\gamma ^2 C^*}} \vee 1 \right),
\end{eqnarray*}
Hence, by taking first the limit as \(n\) tends to \(\infty\) and then the limit as \(\eta\) tends to \(0\), from (\ref{eq:new2}), (\ref{eq:new2-2}), Lebesgue's convergence theorem and Assumption \ref{assumption} (1), it holds that
\begin{eqnarray}
\lim_{n\to \infty} \mathbb{E}_x^Z\left[\int_0^T\int_0^t  e^{\gamma ^2 \pi g_{\lambda}^n(\tilde{Z}_s^n,\tilde{Z}_t^n)}dsdt \right]= \mathbb{E}_x^Z\left[\int_0^T\int_0^t  e^{\gamma ^2 \pi g_{\lambda}^{\infty}(\tilde{Z}_s^{\infty},\tilde{Z}_t^{\infty})} dsdt \right].\label{eq:new2-3}
\end{eqnarray}

Next, we consider the second term of (\ref{eq:new2}). It holds that
\begin{eqnarray}
\nonumber \lefteqn{\mathbb{E}_x^Z\left[\int_0^T\int_0^t  e^{\gamma ^2 \pi g_{\lambda}^n(\tilde{Z}_s^{\infty},\tilde{Z}_t^{\infty})}dsdt\right]}\\
&=& \int_0^T\int_0^t \int_{\R^d}\int_{\R^d}e^{\gamma ^2 \pi g_{\lambda}^n(y,z)}p^{\infty}(s,x,y)p^{\infty}(t-s,y,z)dsdt dm(y)dm(z) \label{eq:new2-4}
\end{eqnarray}
By Assumption \ref{assumption} (2) and Lemma \ref{Green'slemma}, it holds that
\begin{eqnarray*}
e^{\gamma ^2 \pi g_{\lambda}^n(y,z)}p^{\infty}(s,x,y)p^{\infty}(t-s,y,z) \leq C\left( \frac{1}{|y-z|^{\gamma^2 C^*}} \vee 1 \right) p^{\infty}(s,x,y)p^{\infty}(t-s,y,z)
\end{eqnarray*}
and, by Lemma \ref{selfsimlemma}, we have
\begin{eqnarray*}
\int_0^T \int_0^t \int_{\R^d} \int_{\R^d} C\left( \frac{1}{|y-z|^{\gamma^2 C^*}} \vee 1 \right) p^{\infty}(s,x,y)p^{\infty}(t-s,y,z) dm(y)dm(z)dsdt <\infty.
\end{eqnarray*}
So, by Lebesgue's convergence theorem, we have
\begin{eqnarray}
\lim_{n\to \infty} \mathbb{E}_x^Z\left[\int_0^T\int_0^t  e^{\gamma ^2 \pi g_{\lambda}^n(\tilde{Z}_s^{\infty},\tilde{Z}_t^{\infty})}dsdt\right]= \mathbb{E}_x^Z\left[\int_0^T\int_0^t  e^{\gamma ^2 \pi g_{\lambda}^{\infty}(\tilde{Z}_s^{\infty},\tilde{Z}_t^{\infty})}dsdt\right]. \label{eq:new2-5}
\end{eqnarray}

For the third term of (\ref{eq:new2}), by Fatou's lemma, we have
\begin{eqnarray}
\nonumber \varlimsup_{n\to \infty} \int_0^T\int_0^t -\mathbb{E}_x^Z[e^{\gamma ^2 \pi g_{\lambda}^n(\tilde{Z}_s^{\infty},\tilde{Z}_t^n)}] dsdt &=& -\varliminf_{n\to \infty} \int_0^T\int_0^t \mathbb{E}_x^Z[e^{\gamma ^2 \pi g_{\lambda}^n(\tilde{Z}_s^{\infty},\tilde{Z}_t^n)}] dsdt\\
\nonumber &\leq &- \int_0^T\int_0^t \mathbb{E}_x^Z[\varliminf_{n\to \infty}e^{\gamma ^2 \pi g_{\lambda}^n(\tilde{Z}_s^{\infty},\tilde{Z}_t^n)}] dsdt\\
&=& - \int_0^T\int_0^t \mathbb{E}_x^Z[e^{\gamma ^2 \pi g_{\lambda}^{\infty}(\tilde{Z}_s^{\infty},\tilde{Z}_t^{\infty})}] dsdt. \label{eq:new2-6}
\end{eqnarray}
By the same way as above, for the fourth term of (\ref{eq:new2}), we have
\begin{eqnarray}
\varlimsup_{n\to \infty} \int_0^T\int_0^t -\mathbb{E}_x^Z[e^{\gamma ^2 \pi g_{\lambda}^n(\tilde{Z}_s^n,\tilde{Z}_t^{\infty})}] dsdt \leq  -  \int_0^T\int_0^t \mathbb{E}_x^Z[e^{\gamma ^2 \pi g_{\lambda}^{\infty}(\tilde{Z}_s^{\infty},\tilde{Z}_t^{\infty})}] dsdt. \label{eq:new2-7}
\end{eqnarray}
By (\ref{eq:new2}), (\ref{eq:new2-3}), (\ref{eq:new2-5}), (\ref{eq:new2-6}) and (\ref{eq:new2-7}),
 \(\mathbb{E}_x^Z \mathbb{E}^X[|\tilde{A}^{n}_T-\tilde{A}^{n,\infty}_T|^2]\) converges to \(0\).
\end{proof}

\begin{proposition}\label{PCAF_Tconv}
For any \(T>0\) and \(x\in \R^d\), \(A_T^n\) under \(\mathbb{P}_{x_n}^{Z^n}\otimes \mathbb{P}^{X^n}\) converges weakly to \(\tilde{A}_T\) under \(\mathbb{P}_{x}^{Z^{\infty}}\otimes \mathbb{P}^{X^{\infty}}\).
\end{proposition}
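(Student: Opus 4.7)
The strategy is to invoke the Skorokhod coupling \((\Omega^Z, \mathbb{P}_x^Z)\) introduced before Lemma \ref{newlemma}, on which \(\tilde{Z}^n\to\tilde{Z}^{\infty}\) in \(J_1\) almost surely and \(\tilde{Z}^n\) has the same law as \(Z^n\) for each \(n\). Since \(X^n\) is independent of the processes, \(\tilde{A}^n_T\) has the same distribution as \(A^n_T\), so it is enough to show that \(\tilde{A}^n_T\) under \(\mathbb{P}_x^Z \otimes \mathbb{P}^X\) converges in distribution to a random variable whose law equals that of \(\tilde{A}_T\) under \(\mathbb{P}_x^{Z^{\infty}}\otimes \mathbb{P}^{X^{\infty}}\). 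I would split
\[\tilde{A}^n_T = \tilde{A}^{n,\infty}_T + (\tilde{A}^n_T - \tilde{A}^{n,\infty}_T)\]
and handle the two terms via the two preceding lemmas.

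By Lemma \ref{newlemma}, the second term tends to \(0\) in \(L^2(\mathbb{P}_x^Z\otimes \mathbb{P}^X)\), hence in probability. For the main term \(\tilde{A}^{n,\infty}_T\), I would reapply the argument of Lemma \ref{PCAFcor} with \(\tilde{Z}^{\infty}\) in place of \(Z^{\infty}\): the proof uses \(Z^{\infty}\) only through the integrability bound coming from Lemma \ref{selfsimlemma} (which depends only on its law) and Shamov's convergence criterion applied to the random occupation measure, both of which are measurable functionals of the path. Consequently, for \(\mathbb{P}_x^Z\)-a.e.\ \(\omega\), \(\tilde{A}^{n,\infty}_T(\omega)\) converges in \(L^1(\mathbb{P}^X)\) to the GMC \(\widehat{A}_T(\omega)\) of \(X^{\infty}\) against the occupation measure of \(\tilde{Z}^{\infty}(\omega)\); by construction, \(\widehat{A}_T\) has the same joint law under \(\mathbb{P}_x^Z\otimes \mathbb{P}^X\) as \(\tilde{A}_T\) under \(\mathbb{P}_x^{Z^{\infty}}\otimes \mathbb{P}^{X^{\infty}}\).

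To combine, I would lift the fibrewise \(L^1(\mathbb{P}^X)\) convergence to joint convergence: since \(\mathbb{E}^X[\tilde{A}^{n,\infty}_T] = T\) for all \(n\) and \(\mathbb{E}^X[\widehat{A}_T] \leq T\) by Fatou, the quantity \(\mathbb{E}^X[|\tilde{A}^{n,\infty}_T - \widehat{A}_T|]\) is uniformly bounded by \(2T\), and Lebesgue's dominated convergence on \((\Omega^Z, \mathbb{P}_x^Z)\) upgrades the convergence to \(L^1(\mathbb{P}_x^Z\otimes \mathbb{P}^X)\). Adding the two pieces gives \(\tilde{A}^n_T\to \widehat{A}_T\) in probability, hence in distribution, which finishes the argument. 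The one delicate point is the transfer of Lemma \ref{PCAFcor} from \(Z^{\infty}\) to \(\tilde{Z}^{\infty}\): this is essentially bookkeeping, but one must verify that the \(\mathbb{P}_x^{Z^{\infty}}\)-null set on which the conclusion may fail is a measurable functional of the path law, so that it survives intact under the Skorokhod coupling.
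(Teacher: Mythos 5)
Your proposal is correct and follows essentially the same route as the paper, which deduces the proposition directly from Lemma \ref{PCAFcor}, Lemma \ref{newlemma} and the equality in law of \(Z^n\) and \(\tilde{Z}^n\); you have merely spelled out the details (the transfer of Lemma \ref{PCAFcor} to the coupled copy \(\tilde{Z}^{\infty}\), and the dominated-convergence upgrade of the fibrewise \(L^1(\mathbb{P}^X)\) convergence using the uniform bound \(2T\)) that the paper leaves implicit.
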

\begin{proof}
This follows from Lemma \ref{PCAFcor}, \ref{newlemma} and the fact that \(Z^n\) has the same distribution as that of \(\tilde{Z}^n.\)
\end{proof}

Next, we check the criteria of Theorem \ref{M1cri2} to prove the tightness of \(\{(A^n)^{-1}\}_{n}\) with respect to \(M_1\)-topology. See Appendix \ref{AppSko} for Skorokhod's topologies.

\begin{lemma}\label{lemma7}
For any \(T, \eta>0\) and \(x\in \R^d\), it holds that
\[\lim_{\delta \searrow 0}\varlimsup_{n \to \infty}\mathbb{P}_{x_n}^{Z^n}\otimes \mathbb{P}^{X^n}\left(v_T((A_T^{n})^{-1},0,\delta) \geq \eta\right)=0,\]
where \(v_T\) is defined at \((\ref{eq:v1})\) in Appendix \ref{AppSko}.
\end{lemma}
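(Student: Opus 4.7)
The strategy exploits the monotonicity of $(A^n)^{-1}$ to reduce the $M_1$ endpoint modulus $v_T(\,\cdot\,,0,\delta)$ to a single evaluation, and then invokes the weak convergence $A^n_\eta \Rightarrow \tilde{A}_\eta$ from Proposition \ref{PCAF_Tconv}. The first step is to observe that, for each fixed $n$, the PCAF $A^n$ is continuous with $A^n_0=0$ and its Revuz measure $\mu^n$ is equivalent to $m$ almost surely (because $\exp(\gamma X^n)$ is positive almost surely). Hence the sample paths of $A^n$ are strictly increasing and tend to infinity, so $(A^n)^{-1}$ is continuous and nondecreasing with $(A^n)^{-1}_0 = 0$. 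Inspecting the definition of $v_T$ recalled by (\ref{eq:v1}) in Appendix \ref{AppSko}, one verifies that for any such monotone, right-continuous function $f$ starting at $0$ one has $v_T(f,0,\delta) \leq f(\delta)$, so in particular
\[
v_T\bigl((A^n)^{-1},0,\delta\bigr) \leq (A^n)^{-1}_\delta.
\]

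The second step is to pass from the inverse back to $A^n$ itself. Using $(A^n)^{-1}_\delta = \inf\{s > 0 : A^n_s > \delta\}$ together with continuity of $A^n$, one obtains the equivalence $\{(A^n)^{-1}_\delta \geq \eta\} = \{A^n_\eta \leq \delta\}$. Consequently, the lemma reduces to establishing
\[
\lim_{\delta \searrow 0}\varlimsup_{n \to \infty} \mathbb{P}_{x_n}^{Z^n} \otimes \mathbb{P}^{X^n}(A^n_\eta \leq \delta) = 0.
\]

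For the final step, Proposition \ref{PCAF_Tconv} gives $A^n_\eta \Rightarrow \tilde{A}_\eta$, so the Portmanteau theorem applied to the closed set $[0,\delta]$ yields $\varlimsup_{n\to\infty} \mathbb{P}(A^n_\eta \leq \delta) \leq \mathbb{P}(\tilde{A}_\eta \leq \delta)$. Letting $\delta \searrow 0$, this upper bound tends to $\mathbb{P}(\tilde{A}_\eta = 0)$, which vanishes because $\tilde{A}_\eta$ is identified in Lemma \ref{PCAFcor} as the GMC of $X^\infty$ against the occupation measure $\nu^\infty_\eta$, and such a GMC is strictly positive almost surely in the $L^2$-regime (equivalently, this is the standard a.s.\ positivity of the Liouville Brownian motion / Liouville Cauchy process PCAF at any positive time, cf.\ \cite{GRV,Bav}).

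The two places where care is needed are: (i) the bookkeeping that the modulus $v_T$ in Appendix \ref{AppSko} does indeed dominate as claimed on nondecreasing right-continuous functions, which is a routine check once the definition is unpacked; and (ii) the a.s.\ positivity of $\tilde{A}_\eta$. The second point is the main conceptual input beyond what has already been proved in this section, but it is a classical fact about the Liouville PCAF, and can also be deduced from the non-degeneracy of the $L^2$-GMC together with $\mathbb{E}[\tilde A_\eta]=\eta>0$.
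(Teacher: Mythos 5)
Your proof is correct and follows essentially the same route as the paper: monotonicity reduces $v_T((A^n)^{-1},0,\delta)$ to $(A^n)^{-1}_\delta$, the identity $\{(A^n)^{-1}_\delta\geq\eta\}=\{A^n_\eta\leq\delta\}$ transfers the question to $A^n_\eta$, and Proposition \ref{PCAF_Tconv} finishes the job. If anything you are more careful than the paper, which chooses $\delta$ with $\mathbb{P}(\tilde A_\eta\leq\delta)\leq\varepsilon/2$ and thereby implicitly uses both the Portmanteau-type bound and the fact $\mathbb{P}(\tilde A_\eta=0)=0$ that you state and justify explicitly.
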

\begin{proof}
For any \(\eta\) and \(\varepsilon >0\), we take \(\delta>0\) satisfying \(\mathbb{P}_{x}^{Z^{\infty}}\otimes \mathbb{P}^{X^{\infty}}(\tilde{A}_{\eta}\leq \delta)\leq \varepsilon/2.\)
By Lemma \ref{PCAF_Tconv}, there exists \(n_*\in \mathbb{N}\) such that, for any \(n \geq n_*\), 
\[|\mathbb{P}_{x}^{Z^{\infty}}\otimes \mathbb{P}^{X^{\infty}}(\tilde{A}_{\eta}\leq \delta)-\mathbb{P}_{x_n}^{Z^n}\otimes \mathbb{P}^{X^n}(A^n_{\eta}\leq \delta)|\leq \varepsilon/2.\]
Since \(A^n\) is non-decreasing \(\mathbb{P}_{x_n}^{Z^n}\otimes \mathbb{P}^{X^n}\)-almost surely for each \(n\), we have \[\mathbb{P}_{x_n}^{Z^n}\otimes \mathbb{P}^{X^n} \left(v_T((A_T^{n})^{-1},0,\delta) \geq \eta\right)=\mathbb{P}_{x_n}^{Z^n}\otimes \mathbb{P}^{X^n}\left((A_{\delta}^n)^{-1} \geq \eta \right) = \mathbb{P}_{x_n}^{Z^n}\otimes \mathbb{P}^{X^n}\left(A_{\eta}^n\leq \delta \right).\]
These inequalities yield that \(\mathbb{P}_{x_n}^{Z^n}\otimes \mathbb{P}^{X^n}(A^n_T\leq \delta)\leq \varepsilon\) for any \(n\geq n_*\). Thus the proof is completed. 
\end{proof}

To check the criterion Theorem \ref{M1cri2} (1), we prove the following statement.
\begin{lemma}\label{lemma8}
\(\tilde{A}_T/T\) converges to \(1\) in \(L^2(\mathbb{P}_x^{Z^{\infty}}\otimes \mathbb{P}^X)\) as \(T \to \infty.\)
\end{lemma}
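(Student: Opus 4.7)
The strategy is to compute the first two moments of $\tilde{A}_T$ and show the variance is $o(T^2)$. By Lemma~\ref{PCAFcor}, for $\mathbb{P}^{Z^{\infty}}_x$-almost every path of $Z^{\infty}$, $\tilde{A}_T$ is realized as the GMC of $X^{\infty}$ with respect to the occupation measure $\nu^{\infty}_T$. Standard GMC moment identities then give $\E^X[\tilde{A}_T] = \nu^{\infty}_T(\R^d) = T$ and $\E^X[\tilde{A}_T^2] = \int_0^T \int_0^T \exp(\gamma^2 \pi g^{\infty}_{\lambda}(Z^{\infty}_s, Z^{\infty}_t))\, ds\, dt$, so that $\E[(\tilde{A}_T/T - 1)^2] = \E[\tilde{A}_T^2]/T^2 - 1$. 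Using the stationary increments of $Z^{\infty}$ together with translation invariance of $g^{\infty}_{\lambda}$, I would reduce to showing that, as $T \to \infty$,
\begin{equation*}
\frac{2}{T^2}\int_0^T (T-u) f(u)\, du \longrightarrow 1, \qquad f(u) := \E^{Z^{\infty}}_0\!\left[\exp(\gamma^2 \pi g^{\infty}_{\lambda}(0, Z^{\infty}_u))\right].
\end{equation*}

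The crucial step is to establish $f(u) \to 1$ as $u \to \infty$. Setting $\phi(z) := \exp(\gamma^2 \pi g^{\infty}_{\lambda}(0,z))$, I would write $f(u) - 1 = \int_{\R^d}(\phi(z)-1)\, p^{\infty}(u,0,z)\, dm(z)$ and verify that $\phi - 1 \in L^1(\R^d)$. For $|z|$ small, Lemma~\ref{Green'slemma} gives $\phi(z) \leq C|z|^{-\gamma^2}$, which is integrable since $\gamma^2 < d$; for $|z|$ large, the explicit representations of $g^{\infty}_{\lambda}$ derived in the proof of Lemma~\ref{Green'slemma} yield exponential decay when $d=2$ and $|z|^{-2}$ decay when $d=1$, hence $|\phi(z)-1| \leq C g^{\infty}_{\lambda}(0,z)$ is integrable at infinity. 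Self-similarity of $Z^{\infty}$ gives $\|p^{\infty}(u,0,\cdot)\|_\infty \leq Cu^{-1}$ for $u \geq 1$, and therefore $|f(u)-1| \leq Cu^{-1}\|\phi-1\|_{L^1} \to 0$.

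To conclude, I would use the identity $\frac{2}{T^2}\int_0^T(T-u)\,du = 1$ and split $\int_0^T (T-u)(f(u)-1)\,du$ at a cutoff $u_0 \geq 1$. The tail contribution over $[u_0, T]$ is bounded by $T^2 \sup_{u \geq u_0}|f(u)-1|$, which can be made arbitrarily small by choosing $u_0$ large; the contribution over $[0, u_0]$ uses the bound $f(u) \leq C u^{-\gamma^2/d}$ provided by Lemma~\ref{selfsimlemma}, whose integral near $0$ is finite (since $\gamma^2/d < 1$) and yields an $O(u_0/T)$ contribution. Choosing $u_0$ large first and then $T$ large completes the argument.

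The main technical point is the $L^1$ integrability of $\phi - 1$, in particular its decay at infinity; this is not immediately contained in the statement of Lemma~\ref{Green'slemma} but falls out of the explicit computations performed in its proof. Once this input is in hand, the remaining argument is a routine Cesaro-type averaging combined with the moment reduction to the one-parameter function $f$.
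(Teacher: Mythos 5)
Your proposal is correct, and it reaches the same second-moment identity $\E^X_{}[\tilde{A}_T]=T$, $\mathbb{E}_x^{Z^{\infty}}\mathbb{E}^X[(\tilde{A}_T)^2]=\int_0^T\int_0^T \mathbb{E}_x^{Z^{\infty}}[\exp(\gamma^2\pi g_{\lambda}^{\infty}(Z_s^{\infty},Z_t^{\infty}))]\,ds\,dt$ that the paper starts from, but the way you extract the limit of $\E[\tilde{A}_T^2]/T^2$ is genuinely different. The paper rescales the double integral to $[0,1]^2$ and uses the self-similarity of $Z^{\infty}$ to push $T$ into the spatial argument, writing the integrand as $\exp(\gamma^2\pi g_{\lambda}^{\infty}(0,T^{1/d}y))$; a $T$-uniform dominating function $C|y|^{-\gamma^2}{\bf 1}_{\{|y|\le 1\}}+C{\bf 1}_{\{|y|>1\}}$ then lets Lebesgue's theorem give the limit $1$ directly, using only that $g_{\lambda}^{\infty}(0,z)\to 0$ as $|z|\to\infty$. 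You instead keep the time variable, prove the mixing-type statement $f(u)=\mathbb{E}_0^{Z^{\infty}}[\phi(Z_u^{\infty})]\to 1$ via the $L^1$--$L^{\infty}$ pairing $|f(u)-1|\le \|p^{\infty}(u,0,\cdot)\|_{\infty}\|\phi-1\|_{L^1}\le Cu^{-1}$, and finish with a Ces\`aro split. Your route requires the extra quantitative input $\phi-1\in L^1(\R^d)$, i.e.\ decay of $g_{\lambda}^{\infty}(0,z)$ at infinity (exponential for $d=2$, of order $|z|^{-2}$ for $d=1$); you are right that this is not contained in the statement of Lemma \ref{Green'slemma} but does follow from the explicit integral representations in its proof (for $d=1$, bounding $t/(1+t^2)\le t$ under $\int_0^{\infty}e^{-\lambda|z|t}\,\cdot\,dt$). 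The paper's scaling argument avoids needing any rate of decay and is shorter; your argument is more robust in that it does not use self-similarity at this step beyond the heat-kernel bound $\|p^{\infty}(u,0,\cdot)\|_{\infty}\le Cu^{-1}$, and it additionally yields a quantitative rate for $\mathbb{E}[(\tilde{A}_T/T-1)^2]$. Both are complete proofs.
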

\begin{proof}
By Lemma \ref{PCAFcor}, \(\tilde{A}^{n,\infty}_T\) converges to \(\tilde{A}_T\) in \(L^2(\mathbb{P}_x^{Z^{\infty}} \otimes \mathbb{P}^{X})\) and so \(\mathbb{E}_x^{Z^{\infty}}\mathbb{E}^{X}[(\tilde{A}_T)^2]\) can be expressed as
\[\mathbb{E}_x^{Z^{\infty}}\mathbb{E}^{X}[(\tilde{A}_T)^2]= \int_0^T \int_0^T \mathbb{E}_x^{Z^{\infty}}[\exp{(\gamma^2 \pi g_{\lambda}^{\infty}(Z_s^{\infty},Z_t^{\infty}))}]dsdt.\] Combining this with change of variables, the stationary increments of \(Z^{\infty}\) and the self-similarity of \(Z^{\infty}\), we have
\begin{eqnarray}
\nonumber \lefteqn{\frac{1}{T^2}\mathbb{E}_x^{Z^{\infty}}\mathbb{E}^{X}\left[(\tilde{A}_T)^2\right] = \int_0^1 \int_0^1 \mathbb{E}_x^{Z^{\infty}}[\exp{(\gamma^2 \pi g_{\lambda}^{\infty}(Z_{Ts}^{\infty},Z_{Tt}^{\infty}))}]dsdt} \\
\nonumber &=& \int_0^1 \int_0^1 \mathbb{E}_0^{Z^{\infty}}[\exp{(\gamma^2 \pi g_{\lambda}^{\infty}(0,Z_{T|t-s|}^{\infty}))}]dsdt\\
\nonumber &=& \int_0^1 \int_0^1 \mathbb{E}_0^{Z^{\infty}}[\exp{(\gamma^2 \pi g_{\lambda}^{\infty}(0,T^{1/d}Z_{|t-s|}^{\infty}))}]dsdt\\
&=& \int_0^1 \int_0^1 \int_{\mathbb{R}^d} \exp{(\gamma^2 \pi g_{\lambda}^{\infty}(0,T^{1/d}y))} p^{\infty}(|t-s|,0,y) dm(y) ds dt. \label{eq:8-3}
\end{eqnarray}
For \(T\geq 1\), by Lemma \ref{Green'slemma}, we have
\begin{eqnarray}
e^{\gamma^2 \pi g_{\lambda}^{\infty}(0,T^{1/d}y)} &\leq & e^{\gamma^2 \log_{+}{\frac{1}{T^{1/d}|y|}}+C}\leq \frac{C}{|y|^{\gamma^2}}{\bf 1}_{\{|y|\leq 1\}}+C {\bf 1}_{\{|y|> 1\}}.\label{eq:8-4}
\end{eqnarray}
We remark the last two terms of \((\ref{eq:8-4})\) do not depend on \(T\). By Lemma \ref{selfsimlemma}, we have
\begin{eqnarray}
\nonumber \lefteqn{\int_0^1 \int_0^1 \int_{\mathbb{R}^d} \left(\frac{C}{|y|^{\gamma^2}}{\bf 1}_{\{|y|\leq 1\}}+C {\bf 1}_{\{|y|> 1\}}\right) p^{\infty}(|t-s|,0,y))dm(y)dsdt}\\
\nonumber &\leq & C \int_0^1\int_0^1 \mathbb{E}_x^{Z^{\infty}}\left[ \frac{1}{|Z_s^{\infty}-Z^{\infty}_t|^{\gamma^2}} \right]dsdt +C\\
&=& C \int_0^1\int_0^1 \frac{1}{|t-s|^{\gamma^2/d}}dsdt +C\  <\  \infty. \label{eq:8-5}
\end{eqnarray}
By \((\ref{eq:8-4})\), \((\ref{eq:8-5})\) and applying Lebesgue's convergence theorem to \((\ref{eq:8-3})\), we have

\begin{eqnarray*}
\lim_{T\to \infty} \frac{1}{T^2}\mathbb{E}_x^{Z^{\infty}}\mathbb{E}^{X} \left[(\tilde{A}_T)^2\right]&=& \lim_{T\to \infty} \int_0^1 \int_0^1 \int_{\mathbb{R}^d} e^{\gamma^2 \pi g_{\lambda}^{\infty}(0,T^{1/d}y)} p^{\infty}(|t-s|,0,y) dm(y) ds dt\\
&=& \int_0^1 \int_0^1 \int_{\mathbb{R}^d}  p^{\infty}(|t-s|,0,y) dm(y) ds dt\\
&=&1.
\end{eqnarray*}
Combining this with \(\mathbb{E}_x^{Z^{\infty}}\mathbb{E}^{X}[\tilde{A}_T]=T\), the proof is completed.
\end{proof}

\begin{lemma}\label{lemma9}
For any \(T >0\) and \(x\in \R^d\), it holds that
\[\lim_{c\to \infty}\varlimsup_{n \to \infty}\mathbb{P}_{x_n}^{Z^n}\otimes \mathbb{P}^{X}\left(\sup_{t\leq T}|(A_t^{n})^{-1}|> c\right)=0.\]
\end{lemma}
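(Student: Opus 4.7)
The plan is to reduce the statement to a weak-convergence fact about $A^n_c$ together with the $L^2$ growth bound for $\tilde{A}_c$ given by Lemma \ref{lemma8}. Since each $A^n$ is a PCAF, hence nondecreasing in $t$, the same holds for its right-continuous inverse $(A^n)^{-1}$. Therefore
\[
\sup_{t\leq T} |(A^n_t)^{-1}| = (A^n_T)^{-1},
\]
and from the definition $(A^n)^{-1}_T = \inf\{s>0 : A^n_s > T\}$ we immediately get the inclusion
\[
\{(A^n_T)^{-1} > c\} \subseteq \{A^n_c \leq T\}.
\]
So it suffices to show
\[
\lim_{c\to\infty} \varlimsup_{n\to\infty} \mathbb{P}_{x_n}^{Z^n}\otimes \mathbb{P}^{X^n}\bigl(A^n_c \leq T\bigr) = 0.
\]

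For fixed $c>0$, Proposition \ref{PCAF_Tconv} gives that $A^n_c$ under $\mathbb{P}^{Z^n}_{x_n}\otimes \mathbb{P}^{X^n}$ converges weakly to $\tilde{A}_c$ under $\mathbb{P}^{Z^\infty}_x \otimes \mathbb{P}^{X^\infty}$. Since $\{y \in \mathbb{R}: y \leq T\}$ is closed, the Portmanteau theorem yields
\[
\varlimsup_{n\to\infty}\mathbb{P}_{x_n}^{Z^n}\otimes \mathbb{P}^{X^n}(A^n_c\leq T) \leq \mathbb{P}_{x}^{Z^{\infty}}\otimes \mathbb{P}^{X^{\infty}}(\tilde{A}_c\leq T).
\]

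It remains to check that the right-hand side tends to $0$ as $c\to\infty$. This is where Lemma \ref{lemma8} enters: the convergence $\tilde{A}_c/c \to 1$ in $L^2(\mathbb{P}_x^{Z^\infty}\otimes \mathbb{P}^{X^\infty})$ implies convergence in probability, so for any $c > 2T$,
\[
\mathbb{P}(\tilde{A}_c \leq T) \leq \mathbb{P}\bigl(|\tilde{A}_c/c - 1| \geq 1/2\bigr) \longrightarrow 0
\]
as $c\to\infty$. Combining the three steps gives the claimed double limit. The argument has no real obstacle, since the two inputs (Proposition \ref{PCAF_Tconv} and Lemma \ref{lemma8}) do all the work; the only minor point to keep track of is the monotonicity of $A^n$ and $(A^n)^{-1}$, which converts the supremum-over-$t$ statement into a one-point statement about $A^n_c$.
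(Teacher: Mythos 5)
Your proof is correct and follows essentially the same route as the paper: both reduce the supremum statement to $\mathbb{P}(A^n_c\leq T)$ via monotonicity, pass to the limit in $n$ using Proposition \ref{PCAF_Tconv}, and then kill the resulting bound $\mathbb{P}(\tilde{A}_c\leq T)$ as $c\to\infty$ using Lemma \ref{lemma8}. The only (harmless) difference is in the last step, where you apply Chebyshev's inequality to $\tilde{A}_c/c-1$ directly, while the paper extracts the same tail estimate from a Cauchy--Schwarz/second-moment (Paley--Zygmund type) computation; your version is if anything slightly cleaner, as is your use of the Portmanteau one-sided inequality for the closed set $(-\infty,T]$.
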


\begin{proof}
For any \(t >0\) and \(\eta >0\), we have \(\mathbb{E}_{x_n}^{Z^n}\mathbb{E}^X [A_t^n]=t\) and
\begin{eqnarray*}
\nonumber \mathbb{E}_{x_n}^{Z^n}\mathbb{E}^X[A_t^n]&=&\mathbb{E}_{x_n}^{Z^n}\mathbb{E}^X[A_t^n {\bf 1}_{\{A_t^n\leq \eta\}}]+\mathbb{E}_{x_n}^{Z^n}\mathbb{E}^X[A_t^n {\bf 1}_{\{A_t^n> \eta\}}]\\
&\leq & \eta \mathbb{P}_{x_n}^{Z^n}\otimes \mathbb{P}^{X}(A_t^n \leq \eta) + \sqrt{\mathbb{E}_{x_n}^{Z^n}\mathbb{E}^X[(A_t^n)^2]} \sqrt{\mathbb{P}_{x_n}^{Z^n}\otimes \mathbb{P}^{X}(A_t^n > \eta)}
\end{eqnarray*}
By Proposition \ref{PCAF_Tconv}, taking the limit as \(n\) tends to infinity, we have
\begin{equation}
t\leq \eta \mathbb{P}_{x}^{Z^{\infty}} \otimes \mathbb{P}^X
(\tilde{A}_t \leq \eta) + \sqrt{\mathbb{E}_{x}^{Z^{\infty}}\mathbb{E}^X[(\tilde{A}_t)^2]} \sqrt{\mathbb{P}_{x}^{Z^{\infty}} \otimes \mathbb{P}^X(\tilde{A}_t > \eta)}. \label{eq:9-c}
\end{equation}
By \((\ref{eq:9-c})\) and Lemma \ref{lemma8}, we have
\begin{eqnarray}
\nonumber \varlimsup_{t\to \infty} \mathbb{P}_{x}^{Z^{\infty}} \otimes \mathbb{P}^X(\tilde{A}_t\leq \eta )&= & 1- \varliminf_{t\to \infty} \mathbb{P}_{x}^{Z^{\infty}} \otimes \mathbb{P}^X(\tilde{A}_t> \eta )\\
\nonumber &\leq & 1- \varliminf_{t\to \infty} \frac{\left(t-\eta \mathbb{P}_{x}^{Z^{\infty}} \otimes \mathbb{P}^X(\tilde{A}_t\leq \eta)\right)^2}{\mathbb{E}_x^{Z^{\infty}}\mathbb{E}^X[(\tilde{A}_t)^2]} \\
\nonumber &=& 1-  \lim_{t\to \infty} \frac{1}{ \mathbb{E}_x^{Z^{\infty}}\mathbb{E}^X[(\frac{\tilde{A}_t}{t})^2]} \\
&=& 0. \label{eq:9-1}
\end{eqnarray}

On the other hand, by Proposition \ref{PCAF_Tconv}, for any \(\varepsilon >0\) and \(c>0\), there exists \(n^*\) such that, for any \(n\geq n^*\), 
\begin{equation}
|\mathbb{P}_{x}^{Z^{\infty}} \otimes \mathbb{P}^X(\tilde{A}_{c}\leq T)-\mathbb{P}_{x_n}^{Z^n}\otimes \mathbb{P}^{X}(A^{n}_{c}\leq T)|\leq \varepsilon /2. \label{eq:9-2}
\end{equation}
Since \(A^n\) is non-decreasing \(\mathbb{P}_{x_n}^{Z^n}\otimes \mathbb{P}^{X}\)-almost surely for each \(n\), we have 
\[\mathbb{P}_{x_n}^{Z^n}\otimes \mathbb{P}^{X}\left(\sup_{t\leq T}|(A_t^{n})^{-1}|> c\right)=\mathbb{P}_{x_n}^{Z^n}\otimes \mathbb{P}^{X}\left((A_T^{n})^{-1}> c\right)\leq \mathbb{P}_{x_n}^{Z^n}\otimes \mathbb{P}^{X}\left(A_c^{n}\leq T \right).\]
So the proof is completed by combining (\ref{eq:9-2}) with (\ref{eq:9-1}) replaced \(t\) and \(\eta\) with \(c\) and \(T\), respectively.
\end{proof}

\begin{theorem}\label{AinvM1tight}
The family of the distributions of \((A^n)^{-1}\) under \(\mathbb{P}_{x_n}^{Z^n}\otimes \mathbb{P}^X\) is tight with respect to \(M_1\)-topology. 
\end{theorem}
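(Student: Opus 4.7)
The plan is to invoke the $M_1$-tightness criterion, Theorem \ref{M1cri2}, directly. Since $(A^n)^{-1}$ is non-decreasing (indeed càdlàg and monotone, being the right-continuous inverse of the non-decreasing PCAF $A^n$), the $M_1$ criterion reduces to two standard ingredients: a uniform sup-bound on each compact time interval, and a vanishing-oscillation condition measured by the functional $v_T$ defined in \eqref{eq:v1}. Both ingredients have already been prepared in the preceding lemmas, so the proof is essentially a bookkeeping exercise.

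Concretely, I would proceed as follows. Fix $T>0$. The first hypothesis of Theorem \ref{M1cri2}, namely
\[
\lim_{c\to\infty}\varlimsup_{n\to\infty}\mathbb{P}_{x_n}^{Z^n}\otimes\mathbb{P}^X\!\Bigl(\sup_{t\le T}|(A^n_t)^{-1}|>c\Bigr)=0,
\]
is exactly the content of Lemma \ref{lemma9}. The second hypothesis, the oscillation control
\[
\lim_{\delta\searrow 0}\varlimsup_{n\to\infty}\mathbb{P}_{x_n}^{Z^n}\otimes\mathbb{P}^{X^n}\!\bigl(v_T((A^n)^{-1},0,\delta)\ge\eta\bigr)=0
\]
for every $\eta>0$, is Lemma \ref{lemma7}. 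Applying Theorem \ref{M1cri2} then yields tightness of the family $\{(A^n)^{-1}\}_n$ in the Skorokhod space $D[0,\infty)$ equipped with $M_1$-topology.

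There is no substantive obstacle at this stage: the real work — namely the $L^1$-convergence of $A^n_T$ to $\tilde{A}_T$ from Proposition \ref{PCAF_Tconv}, the second-moment asymptotic of $\tilde{A}_T$ from Lemma \ref{lemma8}, and the monotonicity-based identity $\{v_T((A^n)^{-1},0,\delta)\ge\eta\}=\{A^n_\eta\le\delta\}$ used in Lemma \ref{lemma7} — has already been carried out. The only mild point to double-check is that the specific version of Theorem \ref{M1cri2} in the appendix is formulated for monotone (or at least càdlàg) processes on $[0,\infty)$ in a way that does not require any additional condition at $t=0$; if such a condition is needed, it follows immediately from $(A^n_0)^{-1}=0$ almost surely for every $n$. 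With these verifications, the conclusion follows.
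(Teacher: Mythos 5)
Your proposal is correct and follows essentially the same route as the paper: the paper likewise verifies Theorem \ref{M1cri2} by citing Lemma \ref{lemma9} for the sup-bound, Lemma \ref{lemma7} for the $v_T$ oscillation condition at $0$, and the almost-sure monotonicity of $(A^n)^{-1}$ to dispose of the $w_T$ condition automatically. No gaps.
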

\begin{proof}
Since \((A^n)^{-1}\) is increasing \(\mathbb{P}_{x_n}^{Z^n}\otimes \mathbb{P}^X\)-almost surely for each \(n\), the criterion Theorem \ref{M1cri2} (2) automatically holds. Combining this with Lemma \ref{lemma7} and Lemma \ref{lemma9}, \(\{(A^n)^{-1}\}_n\) is tight with respect to \(M_1\)-topology by Theorem \ref{M1cri2}.
\end{proof}

We show the following lemma in order to prove the convergence with respect to \(U\)-topology, where \(U\)-topology is the local uniform topology. We remark that the convergence with respect to any of Skorokhod's topologies are the same when the limit is continuous.

\begin{lemma}\label{lemma11}
Any subsequential limit \(\tilde{A}^{-1}\) of \(\{(A^{n})^{-1}\}_{n}\) is strictly increasing, \(\mathbb{P}_x^{Z^{\infty}}\otimes \mathbb{P}^X\)-almost surely.
\end{lemma}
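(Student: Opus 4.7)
The plan is to identify any subsequential $M_1$-limit $\tilde{A}^{-1}$ of $\{(A^n)^{-1}\}_n$ (whose existence is ensured by Theorem \ref{AinvM1tight}) with the right-continuous inverse of a continuous non-decreasing process $\tilde{A}$, and then to invoke the elementary fact that the right-continuous inverse of such a function is strictly increasing. Since $A^{\infty}$ is a positive \emph{continuous} additive functional, the continuity needed is built into its definition.

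First, fix a subsequence $\{n_k\}$ along which $(A^{n_k})^{-1}$ converges in distribution in $M_1$ to $\tilde{A}^{-1}$. By Proposition \ref{PCAF_Tconv} combined with a diagonal extraction, one may further assume that $A^{n_k}_T$ converges in distribution to $\tilde{A}_T$ for every $T$ in a fixed countable dense subset of $[0,\infty)$, and then realize both convergences almost surely on a common probability space via Skorokhod's representation theorem. Since each $A^{n_k}$ is continuous and strictly increasing (its integrand in \eqref{eq:PCAFdef} being strictly positive), the pathwise equivalence $(A^{n_k})^{-1}_t \leq s \Longleftrightarrow A^{n_k}_s \geq t$ holds. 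Passing to the limit at continuity points yields $\tilde{A}^{-1}_t \leq s \Longleftrightarrow \tilde{A}_s \geq t$, which identifies $\tilde{A}^{-1}$ as the right-continuous inverse of the non-decreasing process $\tilde{A}$.

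Next, identify $\tilde{A}$ with the PCAF $A^{\infty}$. By Lemma \ref{PCAFcor}, for $\mathbb{P}_x^{Z^{\infty}}$-a.e.\ trajectory of $Z^{\infty}$, $\tilde{A}_T$ is the GMC of $X^{\infty}$ against the occupation measure $\nu_T^{\infty}$; by the construction of $A^{\infty}$ as the Revuz dual of $\mu^{\infty}$ recalled around \eqref{eq:PCAFdef}, this coincides with $A^{\infty}_T$. Extending from a countable dense set in $T$ by right-continuity and monotonicity yields $\tilde{A} = A^{\infty}$ almost surely as processes. Since $A^{\infty}$ is a PCAF in the strict sense, almost every path of $\tilde{A}$ is continuous.

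Finally, for any continuous non-decreasing $f : [0,\infty) \to [0,\infty]$, the right-continuous inverse $f^{-1}(t) := \inf\{s \geq 0 : f(s) > t\}$ is strictly increasing, since a flat interval of $f^{-1}$ on $[a,b)$ would force a jump of size at least $b - a$ in $f$. Applying this elementary fact to $\tilde{A}$ shows that $\tilde{A}^{-1}$ is strictly increasing almost surely. I expect the main challenge to lie in the identification step: reconciling the $M_1$-convergence of $(A^n)^{-1}$ with only the marginal convergence of $A^n$ supplied by Proposition \ref{PCAF_Tconv}; this will be handled via the pathwise inverse equivalence above combined with the Skorokhod coupling, taking care that limits are taken at continuity points of both $\tilde{A}$ and $\tilde{A}^{-1}$.
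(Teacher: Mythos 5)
Your closing observation is fine: the right-continuous inverse of a continuous non-decreasing function vanishing at $0$ is indeed strictly increasing. The gap is in the two identification steps you lean on to get there, both of which are unavailable at this point in the argument. First, the continuity of $\tilde{A}$ is obtained by asserting $\tilde{A}=A^{\infty}$ "by the construction of $A^{\infty}$ as the Revuz dual of $\mu^{\infty}$". That identification is not a one-line consequence of Lemma \ref{PCAFcor}: knowing that $\tilde{A}_T$ is, for each fixed $T$, the GMC of $X^{\infty}$ against the occupation measure $\nu_T^{\infty}$ gives a family of random variables with no pathwise regularity in $T$, and upgrading this to "$\tilde{A}$ is (a modification of) the PCAF associated with $\mu^{\infty}$" is precisely the content of Proposition \ref{PCAFlem}, i.e.\ of all of Section \ref{secPCAFlem} (Propositions \ref{PCAFlem2} and \ref{PCAFlem3}). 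Worse, that section's proof itself invokes the strict increase and continuity of $\tilde{A}$ and $\tilde{A}^{-1}$, which rest on Lemma \ref{lemma11} and Lemma \ref{lemma14}; so your route is circular with respect to the paper's logical structure. Second, the identification of the $M_1$-limit $\tilde{A}^{-1}$ with the functional inverse of $\tilde{A}$ is also not free: passing to the limit in $(A^{n_k})^{-1}_t\leq s\Longleftrightarrow A^{n_k}_s\geq t$ only preserves each implication in one direction (a sequence with $(A^{n_k})^{-1}_t>s$ for all $k$ can have limit exactly $s$), and the failure mode of the resulting one-sided inequalities is exactly a flat stretch or jump of $\tilde{A}$ at $\tilde{A}^{-1}_t$ --- the very phenomenon you are trying to exclude. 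In the paper this identification is only achieved in Proposition \ref{lemma15}, via the continuity of the inverse map at \emph{strictly increasing} functions \cite[Corollary 13.6.4]{W}, so it presupposes the conclusion of Lemma \ref{lemma11}.

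The paper's proof is entirely different and avoids any identification of the limit. It reduces the claim, via closedness of the non-decreasing functions under $M_1$ and a dyadic covering, to showing $\mathbb{P}_x^{Z^{\infty}}\otimes\mathbb{P}^X(\tilde{A}^{-1}_t=\tilde{A}^{-1}_s)=0$ for fixed $s<t$. By the Portmanteau theorem and the additivity of the PCAF, the event $\{(A^n_t)^{-1}-(A^n_s)^{-1}<1/N\}$ is contained in $\{t\leq s+A^n_{1/N}\circ\theta_{(A^n_s)^{-1}}\}$; the strong Markov property of $Z^n$, the stationarity of $X^n$ and $\mathbb{E}[A^n_{1/N}]=1/N$ then bound its probability by $\tfrac{1/N}{t-s}$, which vanishes as $N\to\infty$. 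If you want to salvage your approach without circularity, you would need an independent proof of the $C$-tightness of $\{A^n\}_n$ (for instance via the uniform second-moment estimate $\mathbb{E}[(A^n_{t}-A^n_{s})^2]\leq C|t-s|^{2-\gamma^2C^*/d}$, which is available in the $L^2$-regime from the computations in Lemma \ref{newlemma}); as written, the continuity of $\tilde{A}$ is assumed rather than proved.
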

\begin{proof}
Let \(\tilde{A}^{-1}\) be a subsequential limit of \(\{(A^{n})^{-1}\}_{n}\). For convenience, by taking a subsequence, we may assume that \((A^n)^{-1}\) converges weakly to \(\tilde{A}^{-1}\) with respect to \(M_1\)-topology. By \cite[Lemma 13.2.3]{W}, the family of non-decreasing c\`{a}dl\`{a}g functions is closed in \(D[0,\infty)\) with respect to \(M_1\)-topology, so \(\tilde{A}^{-1}\) is non-decreasing, \(\mathbb{P}_x^{Z^{\infty}}\otimes \mathbb{P}^X\)-almost surely. It holds that
\begin{eqnarray*}
\nonumber \lefteqn{\mathbb{P}_{x}^{Z^{\infty}}\otimes \mathbb{P}^X(\text{There\ exist}\ s<t\ \text{such\ that\ }\tilde{A}_t^{-1}=\tilde{A}_s^{-1})}\\
\nonumber &=&\lim_{T\to \infty}\mathbb{P}_{x}^{Z^{\infty}}\otimes \mathbb{P}^X(\text{There\ exist}\ s<t\leq T\ \text{such\ that\ }\tilde{A}_t^{-1}=\tilde{A}_s^{-1})\\
\nonumber &=&\lim_{T\to \infty}\lim_{N\to \infty} \mathbb{P}_{x}^{Z^{\infty}}\otimes \mathbb{P}^X(\text{There\ exists}\ k\  \text{such\ that}\ 1\leq k\leq 2^N,\ \tilde{A}_{kT/2^N}^{-1}=\tilde{A}_{(k-1)T/2^N}^{-1})\\
&\leq &\lim_{T\to \infty}\lim_{N\to \infty}\sum_{k=1}^{2^N} \mathbb{P}_{x}^{Z^{\infty}}\otimes \mathbb{P}^X(\tilde{A}_{kT/2^N}^{-1}=\tilde{A}_{(k-1)T/2^N}^{-1}).
\end{eqnarray*}
So, it is enough to show that \(\mathbb{P}_{x}^{Z^{\infty}}\otimes \mathbb{P}^X(\tilde{A}_t^{-1}=\tilde{A}_s^{-1})=0\) for any \(s<t\).
By \cite[Lemma VI.3.12]{JS}, \(\{t>0\ ; \ \mathbb{P}_{x}^{\infty}(\tilde{A}_t^{-1}=\tilde{A}_{t-}^{-1})=1\}\) is dense in \([0,\infty)\). So, without loss of generality, we may assume that \(s\) and \(t\) satisfy \(\mathbb{P}_{x}^{Z^{\infty}}\otimes \mathbb{P}^X(\tilde{A}_t^{-1}=\tilde{A}_{t-}^{-1})=\mathbb{P}_{x}^{Z^{\infty}}\otimes \mathbb{P}^X(\tilde{A}_s^{-1}=\tilde{A}_{s-}^{-1})=1\). By Theorem \ref{whitt} and the definition of PCAF, we have
\begin{eqnarray}
\nonumber \mathbb{P}_{x}^{Z^{\infty}}\otimes \mathbb{P}^X(\tilde{A}_t^{-1}=\tilde{A}_s^{-1})&=&\lim_{N\to \infty}\mathbb{P}_{x}^{Z^{\infty}}\otimes \mathbb{P}^X(\tilde{A}_t^{-1}-\tilde{A}_s^{-1}<\frac{1}{N})\\
\nonumber &\leq & \lim_{N\to \infty}\varliminf_{n\to \infty}\mathbb{P}_{x_n}^{Z^n}\otimes \mathbb{P}^X((A^{n}_t)^{-1}-(A^{n}_s)^{-1}<\frac{1}{N})\\
\nonumber &=&\lim_{N\to \infty}\varliminf_{n\to \infty}\mathbb{P}_{x_n}^{Z^n}\otimes \mathbb{P}^X(t\leq A^{n}_{(A_s^{n})^{-1}+\frac{1}{N}})\\
&=&\lim_{N\to \infty}\varliminf_{n\to \infty}\mathbb{P}_{x_n}^{Z^n}\otimes \mathbb{P}^X(t\leq s+A^{n}_{\frac{1}{N}}\circ \theta_{(A^{n}_s)^{-1}})\label{eq:11-1}
\end{eqnarray}
Since \(\{(A^{n}_s)^{-1}\leq u\}=\{s\leq A_u^{n}\}\in \sigma(\{Z_v^{n}\}_{v\leq u})\), \(\mathbb{P}^X\)-almost surly, by the strong Markov property for \(Z^{n}\), the stationarity of \(X^n\) and \(\mathbb{E}_{x_n}^n[A^n_{1/N}]=1/N\), we have
\begin{eqnarray*}
\mathbb{P}_{x}^{Z^{\infty}}\otimes \mathbb{P}^X(\tilde{A}_t^{-1}=\tilde{A}_s^{-1})&\leq & \lim_{N\to \infty}\varliminf_{n\to \infty}\mathbb{P}_{x_n}^{Z^n}\otimes \mathbb{P}^X(t<s+A^{n}_{\frac{1}{N}})\\
&\leq & \lim_{N\to \infty}\varliminf_{n\to \infty} \frac{\frac{1}{N}}{t-s} \ =\ 0.
\end{eqnarray*}
Thus the proof is complete.
\end{proof}

Although the next theorem is not necessary for the proof of Theorem \ref{maintight}, we state here of independent interest.
\begin{theorem}\label{theorem12}
For any \(x\in \R^d\), the collection of distributions of time-changed processes \(\hat{Z^n}\) under \(\mathbb{P}_{x_n}^{Z^n}\otimes \mathbb{P}^X\) is tight with respect to \(L^1_{loc}\)-topology.
\end{theorem}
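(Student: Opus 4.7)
The plan is to deduce tightness of $\hat{Z}^n = Z^n\circ (A^n)^{-1}$ in $L^1_{loc}$-topology by applying the continuous mapping theorem to the composition map, exploiting the fact that composition is continuous from $J_1\times M_1$ into $L^1_{loc}$ at pairs whose second coordinate is strictly increasing. This is the natural weakening of the strategy used in Corollary \ref{cor4.3}, which required the much stronger $U$-topology convergence of $(A^n)^{-1}$ from Theorem \ref{PCAFconvergence}(2); here we only have $M_1$-tightness, which forces a weaker target topology.

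First, I would upgrade the marginal tightness of $Z^n$ (from Assumption \ref{assumption}(1), which gives weak convergence in $J_1$ and hence tightness) and the marginal $M_1$-tightness of $(A^n)^{-1}$ (Theorem \ref{AinvM1tight}) to joint tightness of $\{(Z^n,(A^n)^{-1})\}_n$ on the product space $(D[0,\infty),J_1)\times(D[0,\infty),M_1)$. Since both topologies are Polish, marginal tightness yields joint tightness in the product topology.

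Second, I would pass to any weakly convergent subsequence of the joint laws, whose limit is some pair $(Z^\infty,\tilde{A}^{-1})$: the first coordinate is determined by Assumption \ref{assumption}(1), and by Lemma \ref{lemma11} the second coordinate $\tilde{A}^{-1}$ is strictly increasing $\mathbb{P}_x^{Z^\infty}\otimes\mathbb{P}^X$-almost surely. Third, I would invoke the composition-continuity result stating that $(f,g)\mapsto f\circ g$ is continuous from $(D[0,\infty),J_1)\times(D[0,\infty),M_1)$ into $(D[0,\infty),L^1_{loc})$ at every pair $(f,g)$ for which $g$ is strictly increasing. This is a standard statement in Whitt's framework of Skorokhod topologies and should be recorded (or cited) in Appendix \ref{AppSko}. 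Applying the continuous mapping theorem, $\hat{Z}^n$ converges in law along the subsequence in $L^1_{loc}$-topology, whence the full family is $L^1_{loc}$-tight.

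The main obstacle I anticipate is locating and formulating the precise composition-continuity statement: convergence under $J_1$ of $f_n\to f$ combined with mere $M_1$-convergence of $g_n\to g$ generally fails to yield $J_1$-convergence of $f_n\circ g_n$, but when $g$ is strictly increasing the discrepancies occur on sets of arbitrarily small Lebesgue measure, so $L^1_{loc}$-convergence still holds. Verifying this (or citing it) carefully at the required class of limit pairs is the only non-routine step; once it is in hand the rest is a direct application of the continuous mapping theorem together with Lemma \ref{lemma11}.
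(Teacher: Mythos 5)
Your proposal is correct and follows essentially the same route as the paper: the paper's proof of Theorem \ref{theorem12} cites exactly Assumption \ref{assumption}(1) for $J_1$-tightness of $Z^n$, Theorem \ref{AinvM1tight} for $M_1$-tightness of $(A^n)^{-1}$, Lemma \ref{lemma11} for strict monotonicity of subsequential limits, and Theorem \ref{J1M1L1} of Appendix \ref{AppSko}, which is precisely the composition-continuity statement ($J_1\times M_1 \to L^1_{loc}$, continuous at $(C^d\times D_{\uparrow})\cup(D^d\times D_{\upuparrows})$) that you anticipated as the only non-routine ingredient. The paper indeed records and proves that statement in the appendix, so nothing is missing from your argument.
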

\begin{proof}
It follows from Assumption \ref{assumption} (1), Lemmas \ref{AinvM1tight}, \ref{lemma11} and Theorem \ref{J1M1L1}.
\end{proof}

We prove that \(\{A^n\}_n\) is also tight with respect to \(M_1\)-topology and its subsequential limit is strictly increasing as follows. We will use these properties to prove the weak convergence of \(A^n\) and \((A^n)^{-1}\) with respect to \(U\)-topology.
\begin{lemma}\label{AM1tight}
The family of the distributions of \(A^n\) under \(\mathbb{P}_{x_n}^{Z^n}\otimes \mathbb{P}^X\) is tight with respect to \(M_1\)-topology. 
\end{lemma}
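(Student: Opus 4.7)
The plan is to apply Theorem \ref{M1cri2} directly to $A^n$, paralleling the proof of Theorem \ref{AinvM1tight}. Since $A^n$ is non-decreasing $\mathbb{P}_{x_n}^{Z^n}\otimes \mathbb{P}^X$-almost surely, criterion (2) of Theorem \ref{M1cri2} (which controls interior $M_1$-oscillation via monotonicity) holds automatically, just as it did for $(A^n)^{-1}$. So only two ingredients remain.

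First, I would establish the supremum tightness
\[\lim_{c\to \infty} \varlimsup_{n\to \infty} \mathbb{P}_{x_n}^{Z^n}\otimes \mathbb{P}^X\left(\sup_{t\leq T} A^n_t > c\right) = 0.\]
By monotonicity, $\sup_{t\leq T} A^n_t = A^n_T$, and by Proposition \ref{PCAF_Tconv} the random variable $A^n_T$ converges weakly to the finite random variable $\tilde{A}_T$; in particular, $\{A^n_T\}_n$ is tight on $\mathbb{R}$, which gives the required bound.

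Second, I would verify the boundary oscillation condition at $t=0$. Since $A^n_0=0$ and $A^n$ is non-decreasing, $v_T(A^n,0,\delta)$ reduces to $A^n_\delta$. By Fubini's theorem together with the normalization $\mathbb{E}^{X^n}[\exp(\gamma X^n(y)-\tfrac{\gamma^2}{2}\mathbb{E}^{X^n}[X^n(y)^2])]=1$, one has $\mathbb{E}_{x_n}^{Z^n}\mathbb{E}^{X^n}[A^n_\delta]=\delta$. Markov's inequality then gives, for every $\eta>0$,
\[\mathbb{P}_{x_n}^{Z^n}\otimes \mathbb{P}^X\!\left(v_T(A^n,0,\delta)\geq \eta\right) = \mathbb{P}_{x_n}^{Z^n}\otimes \mathbb{P}^X(A^n_\delta\geq \eta)\leq \frac{\delta}{\eta},\]
which tends to $0$ as $\delta\to 0$, uniformly in $n$.

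Combining monotonicity, the sup-tightness, and the boundary modulus estimate with Theorem \ref{M1cri2} yields the $M_1$-tightness of $\{A^n\}_n$. I do not anticipate any serious obstacle: this argument is actually simpler than the one for $(A^n)^{-1}$, because the explicit first-moment identity $\mathbb{E}[A^n_\delta]=\delta$ gives the boundary oscillation bound by a one-line application of Markov's inequality, without needing to route through the distributional limit $\tilde{A}$.
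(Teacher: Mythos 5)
Your proposal is correct and matches the paper's argument: the paper's one-line proof likewise combines Theorem \ref{M1cri2} with monotonicity (which disposes of the $w_T$-criterion) and the Markov bound $\mathbb{P}_{x_n}^{Z^n}\otimes\mathbb{P}^X(A^n_T\geq\eta)\leq T/\eta$ coming from $\mathbb{E}[A^n_T]=T$, which handles both the supremum bound and the oscillation at $t=0$. Your only deviation is invoking Proposition \ref{PCAF_Tconv} for the supremum bound where the same Markov inequality already suffices, but this is immaterial.
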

\begin{proof}
This follows from the fact that \(\mathbb{P}_{x_n}^{Z^n}\otimes \mathbb{P}^X(A_T^n\geq \eta)\leq T/\eta\) and Theorem \ref{M1cri2}.
\end{proof}

\begin{lemma}\label{lemma14}
Any subsequential limit \(\tilde{A}\) of \(\{A^{n}\}_{n}\) is strictly increasing, \(\mathbb{P}_x^{Z^{\infty}}\otimes \mathbb{P}^X\)-almost surely.
\end{lemma}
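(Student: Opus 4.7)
The strategy mirrors that of Lemma \ref{lemma11}. First, since \(\tilde A\) arises as an \(M_1\)-subsequential limit of the non-decreasing continuous functions \(\{A^n\}_n\), Whitt's \cite[Lemma 13.2.3]{W} forces \(\tilde A\) to be non-decreasing \(\mathbb{P}_x^{Z^\infty} \otimes \mathbb{P}^X\)-a.s. Moreover, Lemma \ref{lemma11} already gives that \(\tilde A^{-1}\) is strictly increasing, which is equivalent to \(\tilde A\) having no jumps; hence \(\tilde A\) is continuous a.s. By countable subadditivity over pairs of rationals, combined with monotonicity and continuity of \(\tilde A\), it suffices to show \(\mathbb{P}_x^{Z^\infty} \otimes \mathbb{P}^X(\tilde A_t = \tilde A_s) = 0\) for each fixed \(0 \leq s < t\).

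Write \(\mathbb{P}(\tilde A_t = \tilde A_s) = \lim_{N\to\infty} \mathbb{P}(\tilde A_t - \tilde A_s < 1/N)\) and apply the open-set direction of Theorem \ref{whitt} (using the continuity of \(\tilde A\)) to obtain
\[\mathbb{P}(\tilde A_t - \tilde A_s < 1/N) \leq \varliminf_n \mathbb{P}_{x_n}^{Z^n} \otimes \mathbb{P}^X(A^n_t - A^n_s < 1/N).\]
Next, using the additive-functional identity \(A^n_t - A^n_s = A^n_{t-s} \circ \theta_s\), the strong Markov property of \(Z^n\) at \(s\), and the joint spatial translation invariance of \((Z^n, X^n)\) (valid in case (A) from translation invariance of the symmetric stable process and its Green's function, and in case (B) from lattice translation invariance along \(D_n\)), I will reduce the right-hand side to \(\mathbb{P}_0^{Z^n} \otimes \mathbb{P}^X(A^n_{t-s} < 1/N)\).

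Applying Proposition \ref{PCAF_Tconv} with starting point \(0\) then gives, for every \(1/N\) that is a continuity point of the distribution function of \(\tilde A_{t-s}\) (a cocountable set of \(N\)),
\[\varliminf_n \mathbb{P}_0^{Z^n} \otimes \mathbb{P}^X(A^n_{t-s} < 1/N) = \mathbb{P}_0^{Z^\infty} \otimes \mathbb{P}^X(\tilde A_{t-s} < 1/N).\]
Sending \(N \to \infty\) through such continuity points yields \(\mathbb{P}(\tilde A_t = \tilde A_s) \leq \mathbb{P}_0^{Z^\infty} \otimes \mathbb{P}^X(\tilde A_{t-s} = 0)\). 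Proposition \ref{PCAF_Tconv} identifies the law of \(\tilde A_{t-s}\) with that of the Liouville PCAF \(A^\infty_{t-s}\), and strict positivity of \(A^\infty_u\) for \(u > 0\) is part of the construction of Liouville Brownian motion in \cite{GRV, AK} for \(d=2\) and of the Liouville Cauchy process in \cite{Bav} for \(d=1\); this closes the argument.

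The main obstacle I anticipate is carefully justifying the stationarity reduction in case (B), because one must integrate out the random point \(Z^n_s \in D_n\). I plan to handle this by observing that for every \(y \in D_n\) the joint distribution of \((Z^n - y, X^n(\cdot + y))\) under \(\mathbb{P}_y^{Z^n} \otimes \mathbb{P}^X\) coincides with that of \((Z^n, X^n)\) under \(\mathbb{P}_0^{Z^n} \otimes \mathbb{P}^X\), while the functional \(A^n_{t-s}\) is invariant under this joint shift; hence the inner probability \(\mathbb{P}_y^{Z^n} \otimes \mathbb{P}^X(A^n_{t-s} < 1/N)\) is independent of \(y \in D_n\), and the reduction goes through.
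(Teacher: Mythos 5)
Your overall template (reduce to showing \(\mathbb{P}(\tilde A_t=\tilde A_s)=0\), apply a portmanteau bound, then use positivity of the limiting increment) matches the skeleton of the paper's argument, which is modelled on Lemma \ref{lemma11}. However, the central reduction step does not survive in the paper's generality. You reduce \(\mathbb{P}_{x_n}^{Z^n}\otimes\mathbb{P}^X(A^n_t-A^n_s<1/N)\) to \(\mathbb{P}_0^{Z^n}\otimes\mathbb{P}^X(A^n_{t-s}<1/N)\) using ``joint spatial translation invariance of \((Z^n,X^n)\),'' justified in case (A) by ``translation invariance of the symmetric stable process.'' But in case (A) the approximating processes \(Z^n\) are arbitrary strongly recurrent \(m\)-symmetric Hunt processes with continuous densities --- only the limit \(Z^{\infty}\) is a symmetric stable process --- and the covariance \(\pi g_{\lambda}^n(x,y)\) of \(X^n\) is not assumed to depend only on \(x-y\). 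So neither \(Z^n\) nor \(X^n\) is translation invariant under Assumption \ref{assumption}, and the identity \(\mathbb{P}_y^{Z^n}\otimes\mathbb{P}^X(A^n_{t-s}<1/N)=\mathbb{P}_0^{Z^n}\otimes\mathbb{P}^X(A^n_{t-s}<1/N)\) is unavailable. Even granting the Markov decomposition at time \(s\), you would still need to control \(\mathbb{P}_y^{Z^n}\otimes\mathbb{P}^X(A^n_{t-s}<1/N)\) uniformly over the random position \(y=Z^n_s\), whereas Proposition \ref{PCAF_Tconv} is a statement for each fixed deterministic starting point.

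The paper's proof circumvents exactly this: it first invokes Lemma \ref{newlemma} to replace \(A^n\) by \(A^{n,\infty}\), the PCAF of \(\mu^n\) run along the \emph{fixed} limit process \(Z^{\infty}\); Lemma \ref{PCAFcor} then gives \(L^1(\mathbb{P}^X)\)-convergence of \(A^{n,\infty}_t-A^{n,\infty}_s\) to \(\tilde A_t-\tilde A_s\) for \(\mathbb{P}_x^{Z^{\infty}}\)-a.e.\ fixed path, and the non-degeneracy supplied by Lemma \ref{lemma7} (which holds for every starting point) closes the argument as in Lemma \ref{lemma11}. Your proposal never uses Lemma \ref{newlemma} or \(A^{n,\infty}\), and that is the missing ingredient. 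Two smaller points: deducing continuity of \(\tilde A\) from Lemma \ref{lemma11} presumes that the functional inverse of the subsequential limit of \(\{A^n\}_n\) is a subsequential limit of \(\{(A^n)^{-1}\}_n\); this identification is only established in Proposition \ref{lemma15}, which itself cites Lemma \ref{lemma14}, so you should instead restrict \(s,t\) to the dense set of a.s.\ continuity times via \cite[Lemma VI.3.12]{JS}, as is done in Lemma \ref{lemma11}. Your final appeal to strict positivity of \(A^{\infty}_u\) is fine and plays the role that Lemma \ref{lemma7} plays in the paper.
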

\begin{proof}
By Lemma \ref{newlemma}, we may replace \(A^n\) to \(A^{n, \infty}\). By Lemma \ref{PCAFcor}, this follows from Lemma \ref{lemma7} and the same way as the proof of Lemma \ref{lemma11}.
\end{proof}

\begin{proposition}\label{lemma15}
\((1)\)\ The family of the distributions under \(\mathbb{P}_{x_n}^{Z^n}\otimes \mathbb{P}^X\) of \(A^n\) is \(U\)-tight and a subsequential limit of \(\{A^n\}_n\) is continuous \(\mathbb{P}_x^{Z^{\infty}}\otimes \mathbb{P}^X\)-almost surely.\\
\((2)\)\ The family of the distributions under \(\mathbb{P}_{x_n}^{Z^n}\otimes \mathbb{P}^X\) of \((A^n)^{-1}\) is \(U\)-tight and a subsequential limit of \(\{(A^n)^{-1}\}_n\) is continuous \(\mathbb{P}_x^{Z^{\infty}}\otimes \mathbb{P}^X\)-almost surely.
\end{proposition}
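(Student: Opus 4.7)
The strategy is to combine the $M_1$-tightness already established in Lemma \ref{AM1tight} and Theorem \ref{AinvM1tight} with the strict monotonicity of subsequential limits established in Lemmas \ref{lemma14} and \ref{lemma11}, exploiting the classical duality for non-decreasing c\`adl\`ag functions $f\colon[0,\infty)\to[0,\infty)$ with $f(0)=0$ and $\lim_{t\to\infty}f(t)=\infty$: the function $f$ is continuous if and only if its right-continuous inverse $f^{-1}(t)=\inf\{s\geq 0: f(s)>t\}$ is strictly increasing. In particular, once both a limit and its inverse are known to be strictly increasing, both are automatically continuous.

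Starting from an arbitrary subsequence, I would first apply the two tightness statements in turn to extract a further sub-subsequence along which $A^n$ and $(A^n)^{-1}$ converge jointly in $M_1$-topology, to limits which I denote $\tilde{A}$ and $\tilde{A}^{-1}$. Since the set of non-decreasing c\`adl\`ag functions is $M_1$-closed (see \cite[Lemma 13.2.3]{W} as used in the proof of Lemma \ref{lemma11}), both limits are non-decreasing $\mathbb{P}_x^{Z^{\infty}}\otimes\mathbb{P}^X$-a.s. Moreover, since each $(A^n)^{-1}$ is the right-continuous generalized inverse of the non-decreasing c\`adl\`ag function $A^n$, passing to the $M_1$-limit (using the continuity of the inversion map on non-decreasing c\`adl\`ag functions in $M_1$-topology from Whitt's framework) shows that $\tilde{A}^{-1}$ is a.s.\ the right-continuous inverse of $\tilde{A}$.

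Invoking now Lemma \ref{lemma14} and Lemma \ref{lemma11}, both $\tilde{A}$ and $\tilde{A}^{-1}$ are strictly increasing a.s. By the duality recalled above, this forces each of them to be continuous: a jump of $\tilde{A}$ would create a flat piece of $\tilde{A}^{-1}$, contradicting strict monotonicity of the inverse, and symmetrically for $\tilde{A}^{-1}$. Since $M_1$-convergence to a continuous limit is equivalent to local uniform convergence (a standard fact recalled in Appendix \ref{AppSko}), the joint $M_1$-convergence along the chosen sub-subsequence is in fact $U$-convergence. As the initial subsequence was arbitrary, this gives $U$-tightness of $\{A^n\}_n$ and $\{(A^n)^{-1}\}_n$ together with continuity of every subsequential limit, proving both (1) and (2).

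The main delicate point is the identification in the second paragraph of $\tilde{A}^{-1}$ with the right-continuous inverse of $\tilde{A}$; approached naively this can feel circular, but it can be made rigorous by reading off the inverse at continuity points of both limits via the pointwise characterizations in Whitt's theory and then extending by right-continuity, without any prior use of the strict monotonicity statements.
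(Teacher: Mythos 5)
Your proposal is correct and uses the same ingredients as the paper: the $M_1$-tightness statements (Lemma \ref{AM1tight}, Theorem \ref{AinvM1tight}), the strict monotonicity of subsequential limits (Lemmas \ref{lemma11} and \ref{lemma14}), Whitt's results on the inverse map, and the fact that $M_1$-convergence to a continuous limit is local uniform convergence. The only real difference is how the inverse-map continuity is deployed, and here the paper's bookkeeping neatly sidesteps the step you yourself flag as delicate: rather than first identifying the $M_1$-limit of $(A^{n_i})^{-1}$ with the right-continuous inverse of the $M_1$-limit of $A^{n_i}$, the paper applies the inverse map directly to the sequence $(A^{n_i})^{-1}$ (which tautologically returns $A^{n_i}$) and invokes \cite[Corollary 13.6.4]{W} -- continuity of inversion from $M_1$ to $U$ at the strictly increasing limit $\tilde{B}$ -- to conclude at once that $A^{n_i}$ converges in $U$ to $\tilde{B}^{-1}$, which is automatically continuous as the inverse of an unbounded strictly increasing function; the identification $\tilde{A}=\tilde{B}^{-1}$ then falls out by uniqueness of weak limits, and \cite[Lemma 13.6.5]{W} handles the inverse direction. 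Your route via the jump/flat-spot duality is sound, but if you keep it you should justify the limit-of-inverses-equals-inverse-of-limit step with a precise citation (Whitt's $M_1$-continuity of the inverse map on unbounded non-decreasing functions) rather than the informal argument sketched in your last paragraph.
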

\begin{proof}
By Lemmas \ref{AinvM1tight} and \ref{AM1tight}, \(\{A^n\}_n\) and \(\{(A^n)^{-1}\}_n\) are \(M_1\)-tight, respectively. So, for any sequence \(n_i\), there exists a subsequence \(\{n'_i\}\) and \(\tilde{A}, \tilde{B}\) such that \(A^{n'_i}\) converges weakly to \(\tilde{A}\) with respect to \(M_1\)-topology and  \((A^{n'_i})^{-1}\) converges weakly to \(\tilde{B}\) with respect to \(M_1\)-topology. For convenience, we write the subsequence \(\{n_i\}\) as \(\{n'_i\}\).

By Lemma \ref{lemma11} and Lemma \ref{lemma14}, \(\tilde{A}\) and \(\tilde{B}\) are unbounded strictly increasing c\`{a}dl\`{a}g processes. So, by the continuity of an inverse operator at a strictly increasing function \cite[Corollary 13.6.4]{W}, it holds that \(((A^{n_i})^{-1})^{-1}=A^{n_i}\) converges weakly to \(\tilde{B}^{-1}\) with respect to \(U\)-topology. So we have \(\tilde{A}=\tilde{B}^{-1}\) in distribution and \(\tilde{A}\) is continuous because \(C[0,\infty)\) is the closed set of \(D[0,\infty)\) with respect to \(U\)-topology. By \cite[Lemma 13.6.5]{W}, \(\tilde{A}^{-1}\) is also continuous, and so \((A^{n_i})^{-1}\) converges weakly to \(\tilde{A}^{-1}\) with respect to \(U\)-topology.
\end{proof}

The proof of the following Proposition \ref{PCAFlem} is postponed to Section \ref{secPCAFlem}.
\begin{proposition}\label{PCAFlem}
\(\tilde{A}\) and \(A^{\infty}\) have the same distribution under \(\mathbb{P}_x^{Z^{\infty}} \otimes \mathbb{P}^{X}\) for any \(x\in \R^d\).
\end{proposition}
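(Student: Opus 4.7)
The plan is to identify $\tilde{A}$ with $A^{\infty}$ through the quenched Revuz correspondence: for $\mathbb{P}^{X^{\infty}}$-a.e.\ configuration, the PCAF of $Z^{\infty}$ in the strict sense with Revuz measure $\mu^{\infty}$ is unique up to equivalence, so it suffices to show that $\tilde{A}$ is such a PCAF. Continuity and strict monotonicity of $\tilde{A}$ are already provided by Proposition \ref{lemma15} and Lemma \ref{lemma14}. For additivity, I would work with the approximants $A^{n,\infty}$ rather than $A^n$: each $A^{n,\infty}$ is a bona fide additive functional of $Z^{\infty}$ for a frozen realisation of $X^n$, and by Lemmas \ref{PCAFcor} and \ref{newlemma} we have $L^1$-convergence $A^n_T, A_T^{n,\infty} \to \tilde{A}_T$ for each $T$; extracting a diagonal subsequence and invoking Fubini upgrades this to almost-sure convergence on a common space, so the additivity identity $\tilde{A}_{t+s} = \tilde{A}_t + \tilde{A}_s \circ \theta_t$ is inherited from the $A^{n,\infty}$.

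The key step is to identify the quenched Revuz measure of $\tilde{A}$. Because $A^{n,\infty}(\omega_X,\cdot)$ is, for each fixed $\omega_X \in \Omega^X$, a PCAF of $Z^{\infty}$ with Revuz measure $\mu^n(\omega_X)$, the deterministic identity
\[
\mathbb{E}_x^{Z^{\infty}}\Bigl[\int_0^{\infty} e^{-\lambda t} f(Z_t^{\infty})\, dA_t^{n,\infty}(\omega_X)\Bigr] = \int_{\mathbb{R}^d} g_{\lambda}^{\infty}(x,y)\, f(y)\, d\mu^n(\omega_X, y)
\]
holds for every $f \in C_c(\mathbb{R}^d)$. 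Along the chosen subsequence, the right-hand side converges for $\mathbb{P}^{X^{\infty}}$-a.e.\ $\omega_X$ to $\int g_{\lambda}^{\infty}(x,y) f(y)\, d\mu^{\infty}(\omega_X,y)$ by Theorem \ref{Shamov}. The left-hand side converges to the analogous expression for $\tilde{A}$ by splitting $[0,\infty) = [0,T] \cup [T,\infty)$, applying Fubini and Lemma \ref{PCAFcor} on $[0,T]$, and controlling the tail through the uniform second-moment bounds on $A_T^{n,\infty}$ provided by Kahane's convexity inequality and Assumption \ref{assumption}(2). This identifies the quenched $\lambda$-potential, and hence the quenched Revuz measure, of $\tilde{A}$ as $\mu^{\infty}$, $\mathbb{P}^{X^{\infty}}$-almost surely.

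Uniqueness in the Revuz correspondence (see \cite{CF}) then forces $\tilde{A}(\omega,\cdot)$ and $A^{\infty}(\omega,\cdot)$ to be equivalent PCAFs of $Z^{\infty}$ in the strict sense for $\mathbb{P}^{X^{\infty}}$-a.e.\ $\omega$, giving the desired equality in distribution under $\mathbb{P}_x^{Z^{\infty}} \otimes \mathbb{P}^{X^{\infty}}$. The main obstacle is the joint passage to the limit: the convergences $A^{n,\infty} \to \tilde{A}$ (in $L^1$ per $T$) and $\mu^n \to \mu^{\infty}$ (in $L^1(\mathbb{P}^X)$, vaguely) must be realised simultaneously on a common probability space, with enough uniform integrability to justify exchanging the limit with the integral $\int_0^{\infty} e^{-\lambda t}(\cdot)\, dt$. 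Once the tail of $A^{n,\infty}$ is controlled uniformly in $n$ --- via Kahane's inequality, the logarithmic divergence of $g_{\lambda}^{\infty}$ from Lemma \ref{Green'slemma}, and Assumption \ref{assumption}(2) --- the identification of Revuz measures reduces to a careful bookkeeping of these limits.
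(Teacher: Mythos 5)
Your overall strategy --- realize $\tilde{A}$ as a PCAF of $Z^{\infty}$, identify its Revuz measure as $\mu^{\infty}$, and conclude by uniqueness in the Revuz correspondence --- is exactly the strategy of the paper (Propositions \ref{PCAFlem2} and \ref{PCAFlem3}). However, two steps are genuinely incomplete. First, the identification of the quenched potential: you test the convergence $\mu^n\to\mu^{\infty}$ against $y\mapsto g_{\lambda}^{\infty}(x,y)f(y)$, but this function is unbounded (it has a logarithmic singularity at $y=x$ by Lemma \ref{Green'slemma}), whereas Theorem \ref{Shamov} only gives $L^1(\mathbb{P}^X)$-convergence of $\int f\,d\mu^n$ for $f\in C_c(\R^d)$. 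You would need an additional uniform-integrability estimate for the mass of $\mu^n$ near the singularity to justify this limit; the paper sidesteps the issue entirely by working with the finite-horizon quantity $\int_{\R^d}\int_0^t P_s^{\infty}h\,ds\cdot f\,d\mu^n$, whose integrand is bounded, continuous and compactly supported by the strong Feller property of $Z^{\infty}$, so that Theorem \ref{Shamov} applies verbatim, and the Revuz measure is then read off by taking $h=1$, dividing by $t$ and letting $t\searrow 0$.

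Second, and more seriously, the assertion that additivity is ``inherited from the $A^{n,\infty}$'' after a diagonal extraction hides the main technical content of the paper's argument. The subsequence along which $A^{n,\infty}$ converges almost surely depends on the pair $(\omega,\omega')$ and on the time horizon, and to obtain a PCAF in the strict sense one must produce a single $\mathscr{F}^X\otimes\mathscr{F}^{Z^{\infty}}_{\infty}$-measurable defining set $\Lambda$ with $\mathbb{P}^{Z^{\infty}}_x(\Lambda^{\omega})=1$ for \emph{every} $x\in\R^d$ (not merely $m$-a.e.\ $x$) and with $\theta_t\Lambda^{\omega}\subset\Lambda^{\omega}$. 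The paper's Proposition \ref{PCAFlem2} achieves this by defining two-parameter limits $\tilde{A}_{s,t}$ away from time $0$, proving shift-invariance of the sets $\Lambda_t$ (Lemma \ref{shiftlem1}), upgrading full measure from almost every starting point to every starting point via the Markov property (Lemma \ref{PCAFprob1lem}), and only then letting $s\searrow 0$ to recover $\bar{A}_t$; the conclusion is a process $\bar{A}$ equal in law to $\tilde{A}$ that is an honest strict-sense PCAF. Without some such construction, $\tilde{A}$ is only a weak limit in $D[0,\infty)$ and one cannot yet speak of ``the quenched Revuz measure of $\tilde{A}$.'' Your proposal correctly flags the joint passage to the limit as the obstacle, but its resolution requires this full construction rather than bookkeeping of $L^1$-limits.
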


\begin{proof}[Proof of Theorem \ref{PCAFconvergence}]
By Proposition \ref{lemma15}, for any sequence, there exit a subsequence \(\{n_i\}\) and continuous process \(\check{A}\) such that \(A^{n_i}\) converges weakly to \(\check{A}\) with respect to \(U\)-topology. By Theorem \ref{whitt}, \(A^{n_i}\) converges to \(\check{A}\) in finite-dimensional distribution. By Lemma \ref{PCAF_Tconv}, \(A^{n_i}_t\) converges weakly to \(\tilde{A}_t\) for any \(t\). Thus, for any \(t\), it holds that \(\mathbb{P}_x^{Z^{\infty}}\otimes \mathbb{P}^X(\tilde{A}_t=\check{A}_t)=1\) and, by \cite[Problem 1.1.5]{KS}, we have \(\mathbb{P}_x^{Z^{\infty}}\otimes \mathbb{P}^X(\tilde{A}_t=\check{A}_t\ \text{ for\ any\ }t)=1\). Therefore \(\tilde{A}\) and \(\check{A}\) have the same distribution under \(\mathbb{P}_x^{Z^{\infty}}\otimes \mathbb{P}^X\) with respect to \(U\)-topology, and this yields that \(A^n\) converges weakly to \(\tilde{A}\) with respect to \(U\)-topology.

In the same way as the proof of Proposition \ref{lemma15}, \((A^n)^{-1}\) converges weakly to \(\tilde{A}^{-1}\) with respect to \(U\)-topology. 

Combining these with Proposition \ref{PCAFlem} completes the proof.
\end{proof}

\subsection{Convergence of finite-dimensional distributions}\label{seccfdd}
In this subsection, we prove the convergence of the pairs \((Z^n, A^n)\) under \(\mathbb{P}_{x_n}^{Z^n}\otimes \mathbb{P}^X\) to \((Z^{\infty}, A^{\infty})\) under \(\mathbb{P}_{x}^{Z^{\infty}}\otimes \mathbb{P}^X\) in finite-dimensional distribution. Recall that we defined
\[A^{n,\infty}_T:= \int_0^T \exp{(\gamma X^n(Z_s^{\infty})-\frac{\gamma^2}{2} \E^{X^n}[X^n(Z^{\infty}_s)^2])}ds \] 
for \(n\in \mathbb{N}\), and \(\tilde{A}_T\) as the weak limit of \(A^n_T\) for any \(T\geq 0\) in the previous subsection. Remark that \(\tilde{A}\) has the same distribution as that of \(A^{\infty}\) by Proposition \ref{PCAFlem}. Throughout of this subsection, we take  \(\gamma \in (0,\sqrt{d/C^*})\).

\begin{lemma}\label{lemma18}
The pair \((Z^n, A^n)\) under \(\mathbb{P}_{x_n}^{Z^n}\otimes \mathbb{P}^X\) converges to \((Z^{\infty}, A^{\infty})\) under \(\mathbb{P}_{x}^{Z^{\infty}}\otimes \mathbb{P}^X\) in finite-dimensional distribution on the continuity point of \((Z^{\infty}, A^{\infty})\) for any \(x\in \R^d\).
\end{lemma}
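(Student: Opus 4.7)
The plan is to exploit the Skorokhod coupling introduced in Subsection \ref{sectight} together with Lemmas \ref{PCAFcor} and \ref{newlemma} to upgrade the one-time convergence to joint convergence at any finite collection of times that are almost surely continuity points of $(Z^{\infty}, A^{\infty})$.

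First, recall that on the common probability space $(\Omega^Z, \mathbb{P}_x^Z)$ we have $\tilde{Z}^n \to \tilde{Z}^{\infty}$ almost surely in Skorokhod's $J_1$ topology, and we construct $\tilde{A}^n$ and $\tilde{A}^{n,\infty}$ from the same Gaussian fields $X^n$ on $\Omega^X$. Fix any $0 \le t_1 < \cdots < t_k$ such that, almost surely, each $t_i$ is a continuity point of $\tilde{Z}^{\infty}$. Then $\tilde{Z}^n_{t_i} \to \tilde{Z}^{\infty}_{t_i}$ almost surely on $\Omega^Z$, while Lemma \ref{newlemma} applied with $T = t_k$ gives $\tilde{A}^n_{t_i} - \tilde{A}^{n,\infty}_{t_i} \to 0$ in $L^2(\mathbb{P}_x^Z \otimes \mathbb{P}^X)$, and Lemma \ref{PCAFcor} yields $\tilde{A}^{n,\infty}_{t_i} \to \tilde{A}_{t_i}$ in $L^1(\mathbb{P}^X)$ for $\mathbb{P}_x^{Z^{\infty}}$-a.e.\ $\omega$, hence in probability under the product measure. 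Combining these gives $\tilde{A}^n_{t_i} \to \tilde{A}_{t_i}$ in probability on $\Omega^Z \times \Omega^X$.

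These two convergences together imply the joint convergence in probability (hence in distribution)
\[
(\tilde{Z}^n_{t_1}, \dots, \tilde{Z}^n_{t_k}, \tilde{A}^n_{t_1}, \dots, \tilde{A}^n_{t_k}) \longrightarrow (\tilde{Z}^{\infty}_{t_1}, \dots, \tilde{Z}^{\infty}_{t_k}, \tilde{A}_{t_1}, \dots, \tilde{A}_{t_k})
\]
under $\mathbb{P}_x^Z \otimes \mathbb{P}^X$. Since $\tilde{Z}^n$ has the same law as $Z^n$ and $\tilde{A}^n$ is obtained from $\tilde{Z}^n$ and $X^n$ by the same functional recipe that produces $A^n$ from $Z^n$ and $X^n$, the left-hand side has the same joint law as $(Z^n_{t_1}, \dots, Z^n_{t_k}, A^n_{t_1}, \dots, A^n_{t_k})$ under $\mathbb{P}_{x_n}^{Z^n} \otimes \mathbb{P}^X$. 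On the right-hand side, Proposition \ref{PCAFlem} combined with the fact that $\tilde{A}$ is a measurable functional of $\tilde{Z}^{\infty}$ and $X^{\infty}$ (via the $L^2$-limit in Lemma \ref{PCAFcor}) allows us to replace $(\tilde{Z}^{\infty}, \tilde{A})$ by $(Z^{\infty}, A^{\infty})$, delivering the claimed finite-dimensional distributional convergence.

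The main difficulty I expect is the last identification step: individually $\tilde{A} \stackrel{d}{=} A^{\infty}$ is exactly the content of Proposition \ref{PCAFlem}, but the lemma requires joint convergence with the base process. One must therefore argue that the functional construction of $\tilde{A}$ from $(\tilde{Z}^{\infty}, X^{\infty})$ matches the PCAF construction of $A^{\infty}$ from $(Z^{\infty}, X^{\infty})$, so that the joint laws coincide; this is a refinement that should be read off from the proof of Proposition \ref{PCAFlem}. Once that is in hand, everything else is a routine combination of almost sure and $L^p$ convergences.
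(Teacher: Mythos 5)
Your proposal is correct and follows essentially the same route as the paper: Skorokhod coupling of the base processes, Lemma \ref{newlemma} to pass from \(A^n\) to \(A^{n,\infty}\), Lemma \ref{PCAFcor} to pass from \(A^{n,\infty}\) to \(\tilde{A}\), and the Section \ref{secPCAFlem} identification of \(\tilde{A}\) with \(A^{\infty}\); the paper merely phrases the combination via bounded Lipschitz test functions rather than convergence in probability of the vectors. The joint-law identification you flag as the main difficulty is indeed resolved exactly where you point: Propositions \ref{PCAFlem2} and \ref{PCAFlem3} build \(\bar{A}\) as an almost-sure subsequential limit on the same space as \(Z^{\infty}\) and show it is a PCAF corresponding to \(\mu^{\infty}\), so it coincides with \(A^{\infty}\) jointly with the path by the uniqueness in the Revuz correspondence.
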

\begin{proof}

Take any positive integer \(k\), and \(\{t_i\}_{1\leq i \leq k}\) satisfying \(0\leq t_1<t_2<\cdots <t_k\) and \(\mathbb{P}_{x}^{Z^{\infty}}(Z_{t_i}^{\infty}=Z_{t_{i-}}^{\infty})=1\). For any bounded Lipschitz function \(F:\Pi_{i=1}^k (\R^d \times [0,\infty))\to \R\), by Lemma \ref{PCAFcor}, we have
\begin{eqnarray}
\nonumber &&\varlimsup_{n\to \infty} \left| \mathbb{E}_x^{Z^{\infty}}\mathbb{E}^X[F(Z^{\infty}_{t_1}, A_{t_1}^{n,\infty},Z^{\infty}_{t_2},\cdots ,A_{t_k}^{n,\infty})]-\mathbb{E}_x^{Z^{\infty}}\mathbb{E}^X[F(Z^{\infty}_{t_1}, A_{t_1}^{\infty},Z^{\infty}_{t_2},\cdots ,A_{t_k}^{\infty})]  \right|\\
&\leq &\varlimsup_{n\to \infty} C\sum_{i=1}^k \mathbb{E}_x^{Z^{\infty}}\mathbb{E}^X|A_{t_i}^{n,\infty}-A_{t_i}^{\infty}|\  = \ 0. \label{eq:new3}
\end{eqnarray}

We define the random functions \(G^n:\D \to \Pi_{i=1}^k (\R^d \times [0,\infty))\) by 
\[G^n(z):=\left(z_{t_1}, \int_0^{t_1}e^{\tilde{X}^n(z_s)}ds, \cdots, z_{t_k}, \int_0^{t_k}e ^{\tilde{X}^n(z_s)}ds\right),\]
where \(\tilde{X^n}(y):=\gamma X^n(y)-\frac{\gamma^2}{2} \EX [X^n(y)^2]\) for \(y\in \R^d.\) By Skorokhod's representation theorem, we may assume that \(\{Z^n\}_n\) for all \(m\in \mathbb{N}\cup \{\infty\}\) are on the same probability space \((\Omega, \mathbb{P}_x^{Z})\). For \(\tilde{A}^{n,\infty}_T:= \int_0^T \exp{(\gamma X^n(\tilde{Z}_s^{\infty})-\frac{\gamma^2}{2} \E^{X^n}[X^n(\tilde{Z}^{\infty}_s)^2])}ds\), by Lemma \ref{newlemma}, 
\begin{equation}
\left|\EX\E_x^{Z}[FG^n(Z^n)]-\EX\E_x^{Z}[FG^n(Z^{\infty})]
\right|^2\leq C \EX\E_x^{Z} \left[\sum_{i=1}^k |Z^n_{t_i}-Z^{\infty}_{t_i}|^2 + |A^{n}_{t_i}-A_{t_i}^{n,\infty}  
|^2\right] \label{eq:new4}
\end{equation}
converges to \(0.\) By \((\ref{eq:new3})\), \((\ref{eq:new4})\) and the independence of \(\{Z^n\}_n\) and \(\{X^n\}_n\), we have
\[\lim_{n\to \infty} \left| \mathbb{E}_x^{Z^{\infty}}\mathbb{E}^X[F(Z^{n}_{t_1}, A_{t_1}^{n},Z^{n}_{t_2},\cdots ,A_{t_k}^{n})]-\mathbb{E}_x^{Z^{\infty}}\mathbb{E}^X[F(Z^{\infty}_{t_1}, A_{t_1}^{\infty},Z^{\infty}_{t_2},\cdots ,A_{t_k}^{\infty})]  \right|=0.\]
So the proof is completed.
\end{proof}

\begin{remark}
If \(Z^n\) were independent of \(A^n\), the convergence of the joint distribution of \(Z^n\) and \(A^n\) follows from the convergence of the marginal distributions of \(Z^n\) and \(A^n\) immediately. In our case, we use the property that \(A^n\) can be viewed as a function of \(Z^n\), instead of the independence.
\end{remark}

\subsection{Convergence of \((Z^n, A^n)\)}\label{seccjd}
\begin{proposition}\label{lemma19}
For \(\gamma \in (0,\sqrt{d/C^*})\), the joint distribution of \((Z^n, A^n)\) under \(\mathbb{P}_{x_n}^{Z^n}\otimes \mathbb{P}^{X^n}\) converges to that of \((Z^{\infty}, A^{\infty})\) under \(\mathbb{P}_x^{Z^{\infty}}\otimes \mathbb{P}^{X^{\infty}}\) with respect to \(J_1 \times U\)-topology as \(n \to \infty\) for any \(x\in \R^d\). 
\end{proposition}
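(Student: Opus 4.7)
The plan is to combine Theorem \ref{maintight} (tightness of $(Z^n, A^n)$ in $J_1 \times U$-topology) with Lemma \ref{lemma18} (convergence of finite-dimensional distributions) via a standard Prohorov-type argument. By Theorem \ref{maintight}, the family $\{(Z^n, A^n)\}_n$ under $\mathbb{P}_{x_n}^{Z^n} \otimes \mathbb{P}^{X^n}$ is tight in $D[0,\infty) \times D[0,\infty)$ equipped with $J_1 \times U$-topology. Prohorov's theorem then yields that every subsequence admits a further subsequence, say along $\{n_j\}$, such that $(Z^{n_j}, A^{n_j})$ converges in law to some process $(\check{Z}, \check{A})$ on the product Skorokhod space. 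It suffices to show that the law of $(\check{Z}, \check{A})$ coincides with that of $(Z^\infty, A^\infty)$; the full convergence then follows by the usual subsequence argument.

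To identify the limit, let $T_1 \subset [0,\infty)$ be the set of $t$ with $\mathbb{P}_x^{Z^\infty}(Z^\infty_{t-} = Z^\infty_t) = 1$ and let $T_2$ be the analogous set for $\check{Z}$. Each is co-countable, since any c\`adl\`ag process has at most countably many fixed times of discontinuity, so $T := T_1 \cap T_2$ is dense in $[0,\infty)$. Moreover, by Proposition \ref{lemma15}, $\check{A}$ is continuous $\mathbb{P}_x^{Z^\infty}\otimes \mathbb{P}^X$-almost surely, so every $t$ is a continuity time of $\check{A}$. Hence, for any $t \in T$, the evaluation map $(z,a) \mapsto (z_t, a_t)$ on $D[0,\infty) \times D[0,\infty)$ (equipped with $J_1 \times U$-topology) is continuous on a set of $(\check{Z}, \check{A})$-full measure. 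By the continuous mapping theorem applied to the subsequential convergence $(Z^{n_j}, A^{n_j}) \Rightarrow (\check{Z}, \check{A})$, the joint finite-dimensional distributions of $(\check{Z}, \check{A})$ at any tuple of times in $T$ are weak limits of the corresponding distributions of $(Z^{n_j}, A^{n_j})$. By Lemma \ref{lemma18}, these weak limits are precisely the finite-dimensional distributions of $(Z^\infty, A^\infty)$.

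Finally, since the finite-dimensional distributions of $(\check{Z}, \check{A})$ and $(Z^\infty, A^\infty)$ agree on the dense set $T$ and both processes are right-continuous in the first coordinate and continuous in the second, a standard right-continuity argument extends the identity to all $k$-tuples of times; hence $(\check{Z}, \check{A})$ and $(Z^\infty, A^\infty)$ have the same law. The main obstacle is largely organizational rather than analytic: the evaluation map on $D[0,\infty)$ is not continuous at fixed times in general under $J_1$-topology, so one must restrict attention to a dense set of continuity times of both $Z^\infty$ and $\check{Z}$, which is why the co-countability of fixed discontinuities and the continuity of $\check{A}$ (Proposition \ref{lemma15}) are invoked. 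Once the subsequential limit is uniquely identified, tightness upgrades this to convergence of the whole sequence.
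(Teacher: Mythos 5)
Your proposal is correct and follows essentially the same route as the paper: the paper simply cites Theorem \ref{maintight} (tightness), Lemma \ref{lemma18} (finite-dimensional convergence at continuity times) and Theorem \ref{whitt} (the Whitt equivalence of weak convergence with tightness plus f.d.d.\ convergence), whereas you have unpacked the standard Prohorov-plus-identification argument that underlies Theorem \ref{whitt}. No gap; the extra detail about evaluation maps and co-countable continuity times is exactly what the cited theorem encapsulates.
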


\begin{proof}
This follows from Theorem \ref{maintight}, Lemma \ref{lemma18} and Theorem \ref{whitt}.
\end{proof}

\begin{proof}[Proof of Theorem \ref{mainconvergence}]
This follows from Proposition \ref{lemma19}, the continuity of an inverse map \cite[Corollary 13.6.4]{W}, Theorem \ref{J1UJ1}, and the strictly increase and the continuity of \(A^{\infty}\).
\end{proof}

\section{Proof of the equality in distribution of \(\tilde{A}\) and \(A^{\infty}\)}\label{secPCAFlem}
In this section, we prove Proposition \ref{PCAFlem} by proving that there exists a PCAF \(\bar{A}\) having the same distribution as that of \(\tilde{A}\), and this PCAF \(\bar{A}\) corresponds to \(\mu^{\infty}\). See Appendix \ref{AppPCAF} for the definition and properties of a PCAF.

\begin{proposition}\label{PCAFlem2}
There exist \(\Lambda \in \mathscr{F}^{X}\otimes \mathscr{F}_{\infty}^{Z^{\infty}}\) and \(\bar{A}\) such that \(\mathbb{P}_x^{Z^{\infty}}(\Lambda_{\omega})=1\) for any \(x\in \R^d\) and \(\bar{A}(\omega, \cdot)\) is a PCAF on the defining set \(\Lambda_{\omega}\) for \(\mathbb{P}^X\)-a.e. \(\omega\), where \(\Lambda_{\omega}:=\{\omega'\in \Omega^{Z^{\infty}}\ |\  (\omega, \omega')\in \Lambda\}\). Moreover, \(\bar{A}\) has the same distribution as that of \(\tilde{A}\) under \(\mathbb{P}_x^{Z^{\infty}}\otimes \mathbb{P}^X\) for any \(x\in \R^d\).
\end{proposition}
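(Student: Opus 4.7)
The plan is to construct the candidate $\bar A$ directly as a quenched PCAF of $Z^{\infty}$ driven by $\mu^{\infty}$, and then to identify its joint law with that of $\tilde A$ through the $L^1$-limit already supplied by Lemma~\ref{PCAFcor}. Working on the common probability space built in Subsection~\ref{sec:3-2} on which all the fields $X^n$ and $X^{\infty}$ are simultaneously realised, for each $T>0$ I would set
\[
\bar A_T(\omega,\omega') \;:=\; \lim_{n\to\infty}\int_0^T \exp\!\left(\gamma X^n(Z^{\infty}_s(\omega'))-\tfrac{\gamma^2}{2}\mathbb{E}^X[X^n(Z^{\infty}_s(\omega'))^2]\right)ds,
\]
and let $\Lambda$ be the set of $(\omega,\omega')$ for which this limit exists for every rational $T$ and yields a non-decreasing, continuous function of $T$. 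Lemma~\ref{PCAFcor} guarantees that, at each fixed $T$, this limit exists in $L^1(\mathbb{P}^X)$ for $\mathbb{P}_x^{Z^{\infty}}$-a.e.\ $\omega'$ and coincides with the GMC of $X^{\infty}$ integrated against the occupation measure $\nu^{\infty}_T$ of $Z^{\infty}$; together with a Fubini argument this supplies a jointly measurable version on $\mathscr{F}^X\otimes\mathscr{F}^{Z^{\infty}}_{\infty}$.

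Next I would verify the defining properties of a PCAF for $\bar A(\omega,\cdot)$ for $\mathbb{P}^X$-a.e.\ $\omega$. Non-negativity, $\bar A_0 = 0$, adaptedness, and monotonicity in $T$ are inherited from the approximants $A^{n,\infty}$; the additivity relation $\bar A_{t+s}(\omega,\omega')=\bar A_t(\omega,\omega')+\bar A_s(\omega,\theta_t\omega')$ follows from the corresponding identity for each $A^{n,\infty}$ (which uses the Markov property of $Z^{\infty}$) after passing to the $L^1$-limit. Continuity in $T$ is the most delicate point, and I would obtain it by estimating the second moment of the increments $\bar A_{T+h}-\bar A_T$ via the same occupation-measure computation as in the proof of Lemma~\ref{newlemma}: the condition $\gamma^2 C^*<d$, the Green's function expansion of Lemma~\ref{Green'slemma} and the self-similarity bound of Lemma~\ref{selfsimlemma} together yield an estimate of the form $\mathbb{E}_x^{Z^{\infty}}\mathbb{E}^X[(\bar A_{T+h}-\bar A_T)^2]\le C h^{1+\alpha}$ for some $\alpha>0$, after which Kolmogorov's criterion upgrades the pointwise-in-$T$ convergence to a continuous version.

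The equality in distribution $\bar A\stackrel{d}{=}\tilde A$ under $\mathbb{P}_x^{Z^{\infty}}\otimes\mathbb{P}^X$ then reduces to comparing finite-dimensional marginals: for each fixed $T$, $\bar A_T$ and $\tilde A_T$ agree by construction, since both are defined as the same $L^1(\mathbb{P}^X)$-limit used in Lemma~\ref{PCAFcor}; joint marginals at $(T_1,\dots,T_k)$ follow by the same argument applied to the vector of approximants. The full-path equality in law is then obtained from path continuity of both sides together with the density of rational times. The main obstacle I foresee is reconciling the per-$T$ $L^1(\mathbb{P}^X)$-a.s.\ convergence statement of Lemma~\ref{PCAFcor}, which a priori depends on the choice of $T$, with the simultaneous existence over all $T\geq 0$ required for a genuine pathwise PCAF; this is what forces one first to restrict to a countable dense set of times and then to use the quantitative continuity estimate above to extend to all $T$ on a set $\Lambda$ of full measure.
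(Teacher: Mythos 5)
Your overall strategy---realise $\bar A$ as a limit of the approximants $A^{n,\infty}$, check the PCAF axioms, and match marginals with $\tilde A$---is the same as the paper's, and your continuity argument via the second-moment bound $\mathbb{E}[(\bar A_{T+h}-\bar A_T)^2]\le Ch^{2-\gamma^2/d}$ (with $2-\gamma^2/d>1$ since $\gamma^2C^*<d$) is a workable alternative to the paper's route, which instead uses Lemma \ref{contiincreconv} to extract a subsequence along which $A^{n_i,\infty}_u$ converges for \emph{all} $u$ in a compact interval simultaneously, so that continuity and monotonicity are inherited pathwise. You also correctly identify the tension between the per-$T$ convergence of Lemma \ref{PCAFcor} and the need for a single full-measure set; the countable intersection over rational times that you sketch is essentially the paper's fix.

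The genuine gap is the clause ``$\mathbb{P}_x^{Z^{\infty}}(\Lambda_{\omega})=1$ for \emph{any} $x\in\R^d$''. Lemma \ref{PCAFcor} yields convergence of $A^{n,\infty}_T$ only for $\mathbb{P}_x^{Z^{\infty}}$-a.e.\ $\omega'$ for the one starting point $x$ fixed in Subsection \ref{sectight}, so the set $\Lambda$ your construction produces is a priori of full measure only under that single $\mathbb{P}_x^{Z^{\infty}}$. To obtain a defining set that works for every starting point---and is stable under the shifts $\theta_t$, which you also need so that the additivity identity holds pointwise on $\Lambda^{\omega}$ rather than merely a.s.\ for each fixed pair $(s,t)$---the paper proves the shift relation $\Lambda_t^{\omega}=\theta_t^{-1}(\Lambda_0^{\omega})$ and then combines Fubini with the Markov property of $Z^{\infty}$: from $\mathbb{P}_y^{Z^{\infty}}(\Lambda_0^{\omega})=1$ for a.e.\ $y$ one gets $\mathbb{P}_x^{Z^{\infty}}(\theta_t^{-1}\Lambda_0^{\omega})=\int_{\R^d}\mathbb{P}_y^{Z^{\infty}}(\Lambda_0^{\omega})p^{\infty}(t,x,y)\,dm(y)=1$ for \emph{every} $x$. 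Without this step your $\bar A(\omega,\cdot)$ is an additive functional only in a weaker, $x$-dependent sense, which is not what the proposition asserts. A second, smaller point: since Lemma \ref{PCAFcor} provides only $L^1(\mathbb{P}^X)$ convergence, the full-sequence pointwise limit in your definition of $\bar A_T$ need not exist; the defining set has to be built from subsequential limits (as in the paper's $\Lambda_t^T$), together with a consistency check that the limits obtained for different time windows agree.
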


We prove Proposition \ref{PCAFlem2} in a similar way to the proof of \cite[Proposition 2.4]{AK}. In \cite{AK}, it is assumed that the almost sure convergence of PCAFs. Although this assumption does not hold in our case, we can prove Proposition \ref{PCAFlem2} by using the following two lemmas instead of the assumption in \cite[Proposition 2.4]{AK}.

\begin{lemma}\label{PCAFassumption}
For any \(x\in \R^d\) and \(t>0,\) \(\int_0^t |A_s^{n,\infty}-\tilde{A}_s|ds\) converges to \(0\) in \(L^1(\mathbb{P}_x^{Z^{\infty}}\otimes \mathbb{P}^X)\) as \(n\to \infty\).
\end{lemma}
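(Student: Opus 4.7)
The plan is to apply Tonelli and then two successive dominated convergence arguments, trading a joint $L^1$ statement on $[0,t]$ for pointwise $L^1$ statements at each fixed time $s$ that are already delivered by Lemma \ref{PCAFcor}. The essential input is the normalization of the Gaussian multiplicative chaos, which makes $A^{n,\infty}_s$ have constant $\mathbb{P}^X$-mean and thereby furnishes a clean $s$-dependent dominant. I will tacitly use that (by the work of Section \ref{sectight}) the weak limit $\tilde{A}$ admits a continuous nondecreasing version, so $(s,\omega)\mapsto \tilde{A}_s(\omega)$ is jointly measurable and the integral in the statement is well defined.

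\textbf{Step 1 (reduction via Tonelli).} Since the integrand is nonnegative, Tonelli gives
\[
\mathbb{E}_x^{Z^\infty}\mathbb{E}^X\int_0^t |A^{n,\infty}_s-\tilde{A}_s|\,ds=\int_0^t \mathbb{E}_x^{Z^\infty}\mathbb{E}^X\bigl|A^{n,\infty}_s-\tilde{A}_s\bigr|\,ds.
\]
So it suffices to show that the inner function $\phi_n(s):=\mathbb{E}_x^{Z^\infty}\mathbb{E}^X|A^{n,\infty}_s-\tilde{A}_s|$ tends to $0$ and admits an $n$-independent integrable dominant on $[0,t]$.

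\textbf{Step 2 (pointwise convergence of $\phi_n(s)$).} Fix $s>0$. Lemma \ref{PCAFcor} gives $A^{n,\infty}_s(\omega,\cdot)\to \tilde{A}_s(\omega)$ in $L^1(\mathbb{P}^X)$ for $\mathbb{P}_x^{Z^\infty}$-a.e.\ $\omega$, hence $\mathbb{E}^X|A^{n,\infty}_s-\tilde{A}_s|(\omega)\to 0$ for a.e.\ $\omega$. For the $\omega$-dominant I exploit the GMC normalization: because $\mathbb{E}^X[\exp(\gamma X^n(y)-\tfrac{\gamma^2}{2}\mathbb{E}^X[X^n(y)^2])]=1$, Fubini gives
\[
\mathbb{E}^X A^{n,\infty}_s(\omega)=\int_0^s \mathbb{E}^X\bigl[e^{\tilde{X}^n(Z^\infty_u(\omega))}\bigr]\,du=s,
\]
deterministically in $\omega$. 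Passing to the $L^1(\mathbb{P}^X)$-limit along Lemma \ref{PCAFcor} yields $\mathbb{E}^X\tilde{A}_s(\omega)=s$ as well, so
\[
\mathbb{E}^X|A^{n,\infty}_s-\tilde{A}_s|(\omega)\le \mathbb{E}^X A^{n,\infty}_s(\omega)+\mathbb{E}^X\tilde{A}_s(\omega)=2s
\]
for a.e.\ $\omega$, uniformly in $n$. Dominated convergence in $\omega$ now gives $\phi_n(s)\to 0$ for every $s>0$.

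\textbf{Step 3 (dominated convergence in $s$).} The calculation in Step 2 also gives the uniform bound $\phi_n(s)\le 2s$, and $s\mapsto 2s$ is integrable on $[0,t]$. A second application of dominated convergence yields
\[
\int_0^t \phi_n(s)\,ds\ \longrightarrow\ 0,
\]
which by Step 1 is the claim.

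The only mildly delicate point is making sense of the integrand, i.e.\ having a version of $\tilde{A}$ that is jointly measurable in $(s,\omega)$ so that $\int_0^t |A^{n,\infty}_s-\tilde{A}_s|\,ds$ is a bona fide random variable. This is not really an obstacle: Proposition \ref{lemma15} together with Theorem \ref{PCAFconvergence} produces a continuous nondecreasing version of $\tilde{A}$ as a process, and Lemma \ref{PCAFcor} identifies its time-marginals with the $L^1(\mathbb{P}^X)$-limits appearing here. Beyond this bookkeeping, the argument is just the two dominated convergence applications driven by the GMC normalization identity $\mathbb{E}^X[e^{\tilde{X}^n(y)}]=1$.
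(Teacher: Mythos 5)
Your proposal is correct and follows essentially the same route as the paper's proof: both rely on the pointwise-in-$(s,\omega)$ $L^1(\mathbb{P}^X)$ convergence from Lemma \ref{PCAFcor}, the normalization bound $\mathbb{E}^X|A^{n,\infty}_s-\tilde{A}_s|\le \mathbb{E}^X[A^{n,\infty}_s]+\mathbb{E}^X[\tilde{A}_s]=2s$, and dominated convergence to exchange limit and integration. Your version merely spells out the Tonelli step and the two successive dominated-convergence applications that the paper compresses into one line.
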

\begin{proof}
By Lemma \ref{PCAFcor}, for \(s\leq t\), \(x\in \R^d\) and \(\mathbb{P}_x^{Z^{\infty}}\)-almost sure \(\omega\), \(A_s^{n,\infty}\) converges to \(\tilde{A}_s\) in \(L^1(\mathbb{P}^X)\). Combining this with 
\[\EX |A^{n,\infty}_s-\tilde{A}_s| \leq \EX [A^{n,{\infty}}_s]+\EX [\tilde{A}_s]=2s,\]
by Lebesgue's convergence theorem, it holds that
\begin{eqnarray*}
\lim_{n\to \infty} \mathbb{E}_x^{Z^{\infty}}\mathbb{E}^{X} \left[\int_0^t |A_s^{n,{\infty}}-\tilde{A}_s|ds\right]= \mathbb{E}^{Z^{\infty}}_x \int_0^t \lim_{n\to \infty} \EX[|A_s^{n,{\infty}}-\tilde{A}_s|]ds=0
\end{eqnarray*}
for any \(t>0\) and \(x\in \R^d\). So the proof is completed.
\end{proof}

\begin{lemma}\label{contiincreconv}
For any \(t>0\) and non-decreasing functions \(f_n, f\in C([0,t])\), if \(\int_0^t|f_n(s)-f(s)|ds\) converges to \(0\) as \(n\to \infty\), then \(f_n\) converges pointwise to \(f\) on \((0,t)\).
\end{lemma}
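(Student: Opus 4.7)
The plan is to argue by contradiction, exploiting the monotonicity of the $f_n$ together with the continuity of the limit $f$ to translate a pointwise failure at $s_0 \in (0,t)$ into a uniform (in $n$) lower bound on $|f_n - f|$ over a whole subinterval, which then contradicts $L^1$ convergence.

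First I would fix $s_0 \in (0,t)$ and suppose toward a contradiction that $f_n(s_0) \not\to f(s_0)$. By passing to a subsequence, there exists $\varepsilon > 0$ such that $|f_{n_k}(s_0) - f(s_0)| \geq \varepsilon$ for all $k$. Passing to a further subsequence if necessary, I split into two cases according to the sign of $f_{n_k}(s_0) - f(s_0)$.

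In the case $f_{n_k}(s_0) \geq f(s_0) + \varepsilon$, I use continuity of $f$ at $s_0$ to pick $\delta \in (0, t - s_0)$ such that $f(s) \leq f(s_0) + \varepsilon/2$ on $[s_0, s_0 + \delta]$. Since $f_{n_k}$ is non-decreasing, $f_{n_k}(s) \geq f_{n_k}(s_0) \geq f(s_0) + \varepsilon$ on the same interval, whence $f_{n_k}(s) - f(s) \geq \varepsilon/2$ there. Integrating yields
\[
\int_0^t |f_{n_k}(s) - f(s)|\, ds \;\geq\; \int_{s_0}^{s_0+\delta}\bigl(f_{n_k}(s) - f(s)\bigr)\, ds \;\geq\; \frac{\varepsilon \delta}{2},
\]
contradicting $\int_0^t |f_n - f| \to 0$. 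The opposite case $f_{n_k}(s_0) \leq f(s_0) - \varepsilon$ is handled symmetrically on an interval $[s_0 - \delta, s_0] \subset (0,t)$ (this is where $s_0 > 0$ is used), picking $\delta$ so that $f(s) \geq f(s_0) - \varepsilon/2$ on this interval and using monotonicity to get $f(s) - f_{n_k}(s) \geq \varepsilon/2$ there.

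There is no real obstacle here; the argument is entirely elementary. The only mild point is that one must use continuity of $f$ (not of $f_n$) to pin down the neighborhood on which $f$ is close to $f(s_0)$, and then monotonicity of $f_n$ (not of $f$) to transport the pointwise gap at $s_0$ to a uniform gap on that neighborhood. The hypothesis $s_0 \in (0,t)$ (as opposed to the closed interval) is needed precisely to have room on both sides for the two cases.
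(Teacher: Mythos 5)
Your proof is correct and follows essentially the same route as the paper's: argue by contradiction, use continuity of \(f\) at \(s_0\) to control \(f\) on a small one-sided interval, use monotonicity of \(f_{n_k}\) to propagate the gap at \(s_0\) to a uniform lower bound \(\varepsilon/2\) on that interval, and integrate to contradict the \(L^1\) convergence. The two case splits (by sign of the discrepancy, using the right- or left-hand interval) also match the paper's treatment.
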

\begin{proof}
If this lemma does not hold, there exist \(s_0\in (0,t)\), an increasing sequence \(\{n_k\}_k\) satisfying \(n_k\to \infty\) and \(\varepsilon>0\) such that \(|f_{n_k}(s_0)-f(s_0)|>\varepsilon\) holds.

If \(f_{n_k}(s_0)-f(s_0)>\varepsilon\) holds, then, by the continuity of \(f\) at \(s_0\), there exists \(\delta\) such that \(|f(s_0)-f(s)|\leq \varepsilon/2\) for \(s\in [(s_0-\delta)\vee 0,(s_0+\delta)\wedge t]\). Combining this with the monotonicity of \(f_{n_k}\), for \(s\in [s_0,(s_0+\delta)\wedge t]\), we have 
\begin{eqnarray*}
f_{n_k}(s)-f(s)\ \geq \ f_{n_k}(s)-f(s_0)-\varepsilon/2 \ \geq \ f_{n_k}(s_0)-f(s_0)-\varepsilon/2 \ \geq \ \varepsilon/2
\end{eqnarray*}
and so
\[\int_0^t|f_{n_k}(s)-f(s)|ds\ \geq \ \int_{s_0}^{(s_0+\delta)\wedge t}\frac{\varepsilon}{2} ds \ \geq \ \frac{\delta \wedge (t-s_0)}{2}\varepsilon \ >\ 0.\]
This is a contradiction.
If \(f(s_0)-f_{n_k}(s_0)>\varepsilon\) holds, in the same way as above, we also get a contradiction.
\end{proof}

PCAFs \(A^{n,{\infty}}\) and \(\tilde{A}\) are continuous and non-decreasing \(\mathbb{P}_x^{Z^{\infty}}\otimes \mathbb{P}^X\)-almost surely and \(A^{n,{\infty}}_0=\tilde{A}_0=0\). Moreover \(L^1\)-convergence implies the subsequential almost everywhere convergence. So the following holds from Lemmas \ref{PCAFassumption}, \ref{contiincreconv}. 
\begin{corollary}\label{PCAFassu1}
For any \(t>0\) and \(x\in \R^d\), there exist \(\{n_i\}_i\subset \mathbb{N}\) satisfying \(n_i \to \infty\) such that \(A^{n_i, \infty}_s(\omega, \omega')\) converges pointwise to \(\tilde{A}_s(\omega, \omega')\) as \(n_i \to \infty\), for \(\mathbb{P}_x^{Z^{\infty}}\otimes \mathbb{P}^X\)-almost sure \((\omega, \omega')\) and \(s\in[0,t]\).
\end{corollary}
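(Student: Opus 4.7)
My plan is to deduce the corollary directly from Lemma \ref{PCAFassumption} and Lemma \ref{contiincreconv}, using only soft measure-theoretic facts (subsequential almost sure convergence from $L^1$ convergence) and the continuity/monotonicity of PCAFs.

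First I would fix $t>0$ and $x\in \R^d$ and pick an arbitrary $t'>t$. By Lemma \ref{PCAFassumption} applied with $t'$ in place of $t$, the nonnegative random variables $I_n := \int_0^{t'}|A_s^{n,\infty}-\tilde{A}_s|\,ds$ converge to $0$ in $L^1(\mathbb{P}_x^{Z^{\infty}}\otimes\mathbb{P}^X)$. A standard fact about $L^1$-convergence then yields a subsequence $\{n_i\}_{i}$ along which $I_{n_i}\to 0$ $\mathbb{P}_x^{Z^{\infty}}\otimes\mathbb{P}^X$-almost surely. Call $\Omega_0$ the full-measure set on which this pathwise $L^1([0,t'])$-convergence holds.

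Next I would fix $(\omega,\omega')\in\Omega_0$ and apply Lemma \ref{contiincreconv} to the functions $f_{n_i}(s):=A^{n_i,\infty}_s(\omega,\omega')$ and $f(s):=\tilde{A}_s(\omega,\omega')$ on $[0,t']$. Both are continuous and non-decreasing in $s$ (recalling that PCAFs have these properties $\mathbb{P}_x^{Z^{\infty}}\otimes\mathbb{P}^X$-a.s., which lets me throw away another null set if necessary), and $I_{n_i}(\omega,\omega')\to 0$ by construction, so the hypothesis of Lemma \ref{contiincreconv} is satisfied. The conclusion is that $A^{n_i,\infty}_s(\omega,\omega')\to \tilde{A}_s(\omega,\omega')$ for every $s\in(0,t')$. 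Since $[0,t]\subset(0,t')\cup\{0\}$ and the convergence at $s=0$ is trivial (both sides vanish by definition of a PCAF), the desired pointwise convergence on $[0,t]$ follows.

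There is really no substantive obstacle once Lemmas \ref{PCAFassumption} and \ref{contiincreconv} are in hand; the only subtlety is that Lemma \ref{contiincreconv} as stated gives pointwise convergence only on the open interval, which is why I would enlarge $t$ to $t'>t$ before extracting the subsequence. This ensures the conclusion covers the endpoint $s=t$ as well. The subsequence $\{n_i\}$ produced does depend on $t$ (or rather $t'$), but that matches exactly the statement of the corollary.
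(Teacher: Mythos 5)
Your proof is correct and follows essentially the same route as the paper: extract a subsequence along which the $L^1([0,t])$-convergence of Lemma \ref{PCAFassumption} becomes almost sure, then apply Lemma \ref{contiincreconv} pathwise to the continuous, non-decreasing functions $A^{n_i,\infty}_{\cdot}$ and $\tilde{A}_{\cdot}$. Your enlargement to $t'>t$ to capture the endpoint $s=t$ (which Lemma \ref{contiincreconv} alone, giving convergence only on the open interval, would miss) is a small refinement that the paper's one-line justification glosses over.
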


\begin{proof}[Proof of Proposition \ref{PCAFlem2}]
In this proof, for any subset \(\Lambda \subset \Omega^X \times \Omega ^{Z^{\infty}}\), we define its section for \(\omega\in \Omega^X\) by \(\Lambda^{\omega}:=\{\omega' \in \Omega^{Z^{\infty}} \mid (\omega, \omega')\in \Lambda\}.\) For \(T\geq 0\) and \(0\leq t\leq T\), we define \(\Lambda_t^T \) by
\begin{equation*}
\left\{(\omega, \omega')\in \Omega ^X \times \Omega^{Z^{\infty}}\ \middle| \ \begin{split} \text{There\ exist\ }n_i \to \infty \text{\ such\ that,}\text{\ for\ any\ }u\in [t,T],\hspace{10mm}\\
 \tilde{A}_{t,u}(\omega,\omega'):=\lim_{n_i\to \infty}(A^{n_i, \infty}_u(\omega,\omega')-A^{n_i, {\infty}}_t(\omega,\omega'))\text{\ exists},\\
[t,T]\ni u \mapsto \tilde{A}_{t,u}(\omega, \omega')\in [0,\infty)\text{\ is\ continuous\ and}\hspace{10mm}\\
\text{\ strictly\ increasing,\ and\ } \tilde{A}_{t,t}(\omega, \omega')=0.\hspace{27mm}
\end{split} \right\}.
\end{equation*}
For \(t\geq 0,\) we also define 
\[\Lambda_t:=\bigcap_{t\leq T\in \mathbb{N}}\Lambda_t^{T}.\]

Let \(\{\theta_t\}_{t\geq 0}\) be a collection of shift operators of \(Z^{\infty}\). Then the following holds.
\begin{lemma}\label{shiftlem1}
For any \(t>0\) and \(\omega \in \Omega ^X,\) \(\Lambda_t^{\omega}=\theta_t^{-1}(\Lambda_0^{\omega})\) holds.
\end{lemma}
\begin{proof}
At first, we prove \(\theta_t(\Lambda_t^{\omega})\subset \Lambda_0^{\omega}.\)
For \(\omega'\in \Lambda_t^{\omega}\) and \(T\in \mathbb{N}\), take \(\tilde{T}\in \mathbb{N}\) with \(\tilde{T}\geq T+t.\) Since \((\omega, \omega')\in \Lambda_t^{\tilde{T}}\), there are \(n_i\to \infty\) such that \([t,\tilde{T}]\ni u \mapsto \tilde{A}_{t,u}(\omega, \omega') \in [0,\infty)\) is continuous and strictly increasing. Since \(A^{n,\infty}\) are PCAFs having their defining sets as \(\Omega^{Z^{\infty}}\), \(A^{n_i,\infty}_u(\omega, \theta_t\omega')-A^{n_i,\infty}_0(\omega, \theta_t\omega')=A^{n_i,\infty}_{u+t}(\omega, \omega')-A^{n_i,\infty}_t(\omega, \omega')\) converges to \(\tilde{A}_{t,u+t}(\omega, \omega')\) for any \(u\in [0,T]\). Moreover \(\tilde{A}_{0,u}(\omega, \theta_t\omega')=\tilde{A}_{t,u+t}(\omega, \omega')\) is continuous and strictly increasing on \([0,T]\), and \(\tilde{A}_{0,0}(\omega, \theta_t\omega')=0.\) So \((\omega, \theta_t\omega') \in \Lambda_0^{T}\) and \(\theta_t\omega' \in \Lambda ^{\omega}_0.\)

In a similar way to above, we can also prove \(\theta_t(\Lambda_t^{\omega})\supset \Lambda_0^{\omega}.\)
\end{proof}

Since \(A_s^{n, \infty}\) is \(\mathscr{F}^X\otimes \mathscr{F}^{Z^{\infty}}_s\)-measurable, \(\Lambda_t\) is a \(\mathscr{F}^X\otimes \mathscr{F}^{Z^{\infty}}_{\infty}\)-measurable set. By Lemma \ref{lemma11}, \(\tilde{A}\) is continuous and strictly increasing \(\mathbb{P}_x^{Z^{\infty}}\otimes \mathbb{P}^X\)-almost surely. So, by Corollary \ref{PCAFassu1}, we have \(\mathbb{P}_x^{Z^{\infty}}\otimes \mathbb{P}^X(\Lambda_t^T)=1\) for \(0\leq t\leq T\), and \(\mathbb{P}_x^{Z^{\infty}}\otimes \mathbb{P}^X(\Lambda_t)=1\) for \(t\geq 0\).

\begin{lemma}\label{PCAFprob1lem}
For \(\mathbb{P}^X\)-almost sure \(\omega \in \Omega^X\), any \(t\geq 0\) and \(x\in \R^d,\) it holds that \(\mathbb{P}_x^{Z^{\infty}}(\Lambda_t^{\omega})=1\).
\end{lemma}
\begin{proof}
We can prove this lemma in the same way as that of \cite[Lemma A.3]{AK}. However, for the reader's convenience, we describe a proof. By Fubini's theorem, we have \(\mathbb{P}^{X}\otimes \mathbb{P}_x^{Z^{\infty}}(\Lambda_0^{\omega})=\mathbb{P}_x^{Z^{\infty}}\otimes \mathbb{P}^X(\Lambda_0)=1\) and 
\[\EX \int_{\R} \mathbb{P}_y^{Z^{\infty}}(\Lambda_0^{\omega}) p^{\infty}(0,0,y)dy=1.\]
So, for \(\mathbb{P}^X\)-a.s. \(\omega \in \Omega^X\) and almost every \(y\in \R^d\), \(\mathbb{P}_y^{Z^{\infty}}(\Lambda_0^{\omega})=1\) holds. By Lemma \ref{shiftlem1} and the Markov property for \(Z^{\infty}\), for \(\mathbb{P}^X\)-almost sure \(\omega \in \Omega^X\), \(t\geq 0\) and \(x\in \R^d\), we have
\[\mathbb{P}_x^{Z^{\infty}}(\Lambda_t^{\omega})=\mathbb{P}_x^{Z^{\infty}}(\theta_t^{-1}\Lambda_0^{\omega})=\mathbb{P}_x^{Z^{\infty}}\mathbb{P}_{Z_t^{\infty}}^{Z^{\infty}}(\Lambda_0^{\omega})=\int_{\R^d}\mathbb{P}_y^{Z^{\infty}}(\Lambda_0^{\omega})p^{\infty}(t,x,y)dy=1.\]
\end{proof}

We define \(\tilde{\Lambda}\) by
\begin{equation*}
\left\{(\omega,\omega') \in \Omega^X\times \Omega^{Z^{\infty}} \middle|\begin{split} \text{There\ exists\ }\{n_i\}\subset \mathbb{N}\text{\ such\ that\ }n_i\to \infty, \hspace{25mm} \\
\lim_{t\searrow 0}\varliminf_{n_i\to \infty}A^{n_i, \infty}_t(\omega, \omega')=0 
\text{\ and\ } \lim_{t\to \infty}\varlimsup_{n_i\to \infty}A^{n_i, \infty}_t(\omega,\omega')=\infty \end{split} \right\}.
\end{equation*}
By the monotone convergence theorem and Fatou's lemma, we have 
\[0\leq \E_x^{Z^{\infty}}\mathbb{E}^X[\lim_{t\searrow 0}\varliminf_{n\to \infty}A^{n,\infty}_t]\leq \lim_{t\searrow 0}\varliminf_{n \to \infty}t=0,\]
for any \(x\in \R^d.\) By Lemma \ref{lemma9}, we have \(\lim_{t\to \infty}\varlimsup_{n \to \infty}A^{n,\infty}_t=\infty \), \(\mathbb{P}_x^{Z^{\infty}}\otimes \mathbb{P}^X\)-almost surely, for any \(x\in \R^d\). So \(\mathbb{P}_x^{Z^{\infty}}\otimes \mathbb{P}^X(\tilde{\Lambda})=1\) holds for any \(x\in \R^d.\) In the same way as Lemma \ref{PCAFprob1lem}, for \(\mathbb{P}^X\)-a.s. \(\omega \in \Omega^X\), \(\mathbb{P}_x^{Z^{\infty}}(\tilde{\Lambda}^{\omega})=1\) holds for any \(x\in \R^d.\)

We define 
\[\Lambda:=\tilde{\Lambda}\cap \bigcap_{0< q\in \mathbb{Q}}\Lambda_{q}.\]
By Lemma \ref{PCAFprob1lem} and the above, for \(\mathbb{P}^X\)-a.s. \(\omega \in \Omega^X\), \(\mathbb{P}_x^{Z^{\infty}}(\Lambda^{\omega})=1\) holds for any \(x\in \R^d.\)

On \(\Omega\), we will define the PCAF \(\bar{A}\) having the same distribution as that of \(\tilde{A}\).

For any \(0<q\in \mathbb{Q}\) and \(s,t\) satisfying \(q\leq s\leq t,\) take \(T\in \mathbb{N}\) with \(t\leq T.\) For \((\omega, \omega')\in \Lambda,\) since \((\omega, \omega')\in \Lambda_q^T,\) there are \(n_i\to \infty\) such that 
\[\lim_{n_i\to \infty}(A^{n_i, \infty}_t (\omega, \omega')-A^{n_i, \infty}_s(\omega, \omega'))=\tilde{A}_{q,t}(\omega, \omega')-\tilde{A}_{q,s}(\omega, \omega')\] 
exists. We define this limit by \(\tilde{A}_{s,t}^{(T)}(\omega, \omega').\) Since \(\Lambda_0^{T_1}\subset\Lambda_0^{T_2}\) for \(T_2\leq T_1\), we have \(\tilde{A}_{s,t}^{(T_1)}(\omega, \omega')=\tilde{A}_{s,t}^{(T_2)}(\omega, \omega')\) for \(q\leq s \leq t\leq T_2 \leq T_1.\) So we can define 
\[\tilde{A}_{s,t}(\omega, \omega'):=\lim_{\mathbb{N} \ni T\nearrow \infty}\tilde{A}_{s,t}^{(T)}(\omega, \omega')\]
for \(q\leq s\leq t.\) Since \([0,T]\ni t \mapsto \tilde{A}_{0,t}^{(T)}(\omega, \omega')\in [0,\infty)\) is continuous and strictly increasing, so is \([s,\infty)\ni t \mapsto \tilde{A}_{s,t}(\omega, \omega')\in [0,\infty)\).

Moreover, for \(0\leq u\leq s \leq t\), we have
\begin{eqnarray}
\nonumber 0\leq \tilde{A}_{u,t}(\omega, \omega')-\tilde{A}_{s,t}(\omega, \omega')&=&\lim_{n_i\to \infty}(A_s^{n_i,\infty}(\omega, \omega')-A_u^{n_i, \infty}(\omega, \omega'))\\
&\leq &\varliminf_{n_i\to \infty}A_{s}^{n_i, \infty}(\omega, \omega'). \label{eq:PCAFconst1}
\end{eqnarray}

Since \((\omega,\omega')\in \tilde{\Lambda},\) by letting \(s\searrow 0\) at (\ref{eq:PCAFconst1}), \(\tilde{A}_{u,t}(\omega, \omega')-\tilde{A}_{s,t}(\omega, \omega')\) converges to \(0.\) This means that \(\{\tilde{A}_{s,t}(\omega, \omega')\}_{s\in [0,t]}\) is a Cauchy sequence, so there exists 
\begin{equation}\bar{A}_t(\omega, \omega'):=\lim_{s\searrow 0}\tilde{A}_{s,t}(\omega, \omega') \label{eq:barA}
\end{equation}
for \(t>0.\) We have
\begin{eqnarray}
\nonumber \bar{A}_t(\omega,\omega')-\bar{A}_s(\omega,\omega')&=&\lim_{u\searrow 0}(\tilde{A}_{u,t}(\omega, \omega')-\tilde{A}_{u,s}(\omega, \omega'))\\
\nonumber &=& \lim_{u\searrow 0}\lim_{n_i\to \infty}(A_t^{n_i, \infty}(\omega, \omega')-A_s^{n_i, \infty}(\omega, \omega'))\\
\nonumber &=& \tilde{A}_{0,t}(\omega, \omega')-\tilde{A}_{0,s}(\omega, \omega')\\
&=& \tilde{A}_{s,t}(\omega, \omega')
\label{eq:barAconti1}
\end{eqnarray}
for \(t,s>0\), and
\begin{eqnarray*}
0\ \leq \ \varlimsup_{t\searrow 0}\bar{A}_t(\omega,\omega')&=&\varlimsup_{t\searrow 0}\lim_{s\searrow 0} \lim_{n_i\to \infty} (A^{n_i, \infty}_{t}(\omega, \omega')-A^{n_i, \infty}_{s}(\omega, \omega'))\\
&\leq &\varlimsup_{t\searrow 0}\varliminf_{n_i\to \infty}A^{n_i, \infty}_{t}(\omega, \omega')\ =\ 0.
\end{eqnarray*}

So, by defining \(\bar{A}_0(\omega, \omega'):=0\), \(\bar{A}_{\cdot}(\omega, \omega')\) is continuous on \([0,\infty)\) and strictly increasing, and so is \(\bar{A}(\omega, \cdot)\) on \(\Lambda^{\omega}\) for \(\mathbb{P}^X\)-almost sure \(\omega\).

Since \(A_s^{n,\infty}\) is \(\mathscr{F}^X \otimes \mathscr{F}^{Z^{\infty}}_s\)-measurable, for \(\mathbb{P}^X\)-almost sure \(\omega\) and \(s\geq 0\), \(\bar{A}_s|_{\Lambda^{\omega}}\) is \(\mathscr{F}^{Z^{\infty}}_s\)-measurable.

For \(\mathbb{P}^X\)-almost sure \(\omega\), any \(\omega'\in \Lambda^{\omega}\) and \(t,s\geq 0\), there are \(n_i \to \infty\) such that, for any \(u\in [t,s+t],\) \(A^{n_i, \infty}_u(\omega, \omega')-A^{n_i,\infty}_t(\omega, \omega')\) converges to \(\tilde{A}_{t,u}(\omega, \omega')\). Then we have
\begin{equation}
0\leq \varliminf_{n_i\to \infty}A_s^{n_i,\infty}(\omega, \theta_t \omega')=\varliminf_{n_i\to \infty}(A_{s+t}^{n_i,\infty}(\omega, \omega')-A_t^{n_i,\infty}(\omega, \omega'))=\tilde{A}_{t,s+t}(\omega, \omega'). \label{eq:barAshift}
\end{equation}
By the continuity of \(\tilde{A}_{t,\cdot}(\omega, \omega')\) and \(\tilde{A}_{t,t}(\omega, \omega')=0\), by letting \(s \searrow 0\) at (\ref{eq:barAshift}), we have
\[\lim_{s\to 0}\varliminf_{n_i\to \infty}A_s^{n_i,\infty}(\omega, \theta_t \omega')=0.\] In the same way as (\ref{eq:barAshift}), we have 
\[\lim_{s\to \infty}\tilde{A}_s(\omega, \theta_t \omega')=\lim_{s\to \infty}\varlimsup_{n_i\to \infty}(A_{s+t}^{n_i,\infty}(\omega, \omega')-A_t^{n_i,\infty}(\omega, \omega'))=\infty. \]
So \(\theta_t\omega' \in \tilde{\Lambda}^{\omega}\) for any \(t\geq 0.\)

For \(\mathbb{P}^X\)-almost sure \(\omega\), any \(t\geq 0\), \(s\in \mathbb{Q}\) with \(s> 0\), \(\omega'\in \Lambda^{\omega}\), \(s\leq T\in \mathbb{N}\) and \(u\in [s,T]\), we have 
\begin{eqnarray}
\nonumber \lim_{n_i\to \infty}(A_u^{n_i,\infty}(\omega, \theta_t \omega')-A_s^{n_i,\infty}(\omega, \theta_t \omega')) &=& \lim_{n_i\to \infty}(A_{u+t}^{n_i,\infty}(\omega, \omega')-A_{s+t}^{n_i,\infty}(\omega, \omega'))\\
\nonumber &=& \tilde{A}_{s+t,u+t}(\omega, \omega')\\
&=& \bar{A}_{u+t}(\omega, \omega')-\bar{A}_{s+t}(\omega, \omega'), \label{eq:barAshift2}
\end{eqnarray}
so \(\tilde{A}_{s,u}(\omega, \theta_t \omega'):=\lim_{n_i \to \infty}(A_u^{n_i,\infty}(\omega, \theta_t \omega')-A_s^{n_i,\infty}(\omega, \theta_t \omega'))\) exist. Since \(\bar{A}_{\cdot}(\omega, \omega')\) is continuous and strictly increasing, \((\omega, \theta_t \omega')\in \Lambda_s\) holds. Thus we have \(\theta_t \omega' \in \Lambda^{\omega}\) for any \(t\geq 0\).

Furthermore, for \(\mathbb{P}^X\)-almost sure \(\omega\), \(t\geq 0\), \(\omega'\in \Lambda^{\omega}\), and \(u\geq 0\), we take \(T\in \mathbb{N}\) satisfying \(u+t\leq T.\) By letting \(s\searrow 0\) in (\ref{eq:barAshift2}), we have \(\bar{A}_u(\omega, \theta_t \omega')=\bar{A}_{u+t}(\omega, \omega')-\bar{A}_{t}(\omega, \omega').\) So \(\bar{A}(\omega,\cdot)\) is a PCAF on \(\Lambda^{\omega}\) for \(\mathbb{P}^X\)-a.s. \(\omega\).

Moreover, since \(\bar{A}_t=\lim_{s\searrow 0}\tilde{A}_{s,t}=\lim_{s\searrow 0}\lim_{n_i\to \infty}(A_{t}^{n_i,\infty}-A_{s}^{n_i,\infty})\) on \(\Lambda\) and \(\tilde{A}\) is the weak limit of \(A^{n, \infty}\), for any \(t\geq 0,\) \(\bar{A}_t\) has the same distribution as that of \(\tilde{A}_t\) under \(\mathbb{P}_x^{Z^{\infty}}\otimes \mathbb{P}^X\) for any \(x\in \R^d.\) Since \(\bar{A}\) and \(\tilde{A}\) are right-continuous \(\mathbb{P}_x^{Z^{\infty}}\otimes \mathbb{P}^X\)-a.s., \(\bar{A}\) and \(\tilde{A}\) have the same distribution by \cite[Problem 1.1.5]{KS}. Thus the proof of Proposition \ref{PCAFlem2} is completed.
\end{proof}

Next, we will prove that the PCAF \(\bar{A}\) corresponds to \(\mu^{\infty}\) and \(\bar{A}\) has the same distribution as that of \(A^{\infty}\).

The following lemma appeared in \cite[Appendix A]{CoF} without a proof. For the reader's convenience, we give a proof.
\begin{lemma}\label{Dapprox}
For \(f\in D([0,1];\R^d)\), there exist step functions \(f^{(n)}\) such that \(f^{(n)}\) converges to \(f\) uniformly on \([0,1].\)
\end{lemma}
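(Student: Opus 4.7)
The plan is the classical oscillation-partition argument. For each $\varepsilon > 0$, I will construct a finite partition $0 = t_0 < t_1 < \cdots < t_N = 1$ of $[0,1]$ such that the oscillation of $f$ on each half-open interval $[t_{k-1}, t_k)$ is at most $\varepsilon$. The step function taking value $f(t_{k-1})$ on $[t_{k-1}, t_k)$ for $k=1,\dots,N$ and $f(1)$ at $\{1\}$ is then within $\varepsilon$ of $f$ uniformly on $[0,1]$, so applying this with $\varepsilon = 1/n$ produces the required sequence $f^{(n)}$.

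The partition is constructed inductively. Set $t_0 := 0$ and, given $t_k < 1$, define
\[
t_{k+1} := \sup\Bigl\{ t \in (t_k, 1]\ :\ \sup_{s, u \in [t_k, t)} |f(s) - f(u)| \leq \varepsilon \Bigr\}.
\]
By right-continuity of $f$ at $t_k$ there exists $\delta > 0$ with $|f(t) - f(t_k)| \leq \varepsilon/2$ for every $t \in [t_k, t_k + \delta)$, so $t_{k+1} \geq t_k + \delta > t_k$. If $t_{k+1} = 1$ then set $N := k+1$ and stop; otherwise iterate.

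The key step is to show that this iteration terminates in finitely many steps, and this is the only point where the structure of $D([0,1];\mathbb{R}^d)$ is genuinely used. Suppose for contradiction that the procedure produces an infinite increasing sequence $t_k \nearrow t^*$ with $t_k < t^* \leq 1$ for every $k$. By the existence of the left limit $f(t^*{-})$, there exists $\eta > 0$ such that $|f(s) - f(u)| \leq \varepsilon$ for all $s, u \in [t^* - \eta, t^*)$. Choose $k$ with $t_k > t^* - \eta$. Then for every $t \in (t_k, t^*)$ we have $\sup_{s,u \in [t_k, t)}|f(s) - f(u)| \leq \varepsilon$, which forces $t_{k+1} \geq t^*$ and contradicts $t_{k+1} < t^*$. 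Hence some $t_N$ equals $1$.

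Defining $f^{(n)}(t) := \sum_{k=0}^{N-1} f(t_k)\, \mathbf{1}_{[t_k, t_{k+1})}(t) + f(1)\, \mathbf{1}_{\{1\}}(t)$ yields a step function with $\sup_{t \in [0,1]} |f(t) - f^{(n)}(t)| \leq \varepsilon$ by the oscillation bound built into the partition. The main obstacle is the termination argument, which relies essentially on both right-continuity and the existence of left limits for càdlàg functions; once it is in place, the rest of the proof is immediate.
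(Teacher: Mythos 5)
Your proof is correct, but it follows a genuinely different route from the paper's. You use the classical greedy oscillation partition: inductively take \(t_{k+1}\) to be the largest time up to which the oscillation of \(f\) on \([t_k,\cdot)\) stays below \(\varepsilon\), with right-continuity guaranteeing each step advances and the Cauchy criterion for the left limit \(f(t^*-)\) guaranteeing termination (if \(t_k\nearrow t^*\) strictly, then eventually \([t_k,t)\subset[t^*-\eta,t^*)\) for all \(t<t^*\), forcing \(t_{k+1}\geq t^*\)). The paper instead writes down an explicit partition --- the union of the (finitely many) jump times of size at least \(1/n\) with the dyadic grid \(\{k/2^n\}\) --- and then proves uniform convergence by contradiction, extracting monotone subsequences \(t^{(n_k'')}\) and \(t_i^{(n_k'')}\) converging to a common point \(t\) and ruling out each left/right approach pattern using right-continuity and left limits. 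Your argument is shorter and more self-contained, since the \(\varepsilon\)-control is built into the partition rather than verified after the fact; the paper's construction has the incidental feature of producing nested partitions with mesh at most \(2^{-n}\), but that extra structure is not used in the subsequent application (Lemma on convergence of Lebesgue--Stieltjes integrals), which only needs finiteness of each partition and uniform approximation --- both of which your construction also delivers.
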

\begin{proof}
We define \(0=t_0^{(n)}<t_1^{(n)}<\cdots < t_{N^{(n)}}^{(n)}=1\) as
\[\{t_i^{(n)}\}_{i=0}^{N^{(n)}}=\{0\leq t\leq 1\ ;\ |f(t)-f(t-)|\geq \frac{1}{n}\}\cup\{\frac{k}{2^n}\ ;\ 0\leq k \leq 2^n\}\]
 and the step function 
\[f^{(n)}:=\sum_{0 \leq i \leq N^{(n)}-1}f(t_i^{(n)}){\bf 1}_{[t_i^{(n)}, t_{i+1}^{(n)})}+f(t_{N^{(n)}}^{(n)}) {\bf 1}_{t_{N^{(n)}}^{(n)}}.\]
Remark that \(N^{(n)}\) is finite, and it holds that \(|t_{i+1}^{(n)}-t_{i}^{(n)}|\leq 1/2^n\) and \(\{t_{i}^{(n)}\}_{i=0}^{N^{(n)}}\subset \{t_{i}^{(n+1)}\}_{i=0}^{N^{(n+1)}}\). By the representation
\begin{eqnarray*}
\sup_{0\leq s\leq 1}|f(s)-f^{(n)}(s)|&=&\sup_{0\leq s\leq 1} \sum_{0 \leq i \leq N^{(n)}-1}|f(s)-f(t_i^{(n)})|{\bf 1}_{[t_i^{(n)}, t_{i+1}^{(n)})}(s),
\end{eqnarray*}
it is enough to prove that, for any \(\varepsilon>0,\) there exists \(n_0\) such that, for any \(n\geq n_0\) and \(t\in [t_{i}^{(n)}t_{i+1}^{(n)})\), \(|f(t)-f(t_{i}^{(n)})|\leq \varepsilon\) holds. Suppose this does not hold, that is, there are \(\varepsilon>0,\) \(n_k\nearrow \infty\), \(i=i(n_k)\) and \(t^{(n_k)}\in [t_{i}^{(n_k)}, t_{i+1}^{(n_k)})\) such that \(|f(t^{(n_k)})-f(t_i^{(n_k)})|>\varepsilon.\) Then we can take subsequence \(\{n_k'\}\subset \{n_k\}\) satisfying \(t^{(n_k')}\) converges to some \(t\in[0,1]\). Since it holds that
\(|t_{i}^{(n_k')}-t|\leq |t_{i}^{(n_k')}-t^{(n_k')}|+|t^{(n_k')}-t|\leq 1/2^{n_k'}+|t^{(n_k')}-t|,\) the sequence \(t_{i}^{(n_k')}\) also converges to \(t\). Furthermore, we can choose a further subsequence \(\{n_k''\}\subset \{n_k'\}\) satisfying \(\{t^{(n_k'')}\}_k\) and \(\{t_i^{(n_k'')}\}_k\) are monotone sequences.

If \(t^{(n_k'')} \nearrow t\) and \(t^{(n_k'')}_i \nearrow t\), we have 
\(\varepsilon< |f(t^{(n_k'')})-f(t^{(n_k'')}_i)|\to |f(t-)-f(t-)|=0.\)

If \(t^{(n_k'')} \searrow t\) and \(t^{(n_k'')}_i \searrow t\), we have \(\varepsilon< |f(t^{(n_k'')})-f(t^{(n_k'')}_i)|\to |f(t)-f(t)|=0.\)

Since \(t_i^{(n_k'')} \leq t^{(n_k'')}\), we do not have to consider the case of \(t^{(n_k'')} \nearrow t\) and \(t^{(n_k'')}_i \searrow t\).

If \(t^{(n_k'')} \searrow t\) and \(t^{(n_k'')}_i \nearrow t\) and there exists \(k_0\) such that \(t_i^{(n_{k_0}'')}=t\), then, for any \(k\geq k_0\), there is \(i\) satisfying \(t_i^{(n_{k}'')}=t\), so it holds that \[\varepsilon< |f(t^{(n_k'')})-f(t^{(n_k'')}_i)|=|f(t^{(n_k'')})-f(t)|\to |f(t)-f(t)|=0.\]

Otherwise, it holds that \(t^{(n_k'')} \searrow t\) and \(t^{(n_k'')}_i \nearrow t\), and, for any \(k\), there are \(\tilde{k}\geq k\)  and \(i\) satisfying \(t_i^{(n_{\tilde{k}}'')}<t\). For large \(n_{\tilde{k}}''\) such that \(|f(s)-f(s-)|\leq 1/n_{\tilde{k}}'' <\varepsilon/2\) for \(s\in (t_i^{(n_{\tilde{k}}'')}, t_{i+1}^{(n_{\tilde{k}}'')})\), we have \(\varepsilon< |f(t^{(n_{\tilde{k}}'')})-f(t^{(n_{\tilde{k}}'')}_i)|\to |f(t)-f(t-)|<\varepsilon/2.\)

These are contradictions.
\end{proof}

\begin{lemma}\label{DIntconv}
For any \(T>0\), \(f\in D([0,T]; \R)\) and non-decreasing right continuous functions \(g_n, g :[0, T]\to \R\), if \(g_n\) converges pointwise to \(g\) , then their Lebesgue-Stieltjes integrals \(\int fdg_n\) converge to \(\int fdg.\)
\end{lemma}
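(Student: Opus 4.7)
The plan is to reduce, via uniform approximation of $f$ by step functions (Lemma~\ref{Dapprox}), to a direct computation for each step function in terms of the values of $g_n$ and $g$ at the subdivision points.

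First, I would record the following consequence of the hypotheses: at every continuity point $t$ of $g$, both $g_n(t) \to g(t)$ and $g_n(t-) \to g(t-) = g(t)$. The second convergence is obtained by sandwiching
\[g_n(t-\varepsilon) \leq g_n(t-) \leq g_n(t),\]
taking $\liminf$ and $\limsup$ in $n$ using pointwise convergence at $t-\varepsilon$ and $t$, and then letting $\varepsilon \searrow 0$ and invoking continuity of $g$ at $t$. In particular, $M_n := g_n(T) - g_n(0-)$ converges to $g(T) - g(0-)$ and hence is bounded. In our application $g = \bar A$ is continuous on $[0,T]$, so the above holds at every point; more generally, since a monotone function has at most countably many discontinuities, the continuity points of $g$ form a dense set.

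Next, Lemma~\ref{Dapprox} provides step functions $f^{(k)} = \sum_{i=0}^{N_k -1} c_i^{(k)} {\bf 1}_{[s_i^{(k)}, s_{i+1}^{(k)})}$ converging to $f$ uniformly on $[0,T]$; by slightly perturbing the $s_i^{(k)}$ if needed, we may assume all subdivision points lie at continuity points of $g$. For each fixed $k$,
\[\int_{[0,T]} f^{(k)}\,dg_n = \sum_{i=0}^{N_k-1} c_i^{(k)}\bigl(g_n(s_{i+1}^{(k)}-) - g_n(s_i^{(k)}-)\bigr) \longrightarrow \int_{[0,T]} f^{(k)}\,dg \qquad (n \to \infty)
\]
by the convergence of left-limits established above. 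Combined with the triangle inequality
\[\Bigl|\int f\,dg_n - \int f\,dg\Bigr| \leq \|f - f^{(k)}\|_\infty \bigl(M_n + g(T) - g(0-)\bigr) + \Bigl|\int f^{(k)}\,dg_n - \int f^{(k)}\,dg\Bigr|,
\]
sending $n \to \infty$ and then $k \to \infty$ and using boundedness of $\{M_n\}$ yields the claim.

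The main technical obstacle is the convergence of the left-limits $g_n(s-) \to g(s-)$: pointwise convergence of $g_n$ to $g$ does not in general control $g_n(s-)$ at jump points of $g$ (a simple example with $g_n = {\bf 1}_{[1-1/n,\infty)}$, $g = {\bf 1}_{[1,\infty)}$, $f = {\bf 1}_{[1,\infty)}$ shows this). Thus the proof hinges on ensuring that every subdivision point used in the approximation is a continuity point of $g$, which is guaranteed by the density of such points (and is automatic in the intended application, where the limit PCAF is continuous).
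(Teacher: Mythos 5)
Your overall strategy is the same as the paper's: approximate \(f\) uniformly by the step functions of Lemma \ref{Dapprox}, pass to the limit in \(n\) for each fixed step function, and conclude by the triangle inequality together with boundedness of the total masses \(g_n(T)-g_n(0)\). The genuinely new content in your write-up is the treatment of left limits, and here you are right on a point the paper glosses over: for a step function \(f^{(k)}=\sum_i c_i{\bf 1}_{[s_i,s_{i+1})}\) one has \(\int f^{(k)}dg=\sum_i c_i(g(s_{i+1}-)-g(s_i-))\), not \(\sum_i c_i(g(s_{i+1})-g(s_i))\), and the two disagree precisely when \(\mu_g\) charges a subdivision point. The paper's display (\ref{eq:DInt1}) uses the second expression, and your counterexample (\(g_n={\bf 1}_{[1-1/n,\infty)}\), \(g=f={\bf 1}_{[1,\infty)}\)) is valid: the lemma as stated is false, because pointwise convergence of \(g_n\) to \(g\) gives no control on \(g_n(s-)\) at a jump of \(g\), and if \(f\) also jumps there the integrals need not converge. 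So the statement needs an extra hypothesis (e.g.\ that \(g\) is continuous, or that \(f\) and \(g\) have no common discontinuity); this does hold where the lemma is invoked in Proposition \ref{PCAFlem3}, since there \(g=\tilde A\) and all the \(g_n=A^{n,\infty}\) are continuous.

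One step of your repair is too quick, though. You propose to perturb the subdivision points \(s_i^{(k)}\) so that they all fall at continuity points of \(g\), invoking density of the latter. But the construction in Lemma \ref{Dapprox} forces the jump points of \(f\) of size at least \(1/k\) to be subdivision points: moving such a point destroys the uniform approximation, since \(f^{(k)}\) would then differ from \(f\) by the jump size on a small interval. Hence the perturbation only disposes of the dyadic points, and the jump points of \(f\) are continuity points of \(g\) only under the additional hypothesis just mentioned --- which is exactly the regime in which your counterexample does not apply. With that hypothesis added (it costs nothing in the application), your argument, including the sandwich proof that \(g_n(t-)\to g(t)\) at continuity points \(t\) of \(g\), is correct and is the honest version of what the paper's computation is implicitly doing.
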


\begin{proof}
Without loss of the generality, we may assume \(T=1.\) By Lemma \ref{Dapprox}, for any \(\varepsilon>0,\) we can take \(k\) such that \(\sup_{0\leq t\leq 1}|f^{(k)}(t)-f(t)|\leq \varepsilon\), where \(f^{(k)}\) is the step function constructed in Lemma \ref{Dapprox}. Then we have
\begin{eqnarray}
\nonumber \lefteqn{\left|\int_0^1 fdg- \int_0^1 fdg_n\right|}\\
\nonumber  &\leq & \left|\int_0^1 fdg- \int_0^1 f^{(k)}dg\right| + \left|\int _0^1f^{(k)}dg- \int_0^1 f^{(k)}dg_n\right| + \left|\int_0^1 f^{(k)}dg_n-\int_0^1 fdg_n\right|\\
\nonumber  &\leq &\varepsilon \ (|g(1)-g(0)| + |g_n(1)-g_n(0)|)\\
&& \hspace{10mm}+ \sum_{i=0}^{N^{(k)}}|f(t_i^{(k)})| |(g(t_{i+1}^{(k)})-g(t_{i}^{(k)}))-(g_n(t_{i+1}^{(k)})-g_n(t_{i}^{(k)}))| \label{eq:DInt1}
\end{eqnarray}
Since \(\{t_i^{(k)}\}_i\) is finite for fixed \(k\), by letting \(n\) tends to infinity at (\ref{eq:DInt1}), we have
\[\left|\int_0^1 fdg- \int_0^1 fdg_n\right|\leq 2\varepsilon |g(1)-g(0)|.\] 
Thus the proof is complete.
\end{proof}

\begin{proposition}\label{PCAFlem3}
The PCAF \(\bar{A}\) corresponds to \(\mu^{\infty}\).
\end{proposition}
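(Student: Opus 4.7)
The plan is to verify the Revuz correspondence between $\bar{A}$ and $\mu^\infty$ by matching their $\lambda$-order potentials. For a countable dense collection of $f \in C_c(\R^d)_+$ and $x \in \R^d$, it suffices to show
\begin{equation*}
\mathbb{E}^{Z^\infty}_x\left[\int_0^\infty e^{-\lambda s} f(Z^\infty_s) \, d\bar{A}_s(\omega, \cdot)\right] = \int_{\R^d} g_\lambda^\infty(x, y) f(y) \, d\mu^\infty(\omega)(y)
\end{equation*}
for $\mathbb{P}^X$-almost every $\omega$, since this identity characterizes $\mu^\infty$ as the Revuz measure of the PCAF $\bar{A}(\omega, \cdot)$ and PCAFs of a Hunt process are uniquely determined (up to equivalence) by their Revuz measures. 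For each finite $n$, the explicit representation $A^{n, \infty}_s = \int_0^s \exp(\tilde{X}^n(Z^\infty_u))\, du$ together with Fubini gives the analogous identity with $\bar{A}, \mu^\infty$ replaced by $A^{n, \infty}, \mu^n$.

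I would then pass to the limit along the subsequence $\{n_i\}$ from Corollary \ref{PCAFassu1}. On the left-hand side, that corollary gives pointwise convergence $A^{n_i, \infty}_s \to \bar{A}_s$ on $[0, T]$, $\mathbb{P}_x^{Z^\infty} \otimes \mathbb{P}^X$-almost surely, and Lemma \ref{DIntconv} upgrades this to almost-sure convergence of the Lebesgue-Stieltjes integrals $\int_0^T e^{-\lambda s} f(Z^\infty_s) \, dA^{n_i, \infty}_s \to \int_0^T e^{-\lambda s} f(Z^\infty_s) \, d\bar{A}_s$ (the integrand is in $D([0, T])$ because $Z^\infty$ is c\`adl\`ag and $f$ is continuous). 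Uniform integrability follows from a uniform $L^2$-bound on $A^{n, \infty}_T$ (which holds in the $L^2$-regime $\gamma < \sqrt{d/C^*}$ by an application of Kahane's convexity inequality as in Lemma \ref{newlemma}), and the exponentially weighted tail on $[T, \infty)$ is controlled by $\mathbb{E}^X \mathbb{E}^{Z^\infty}_x[A^{n, \infty}_T] = T$, yielding the desired limit of the left-hand sides.

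The main obstacle is the passage to the limit on the right-hand side, because $g_\lambda^\infty(x, \cdot) f(\cdot)$ is not bounded continuous due to the logarithmic singularity of $g_\lambda^\infty(x, \cdot)$ at $y = x$, so the $L^1(\mathbb{P}^X)$-convergence $\mu^n \to \mu^\infty$ from Theorem \ref{Shamov} does not apply directly. I would split $g_\lambda^\infty(x, y) = (g_\lambda^\infty(x, y) \wedge M) + (g_\lambda^\infty(x, y) - M)_+$: the truncated kernel is bounded continuous, so Theorem \ref{Shamov} gives $L^1(\mathbb{P}^X)$-convergence of its integral against $\mu^n$. The error from the truncation is controlled uniformly in $n \in \mathbb{N} \cup \{\infty\}$ by the pointwise identity $\mathbb{E}^X \mu^n = m$, which yields
\begin{equation*}
\mathbb{E}^X \int (g_\lambda^\infty(x, y) - M)_+ f(y) \, d\mu^n(y) = \int (g_\lambda^\infty(x, y) - M)_+ f(y) \, dm(y),
\end{equation*}
and this tends to $0$ as $M \to \infty$ by the local integrability of the logarithm against Lebesgue measure on $\R^d$. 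Matching the two limits identifies the Revuz measure of $\bar{A}$ with $\mu^\infty$, $\mathbb{P}^X$-almost surely, completing the proof.
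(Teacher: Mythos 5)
Your argument is correct in substance but follows a genuinely different route from the paper. The paper never brings the singular kernel \(g_\lambda^\infty(x,\cdot)\) into play: instead of fixing the starting point, it works with the finite-horizon functional \(\int_0^t f(Z_s^\infty)\,dA_s^{n,\infty}\) and integrates the starting point against a density \(h\,dm\), so that the right-hand side of the Revuz identity becomes \(\int \bigl(\int_0^t P_s^\infty h\,ds\bigr) f\,d\mu^n\) with a \emph{bounded continuous} weight (by the strong Feller property of \(Z^\infty\)); Theorem \ref{Shamov} then applies verbatim, and the Revuz measure is recovered by taking \(h=1\), dividing by \(t\) and letting \(t\searrow 0\) as in \((\ref{eq:AppPCAF-1})\). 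You instead match \(\lambda\)-potentials at a fixed \(x\) and pay for the logarithmic singularity of \(g_\lambda^\infty(x,\cdot)\) with a truncation at level \(M\), controlling the error uniformly in \(n\) via \(\mathbb{E}^X\mu^n=m\) and local integrability of the logarithm --- this works, and your treatment of the left-hand side (Corollary \ref{PCAFassu1}, Lemma \ref{DIntconv}, the uniform \(L^2\)-bound in the regime \(\gamma<\sqrt{d/C^*}\), and the exponential tail) parallels the paper's. Two points deserve more care than you give them: the \(\lambda\)-potential identity can only be secured for \(m\)-a.e.\ \(x\) outside a single \(\mathbb{P}^X\)-null set (not "for \(x\in\R^d\)" as written), and turning that into \(\bar\mu(\omega)=\mu^\infty(\omega)\) requires integrating over the starting point against \(h\,dm\) anyway --- which is precisely the device the paper builds in from the outset, letting it bypass the truncation entirely. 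So the paper's proof is somewhat shorter at this step, while yours is the more classical potential-theoretic argument and makes explicit where the \(L^2\)-regime and the first-moment identity enter.
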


\begin{proof}
By Lemma \ref{PCAFlem2}, \(\bar{A}\) is a PCAF whose distribution is same as that of \(\tilde{A}\). So there exists the Revuz measure \(\bar{\mu}(\omega)\) corresponding to \(\bar{A}( \cdot, \omega)\) for \(\mathbb{P}^X\)-almost sure \(\omega\). We will prove that \(\mu^{\infty}(\omega)=\bar{\mu}(\omega)\) for \(\mathbb{P}^X\)-almost sure \(\omega\).

Set \(d\mu^{n,\infty}:=\exp(\gamma X^n(x)-\frac{\gamma^2}{2} \mathbb{E}^X[(X^n(x))^2])dm.\)
Then, \(A^{n,{\infty}}(\omega,\cdot)\) corresponds to \(\mu^{n,\infty}(\omega)\) for \(\mathbb{P}^X\)-almost sure \(\omega\). So, for \(f\in C_c(\R^d)\) with \(f\geq 0\) and \(h\in L^1(\R^d)\cap L^{\infty}(\R^d)\) with \(h\geq 0\), we have
\begin{equation}
\int_{\R^d} \E_{x}^{Z^{\infty}}\left[\int_0^t f(Z_s^{\infty})dA_s^{n,{\infty}}\right] h(x) dm(x) =\int_{\R^d} \int_0^t P_s^{\infty}h(x)ds \cdot f(x)d\mu^n(x) \label{eq:PCAFlem3-1}
\end{equation}
\(\mathbb{P}^X\)-almost surely. Here, we defined \(P_s^{\infty}h(x):=\E_x^{Z^{\infty}}h(Z_s^{\infty}).\) By the strong Feller property of \(Z^{\infty}\), \(\int_0^t P_s^{\infty}hds \in C(\R^d)\) and so \(\int_0^t P_s^{\infty}hds\cdot f \in C_c(\R^d)\). Thus, by Theorem \ref{Shamov} and (\ref{eq:PCAFlem3-1}), we have
\begin{equation}
\lim_{n\to {\infty}}\EX\left| \int_{\R^d} \E_{x}^{Z^{\infty}}\left[\int_0^t f(Z_s^{\infty})dA_s^{n,{\infty}}\right] h(x) dm(x)- \int_{\R^d} \int_0^t P_s^{\infty}hds \cdot fd\mu^{\infty} \right|=0.\label{eq:PCAFlem3-2}
\end{equation}

On the other hand, since \(f(Z^{\infty}_{\cdot})\in D([0,t];\R)\), \(\mathbb{P}_x^{Z^{\infty}}\)-almost surely, and \(A_t^{n,{\infty}}\) converges to \(\tilde{A}_t\) in \(L^1(\mathbb{P}^X)\), \(\mathbb{P}^{Z^{\infty}}_x\)-almost surely, by Lemma \ref{DIntconv} and Lebesgue's convergence theorem, we have
\begin{eqnarray*}
 \EX\left| \int_{\R^d} \E_{x}^{Z^{\infty}}\left[\int_0^t f(Z_s^{\infty})dA_s^{n,{\infty}}\right] h(x) dm(x) - \int_{\R^d} \E_{x}^{Z^{\infty}}\left[\int_0^t f(Z_s^{\infty})d\tilde{A}_s\right] h(x) dm(x)\right|\\
\leq \int_{\R^d} \E_{x}^{Z^{\infty}}\mathbb{E}^X\left|\int_0^t f(Z_s^{\infty})dA_s^{n,{\infty}}- \int_0^t f(Z_s^{\infty})d\tilde{A}_s\right| h(x) dm(x),
\end{eqnarray*}
and this converges to \(0\) as \(n \to \infty\). Combining this with (\ref{eq:PCAFlem3-2}) and Lemma \ref{PCAFlem2}, we have
\begin{equation}
\int_{\R^d} \E_{x}^{Z^{\infty}}\left[\int_0^t f(Z_s^{\infty})d\bar{A}_s\right] h(x) dm(x) =\int_{\R^d} \int_0^t P_s^{\infty}h(x)ds \cdot f(x)d\mu^{\infty}(x),\label{eq:PCAFlem3-4}
\end{equation}
\(\mathbb{P}^X\)-almost surely, for \(f\in C_c(\R^d)\) and \(h\in L^1(\R^d)\cap L^{\infty}(\R^d)\) with \(f,h\geq 0.\)

Equation (\ref{eq:PCAFlem3-4}) also holds \(\mathbb{P}^X\)-almost surely for a non-negative measurable function \(h\) and \(f\in C_c(\R^d)\) with \(h\geq 0\) by using the monotone convergence theorem for \(h_n:=(h\wedge n){\bf 1}_{[0,n]}\).
By taking \(h=1\), it follows from (\ref{eq:PCAFlem3-4}) that
\begin{equation}
\int_{\R^d} f(x)d\mu^{\infty}(x)=\lim_{t\searrow 0}\frac{1}{t}\int_{\R^d} \E_{x}^{Z^{\infty}}\left[\int_0^t f(Z_s^{\infty})d\bar{A}_s \right] dm(x), \label{eq:PCAFlem3-5}
\end{equation}
\(\mathbb{P}^X\)-almost surely, for \(f\in C_c(\R^d)\) with \(f\geq 0.\) 
Combining this with the correspondence between \(\bar{A}\) and \(\bar{\mu}\), 
\[\int fd\mu^{\infty} = \int fd\bar{\mu}\]
 holds \(\mathbb{P}^X\)-almost surely. Thus, for each open set \(O\), there exists \(\Lambda^O\) with \(\mathbb{P}^X(\Lambda^O)=1\) such that \(\mu^{\infty}(\omega)(O) = \bar{\mu}(\omega)(O)\) for \(\omega \in \Lambda^O.\) By taking a countable open basis \(\{O_i\}_{i=1}^{\infty}\), it holds that 
 \[\mu^{\infty}= \bar{\mu} \text{\ \ on\ } \cap_{i=1}^{\infty} \Lambda^{O_i}\]
  since \(\mu^{\infty}\) and \(\bar{\mu}\) are Borel measures. Thus \(\bar{A}(\omega,\cdot)\) corresponds to \(\mu^{\infty}(\omega)\) for \(\mathbb{P}^X\)-almost sure \(\omega\).
\end{proof}

\begin{proof}[Proof of the Proposition \ref{PCAFlem}]
By Lemma \ref{PCAFlem2} and \ref{PCAFlem3}, \(\bar{A}\) has the same distribution as that of \(A^{\infty}\).
\end{proof}

In this section, we did not use specific properties of \(A^{n,\infty}\) for \(Z^n\). Thus, by the same proof as that of Proposition \ref{PCAFlem}, we have the following general statement of independent interest.

\begin{theorem}
Let \((\mathcal{E},\mathcal{F})\) on \(L^2(E;dm)\) be a regular Dirichlet form and \(Z\) be the \(m\)-symmetric Hunt process associated with this Dirichlet form. Denote \(A^n\) by a PCAF corresponding the smooth measure \(\mu^n\) for \(n\in \mathbb{N}\cup\{\infty\}.\) Assume that \(Z\) is the strong Feller process, \(\mu^n\) converges vaguely to \(\mu^{\infty}\) , \(A^n_t\) converges to some \(\tilde{A}_t\) in \(L^1(\mathbb{P}_x^{Z})\) for any \(x\in E\) and \(t\), and there exists \(C_t\in L^1(0,T)\) such that \(\mathbb{E}_x[A_t^n]\leq C_t\) for any \(T\). Then \(\tilde{A}\) has the same distribution as that of \(A^{\infty}\).
\end{theorem}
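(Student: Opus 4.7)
The plan is to mirror the two-step argument used to prove Proposition \ref{PCAFlem}. In the first step I would construct, on the canonical product space, a PCAF $\bar{A}$ in the strict sense whose finite-dimensional distributions agree with those of $\tilde{A}$, following the construction of Proposition \ref{PCAFlem2}. In the second step I would show that the Revuz measure of $\bar{A}$ equals $\mu^\infty$, following Proposition \ref{PCAFlem3}; uniqueness of the PCAF attached to a smooth measure then forces $\bar{A}$ and $A^\infty$ to have the same distribution, and hence so do $\tilde{A}$ and $A^\infty$.

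For the first step, the key analytic input is the integrated convergence $\int_0^t |A^n_s - \tilde{A}_s|\,ds \to 0$ in $L^1(\mathbb{P}_x^Z)$, which plays the role of Lemma \ref{PCAFassumption}. This follows from Fubini's theorem, the assumed pointwise $L^1$ convergence $A^n_s \to \tilde{A}_s$ at each $s$, and Lebesgue's dominated convergence with the integrable majorant $\E_x[A^n_s] + \E_x[\tilde{A}_s] \leq 2 C_s$ supplied by the hypothesis $C \in L^1_{\mathrm{loc}}$. Passing to a subsequence and invoking Lemma \ref{contiincreconv}, one obtains the pointwise convergence $A^{n_i}_s(\omega) \to \tilde{A}_s(\omega)$ almost surely. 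From here I would set up the defining sets $\Lambda_t^T$, $\Lambda_t$, $\tilde{\Lambda}$, and $\Lambda$ and define $\bar{A}$ via the double limit \((\ref{eq:barA})\), exactly as in the proof of Proposition \ref{PCAFlem2}. The verification of the cocycle property $\bar{A}_{t+s} = \bar{A}_t + \bar{A}_s \circ \theta_t$ on $\Lambda$ uses only the cocycle relation for each PCAF $A^n$ and the strong Markov property of $Z$, both provided by the hypotheses.

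For the second step, I would start from the Revuz formula applied to each $A^n$:
\[
\int_E \E_x^Z\Bigl[\int_0^t f(Z_s)\,dA^n_s\Bigr] h(x)\,dm(x)
= \int_E \Bigl(\int_0^t P_s h(x)\,ds\Bigr) f(x)\,d\mu^n(x),
\]
for $f \in C_c(E)$ and $h \in L^1(E;dm) \cap L^\infty(E;dm)$, both non-negative. The strong Feller assumption on $Z$ ensures that the integrand on the right belongs to $C_c(E)$, so the vague convergence $\mu^n \to \mu^\infty$ sends the right-hand side to its $\mu^\infty$-counterpart. On the left, the almost-sure pointwise convergence $A^n \to \bar{A}$ on $\Lambda$ combined with Lemma \ref{DIntconv} yields convergence of the inner Lebesgue--Stieltjes integral $\int_0^t f(Z_s)\,dA^n_s \to \int_0^t f(Z_s)\,d\bar{A}_s$; dominated convergence at the outer expectation is justified by the uniform bound $\E_x[A^n_t] \leq C_t$. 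Identifying the Revuz measure $\bar{\mu}$ of $\bar{A}$ with $\mu^\infty$ on a countable basis of open sets, exactly as in the proof of Proposition \ref{PCAFlem3}, concludes this step.

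The main obstacle is that the construction of $\Lambda$ in Section \ref{secPCAFlem} exploits the strict monotonicity of $\tilde{A}$ (obtained via Lemma \ref{lemma11}), which in the original argument was derived using the inverse-process tightness machinery specific to the stable-process setting of Assumption \ref{assumption}. In the present generality strict monotonicity is not a hypothesis, so one must either relax the definition of $\Lambda_t^T$ to require only monotonicity and continuity of the subsequential limits (which still suffices for the Revuz identification, since that step only needs $\bar{A}$ to be a well-defined PCAF with the correct one-dimensional marginals), or derive strict monotonicity from the positivity of $\mu^\infty$ on open sets together with Fatou's inequality applied to $\E_x[\tilde{A}_t - \tilde{A}_s]$. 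Either route completes the argument without invoking the stable-process-specific tools of Sections \ref{sectight} and \ref{seccfdd}.
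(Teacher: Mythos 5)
Your proposal follows essentially the same route as the paper, which proves this theorem by observing that the argument for Proposition \ref{PCAFlem} (via Propositions \ref{PCAFlem2} and \ref{PCAFlem3}, with Lemmas \ref{PCAFassumption}, \ref{contiincreconv} and \ref{DIntconv}) uses no properties specific to the stable-process setting. Your closing remark correctly identifies the one adjustment the paper leaves implicit, namely that the strict monotonicity built into the sets \(\Lambda_t^T\) is not available in this generality and must either be dropped from their definition (which suffices, since the Revuz identification only needs \(\bar{A}\) to be a PCAF with the right marginals) or derived separately.
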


\section{Examples}\label{examples}
In this section, we give two examples of the main result, Theorem \ref{mainconvergence}. In the first one, we introduce Liouville \(\alpha\)-stable process and prove the convergence of these processes. In the second one, we prove the scaling limit of Liouville simple random walk on \(\mathbb{Z}^2\) is Liouville Brownian motion.

\subsection{Convergence of the Liouville \(\alpha\)-stable processe on \(\R\)}\label{example1}
Let \(d=1\). For \(\alpha \in[1,2)\), denote by \(Z^{\alpha}=(\{Z^{\alpha}_t\}_t, \{\mathbb{P}_x^{Z^{\alpha}}\}_x)\) a symmetric \(\alpha\)-stable process on \(\R\) on a probability space \(\Omega^{Z^{\alpha}}\). Denote by \(p^{\alpha}(t,x,y)\) the continuous transition density function of \(Z^{\alpha}\) and \(g_{\lambda}^{\alpha}(x,y):=\int_0^{\infty} e^{-\lambda t}p^{\alpha}(t,x,y)dt\) a \(\lambda\)-order Green's function of \(Z^{\alpha}\) for \(\lambda>0.\) Let \(X^{\alpha}\) be a Gaussian field on \(\R\) on a probability space \((\Omega^{X^{\alpha}}, \mathbb{P}^{X^{\alpha}})\) whose covariance kernel is \(\pi g_{\lambda}^{\alpha}\). By Lemma \ref{Green'slemma} and \cite{B}, we can construct a non-degenerate GMC \(\mu^{\alpha}\) for any \(\alpha \geq d=1\) and \(\gamma \in [0,\sqrt{2})\). We define \(\hat{Z}^{\alpha}\) as the time-changed process of \(Z^{\alpha}\) by \(\mu^{\alpha}\) for any  \(\alpha \geq 1\). In particular, \(\hat{Z}^1\) is Liouville Cauchy process on \(\R\). See \cite{Bav} for details of the construction of Liouville Cauchy process.

\begin{theorem}\label{example1thm}
For \(\gamma \in [0,1)\) and any \(x\in \R\), time-changed process \(\hat{Z}^{\alpha}\) under \(\mathbb{P}_x^{Z^{\alpha}} \otimes \mathbb{P}^{X^{\alpha}}\) converges weakly to \(\hat{Z}^1\) under \(\mathbb{P}_x^{Z^1} \otimes \mathbb{P}^{X^1}\) with respect to \(J_1\)-topology as \(\alpha \searrow 1\).
\end{theorem}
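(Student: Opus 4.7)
The plan is to reduce Theorem~\ref{example1thm} to an application of Theorem~\ref{mainconvergence}. Fix $\gamma \in [0,1)$; the case $\gamma=0$ is trivial (the time change is deterministic), so assume $\gamma \in (0,1)$. For any sequence $\alpha_n \downarrow 1$ I set $Z^n := Z^{\alpha_n}$, with $Z^1$ playing the role of $Z^{\infty}$. Each $Z^{\alpha_n}$ has a continuous transition density on $\R$, so Condition (A) of Section~\ref{main} applies, and it remains to check the two clauses of Assumption~\ref{assumption}.

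For clause (1), weak convergence $Z^\alpha \Rightarrow Z^1$ in $J_1$-topology follows from the uniform-on-compacts convergence of characteristic exponents $|\xi|^\alpha \to |\xi|$ combined with standard weak-convergence criteria for L\'evy processes (e.g., Jacod--Shiryaev VII.3.4). Pointwise convergence $g_\lambda^\alpha(x,y) \to g_\lambda^1(x,y)$ follows from the Fourier representation $g_\lambda^\alpha(r) = \pi^{-1} \int_0^\infty \cos(\xi r)/(\lambda + \xi^\alpha)\, d\xi$ using dominated convergence on $[0,M]$ and integration by parts to handle the oscillatory tail on $[M,\infty)$.

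The heart of the proof is the uniform Green's-function bound in clause (2). Given any $\alpha_0 > 1$, I would establish
\[
g_\lambda^\alpha(r) \;\leq\; \Gamma(1/\alpha_0)\, g_\lambda^1(r) + C, \qquad \forall\,\alpha \in [1,\alpha_0],\; r > 0.
\]
For $r \geq 1$, the stable heat-kernel tail bound $p^\alpha(t,0,r) \leq C_\alpha\, t/r^{1+\alpha}$ (with $C_\alpha$ uniformly bounded for $\alpha \in [1,2]$) gives $g_\lambda^\alpha(r) \leq C/\lambda^2$ at once. For $r < 1$, split the time integral at $t=r^\alpha$: the contribution from $t < r^\alpha$ is controlled by the same tail bound, while on $t \geq r^\alpha$ I use the unimodality bound $p^\alpha(t,0,r) \leq p^\alpha(t,0,0) = \Gamma(1/\alpha)/(\pi\alpha) \cdot t^{-1/\alpha}$ (Yamazato's theorem). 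The elementary inequality
\[
\frac{1 - r^{\alpha - 1}}{\alpha - 1} \;\leq\; \log(1/r) \qquad (r \in (0,1),\; \alpha > 1),
\]
obtained by monotonicity in $\alpha$ of $\alpha \mapsto 1 - r^{\alpha-1} + (\alpha-1)\log r$, then yields $\pi g_\lambda^\alpha(r) \leq \Gamma(1/\alpha) \log_+(1/r) + C$. Substituting $\log_+(1/r) = \pi g_\lambda^1(r) - h(r)$ from Lemma~\ref{Green'slemma} and using $\Gamma(1/\alpha) \leq \Gamma(1/\alpha_0)$ completes the bound.

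Since $\Gamma(1/\alpha_0) \to \Gamma(1) = 1$ as $\alpha_0 \downarrow 1$, for the given $\gamma \in (0,1)$ I choose $\alpha_0$ so that $\Gamma(1/\alpha_0) < 1/\gamma^2$ and apply Theorem~\ref{mainconvergence} with $C^* := \Gamma(1/\alpha_0)$; this yields $\hat Z^{\alpha_n} \Rightarrow \hat Z^1$ in $J_1$-topology along any sequence $\alpha_n \downarrow 1$ with $\alpha_n \leq \alpha_0$, and hence the full limit $\alpha \downarrow 1$. The main obstacle is the \emph{sharp} coefficient in the logarithm: the crude diagonal bound $g_\lambda^\alpha(r) \leq g_\lambda^\alpha(0) \sim 1/(\pi(\alpha-1))$ is far too weak, and only the refined time split at $t=r^\alpha$ captures the prefactor $\Gamma(1/\alpha)/\pi$, which tends to the correct value $1/\pi$ as $\alpha \downarrow 1$, making $C^*$ arbitrarily close to $1$ and thereby covering the full $L^2$-regime $\gamma < 1$.
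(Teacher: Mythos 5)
Your reduction to Theorem \ref{mainconvergence} and your verification of Assumption \ref{assumption}(1) coincide in substance with the paper's (the paper obtains pointwise convergence of \(g_\lambda^\alpha\) from heat-kernel convergence plus a dominating kernel rather than from the Fourier integral, but this is cosmetic). Where you genuinely diverge is Assumption \ref{assumption}(2). The paper takes \(C^*=1\) exactly: it computes \(g_\lambda^\alpha(0,0)=\frac{1}{2\alpha\sin(\pi/\alpha)}\), argues by continuity that \(g_\lambda^\alpha(x,y)\le g_\lambda^1(x,y)\) when both \(|x-y|\) and \(\alpha-1\) are small, and controls the off-diagonal regime by integrating by parts in the Fourier integral to get \(|g_\lambda^\alpha(x,y)|\le C/|x-y|\). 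You instead prove the quantitative bound \(\pi g_\lambda^\alpha(r)\le \Gamma(1/\alpha)\log_+(1/r)+C\) by splitting the time integral at \(t=r^\alpha\) and combining the stable tail bound with the on-diagonal (unimodality) bound, then take \(C^*=\Gamma(1/\alpha_0)>1\) and let \(\alpha_0\searrow 1\) depending on \(\gamma\). Both routes cover the whole range \(\gamma\in[0,1)\). Your version buys explicitness: the coefficient of the logarithm is identified, the passage \(C^*\to 1\) is transparent, and you avoid the paper's near-diagonal step (the existence of a \(\delta\) uniform in \(\alpha\) with \(g_\lambda^\alpha\le g_\lambda^1\) on \(\{|x-y|\le\delta\}\)), which is asserted rather tersely there. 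The price is that you must invoke heat kernel estimates uniform in \(\alpha\in[1,\alpha_0]\) and track the normalization of the characteristic exponent: with the paper's convention \(e^{-t|\theta|^\alpha/2}\) the prefactor relative to \(g_\lambda^1\) becomes \(2^{1/\alpha-1}\Gamma(1/\alpha)\) rather than \(\Gamma(1/\alpha)\), but it still tends to \(1\), so the conclusion is unaffected. For completeness you should also record that \(Z^\alpha\) is strongly recurrent for \(\alpha>1\) (finite diagonal Green's function), since that is a standing hypothesis of Section \ref{main}, and note that the continuous limit \(\alpha\searrow 1\) is recovered from sequential convergence along every \(\alpha_n\searrow 1\), which eventually lies in \([1,\alpha_0]\).
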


\begin{remark}
For \(\alpha \in (0,2)\) and positive integer \(d\), we can also consider symmetric \(\alpha\)-stable processes \(Z^{\alpha}\) on \(\R^d\) and Gaussian fields whose covariance kernel are \(\pi\) times \(\lambda\)-order Green's functions \(g_{\lambda}^{\alpha}\) of \(Z^{\alpha}\).  However, we cannot construct non-degenerate GMC \(\mu^{\alpha}\) for any \(\alpha<d\) because \(\pi g_{\lambda}^{\alpha}(x,y)\) cannot be bounded by \(\log_{+}{\frac{1}{|x-y|}}+C\) uniformly. So we only consider cases of \(d=1,2\) and \(\alpha \in [d,2]\). Moreover, if \(d=2\), there is the only case of \(\alpha=2\), so we assumed \(d=1\).
\end{remark}

\begin{remark}
For \(d=\alpha=2\), \(X^{\alpha}\) is the massive Gaussian free field on \(\R^2\). Hence, for \(\alpha \geq d,\) \(X^{\alpha}\), \(\mu^{\alpha}\) and \(\hat{Z}^{\alpha}\) are generalizations to \(\alpha\)-stable cases of the massive Gaussian free field, the Liouville measure and Liouville Brownian motion, respectively. For convenience, we call \(\hat{Z}^{\alpha}\) the \textit{Liouville \(\alpha\)-stable process}. In particular, we call \(\hat{Z}^1\) \textit{Liouville Cauchy process} on \(\R\). Theorem \ref{example1thm} means that Liouville \(\alpha\)-stable process converges weakly to Liouville Cauchy process as \(\alpha \searrow 1\).
\end{remark}

\begin{remark}
For positive integer \(d\) and \(\alpha \in (d,2)\), symmetric \(\alpha\)-stable process \(Z^{\alpha}\) on \(\R^d\) is strongly recurrent, that is, \(Z^{\alpha}\) has positive capacity for a point. Then \(X^{\alpha}\) has a bounded covariance kernel, so \(X^{\alpha}\) is not only a random distribution, but also a random function for \(\alpha>d\). Moreover \(\mu^{\alpha}\) is absolutely continuous with respect to the Lebesgue measure, \(\mathbb{P}^{X^{\alpha}}\)-almost surely for \(\alpha >d\). On the other hand, \(Z^d\) is recurrent simply, so \(X^d\) does not have a bounded covariance kernel, \(X^d\) is not a random function and \(\mu^{\alpha}\) is singular with respect to the Lebesgue measure, \(\mathbb{P}^{X^d}\)-almost surely. Properties change significantly when \(\alpha=d\), so we consider the convergence to \(\alpha=d\).
\end{remark}

\begin{remark}
Since \(Z^{\alpha}\) on \(\R\) is strongly recurrent for \(\alpha>1\), by \cite{CHK}, \(\hat{Z}^{\alpha}\) also converges to \(\hat{Z}^{\alpha_0}\) as \(\alpha \to \alpha_0\) for \(\alpha_0>1\).
\end{remark}

\begin{proof}[Proof of Theorem \ref{example1thm}]
By using the Fourier inversion theorem and Euler's reflection formula, we have \(g_{\lambda}^{\alpha}(0,0)=\frac{1}{2\alpha \sin(\pi/\alpha)}\), so  \(Z^{\alpha}\) is strongly recurrent if and only if \(\alpha>d=1\).

By Theorem \ref{mainconvergence}, it is enough to check Assumption \ref{assumption} (1) and (2) with \(C^*=1\). Fix \(d=1\), \(\gamma \in [0,1)\) and \(x\in \R.\)

(1) By considering the characteristic function, \(Z^{\alpha}_t\) converges weakly to \(Z^1_t\) as \(\alpha \to 1\) for any \(t\geq 0\). Since \(Z^{\alpha}\) are L\'evy processes,  by \cite[VI. 17. Exercise 15.]{Ka}, \(Z^{\alpha}\) converges weakly to \(Z^1\) as \(\alpha \to 1\) with respect to \(J_1\)-topology.

By the Fourier inversion theorem, we have
\[p^{\alpha}(t,x,y)=\frac{1}{2\pi}\int_{\R}e^{i\theta (x-y)}e^{-\frac{t}{2}|\theta|^{\alpha}}d\theta.\]
Since  \(|e^{i\theta (x-y)}e^{-\frac{t}{2}|\theta|^{\alpha}}|\leq e^{-\frac{t}{2}|\theta|}+e^{-\frac{t}{2}|\theta|^2},\) by Lebesgue's convergence theorem, \(p^{\alpha}(t,x,y)\) converges pointwisely to \(p^1(t,x,y)\). By the two sided heat kernel estimates for \(p^{\alpha}\) (see \cite{BG} for example), there exists \(C>0\) such that, for any \(\alpha\in [1,2)\), \(t\geq 0,\) and \(x,y\in \R\), it holds that 
\begin{equation}
p^{\alpha}(t,x,y) \leq C(p^0(t,x,y)+p^{1/2}(t,x,y)). \label{eq:ex1-4}
\end{equation}
The right hand side of \((\ref{eq:ex1-4})\) times \(e^{-\lambda t}\) is integrable on \([0,\infty)\) for each \(x,y\in \R\) with \(x\neq y\). Hence, by Lebesgue's convergence theorem, \(g_{\lambda}^{\alpha}(x,y)\) converges pointwise to \(g_{\lambda}^{1}(x,y)\) as \(\alpha \searrow 1.\)

(2) Since \(g_{\lambda}^{\alpha}(0,0)=\frac{1}{2\alpha \sin(\pi/\alpha)}\) is increasing to \(g_{\lambda}^{1}(0,0)=\infty\) as \(\alpha \searrow 1\) and \(g_{\lambda}^{1}(x,y)\) is also increasing to \(g_{\lambda}^{1}(0,0)\) as \(|x-y|\searrow 0\), by the continuity of \(g_{\lambda}^{\alpha}(x,y)\), there exists small \(\delta>0\) such that \(g_{\lambda}^{\alpha}(x,y)\leq g_{\lambda}^{0}(x,y)\) for \(|\alpha -1| \vee |x-y|\leq \delta .\) While, it holds that
\begin{eqnarray}
\nonumber g_{\lambda}^{\alpha}(x,y) &=&  \int_0^{\infty} \frac{e^{-\lambda t}}{2\pi} \int_{\R} e^{i\theta (x-y)}e^{-\frac{t}{2}|\theta|^{\alpha}} d\theta dt\\
\nonumber &=& \frac{1}{2\pi} \int_{\R} \frac{e^{i\theta (x-y)}}{\lambda + \frac{|\theta|^{\alpha}}{2}} d\theta \\
\nonumber &=& \frac{1}{\pi} \int_{0}^{\infty} \frac{1}{\lambda + \frac{\theta^{\alpha}}{2}} \cos(|x-y|\theta) d\theta\\
\nonumber &=& \frac{1}{\pi}\left[\frac{1}{\lambda + \frac{\theta^{\alpha}}{2}} \frac{\sin(|x-y|\theta)}{|x-y|}\right]_0^{\infty}+\frac{1}{2\pi} \int_{0}^{\infty} \frac{\alpha \theta^{\alpha-1}}{(\lambda + \frac{\theta^{\alpha}}{2})^2} \frac{\sin(|x-y|\theta)}{|x-y|} d\theta \\
\nonumber &=& \frac{1}{2\pi} \int_{0}^{\infty} \frac{\alpha \theta^{\alpha-1}}{(\lambda + \frac{\theta^{\alpha}}{2})^2} \frac{\sin(|x-y|\theta)}{|x-y|} d\theta .
\end{eqnarray}
So we have
\begin{eqnarray}
\nonumber |g_{\lambda}^{\alpha}(x,y)|&\leq &  \frac{1}{2\pi|x-y|} \int_{0}^{\infty} \frac{\alpha \theta^{\alpha-1}}{(\lambda + \frac{|\theta|^{\alpha}}{2})^2} d\theta \\
\nonumber &\leq & \frac{1}{2\pi \lambda^2 |x-y|} \int_{0}^{1} \alpha \theta^{\alpha-1} d\theta +\frac{1}{2\pi|x-y|} \int_{1}^{\infty} \frac{\alpha \theta^{\alpha-1}}{(\frac{\theta^{\alpha}}{2})^2} d\theta\\
&=& \frac{\frac{1}{\lambda^2}+4}{2\pi} \frac{1}{|x-y|}.  \label{eq:ex1-3}
\end{eqnarray}
Thus we have \(|g_{\lambda}^{H}(x,y)|\leq C\) for \(|x-y|\geq \delta\), and \(g_{\lambda}^{\alpha}(x,y) \leq g_{\lambda}^{1}(x,y)+C \) for \(x,y\in \mathbb{R}.\)

\end{proof}

\subsection{Scaling limit of Liouville simple random walks on \(\mathbb{Z}^2\)}\label{example2}
Let \(d=2\), \(Z^{\infty}\) be Brownian motion on \(\R^2\) and \(Z^n\) be a continuous-time simple random walk on \(D_n:=\frac{1}{\sqrt{n}}\mathbb{Z}^2\). More precisely, \(Z^n\) is defined as follows. Let \(\{S_i\}_{i=1}^{\infty}\) be a simple symmetric random walk on \(\mathbb{Z}^2\). For convenience, let \(S_0:=(0,0)\). Denote by \(\{N^n\}_n\) an independent Poisson processes with rates \(n\). We remark that 
\[\mathbb{P}(N_t^n=k)=\frac{(nt)^k}{k!}e^{-nt}\]
for \(t\geq 0\) and \(k\in \mathbb{N}\cup \{0\}.\) For \(n\in \mathbb{N}\), we define the continuous-time simple random walk \(Z^n\) on \((\Omega^{Z^n}, \{\mathbb{P}_{x}^{Z^n}\}_{x\in \R^2})\) by
\[Z^n_t:=\frac{1}{\sqrt{n}}S_{N^n_t} + x_n\]
for \(x\in \R^2\) and any \(x_n \in \frac{\mathbb{Z}^2}{\sqrt{n}}\) satisfying \(|x-x_n|< \sqrt{\frac{2}{n}}\). We remark that \(Z^n\) are L\'evy processes, see \cite[Exercise 2.2.5]{CF} for example. In particular, since L\'evy process with no drift is an \(m\)-symmetric Hunt process for the Lebesgue measure \(m\).

The transition density functions \(p^n\) and the \(\lambda\)-order Green's functions \(g_{\lambda}^n\) of \(Z^n\) are written as \(p^n(t,x,y)=\mathbb{P}_{x_n}(Z_t^n=y_n)\) and \(g_{\lambda}^n(x,y)= n \int_0^{\infty}e^{-\lambda t}p^n(t,x,y)dt\) for \(n\in \mathbb{N}, \lambda>0\) and \(x,y\in \R^2.\)

Denote by \(X^n\) the centred Gaussian field on \(\R^2\) on a probability space \((\Omega^{X^n},\mathbb{P}^{X^n})\) having the covariance kernel \(\pi g_{\lambda}^n\), and \(\mu^n\) by its GMC with \(\gamma\in [0,2)\) for \(n\in \mathbb{N}\cup \{\infty\}\). Let \(\hat{Z}^n\) be the time-changed process of \(Z^n\) by \(\mu^n\). For \(n\in \mathbb{N}\), we call \(\hat{Z}^n\) \textit{Liouville simple random walk} on \(\frac{1}{\sqrt{n}}\mathbb{Z}^2\).

Then the following theorem holds.
\begin{theorem}\label{example2thm}
For any \(x\in \R^2\) and \(\gamma \in [0,\sqrt{2})\), time-changed processes \(\hat{Z}^n\) under \(\mathbb{P}_{x}^{Z^n} \otimes \mathbb{P}^{X^n}\) converge weakly to Liouville Brownian motion \(\hat{Z}^{\infty}\) under \(\mathbb{P}_{x}^{Z^{\infty}} \otimes \mathbb{P}^{X^{\infty}} \) with the local uniform topology.
\end{theorem}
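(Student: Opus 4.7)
The plan is to apply Theorem \ref{mainconvergence} in case (B) with $D_n = \frac{1}{\sqrt{n}}\mathbb{Z}^2$, $i_n(x) = x_n$ the nearest lattice point (ties broken by a fixed deterministic rule) and cell-measure normalization $C_n = n$. Since the target range $\gamma \in [0,\sqrt{2})$ coincides with $(0,\sqrt{d/C^*})$ only when $C^* = 1$, the crux is to verify Assumption \ref{assumption} with this \emph{sharp} constant (the case $\gamma = 0$ being trivial since the time change is then deterministic).

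For Assumption \ref{assumption}(1), the $Z^n$ are L\'evy processes with characteristic exponents $\psi_n(\theta) = n\bigl(1 - \tfrac{1}{2}\cos(\theta_1/\sqrt{n}) - \tfrac{1}{2}\cos(\theta_2/\sqrt{n})\bigr)$, which converge pointwise on $\R^2$ to a positive multiple of $|\theta|^2$, matching the exponent of $Z^\infty$. L\'evy continuity then gives finite-dimensional convergence, and I would upgrade this to $J_1$-convergence in $\D$ via the L\'evy-process criterion \cite[VI.17 Exercise 15]{Ka} already invoked in Example \ref{example1}. Pointwise convergence of the Green's functions reduces via the Fourier representation
$$g_\lambda^n(0,z) \;=\; \frac{1}{4\pi^2}\int_{[-\sqrt{n}\pi,\,\sqrt{n}\pi]^2} \frac{e^{-i\theta\cdot z}}{\lambda + \psi_n(\theta)}\,d\theta$$
to dominated convergence against the analogous integral for $g_\lambda^\infty$; the required integrable envelope comes from the uniform lower bound $\psi_n(\theta) \geq c|\theta|^2$ on the fundamental domain, which follows from the elementary inequality $1 - \cos u \geq 2u^2/\pi^2$ on $|u| \leq \pi$.

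For Assumption \ref{assumption}(2) the key reduction is the time-scaling identity $g_\lambda^n(x,y) = \widetilde{G}_{\lambda/n}(\sqrt{n}\,x_n,\,\sqrt{n}\,y_n)$, where $\widetilde{G}_\mu$ denotes the $\mu$-massive Green's function of the rate-one continuous-time simple random walk on $\mathbb{Z}^2$. Classical asymptotics for this object (via the Bessel-function representation of the two-dimensional lattice Green's function, or equivalently a careful local CLT) yield, uniformly in $u\neq v \in \mathbb{Z}^2$ and $\mu \in (0,1]$,
$$\widetilde{G}_\mu(u,v) \;=\; \frac{1}{\pi}\log\frac{1}{|u-v|} + \frac{1}{2\pi}\log\frac{1}{\mu} + O(1).$$
Substituting $\mu = \lambda/n$ and $|u-v| = \sqrt{n}|x_n - y_n|$, the $\log n$ contributions cancel exactly, giving $g_\lambda^n(x,y) \leq \frac{1}{\pi}\log_+\frac{1}{|x-y|} + C$, which combines with Lemma \ref{Green'slemma} to deliver $g_\lambda^n(x,y) \leq g_\lambda^\infty(x,y) + C$, i.e., Assumption \ref{assumption}(2) with $C^* = 1$.

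The hard part is extracting precisely the leading coefficient $\frac{1}{\pi}$ in the lattice Green's function asymptotic. Coarser tools such as Carne--Varopoulos-type Gaussian heat-kernel bounds only yield $g_\lambda^n \leq C g_\lambda^\infty + C$ with some constant $C > 1$, which would restrict us to the sub-range $\gamma^2 < 2/C$; reaching the full $L^2$-regime forces a matching of leading-order constants between the discrete and continuous Green's functions, which in turn reflects the convergence of generators underlying Donsker's theorem. Once Assumption \ref{assumption} is verified, Theorem \ref{mainconvergence} delivers $J_1$-convergence of $\hat{Z}^n$ to Liouville Brownian motion $\hat{Z}^\infty$, and because $\hat{Z}^\infty$ has continuous sample paths this upgrades automatically to local uniform convergence by the remark following Theorem \ref{mainconvergence}.
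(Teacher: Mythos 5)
Your overall architecture is the paper's: reduce to Theorem \ref{mainconvergence} in case (B) with $C_n=n$, identify that the whole difficulty is verifying Assumption \ref{assumption} with the sharp constant $C^*=1$, and upgrade Donsker's theorem to $J_1$-convergence via the L\'evy-process criterion of \cite{Ka}. However, two steps in your sketch do not close as written. For the pointwise convergence of the Green's functions in Assumption \ref{assumption}(1), the proposed envelope $(\lambda+c|\theta|^2)^{-1}$ is \emph{not} integrable on $\R^2$: its integral over the fundamental domain $[-\sqrt{n}\pi,\sqrt{n}\pi]^2$ grows like $\log n$ (consistently with $g_\lambda^n(x,x)\sim \tfrac{1}{2\pi}\log n\to\infty=g_\lambda^\infty(x,x)$), so dominated convergence cannot be applied to the Fourier representation. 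For fixed $x\neq y$ one must exploit the oscillation of $e^{-i\theta\cdot(x-y)}$ (two summations by parts, the two-dimensional analogue of the integration by parts used in Example \ref{example1}), or argue as the paper does, via Poissonization and the local central limit theorem of \cite{LL} with quantitative error bounds.

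For Assumption \ref{assumption}(2), the asymptotic $\widetilde{G}_\mu(u,v)=\tfrac{1}{\pi}\log\tfrac{1}{|u-v|}+\tfrac{1}{2\pi}\log\tfrac{1}{\mu}+O(1)$ cannot hold uniformly over all $u\neq v$ and $\mu\in(0,1]$ as stated: when $\mu|u-v|^2$ is large the right-hand side tends to $-\infty$ while $\widetilde{G}_\mu\geq 0$. Only the one-sided bound with $\log_+$ is needed, and it is true, but the uniformity in $\mu$ of the $O(1)$ error is exactly the hard content of this step and is not quotable off the shelf; the classical references give the potential-kernel asymptotics of the recurrent walk, not the massive Green's function uniformly in the mass. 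The paper establishes precisely this bound by hand: it writes $g_\lambda^n(x_n,0)$ as a Poisson average, applies the local CLT with explicit error terms, and then controls the Riemann-sum and discretization errors term by term to arrive at $g_\lambda^n(x_n,y_n)\leq g_\lambda^\infty(x,y)+C$. So you have correctly located the crux (matching the leading constant $1/\pi$, without which one only reaches a strict sub-range of $\gamma$), but the key estimate is deferred to an unproved citation rather than established.
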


\begin{remark}
By restricting to \(\frac{1}{\sqrt{n}}\mathbb{Z}^2\), \(X^{n}\) is the discrete version on \(\frac{1}{\sqrt{n}}\mathbb{Z}^2\) of the massive Gaussian free field on \(\R^2\). So, Theorem \ref{example2thm} means that the scaling limit of the Liouville simple random walk on \(\mathbb{Z}^2\) is Liouville Brownian motion.
\end{remark}

\begin{proof}[Proof of Theorem \ref{example2thm}]
By the continuity of addition with Skorokhod's topology \cite[Proposition VI. 1. 23]{JS}, without loss of generality, we may use \(x_n \in \frac{\mathbb{Z}^2}{\sqrt{n}}\), which is defined as follows. For any \(x=(x^{(1)},x^{(2)})\in \R^2\), we define \[i_n(x):=x_n:=\left(\frac{\lfloor \sqrt{n} x^{(1)}\rfloor}{\sqrt{n}}, \frac{\lfloor \sqrt{n} x^{(2)}\rfloor}{\sqrt{n}}\right),\]
where \(\lfloor a \rfloor\) is the largest integer less than or equal to \(a.\) Then, \(x_n\) converges to \(x\) and \(C_n=1/m(\{y\in \R^2 : y_n=x\})=n\) for any \(x\in \R^2.\)

We remark that \(J_1\)-topology is the same as the local uniform topology when the limit is continuous. By Theorem \ref{mainconvergence}, it is enough to check Assumption \ref{assumption} (1) and (2) with \(C^*=1\). Suppose \(\gamma \in [0,\sqrt{2})\) and fix \(x\in \R^2.\)

(1) Let \(Z^{\infty}\) be Brownian motion on \(\R^2.\) Then, the weak convergence of \(Z^n_t\) to \(Z^{\infty}_t\) for any \(t\) follows from Donsker's theorem immediately.
Since \(Z^n\) are L\'{e}vy processes, by \cite[VI. 17. Exercise 15.]{K}, \(Z^n\) converges weakly to \(Z^{\infty}\) as \(n \to \infty\) with the local uniform topology.

Next, we prove \(g_{\lambda}^{n}(x,y)\) converges pointwise to \(g_{\lambda}^{\infty}(x,y)\), the \(\lambda\)-order Green's function of \(Z^{\infty}\), as \(n  \to \infty\). By the definition of \(Z^n\), we have
\begin{equation}
p^n(t,x_n,y_n)=\sum_{k=0}^{\infty} \mathbb{P}(S_k=\lfloor \sqrt{n} y \rfloor -\lfloor \sqrt{n}x \rfloor) \frac{(nt)^{k}}{k!}e^{-nt}, \label{eq:HKPoisson}
\end{equation}
where \(\lfloor \sqrt{n} x \rfloor :=(\lfloor \sqrt{n} x^{(1)} \rfloor, \lfloor \sqrt{n} x^{(2)} \rfloor )\) and \(\lfloor \sqrt{n} y \rfloor :=(\lfloor \sqrt{n} y^{(1)} \rfloor, \lfloor \sqrt{n} y^{(2)} \rfloor ).\) For \(x,y\in \R^2\) with \(x\neq y,\) we can take large \(n\in \mathbb{N}\) such that \((x-y)_n\neq 0\). Then, by the local central limit theorem \cite[Theorem 2.3.10, 2.3.11]{LL} and the super-additivity of the floor function, we have
\begin{eqnarray}
\nonumber \lefteqn{|np^n(t,x_n,y_n)- \mathbb{E}[p^{\infty}(N_{t}^n/n,x_n,y_n)] |}\\
\nonumber & \leq &\sum_{k=0}^{\infty} \left|n \mathbb{P}(S_k=\lfloor \sqrt{n} x \rfloor-\lfloor \sqrt{n} y \rfloor)-p^{\infty}(k/n,x_n,y_n)\right|\frac{(nt)^k}{k!}e^{-nt}\\
\nonumber &\leq & \left( \sum_{k=1}^{n}\frac{n}{|\lfloor \sqrt{n} (x-y) \rfloor|^2}\frac{C}{k} +\sum_{k=n+1}^{\infty} \frac{Cn}{|\lfloor \sqrt{n} (x-y) \rfloor|^4} \right) \frac{(nt)^k}{k!}e^{-nt}\\
&\leq & \left( \sum_{k=1}^{n}\frac{1}{|(x-y)_n|^2}\frac{C}{k} +\sum_{k=n+1}^{\infty}\frac{C}{n|(x-y)_n|^4}  \right) \frac{(nt)^k}{k!}e^{-nt} \label{eq:ex2-2-0} \\
&\leq & \frac{C}{nt|(x-y)_n|^2}  +\frac{C}{n|(x-y)_n|^4}\label{eq:ex2-2-1}
\end{eqnarray}
for  \(x,y\in \R^2\) with \(x\neq y\). By checking the convergence of characteristic functions, \(N_{t}^n/n\) converges weakly to \(t\) as \(n\to \infty\) for any \(t\geq 0.\) For \(x,y\in \R^2\) with \(x\neq y\) and large \(n\), we have
\[\sup_{0\leq t}p^{\infty}(t, x_n,y_n)\leq \frac{1}{\pi e |(x-y)_n|^2}\leq \frac{C}{|x-y|^2}.\]
So \(p^{\infty}(\cdot, x_n,y_n)\) is bounded continuous function for each \(x,y\) and large \(n.\) Thus, by letting \(n\) tends to \(\infty\) in \((\ref{eq:ex2-2-1})\), we have
\begin{equation}
\lim_{n\to \infty}np^n(t,x_n,y_n)=p^{\infty}(t,x,y) \label{eq:ex2-2-2}
\end{equation} 
for \(x,y\in \R^2\) with \(x\neq y.\)

For \(x,y \in \R^d\) satisfying \(x\neq y\) and large \(n\), by \((\ref{eq:ex2-2-0})\), we have
\begin{eqnarray}
\nonumber \lefteqn{\left|g_{\lambda}^n(t,x_n,y_n)- \int_0^{\infty}e^{-\lambda t} \mathbb{E}[p^{\infty}(N_{t}^n/n,x_n,y_n)]dt \right|}\\
\nonumber &\leq & \sum_{k=1}^{n}\frac{1}{|(x-y)_n|^2}\frac{C}{k}\int_0^{\infty}e^{-\lambda t} \frac{(nt)^k}{k!}e^{-nt}dt+\sum_{k=n+1}^{\infty}\frac{C}{n|(x-y)_n|^4}\int_0^{\infty}e^{-\lambda t} \frac{(nt)^k}{k!}e^{-nt}dt \\
\nonumber &= &  \sum_{k=1}^{n} \frac{1}{|(x-y)_n|^2}\frac{C}{k}\frac{n^{k}}{(n+\lambda)^{k+1}}+\sum_{k=n+1}^{\infty} \frac{C}{n|(x-y)_n|^4}\frac{n^{k}}{(n+\lambda)^{k+1}}\\
\nonumber &\leq &  \sum_{k=1}^{n} \frac{1}{|\lfloor \sqrt{n} (x-y) \rfloor|^2}\frac{C}{k}+ \frac{C}{|\lfloor \sqrt{n} (x-y) \rfloor|^4}\sum_{k=n+1}^{\infty} \left(\frac{n}{n+\lambda}\right)^{k+1}\\
\nonumber &\leq & C\frac{1+\log n}{|\lfloor \sqrt{n} (x-y) \rfloor|^2} +C \frac{n}{|\lfloor \sqrt{n} (x-y) \rfloor|^4}\\
&\leq & C\frac{1+\log n}{n|x-y|^2} +C \frac{1}{n|x-y|^4}, \label{eq:2-2-3}
\end{eqnarray}
and so 
\begin{equation}
\lim_{n\to \infty} \left|g_{\lambda}^n(t,x_n,y_n)- \int_0^{\infty}e^{-\lambda t} \mathbb{E}[p^{\infty}(N_{t}^n/n,x_n,y_n)]dt \right|=0. \label{eq:2-2-4}
\end{equation}
By the weak convergence of \(N_{t}^n/n\) to \(t\) and the same technique in the proof of (\ref{eq:2-2-3}), \(\mathbb{E}[p^{\infty}(N_{t}^n/n,x_n,y_n)]\) converges to \(p^{\infty}(t,x,y)\). Combining this with Lebesgue's convergence theorem, \(\int_0^{\infty}e^{-\lambda t} \mathbb{E}[p^{\infty}(N_{t}^n/n,x_n,y_n)]dt\) converges to \(g_{\lambda}^{\infty}(x,y).\)

(2) Since \(p^n(t,x_n,y_n)=\sum_{k=0}^{\infty} \mathbb{P}_0(S_k/\sqrt{n}=y_n-x_n)\frac{(nt)^k}{k!}e^{-nt}\), by the local central limit theorem \cite[Theorem 2.3.5]{LL}, we have
\begin{eqnarray}
\nonumber \left| g_{\lambda}^n(x_n,0)-\int_0^{\infty}ne^{-\lambda t}\sum_{k=0}^{\infty}\frac{e^{-\frac{|x_n|^2}{2k/n}}}{2\pi k} \frac{(nt)^k}{k!}e^{-nt} dt \right| &\leq &C \int_0^{\infty} \sum_{k=1}^{\infty} \frac{ne^{-\lambda t} }{k^2} \frac{(nt)^k}{k!}e^{-nt}dt \\
\nonumber &=&\sum_{k=1}^{\infty} \frac{n}{k^2} \frac{n^k}{(n+\lambda)^{k+1}}\\
 & \leq & \sum_{k=1}^{\infty} \frac{1}{k^2} < \infty \label{eq:newex-a}
\end{eqnarray}
Moreover, we have
\begin{eqnarray*}
\int_0^{\infty}ne^{-\lambda t}\sum_{k=0}^{\infty}\frac{e^{-\frac{|x_n|^2}{2k/n}}}{2\pi k} \frac{(nt)^k}{k!}e^{-nt} dt &=& \sum_{k=1}^{\infty} \frac{e^{-\frac{|x_n|^2}{2k/n}}}{2\pi k/n}\frac{n^k}{(n+\lambda)^{k+1}}\\
&=& \int_{1/n}^{\infty} \frac{e^{-\frac{|x_n|^2}{2\lfloor nt \rfloor /n}}}{2\pi \lfloor nt \rfloor /n}\left(1-\frac{\lambda}{n+\lambda} \right)^{\lfloor nt \rfloor +1} dt\\
&=& \int_{1/n}^{\infty} \frac{e^{-\frac{|x_n|^2}{2\lfloor nt \rfloor /n}}}{2\pi \lfloor nt \rfloor /n}\left(1-\frac{\lambda}{n+\lambda} \right)^{\frac{n+\lambda}{\lambda} \frac{n}{n+\lambda}\frac{\lfloor nt \rfloor +1}{n}\lambda} dt.
\end{eqnarray*}
Since \(e\leq (1-1/y)^{-y}\) for any \(y>0\), we have
\begin{eqnarray}
\nonumber \int_0^{\infty}ne^{-\lambda t}\sum_{k=0}^{\infty}\frac{e^{-\frac{|x_n|^2}{2k/n}}}{2\pi k} \frac{(nt)^k}{k!}e^{-nt} dt &\leq & \int_{1/n}^{\infty} \frac{e^{-\frac{|x_n|^2}{2\lfloor nt \rfloor /n}}}{2\pi \lfloor nt \rfloor /n}e^{-\frac{n}{n+\lambda}\frac{\lfloor nt \rfloor +1}{n}\lambda} dt\\
&\leq & \int_{1/n}^{\infty} \frac{e^{-\frac{|x_n|^2}{2t}}}{2\pi \lfloor nt \rfloor /n}e^{-\frac{n}{n+\lambda}\lambda t} dt. \label{eq:newex-b}
\end{eqnarray}
By computation, we have
\begin{eqnarray}
\nonumber \lefteqn{ \left|  \int_{1/n}^{\infty} \frac{e^{-\frac{|x_n|^2}{2\lfloor nt \rfloor /n}}}{2\pi \lfloor nt \rfloor /n}e^{-\frac{n}{n+\lambda}\lambda t} dt- \int_{1/n}^{\infty} \frac{e^{-\frac{|x_n|^2}{2t}}}{2\pi t}e^{-\lambda t} dt \right|}\\
\nonumber &\leq & \sum_{k=1}^{\infty} \int_{\frac{k}{n}}^{\frac{k+1}{n}} \frac{e^{-\frac{|x_n|^2}{2t}}}{2\pi }\left|\frac{1}{k/n}-\frac{1}{t}  \right| e^{-\frac{n}{n+\lambda}\lambda t}dt + \int_{\frac{1}{n}}^{\infty} \frac{e^{-\frac{|x_n|^2}{2t}}}{2\pi t}\left|e^{-\frac{n}{n+\lambda}\lambda t}-e^{-\lambda t}  \right| dt \\
\nonumber &\leq & \sum_{k=1}^{\infty} \int_{\frac{k}{n}}^{\frac{k+1}{n}} \frac{t-k/n}{2\pi (k/n)^2 }e^{-\frac{|x_n|^2}{2t}} e^{-\frac{n}{n+\lambda}\lambda t}dt + \int_{\frac{1}{n}}^{\infty} \frac{e^{-\frac{|x_n|^2}{2t}}e^{-\frac{n}{n+\lambda}\lambda t}}{2\pi t}\left(1-e^{-\frac{\lambda^2}{n+\lambda} t} \right) dt \\
\nonumber &\leq & \sum_{k=1}^{\infty} \frac{(1/n)^2}{4\pi (k/n)^2 } + \int_{\frac{1}{n}}^{\infty} \frac{e^{-\frac{n}{n+\lambda}\lambda t}}{2\pi t}\frac{\lambda^2}{n+\lambda} t dt \\
\nonumber &=& \frac{\pi}{24}+\frac{\lambda}{2\pi n}e^{-\frac{\lambda}{n+\lambda}}\\
&\leq & \frac{\pi}{24}+\frac{\lambda}{2\pi }. \label{eq:newex-c}
\end{eqnarray}
Moreover, if \(|x|>|x_n|\), because of \(||x_n|^2-|x|^2|\leq 2/n\), we have
\begin{eqnarray}
\nonumber \int_{1/n}^{\infty} \frac{e^{-\frac{|x_n|^2}{2t}}}{2\pi t}e^{-\lambda t} dt -\int_{1/n}^{\infty} \frac{e^{-\frac{|x|^2}{2t}}}{2\pi t}e^{-\lambda t} dt &\leq & \int_{1/n}^{\infty} \frac{e^{-\frac{|x_n|^2}{2t}}}{2\pi t}e^{-\lambda t} \frac{|x|^2-|x_n|^2}{2t} dt\\
\nonumber &\leq & C\int_{1/n}^{\infty} \frac{e^{-\lambda t}}{t^2n} dt\\
\nonumber &\leq &\int_{1/n}^1 \frac{C}{nt^2}dt +\int_1^{\infty} Ce^{-\lambda t}dt\\
&\leq & C. \label{eq:newex-d}
\end{eqnarray}
If \(|x|\leq |x_n|\), it is clear that
\begin{eqnarray}
\int_{1/n}^{\infty} \frac{e^{-\frac{|x_n|^2}{2t}}}{2\pi t}e^{-\lambda t} dt \leq \int_{1/n}^{\infty} \frac{e^{-\frac{|x|^2}{2t}}}{2\pi t}e^{-\lambda t} dt \label{eq:newex-d}
\end{eqnarray}
Hence, by \((\ref{eq:newex-a})\), \((\ref{eq:newex-b})\), \((\ref{eq:newex-c})\) and \((\ref{eq:newex-d})\), there exists positive constant \(C\) such that
\[g_{\lambda}^n(x_n,0)\leq g_{\lambda}^{\infty}(x,0)+C\]
for any \(x\in \R^2.\)

Furthermore, in a similar way to above, by using the super-additivity of the floor function, there exists positive constant \(C\) such that
\[g_{\lambda}^n(x_n,y_n)\leq g_{\lambda}^{\infty}(x,y)+C\]
for any \(x,y\in \R^2.\)
\end{proof}

\bmhead{Acknowledgements}
The author would like to express my deepest gratitude to Takashi Kumagai and David A. Croydon for continuous discussions and helpful advice. The author thanks Naotaka Kajino and Seiichiro Kusuoka for careful readings of an earlier version of this paper and pointing out typos. The author also thank Xin Sun for showing the reference \cite{Bav}. This work was supported by JSPS KAKENHI Grant Number JP21J20251 and the Research Institute for Mathematical Sciences, an International Joint Usage/Research Center located in Kyoto University.


\begin{appendices}
\section{Definition and properties of PCAF}\label{AppPCAF}
In this appendix, we recall the definition and properties of positive continuous additive functionals (PCAFs, in abbreviation). See \cite{CF,FOT} and \cite{AK} for details.

Let \(E\) be a locally compact separable metric space, \(\mathcal{B}(E)\) be the collection of all Borel sets of \(E\), and \(m\) be a positive Radon measure with \(\text{ supp}(m)=E\).
Denote by \((\mathcal{E},\mathcal{F})\) a regular Dirichlet form  on \(L^2(E;dm)\) and \(Z=(\{Z_t\}_{t\geq 0}, \Omega, \{\mathcal{F}_t\}_t, \{\mathbb{P}_x\}_{x\in E})\) by an \(m\)-symmetric Hunt process on \(E\) associated with \((\mathcal{E},\mathcal{F})\). Let \(\zeta\) be a life time of \(Z\) and \(\{\theta_t\}_t\) be a shift operator, that is \(\theta_t :\Omega \to \Omega\) satisfying \(Z_s(\theta_t \omega)=Z_{s+t}(\omega)\) for any \(\omega \in \Omega.\)

\begin{definition}
A \([-\infty,\infty]\)-valued stochastic process \(A=\{A_t\}_{t\geq 0}\) is called a \textit{positive continuous additive functional} (PCAF in abbreviation) of \(Z\) if there exists \(\Lambda \in \mathcal{F}_{\infty}\) and an \(m\)-inessential set \(N\subset E\) such that \(\mathbb{P}_x(\Lambda)=1\) for \(x\in E\setminus N\) and \(\theta_t \Lambda \subset \Lambda\) for any \(t>0\), and the following conditions hold.\\
\((A.1)\) For each \(t\geq 0,\) \(A_t|_{\Lambda}\) is \(\mathcal{F}_t|_{\Lambda}\)-measurable.\\
\((A.2)\) For any \(\omega \in \Lambda\), \(A_{\cdot}(\omega)\) is continuous on \([0,\infty)\), \(A_0(\omega)=0,\) \(|A_{t}(\omega)|<\infty\) for \(t<\zeta(\omega)\). Moreover the additivity
\[A_{t+s}(\omega)=A_t(\omega)+A_s(\theta_t\omega)\ \ \text{for\ every\ }t,s\geq 0,\]
is satisfied.
\end{definition}
The set \(\Lambda\) is called the defining set of \(A\). In particular, PCAF \(A\) is called PCAF in the strict sense if \(N\) is empty.

PCAFs \(A\) and \(B\) are called \textit{\(m\)-equivalent} if \(\int_E\mathbb{P}_x(A_t\neq B_t)dm(x)=0\) for any \(t>0\). This condition is equivalent to the condition that there exist a common defining set \(\Lambda\) and a common Borel exceptional set \(N\) such that \(A_t(\omega)=B_t(\omega)\) for every \(t \geq 0\) and \(\omega \in \Lambda.\) Moreover, \(\hat{Z}\) defined by \(\hat{Z}_t:=Z_{A^{-1}_t}\) is called the \textit{time-changed process of \(Z\) by \(A\)}, where \(A^{-1}_t:=\inf\{s>0:A_t>s\}\).

Next, we define some quasi-notations and a smooth measure.

\begin{definition}
(i) An increasing sequence \(\{F_k\}_{k\geq 1}\) of closed sets of \(E\) is \textit{\(\mathcal{E}\)-nest} if \(\cup_{k\geq 1} \{f\in \mathcal{F} : f=0 \ m\text{-a.e.\  on\  }(F_k)^c\}\) is \(\mathcal{E}_1\)-dense in \(\mathcal{F}\), where \(\mathcal{E}_1(\cdot,\cdot)=\mathcal{E}(\cdot,\cdot)+(\cdot,\cdot)_{L^2}\).\\
(ii) A subset \(N\) of \(E\) is \textit{\(\mathcal{E}\)-polar} if there exists an \(\mathcal{E}\)-nest \(\{F_k\}_{k\geq 1}\) such that \(N\subset \cap_{k\geq 1} (F_k)^c\).
\end{definition}

\begin{definition}
A positive Borel measure \(\mu\) on \(E\) is called \textit{smooth} if the following conditions hold.\\
\((S.1)\) \(\mu\) charges no \(\mathcal{E}\)-polar set.\\
\((S.2)\) There exists a nest \(\{F_k\}_k\) such that \(\mu(F_k)<\infty\) for every \(k\geq 1.\)
\end{definition}

The following one-to-one correspondence between PCAFs and smooth measures is called Revuz correspondence. So, a smooth measure is also called \textit{Revuz measure}.
\begin{theorem}
\((i)\)For a PCAF \(A\), there exists a unique smooth measure \(\mu\) such that
\begin{equation}
\int_E fd\mu = \lim_{t\searrow 0}\frac{1}{t}\int_E \mathbb{E}_x[\int_0^t f(Z_s)dA_s] dm(x) \label{eq:AppPCAF-1}
\end{equation}
for any positive Borel function \(f\) on \(E.\)\\
\((ii)\) For any smooth measure \(\mu\), there exists a PCAF satisfying \((\ref{eq:AppPCAF-1})\) up to the \(m\)-equivalence.
\end{theorem}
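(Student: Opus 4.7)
The plan is to follow the classical $\alpha$-potential approach from the theory of regular Dirichlet forms. For part (i), given a PCAF $A$, I would introduce the $\alpha$-potential operator $U^{\alpha}_A f(x) := \mathbb{E}_x\!\left[\int_0^{\infty} e^{-\alpha s} f(Z_s)\, dA_s\right]$ for non-negative Borel $f$. The existence of the Revuz limit will be extracted by relating $\frac{1}{t}\int_E \mathbb{E}_x\!\left[\int_0^t f(Z_s)\,dA_s\right] dm(x)$ to $\alpha(U^{\alpha}_A f, 1)_{L^2(m)}$ via Laplace transforms, a Fubini argument, and the strong Markov property together with $m$-symmetry of $Z$. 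This identifies a positive Borel measure $\mu$ as the candidate. Smoothness then splits into two sub-claims: $\mu$ charges no $\mathcal{E}$-polar set because $A$ does not grow while $Z$ sits inside such a set (and those are hit with $\mathbb{P}_x$-probability zero for quasi-every $x$), while the nest certifying $\sigma$-finiteness arises from the stopping times built from the continuity of $A$ on $[0,\zeta)$.

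For part (ii), I would first treat the subclass of measures $\mu$ whose embedding into the dual of $(\mathcal{F}, \mathcal{E}_\alpha)$ is continuous, i.e., measures of finite $\alpha$-energy. By the Riesz representation theorem there is a unique $U^\alpha\mu \in \mathcal{F}$ with $\mathcal{E}_\alpha(U^\alpha\mu, u) = \int \tilde u\, d\mu$ for every $u\in \mathcal{F}$, where $\tilde u$ denotes a quasi-continuous modification. The process $e^{-\alpha t}\widetilde{U^\alpha\mu}(Z_t)$ is then a non-negative supermartingale, and its Doob--Meyer decomposition yields a predictable increasing process, which I claim is the desired PCAF $A$. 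Positivity is automatic, additivity under $\theta_t$ follows from uniqueness in Doob--Meyer combined with the flow property of $\{\theta_t\}$, and continuity of $A$ comes from the quasi-left-continuity of $Z$ (a Hunt process has no predictable jumps) together with the quasi-continuity of $U^\alpha\mu$. For an arbitrary smooth measure $\mu$, the nest $\{F_k\}$ provided by the definition of smoothness yields finite-energy restrictions $\mu_k := \mathbf{1}_{F_k}\mu$; the associated PCAFs $A^{(k)}$ are consistent on the hitting-time intervals of $F_k$ and glue together into a single PCAF $A$, with inessential exceptional set consisting of points from which $Z$ escapes every $F_k$ with positive probability.

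The technical heart of the argument lies in verifying continuity and additivity of $A$ constructed via Doob--Meyer, and in handling the inessential exceptional set carefully. Quasi-continuity of $U^\alpha\mu$ is indispensable here: combined with the Hunt property of $Z$, it rules out jumps of the supermartingale at predictable times and hence forces continuity of its compensator. The gluing step for general $\mu$ requires a mild uniqueness argument to guarantee that the pieces $A^{(k)}$ agree on overlaps, so that the monotone limit is well defined and independent of the nest. Finally, uniqueness (up to $m$-equivalence) in (ii) would follow from (i): two PCAFs $A, B$ generating the same Revuz measure must satisfy $U^\alpha_A f = U^\alpha_B f$ in $L^2(m)$ for every non-negative $f$, and a standard Hunt-process argument then forces $A$ and $B$ to coincide $\mathbb{P}_x$-almost surely on a common defining set for $m$-almost every $x$.
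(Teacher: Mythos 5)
The paper does not prove this theorem: it is recalled in Appendix \ref{AppPCAF} as a classical fact (the Revuz correspondence) with references to \cite{CF} and \cite{FOT}, so there is no in-paper argument to compare against. Your sketch is essentially the standard textbook proof from those references --- $\alpha$-potentials $U^{\alpha}_A f$ and symmetry for part (i), and for part (ii) the Riesz representation of measures of finite energy integral, the supermartingale $e^{-\alpha t}\widetilde{U^{\alpha}\mu}(Z_t)$, Doob--Meyer, continuity of the compensator from quasi-left-continuity of the Hunt process together with quasi-continuity of $U^{\alpha}\mu$, and gluing along a nest for general smooth $\mu$ --- and the overall architecture is correct.

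One step is stated too quickly: for a general smooth measure the nest $\{F_k\}$ furnished by the definition only guarantees $\mu(F_k)<\infty$; the finite restrictions $\mathbf{1}_{F_k}\mu$ need not have finite energy integral, so you cannot feed them directly into the Riesz/Doob--Meyer machinery. The standard fix (\cite[Theorem 2.2.4]{FOT}) is to refine the nest so that each restriction does lie in the class $S_0$ of measures of finite energy integral, and only then carry out the gluing. With that refinement inserted, and with the usual care in checking that the glued process has a genuine defining set and $m$-inessential exceptional set, your outline reproduces the proof in the cited literature.
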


For example, for a bounded positive Borel function \(f\), we define 
\[A_t:=\int_0^t f(Z_s)ds.\]
Then we have \(A:=\{A_t\}_t\) is a PCAF in the strict sense and the corresponding smooth measure is \(fdm\).

Let \(X\) be a massive Gaussian free field on \(\R^2\) built on a probability space \((\Omega^X, \mathcal{M}^X, \mathbb{P}^X)\). For fixed \(\gamma \in [0,2)\), the Liouville measure \(\mu=\mu_{\gamma}\) is defined, see \cite{K,B,S,GRV,AK} for details. For the definition of Liouville Brownian motion on \(\R^2\), we recall the following proposition from \cite[Proposition 2.4]{AK}.
\begin{proposition}
For Brownian motion \(Z=(\{Z_t\}_t, \{\mathbb{P}_x\}_x, \{\mathcal{F}_t\}_t, \Omega^Z)\) on \(\R^2\) and the Liouville measure \(\mu\), there exists a set \(\Lambda \in \mathcal{M}^X \otimes \mathcal{F}^Z_{\infty}\) such that the following holds.\\
\((i)\) For \(\mathbb{P}^X\)-a.e. \(\omega \in  \Omega^X\), \(\mathbb{P}_x(\Lambda^{\omega})=1\) for any \(x\in \R^2\), where \(\Lambda ^{\omega}:=\{\omega' \in \Omega^Z : (\omega, \omega')\in \Lambda\}\).\\
\((ii)\) For \(\mathbb{P}^X\)-a.e. \(\omega \in  \Omega^X\), there exists a PCAF \(\{A_t(\omega,\cdot)\}_t\) of \(Z\) in the strict sense with defining set \(\Lambda^{\omega}\) such that \(\{A_t(\omega,\cdot)\}_t\) corresponds to \(\mu(\omega)\).
\end{proposition}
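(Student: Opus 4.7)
The plan is to follow the approximation strategy of \cite{GRV} and \cite{AK}: approximate the massive Gaussian free field $X$ by a sequence of continuous Gaussian fields $X_n$ (for instance, circle averages at scale $\varepsilon_n \downarrow 0$) realized on the same probability space $(\Omega^X, \mathcal{M}^X, \mathbb{P}^X)$, take the explicit PCAFs of the corresponding smooth approximating measures, and pass to the limit. The analytic machinery needed is already developed in Section \ref{secPCAFlem} and Lemma \ref{PCAFcor} of the paper, so the appendix proof essentially specializes that machinery to $Z = Z^\infty$ Brownian motion on $\R^2$, with $X_n$ chosen so that the covariance kernels dominate $\pi g_\lambda^\infty$ uniformly (the analogue of Assumption \ref{assumption} (2) with $C^* = 1$) and converge pointwise.

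First, I would fix bounded, jointly continuous $X_n$ such that $\mu_n := \exp(\gamma X_n - \tfrac{\gamma^2}{2} \E[X_n^2]) dm$ is a classical smooth measure of Brownian motion with strict-sense PCAF
\[
A^n_t(\omega,\omega') := \int_0^t \exp\!\bigl(\gamma X_n(\omega, Z_s(\omega')) - \tfrac{\gamma^2}{2} \E[X_n(Z_s)^2]\bigr)\, ds,
\]
well defined for every starting point. Second, I would establish $L^1(\mathbb{P}_x \otimes \mathbb{P}^X)$-convergence of $A^n_t$ to a limit $A_t$, for each $t \geq 0$ and $x \in \R^2$. Conditional on the Brownian path, $A^n_t$ is the GMC of $X_n$ against the Brownian occupation measure $\nu_t^\infty$, so one applies Shamov's theorem: uniform integrability comes from Kahane's convexity inequality and Lemma \ref{Green'slemma}, convergence of the covariance kernel in measure is direct, and the finiteness of the relevant occupation-measure double integral in the subcritical regime $\gamma < 2$ follows from Lemma \ref{selfsimlemma}. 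This is exactly Lemma \ref{PCAFcor} in the present setting.

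Third, I would promote this limit to a PCAF in the strict sense with an explicit defining set, reproducing the construction of Proposition \ref{PCAFlem2}. Set
\[
\Lambda := \tilde{\Lambda} \cap \bigcap_{q \in \mathbb{Q}_{>0}} \Lambda_q,
\]
where $\Lambda_q$ records the existence, continuity, and strict monotonicity of subsequential limits $\tilde{A}_{q,u} = \lim_i (A^{n_i}_u - A^{n_i}_q)$ on every $[q,T]$, and $\tilde{\Lambda}$ ensures $\liminf_{t \downarrow 0} A^n_t = 0$ and $\limsup_{t \to \infty} A^n_t = \infty$. Defining $\bar{A}_t := \lim_{s \downarrow 0} \tilde{A}_{s,t}$ on $\Lambda$ yields a continuous additive functional. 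The shift identity $\Lambda_t^\omega = \theta_t^{-1} \Lambda_0^\omega$ together with the Markov property of Brownian motion propagates $\mathbb{P}_x(\Lambda^\omega) = 1$ from $m$-a.e. $x$ to every $x \in \R^2$, which is exactly assertion (i). Finally, to identify the Revuz measure of $\bar A$ with $\mu$, I would pass to the limit in the identity
\[
\int_{\R^2} \E_x\!\Bigl[\int_0^t f(Z_s)\, dA^n_s\Bigr] h(x)\, dm(x) = \int_{\R^2}\!\int_0^t P_s^\infty h(x)\, ds \cdot f(x)\, d\mu_n(x)
\]
for $f \in C_c(\R^2)$ and $h \in L^1 \cap L^\infty$ nonnegative, using vague convergence $\mu_n \to \mu$ (Shamov) on the right and the Stieltjes-integral convergence of Lemma \ref{DIntconv} on the left, then dividing by $t$ and sending $t \downarrow 0$ to recover the Revuz characterization.

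The principal obstacle is the strict-sense requirement in (i): the defining set $\Lambda^\omega$ must have full $\mathbb{P}_x$-measure for \emph{every} starting point $x \in \R^2$, not just for $m$-a.e. $x$. This is the step that forces the use of the shift-invariance relation $\Lambda_t^\omega = \theta_t^{-1}\Lambda_0^\omega$ together with the strong Feller property (continuity and strict positivity of the Brownian heat kernel) and Fubini, allowing one to upgrade a full-measure statement for $m$-a.e. starting point to a full-measure statement for every starting point.
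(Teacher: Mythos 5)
The paper does not actually prove this proposition itself --- it is recalled from \cite[Proposition 2.4]{AK} without proof --- but your strategy is essentially the one used there and reproduced in the paper's own Section \ref{secPCAFlem} (Propositions \ref{PCAFlem2} and \ref{PCAFlem3}): approximate by continuous fields, obtain \(L^1\)-convergence of the approximating PCAFs conditionally on the Brownian path via Shamov's theorem and Kahane's convexity inequality, build the defining set from subsequential limits, identify the Revuz measure by passing to the limit in the Revuz identity, and upgrade from \(m\)-a.e.\ to every starting point via the shift relation \(\Lambda_t^{\omega}=\theta_t^{-1}\Lambda_0^{\omega}\), Fubini, and the Markov property. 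Your outline is correct; the only detail left implicit is the verification that the subsequential limits are continuous and strictly increasing almost surely (the analogue of Lemmas \ref{lemma11} and \ref{lemma14}), which must be in place before \(\mathbb{P}_x\otimes\mathbb{P}^X(\Lambda_q)=1\) can be asserted.
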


For these \(Z\) and \(A\), the time-changed process \(\hat{Z}_t:=Z_{A^{-1}_t}\) of \(Z\) by \(A\) is called \textit{Liouville Brownian motion} on \(\R^2\). See \cite{GRV} for details.

Next, let \(g_{\lambda}\) be the \(\lambda\)-order Green's function of Cauchy process \(C\) on \(\R\). Denotes by \(Y\) a Gaussian field with covariance kernel \(\pi g_{\lambda}\) on \(\R\) built on a probability space \((\Omega^Y, \mathcal{M}^Y, \mathbb{P}^Y)\). For fixed \(\gamma \in [0,1)\), the Gaussian multiplicative chaos \(\mu=\mu_{\gamma}\) for \(Y\) is defined, see \cite{K,B,S,Bav} for details. By \cite{Bav} or the same way as \cite[Proposition 2.4]{AK}, the following proposition holds.
\begin{proposition}
For Cauchy process \(C=(\{C_t\}_t, \{\mathbb{P}_x\}_x, \{\mathcal{F}_t\}_t, \Omega^C)\) on \(\R\) and the Gaussian multiplicative chaos \(\mu\) of \(Y\), there exists a set \(\Lambda \in \mathcal{M}^Y \otimes \mathcal{F}^C_{\infty}\) such that the following holds.\\
\((i)\) For \(\mathbb{P}^Y\)-a.e. \(\omega \in  \Omega^Y\), \(\mathbb{P}_x(\Lambda^{\omega})=1\) for any \(x\in \R\), where \(\Lambda ^{\omega}:=\{\omega' \in \Omega^C : (\omega, \omega')\in \Lambda\}\).\\
\((ii)\) For \(\mathbb{P}^Y\)-a.e. \(\omega \in  \Omega^Y\), there exists a PCAF \(\{B_t(\omega,\cdot)\}_t\) of \(C\) in the strict sense with defining set \(\Lambda^{\omega}\) such that \(\{B_t(\omega,\cdot)\}_t\) corresponds to \(\mu(\omega)\).
\end{proposition}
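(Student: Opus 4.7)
The plan is to mirror the proof of \cite[Proposition 2.4]{AK}, adapted to the logarithmic covariance regime of the Cauchy process on \(\R\) and organized along the same lines as Section \ref{secPCAFlem}. First, I would introduce smooth approximations \(Y^{\varepsilon}\) of \(Y\) obtained by truncating the semigroup representation of the covariance at small time \(\varepsilon>0\), so that \(Y^{\varepsilon}\) is a centred Gaussian field with bounded continuous covariance kernel \(\pi g_{\lambda}^{\varepsilon}(x,y)\), with \(\pi g_{\lambda}^{\varepsilon}\to \pi g_{\lambda}\) pointwise and dominated by \(\pi g_{\lambda}+C\) uniformly in \(\varepsilon\). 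The associated GMC
\[
d\mu^{\varepsilon}(x) := \exp\!\bigl(\gamma Y^{\varepsilon}(x)-\tfrac{\gamma^{2}}{2}\mathbb{E}^{Y}[Y^{\varepsilon}(x)^{2}]\bigr)\,dx
\]
is absolutely continuous with respect to Lebesgue measure on \(\R\), and the candidate approximate PCAF
\[
B^{\varepsilon}_{t}(\omega,\omega'):=\int_{0}^{t}\exp\!\bigl(\gamma Y^{\varepsilon}(\omega)(C_{s}(\omega'))-\tfrac{\gamma^{2}}{2}\mathbb{E}^{Y}[Y^{\varepsilon}(C_{s}(\omega'))^{2}]\bigr)\,ds
\]
is, for each fixed \(\omega\), a PCAF of \(C\) in the strict sense with defining set \(\Omega^{C}\) corresponding to \(\mu^{\varepsilon}(\omega)\).

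Next, I would prove that \(B^{\varepsilon}_{t}\) converges to a limit \(\tilde{B}_{t}\) in \(L^{1}(\mathbb{P}^{Y})\) for \(\mathbb{P}_{x}^{C}\)-a.e.\ path, in direct analogy with Lemma \ref{PCAFcor}. This uses Kahane's convexity inequality, the logarithmic bound from Lemma \ref{Green'slemma}, the \(L^{2}\)-regime condition \(\gamma\in[0,1)\) (i.e., \(\gamma^{2}<d=1\)), and Shamov's convergence theorem applied to \(B^{\varepsilon}_{t}\) interpreted as a GMC of \(Y^{\varepsilon}\) integrated against the Cauchy occupation measure \(\nu_{t}^{C}\), exactly as in the proof of Lemma \ref{PCAFcor}. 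The required finiteness of the kernel \(\int_{0}^{t}\int_{0}^{t}|C_{u}-C_{v}|^{-1/2}\,du\,dv\), \(\mathbb{P}_{x}^{C}\)-almost surely, is provided by Lemma \ref{selfsimlemma} with \(r=1/2<1=d\).

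With the pointwise-in-\(t\) limit at hand, the defining-set construction and pathwise assembly of \(B\) follow Proposition \ref{PCAFlem2} essentially verbatim. Define
\[
\Lambda_{t}^{T}:=\{(\omega,\omega')\,:\,\text{along some }\varepsilon_{i}\downarrow 0,\ B^{\varepsilon_{i}}_{\cdot}-B^{\varepsilon_{i}}_{t}\text{ converges uniformly on }[t,T]\text{ to a continuous strictly increasing limit}\},
\]
set \(\Lambda:=\tilde\Lambda\cap\bigcap_{0<q\in\mathbb{Q}}\bigcap_{q\le T\in\mathbb{N}}\Lambda_{q}^{T}\) with \(\tilde\Lambda\) encoding the boundary behaviour at \(0\) and \(\infty\), and define \(B_{t}(\omega,\omega'):=\lim_{s\searrow 0}\lim_{i\to\infty}(B^{\varepsilon_{i}}_{t}-B^{\varepsilon_{i}}_{s})\) on \(\Lambda\). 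The Revuz correspondence \(B(\omega,\cdot)\leftrightarrow \mu(\omega)\) is verified by passing \(\varepsilon\to 0\) in
\[
\int_{\R}\mathbb{E}_{x}^{C}\!\left[\int_{0}^{t}f(C_{s})\,dB^{\varepsilon}_{s}\right]h(x)\,dx = \int_{\R}\int_{0}^{t}P_{s}^{C}h(x)\,ds\cdot f(x)\,d\mu^{\varepsilon}(x),
\]
combining Lemma \ref{DIntconv} on the left-hand side and Shamov's theorem on the right-hand side, exactly as in Proposition \ref{PCAFlem3}.

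The main obstacle is the promotion from \(m\)-equivalence to the strict-sense property, namely the requirement \(\mathbb{P}_{x}^{C}(\Lambda^{\omega})=1\) for \emph{every} \(x\in\R\) rather than for Lebesgue-a.e.\ \(x\). As in Lemma \ref{PCAFprob1lem}, this step hinges on the strong Feller property of the Cauchy process, specifically the continuity and strict positivity of its transition density \(p(t,x,y)\), combined with the shift-invariance identity \(\Lambda_{t}^{\omega}=\theta_{t}^{-1}\Lambda_{0}^{\omega}\) (Lemma \ref{shiftlem1}) and the Markov property. Once this pointwise-in-\(x\) upgrade is secured, the remaining verification that \(B(\omega,\cdot)\) is a strict-sense PCAF (continuity at \(0\), additivity, adaptedness, and finiteness) reduces to the bookkeeping already carried out in Section \ref{secPCAFlem}.
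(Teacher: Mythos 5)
Your proposal is correct and follows exactly the route the paper itself indicates: the paper states this proposition without proof, citing \cite{Bav} and the method of \cite[Proposition 2.4]{AK}, and your sketch carries out precisely that method via the \(\varepsilon\)-truncation of the covariance, the Kahane/Shamov arguments applied to the GMC of the Cauchy occupation measure (as in Lemma \ref{PCAFcor}), the defining-set construction of Proposition \ref{PCAFlem2}, the Revuz identification of Proposition \ref{PCAFlem3}, and the Fubini--Markov upgrade to every starting point as in Lemma \ref{PCAFprob1lem}. I see no gaps.
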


For these \(C\) and \(B\), the time-changed process \(\hat{C}_t:=C_{B^{-1}_t}\) of \(C\) by \(B\) is called \textit{Liouville Cauchy process} on \(\R\). See \cite{Bav} for details.

\begin{remark}
In \cite{Bav}, Liouville-Cauchy process on the unit circle is defined. However, by Spitzer's theorem \cite{Spi}, essentially, there is no difference between Liouville-Cauchy process on the unit circle and Liouville Cauchy process on \(\R.\)
\end{remark}

\section{Skorokhod's topology}\label{AppSko}
In this appendix, we recall definitions on Skorokhod's topology from \cite{Sk}, \cite{JS} and \cite{W}, and state properties of weak convergence with respect to Skorokhod's topologies, in particular, the criterion of tightness with respect to Skorokhod's \(M_1\)-topology.

Let \(d\) be a positive integer and \(D[0,T]\) be the collection of \(\R^d\)-valued c\`{a}dl\`{a}g function on \([0,T]\). Denote by \(D[0,\infty)\) the collection of \(\R^d\)-valued c\`{a}dl\`{a}g function on \([0,\infty)\). We introduce Skorokhod's topologies to \(D[0,\infty)\). Define \(\Lambda_T\) by the collection of all strictly increasing continuous functions \(\lambda:[0,T] \to [0,T]\) satisfying \(\lambda(0)=0\) and \(\lambda(T)=T.\)
\begin{definition}
For \(f_1,f_2 \in D[0,\infty)\), define the metric \(d_{J_1}(\cdot, \cdot)\) on \(D[0,\infty)\) by
\[d_{J_1}^T(f_1,f_2):= \inf_{\lambda \in \Lambda_T} \{\sup_{0\leq t\leq T}|\lambda(t)-t|  \vee \sup_{0\leq t\leq T} |f_1(\lambda(t))-f_2(t)|\},\]
\[d_{J_1}(f_1,f_2):=\int_0^{\infty} e^{-T}(d_{J_1}^T(f_1,f_2) \wedge 1)dT.\]
The topology induced on \(D[0,\infty)\) by \(d_{J_1}\) is called \textit{Skorokhod's \(J_1\)-topology}.
\end{definition}

For \(d=1\), we define \(M_1\)-topology. For \(f\in D[0,T]\), we define the completed graph \(\Gamma_f(T)\) and the set \(\Pi_f(T)\) by
\[\Gamma_f(T):=\{(s,z)\in [0,T]\times \R : z=\theta f(t-)+(1-\theta)f(t)\text{\ for\ some\ }\theta \in [0,1]\},\]
\[\Pi_f(T):=\{(r,u):[0,1]\to \Gamma_f(T) ; (r,u)\text{\ is\ continuous\ surjection\ } r(0)=0, r(1)=T\}.\]
\begin{definition}
For \(f_1,f_2 \in D[0,\infty)\), define the metric \(d_{M_1}(\cdot, \cdot)\) on \(D[0,\infty)\) by
\[d_{M_1}^T(f_1,f_2):= \inf_{(r_i,u_i)\in \Pi_{f_i}(T)} \{\sup_{0\leq s\leq 1}|r_1(s)-r_2(s)|  \vee \sup_{0\leq s\leq 1}|u_1(s)-u_2(s)|\},\]
\[d_{M_1}(f_1,f_2):=\int_0^{\infty} e^{-T}(d_{M_1}^T(f_1,f_2) \wedge 1)dT.\]
The topology induced on \(D[0,\infty)\) by \(d_{M_1}\) is called \textit{Skorokhod's \(M_1\)-topology}.
\end{definition}
\begin{remark}
When \(d\geq 2\), \(M_1\)-topology also be defined on \(D[0,\infty)\). However there are several kinds of \(M_1\)-topologies for \(d\geq 2\) such as the strong \(M_1\)-topology and the weak \(M_1\)-topology. See \cite[Section 12]{W} for details.
\end{remark}

\subsection{Weak convergence with respect to Skorokhod's topologies}
Since sample paths of Markov processes are c\`{a}dl\`{a}g functions, we may treat Markov processes as \(D[0,\infty)\)-valued random variables. So we can consider the weak convergence of processes in \(D[0,\infty)\) with respect to Skorokhod's topologies. Here, we summary and prove the properties of the weak convergence of processes in \(D[0,\infty)\).

For fixed \(T>0,\) we set \(D_T[0,T]:=\{x\in D[0,T] : x_T=x_{T-}\}.\)
For \(z\in D[0,T]\), we define \begin{equation}
v_T(z,t,\delta):=\sup_{0\vee (t-\delta)\leq t_1\leq t_2\leq (t+\delta)\wedge T}|z(t_1)-z(t_2)|, \label{eq:v1}
\end{equation}
\begin{equation}
w_T(z,\delta):=\sup_{0\leq t\leq T}\sup_{\substack{0\vee (t-\delta)\leq t_1\leq t_2
\leq t_3\leq (t+\delta)\wedge T}}\inf_{0\leq \theta \leq 1} |z(t_2)-\left(\theta z(t_1)+(1-\theta) z(t_3)\right)| \label{eq:w1}
\end{equation}

\begin{theorem}[{\cite[Theorem 12.12.3]{W}}]\label{M1cri1} 
A sequence \(\{\mathbb{P}^n\}_n\) of probability measures on \(D_T[0,T]\) with respect to \(M_1\)-topology is tight if and only if the following conditions hold:\\
\((1)\) For any \(\varepsilon >0,\) there exists \(c>0\) such that, for all \(n\), 
\begin{equation}
\mathbb{P}^n(\{x\in D_T[0,T] : \sup_{0\leq t\leq T}|x|>c\})\leq \varepsilon, \label{eq:S3}
\end{equation}
\((2)\) For any \(\varepsilon, \eta >0,\) there exists \(\delta\in (0,T]\) such that, for all \(n\),
\begin{equation}
\mathbb{P}^n (\{x\in D_T[0,T] : w_T(x,\delta) \geq \eta \})\leq \varepsilon,\label{eq:S2}
\end{equation}
\((3)\)For any \(\varepsilon, \eta >0,\) there exists \(\delta\in (0,T]\) such that, for all \(n\),
\begin{equation}\mathbb{P}^n (\{x\in D_T[0,T] : v_T(x,0,\delta) \geq \eta \})\leq \varepsilon, \label{eq:S3}
\end{equation}
\((4)\) For any \(\varepsilon, \eta >0,\) there exists \(\delta\in (0,T]\) such that, for all \(n\),
\begin{equation}\mathbb{P}^n (\{x\in D_T[0,T] : v_T(x,T,\delta) \geq \eta \})\leq \varepsilon. \label{eq:S4}
\end{equation}\end{theorem}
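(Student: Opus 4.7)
The plan is to reduce the statement to a characterization of compact sets in $(D_T[0,T], d_{M_1})$, an $M_1$-analogue of the Arzelà–Ascoli theorem, and then combine this with Prokhorov's theorem. Since $(D_T[0,T], d_{M_1})$ is a Polish space, tightness of $\{\mathbb{P}^n\}_n$ is equivalent to the existence, for each $\varepsilon>0$, of a compact set $K_\varepsilon$ with $\mathbb{P}^n(K_\varepsilon) \geq 1-\varepsilon$ for all $n$.

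First I would establish the compactness criterion: a set $K \subset D_T[0,T]$ is relatively compact in $M_1$-topology if and only if
\[\sup_{x\in K}\sup_{0\leq t\leq T}|x(t)| < \infty,\quad \lim_{\delta \searrow 0}\sup_{x\in K}w_T(x,\delta) = 0, \]
\[\lim_{\delta \searrow 0}\sup_{x\in K}v_T(x,0,\delta) = 0,\quad \lim_{\delta \searrow 0}\sup_{x\in K}v_T(x,T,\delta) = 0.\]
The intuition is that $w_T$ measures how far a path deviates from being monotone on small windows, which is the natural obstruction to approximating the completed graph by a simpler parametrization, while $v_T(\cdot,0,\delta)$ and $v_T(\cdot,T,\delta)$ control oscillations at the endpoints (the constraint $x_T = x_{T-}$ explains why we need a condition at $T$). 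The sufficiency direction proceeds by showing that under these conditions one can extract from any sequence $\{x_n\}\subset K$ a parametrization $(r_n,u_n)\in \Pi_{x_n}(T)$ with equicontinuous components, apply Arzelà–Ascoli to obtain a uniform limit $(r,u)$, and verify that it parametrizes the completed graph of some $x_\infty \in D_T[0,T]$. The necessity follows by contradiction.

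Given this characterization, the theorem follows readily. For the sufficient direction, given $\varepsilon >0$, choose $c$ from condition (1) with level $\varepsilon/4$ and, for each $k \in \mathbb{N}$, choose $\delta_k^{(i)} > 0$ from conditions (2), (3), (4) (each with level $\varepsilon/(4\cdot 3\cdot 2^k)$ and $\eta = 1/k$). Define
\[K_\varepsilon := \bigl\{x : \sup_{0\leq t\leq T}|x(t)| \leq c\bigr\}\cap \bigcap_{k\geq 1}\bigl\{x : w_T(x,\delta_k^{(1)})< 1/k,\ v_T(x,0,\delta_k^{(2)})< 1/k,\ v_T(x,T,\delta_k^{(3)}) < 1/k\bigr\}.\]
Then $\mathbb{P}^n(K_\varepsilon)\geq 1-\varepsilon$ for every $n$, and $K_\varepsilon$ satisfies all four properties in the compactness criterion, so it is relatively compact. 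For the necessary direction, given a tight $\{\mathbb{P}^n\}_n$, for any $\varepsilon >0$ extract a compact $K_\varepsilon$ with $\inf_n \mathbb{P}^n(K_\varepsilon) \geq 1-\varepsilon$; each of the four conditions then transfers from $K_\varepsilon$ to the $\mathbb{P}^n$-probabilities.

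The main obstacle is Step 1, the compactness characterization itself. Unlike the $J_1$ case, where one may work path-by-path using the standard modulus and a single time-change, the $M_1$ topology is defined via parametrizations of the completed graph, so the Arzelà–Ascoli-style extraction has to be carried out at the level of graph parametrizations in $\Pi_x(T)$. Particular care is needed to handle how the jumps of $x$ translate into flat pieces of $(r,u)$, to verify that equicontinuous limits of such parametrizations still parametrize a valid completed graph, and to reconstruct the limit $x_\infty \in D_T[0,T]$ (rather than just a continuous curve in $[0,T]\times\mathbb{R}$). This is precisely where the $w_T$ quantity enters, since it controls the non-monotone oscillations that would otherwise prevent the parametrizations from converging.
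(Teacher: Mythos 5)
The paper offers no proof of this statement: it is quoted verbatim from Whitt \cite[Theorem 12.12.3]{W}, and the remark following it traces the result back to Skorokhod \cite[Section 2.7]{Sk}. So the comparison can only be against the standard proof and against the paper's own proof of the companion result, Theorem \ref{M1cri2}, which has exactly the skeleton you propose. Your outer reduction is correct and routine: $(D_T[0,T],d_{M_1})$ is Polish, so Prokhorov's theorem converts tightness into a statement about compact sets, and your construction of $K_\varepsilon$ as a countable intersection of sublevel sets of $w_T$ and $v_T$ (taking its closure to get an actual compact set) together with the converse transfer from a compact $K_\varepsilon$ back to the four probability bounds is exactly the standard bookkeeping.

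The genuine gap is that essentially all of the mathematical content of the theorem lives in your Step 1, the Arzel\`a--Ascoli-type characterization of relatively compact subsets of $D_T[0,T]$ in the $M_1$-topology, and you only sketch it. Moreover, the mechanism you sketch is not the one used by Skorokhod, Whitt, or the paper itself in its proof of Theorem \ref{M1cri2}: there, one does \emph{not} extract equicontinuous graph parametrizations, but instead uses the uniform bound to get pointwise convergence of a subsequence on a countable dense set of times, uses the $w_T$ and $v_T(\cdot,0,\cdot)$, $v_T(\cdot,T,\cdot)$ controls to show the left and right limits of the limiting values exist and define a c\`adl\`ag function $\bar x$ with $\bar x_T=\bar x_{T-}$, and then invokes the local characterization of $M_1$-convergence (\cite[Theorem 12.5.1]{W}) to upgrade pointwise convergence at continuity points plus the oscillation bound to $M_1$-convergence. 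Your route through $\Pi_x(T)$ can in principle be made to work, but the claim that one "can extract a parametrization $(r_n,u_n)$ with equicontinuous components" is precisely the hard point: elements of $\Pi_{x_n}(T)$ are highly non-unique, a generic choice (e.g.\ arc-length) need not be equicontinuous, and the construction of a good choice must itself be driven by the $w_T$ and $v_T$ bounds. As written, the proposal is a correct plan whose decisive step is asserted rather than proved.
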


\begin{remark}
\cite[Section 12.12]{W} is based on \cite[Section 2.7]{Sk} and \(D_T[0,T]\) is used in \cite{Sk}.
\end{remark}

We prove the following theorem on a characterization of a tightness of a sequence of probability measures on \(D[0,\infty)\) with respect to \(M_1\)-topology.

\begin{theorem}\label{M1cri2} 
A sequence \(\{\mathbb{P}^n\}_n\) of probability measures on \(D[0,\infty)\) with respect to \(M_1\)-topology is tight if and only if the following conditions hold:\\
\((1)\) For any \(T, \varepsilon >0,\) there exists \(c>0\) such that, for all \(n\), 
\begin{equation}
\mathbb{P}^n(\{x\in D[0,T] : \sup_{0\leq t\leq T}|x|>c\})\leq \varepsilon, \label{eq:T3}
\end{equation}
\((2)\) For any \(T, \varepsilon, \eta >0,\) there exists \(\delta\in (0,T]\) such that, for all \(n\),
\begin{equation}
\mathbb{P}^n (\{x\in D[0,T] : w_T(x,\delta) \geq \eta \})\leq \varepsilon,\label{eq:T2}
\end{equation}
\((3)\)For any \(T, \varepsilon, \eta >0,\) there exists \(\delta\in (0,T]\) such that, for all \(n\),
\begin{equation}\mathbb{P}^n (\{x\in D[0,T] : v_T(x,0,\delta) \geq \eta \})\leq \varepsilon. \label{eq:T3}
\end{equation}
\end{theorem}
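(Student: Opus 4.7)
The strategy will be to reduce to Theorem \ref{M1cri1} (which characterizes $M_1$-tightness on the fixed-horizon space $D_T[0,T]$) by exploiting the integral structure of $d_{M_1}$ on $D[0,\infty)$. The plan has two main reductions.

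First, I would use the identity $d_{M_1}(f_1,f_2) = \int_0^{\infty} e^{-T}(d_{M_1}^T(f_1,f_2)\wedge 1)\,dT$ to show that $\{\mathbb{P}^n\}$ is tight on $(D[0,\infty), d_{M_1})$ if and only if the restrictions $\{R_T^*\mathbb{P}^n\}$ are tight on $(D[0,T], d_{M_1}^T)$ for every $T > 0$, where $R_T$ denotes the restriction map. This follows because a subset $A \subset D[0,\infty)$ is $d_{M_1}$-relatively compact iff $R_T(A)$ is $d_{M_1}^T$-relatively compact for every $T > 0$, a direct consequence of the integral representation together with monotonicity of $T \mapsto d_{M_1}^T$.

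Next, I would reduce tightness on $(D[0,T], d_{M_1}^T)$ to Theorem \ref{M1cri1} by enlarging the interval. For $x \in D[0,\infty)$ define the constant extension $Ex \in D_{T+1}[0,T+1]$ by $Ex(t) = x(t)$ for $t \in [0,T]$ and $Ex(t) = x(T)$ for $t \in (T, T+1]$; this lies in $D_{T+1}[0,T+1]$ because $Ex$ is constant (hence continuous) at $T+1$. The map $E$ is an $M_1$-homeomorphism onto its image, so tightness of $\{R_T^*\mathbb{P}^n\}$ on $D[0,T]$ is equivalent to tightness of $\{E^*\mathbb{P}^n\}$ on $D_{T+1}[0,T+1]$, to which Theorem \ref{M1cri1} applies. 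Condition (1) of Theorem \ref{M1cri1} on $[0,T+1]$ follows from our (1) via $\sup_{[0,T+1]}|Ex|=\sup_{[0,T]}|x|$; condition (3) follows from our (3) since $v_{T+1}(Ex,0,\delta)=v_T(x,0,\delta)$ for small $\delta$; condition (4) is automatic because $Ex$ is constant on $[T,T+1]$, giving $v_{T+1}(Ex,T+1,\delta)=0$ for $\delta\leq 1$.

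The delicate step will be condition (2) of Theorem \ref{M1cri1}, namely bounding $w_{T+1}(Ex,\delta)$. My plan is a case analysis on triples $t_1\leq t_2\leq t_3$ in a $\delta$-neighborhood. Triples contained in $[0,T]$ contribute at most $w_T(x,\delta)$, controlled by our (2). Triples with $t_1 \geq T$ contribute $0$ by constancy of $Ex$. For triples straddling $T$ (i.e.\ $t_1 < T \leq t_3$), when $t_2\geq T$ taking $\theta=0$ in the infimum yields $0$ since $Ex(t_2)=Ex(t_3)=x(T)$; when $t_2 < T$ the quantity $\inf_\theta |x(t_2) - \theta x(t_1) - (1-\theta)x(T)|$ is the distance from $x(t_2)$ to the line segment between $x(t_1)$ and $x(T)$, which I would bound using our (2) applied with parameter $T+1$ through the monotone-interpolation structure intrinsic to the $M_1$ $w$-modulus. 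Once (2) is verified, Theorem \ref{M1cri1} gives tightness of $\{E^*\mathbb{P}^n\}$, lifting to tightness of $\{R_T^*\mathbb{P}^n\}$ and hence of $\{\mathbb{P}^n\}$ by the first reduction. The necessity direction reverses the argument: a $d_{M_1}$-compact $K \subset D[0,\infty)$ restricts to a $d_{M_1}^T$-compact set in $D[0,T]$, which embeds via $E$ into a $d_{M_1}^{T+1}$-compact subset of $D_{T+1}[0,T+1]$, and the necessity direction of Theorem \ref{M1cri1} then yields our three conditions. The hardest part will be the verification of condition (2) for the extension across the joint at $T$; this subtlety is intrinsic to the $M_1$ topology but is resolved by the $\inf_\theta$ freedom in the $w$-modulus, which accommodates monotone jumps in $x$ at $T$.
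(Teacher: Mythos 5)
Your route is genuinely different from the paper's: you reduce the half-line statement to Theorem \ref{M1cri1} by restricting to $[0,T]$ and then embedding $D[0,T]$ into $D_{T+1}[0,T+1]$ by constant extension, whereas the paper proves the compactness characterization on $D[0,\infty)$ directly, Arzel\`a--Ascoli style: extract pointwise limits on a dense set using $(1)$, build the c\`adl\`ag limit from the moduli in $(2)$ and $(3)$ following Skorokhod, and verify $M_1$-convergence via \cite[Theorem 12.5.1]{W} at horizons that are continuity points of the limit. Your second reduction is mostly sound: the constant extension lands in $D_{T+1}[0,T+1]$, conditions $(1)$, $(3)$, $(4)$ of Theorem \ref{M1cri1} transfer as you say, and the straddling triples in the $w$-modulus are in fact already controlled by $w_T(x,\delta)$ itself (take $t_3=T$ in the window $[T-\delta,T]$), so no appeal to a ``parameter $T+1$'' version is needed; you would, however, still have to prove that $E$ is a homeomorphism onto a closed image rather than assert it.

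The genuine gap is in your first reduction. You claim that tightness on $(D[0,\infty),d_{M_1})$ is equivalent to tightness of the restrictions on $(D[0,T],d_{M_1}^T)$ for \emph{every} $T>0$, as ``a direct consequence of the integral representation together with monotonicity of $T\mapsto d_{M_1}^T$.'' That monotonicity is false: for $f={\bf 1}_{[T,\infty)}$ and $g_n={\bf 1}_{[T+1/n,\infty)}$ one has $d_{M_1}^{T+1}(f,g_n)\to 0$ while $d_{M_1}^T(R_Tf,R_Tg_n)$ stays bounded away from $0$, since $R_Tf={\bf 1}_{\{T\}}$ and $R_Tg_n=0$. Equivalently, the restriction map $R_T$ is not $M_1$-continuous at paths with a jump at $T$; this is exactly the endpoint effect that makes condition $(4)$ of Theorem \ref{M1cri1} disappear on $[0,\infty)$, as the paper's remark after the theorem explains. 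Consequently (i) your necessity direction (``compact $K$ restricts to a compact set for every $T$'') needs an argument handling subsequential limits that jump at $T$, and, more seriously, (ii) in the sufficiency direction you obtain subsequential limits $y^{(T)}$ of the restrictions on each $[0,T]$ but provide no mechanism to glue them into a single element of $D[0,\infty)$ and to upgrade convergence at the fixed horizons $T\in\mathbb{N}$ --- which may be discontinuity points of the limit --- to $M_1$-convergence on $D[0,\infty)$. This consistency/gluing step is precisely where the paper's direct construction does its work, and your proposal does not address it.
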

\begin{proof}
It is sufficient to show that \(K\subset D[0,\infty)\) is relatively compact with respect to \(M_1\)-topology if and only if the following hold;

\((1)'\) For any \(T>0,\) \(\sup_{x\in K} \sup_{0\leq t\leq T}|x_t|<\infty.\)

\((2)'\) For any \(T>0,\) \(\lim_{\delta \searrow 0}\sup_{x\in K} w_T(x,\delta)=0.\)

\((3)'\) \(\lim_{\delta \searrow 0}\sup_{x\in K}v_T(x,0,\delta)=0.\)

Suppose \(K\subset D[0,\infty)\) is relatively compact with respect to \(M_1\)-topology. If \((1)'\) does not hold, there exist \(T>0\) and \(\{x^{(n)}\}\subset K\) such that \(\lim_{n\to \infty}\sup_{t\leq T}|x_t^{(n)}|=\infty\). We take a subsequence \(\{x^{(n')}\}\) satisfying \(x^{(n')}\) converges to some \(x\in \bar{K}\) with respect to \(M_1\)-topology. For \(\tilde{T}\geq T\) satisfying \(x_{\tilde{T}}=x_{\tilde{T}-}\), by \cite[Theorem 12.5.1]{W}, \(\tilde{x}^{(n')}\) converges to \(x\) on \([0,\tilde{T}]\) with respect to \(M_1\)-topology, where \(\tilde{x}^{(n')}\in D_{\tilde{T}}[0,\tilde{T}]\) is defined by \(\tilde{x}^{(n')} _t:=x^{(n')} _t\) for \(t< \tilde{T}\) and \(\tilde{x}^{(n')} _{\tilde{T}}:=x^{(n')}_{\tilde{T}-}\). By \cite[2.7.2]{Sk}, \(\sup_{n'}\sup_{0\leq t \leq \tilde{T}}|\tilde{x}^{(n')} _t |<\infty \). This is a contradiction, so \((1)'\) holds. Similarly, \((2)'\) and \((3)'\) hold.

Suppose \((1)'\), \((2)'\) and \((3)'\). We will prove \(K\) is relatively compact in a similar way to the proof of \cite[2.7.2.]{Sk}. For any \(\{x^{(n)}\} \subset K\), by \((1)'\), there are \(\{n'\}\) and \(\{t_i\}\) such that \(\{t_i\}\) is dense in \([0,\infty)\) and \(x^{(n')}_{t_i}\) converges to some \(x_{t_i}\) as \(n'\to \infty .\) By using \((2)'\) and \((3)'\), we can prove the existence of \(\lim_{t_i \searrow t}x_{t_i}\) and \(\lim_{t_i \nearrow t}x_{t_i}\) for \(t\geq 0\) in the same way as \cite[2.3.5.]{Sk}. Setting \(\bar{x}_t:=\lim_{t_i \searrow t}x_{t_i} \in D[0,\infty). \) For any \(T\geq 0\) satisfying \(\bar{x}_T=\bar{x}_{T-}\), \(x^{(n')}\) converges pointwise to \(\bar{x}\) on \(\{t\in [0,T]\ \mid \ \bar{x}_t=\bar{x}_{t-}\}\). Indeed, for \(\varepsilon>0\) and \(t\in [0,T]\) with \(\bar{x}_t=\bar{x}_{t-}\), we can take \(\delta>0\) such that \(w_T(x^{(n')}, \delta)\leq \varepsilon\) by \((2)'\) and \(t', t''\) satisfying \(t-\delta <t'<t-\delta/2, t+\delta /2<t''<t+\delta\) and \(|\bar{x}_{t'}-\bar{x}_{t}|<\varepsilon \text{\ and\ }|\bar{x}_{t''}-\bar{x}_{t}|<\varepsilon.\) 
Then we have 
\[|x^{(n')}_t-\bar{x}_{t}|\leq w_T(x^{(n')}, \delta)+ |\bar{x}_{t'}-\bar{x}_{t}|+w_T(x^{(n')}, \delta)+|\bar{x}_{t''}-\bar{x}_{t}| \leq 4\varepsilon .\] 
By \cite[Theorem 12.5.1.]{W} and \((2)'\), \(x^{(n')}\) converges to \(\bar{x}\) on \(D[0,T]\) with respect to \(M_1\)-topology. So \(x^{(n')}\) converges to \(\bar{x}\) on \(D[0,\infty)\) with respect to \(M_1\)-topology by \cite[Theorem 12.5.1]{W}.
\end{proof}

\begin{remark}
The difference between Theorem \ref{M1cri1} and Theorem \ref{M1cri2} is the condition \((4)\). The condition \((4)\) is used to guarantee a subsequential limit is continuous at the end point of the interval. However, when we consider the convergence with respect to \(M_1\)-topology on \(D[0,\infty)\), by the definition, we only consider the convergence of the restriction to the interval whose end points are continuous points for the limit. This is the intuitive reason why \((4)\) does not appear when we treat the convergence on \([0,\infty).\)
\end{remark}

Let \(Z^n\) be a c\`{a}dl\`{a}g process on \(\R^d\) on a probability space \((\Omega^n, \mathbb{P}^n)\) for each \(n\in \mathbb{N} \cup \{\infty\}.\) Then \(Z^n\) can be viewed as a \(D[0,\infty)\)-valued random variable, so we can consider the weak convergence of c\`{a}dl\`{a}g processes with  respect to Skorokhod's topology.
\begin{theorem}[{\cite[Theorem 11.6.6]{W}}] \label{whitt}
A process \(Z^n\) under \(\mathbb{P}^{Z^n}\) converges weakly to \(Z^{\infty}\) under \(\mathbb{P}^{Z^{\infty}}\) with respect to \(J_1\) \((\)resp. \(M_1\)\()\)-topology if and only if the following conditions hold.\\
\((1)\) \(Z^n\) converges to \(Z^{\infty}\) in finite-dimensional distribution on \(\{t>0; \mathbb{P}^{Z^{\infty}}(Z_{t-}^{\infty} \neq Z_t^{\infty})=0\}\).\\
\((2)\) The collection of distributions of \(Z^n\) under \(\mathbb{P}^{Z^n}\) is tight with respect to \(J_1\) \((\)resp. \(M_1\)\()\)-topology.
\end{theorem}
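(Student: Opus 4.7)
The plan is to establish the two directions along the standard route for weak convergence in Polish spaces. I would first note that $D[0,\infty)$ equipped with the $J_1$ (resp.\ $M_1$) topology is separable and admits a complete compatible metric (a foundational fact, see \cite{W}), so Prohorov's theorem is available in both cases.

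For the necessity direction, assume weak convergence of $Z^n$ to $Z^{\infty}$. Then Prohorov's theorem immediately gives tightness, hence (2). For (1), the key step is to identify the continuity set of the finite-dimensional projection
\[ \pi_{t_1,\dots,t_k}\colon D[0,\infty)\to(\mathbb{R}^d)^k,\qquad x\mapsto(x(t_1),\dots,x(t_k)). \]
Under both $J_1$ and $M_1$, this map is continuous at any path $x$ that is continuous at each $t_i$. The set $T_0:=\{t>0:\mathbb{P}^{Z^{\infty}}(Z_{t-}^{\infty}=Z_{t}^{\infty})=1\}$ is precisely the set of times at which the law of $Z^{\infty}$ assigns probability one to the continuity-point event of $\pi$, so the continuous mapping theorem delivers the fdd convergence on $T_0$.

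For the sufficiency direction, assume (1) and (2). By Prohorov, tightness gives that every subsequence of the laws of $Z^n$ has a further subsequence converging weakly to some $Q$ on $D[0,\infty)$. Applying the continuous mapping argument above to this subsequential limit, together with the assumed fdd convergence (1), forces $Q$ to coincide with $\mathrm{law}(Z^{\infty})$ on all cylinder sets whose coordinate times lie in $T_0$. The complement $T_0^c$ (the fixed discontinuity times of $Z^{\infty}$) is at most countable, so $T_0$ is dense in $[0,\infty)$; since c\`adl\`ag paths are determined by their values on any dense set via right-continuity, agreement on $T_0$-cylinders upgrades to equality of measures on $D[0,\infty)$. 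Hence every subsequential limit equals $\mathrm{law}(Z^{\infty})$, and the whole sequence converges weakly.

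The main obstacle is the characterization of the continuity set of $\pi_{t_1,\dots,t_k}$, particularly for the $M_1$ topology, where convergence is phrased via parametric representations of completed graphs rather than uniform convergence modulo a time change. One must show carefully that if $x_n\to x$ in $M_1$ and $x$ is continuous at $t$, then $x_n(t)\to x(t)$: this uses that the completed graph of $x$ at $t$ degenerates to the single point $(t,x(t))$, so any admissible parametric representation of $x_n$ must eventually map a neighbourhood of the parameter mapped to $t$ into a neighbourhood of $(t,x(t))$. Once this continuity point characterization is in hand (together with the analogous and easier $J_1$ statement), the rest of the argument is routine Polish-space soft analysis.
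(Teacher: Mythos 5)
The paper does not prove this statement: it is quoted verbatim as an imported result, namely \cite[Theorem 11.6.6]{W}, so there is no in-paper argument to compare yours against. Your proposal is the standard textbook route (Prohorov plus continuous-mapping for the coordinate projections plus identification of subsequential limits via fdds on a dense time set), and it is essentially correct; you also correctly isolate the one genuinely delicate point, namely that under $M_1$ the projection $\pi_{t_1,\dots,t_k}$ is continuous at paths continuous at each $t_i$ (this is the local-uniform-convergence-at-continuity-points property, \cite[Theorem 12.5.1]{W}, which must be combined with the fact that $D[0,\infty)$ under $M_1$ is separable and completely metrizable so that Prohorov applies).

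One small imprecision in the sufficiency direction: when you apply the continuous mapping theorem to a subsequential limit $Q$, the projections converge only at times in $T_Q:=\{t>0: Q(x(t)=x(t-))=1\}$, not at all times in $T_0$, so what you actually obtain is agreement of the fdds of $Q$ and of $\mathrm{law}(Z^{\infty})$ on $T_0\cap T_Q$ rather than on $T_0$. This does not damage the argument, since $T_Q^c$ is also at most countable, so $T_0\cap T_Q$ remains co-countable and hence dense, and the determination of a c\`adl\`ag law by its fdds on a dense subset of $(0,\infty)$ (via right-continuity and the fact that the cylinder sets over a dense time set generate the Borel $\sigma$-field and form a $\pi$-system) goes through unchanged. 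With that correction made explicit, the proof is complete.
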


Time-changed processes can be represented as the compositions of processes and the inverses of PCAFs, so we consider the continuity of composition maps with  respect to Skorokhod's topologies.

Let \(D(E;F):=\{f:E\to F : f\text{ is\ c\`{a}dl\`{a}g} \}\) for subsets \(E\) of \([0,\infty)\) and \(F\) of \(\R^d\). We define 
\(D(E):=D(E;\R)\), \(D^d:=D([0,\infty);\R^d)\), \(D^+:=D([0,\infty);[0,\infty))\), \(D_{\uparrow}:=\{f\in D^+ : f\text{\ is\ non-decreasing}\}\) and \(D_{\upuparrows}:=\{f\in D_{\uparrow} : f\text{\ is\ strictly\  increasing}\}.\) Moreover, we define spaces \(C(E;F), C(E),\) and so on, replaced c\`{a}dl\`{a}g functions by continuous functions above.

\begin{theorem}[{\cite[Theorem 13.2.2]{W}}]\label{J1UJ1}
The composition map from \(D^d\times D_{\uparrow}\) with respect to \(J_1\times J_1\)-topology taking \((x,y)\) into \(x\circ y\) with respect to \(J_1\)-topology is continuous at \((x,y)\in (C^d \times D_{\uparrow}) \cup (D^d \times C_{\upuparrows})\).
\end{theorem}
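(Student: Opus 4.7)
The plan is to argue the two cases $(x,y)\in C^d\times D_{\uparrow}$ and $(x,y)\in D^d\times C_{\upuparrows}$ separately, using the time-change characterization of $J_1$ convergence on $D[0,T]$: $f_n\to f$ in $J_1$ iff there exist $\lambda_n\in\Lambda_T$ with $\sup_t|\lambda_n(t)-t|\to 0$ and $\sup_t|f_n(\lambda_n(t))-f(t)|\to 0$. By the definition of $J_1$ on $D[0,\infty)$, it suffices to prove convergence on every $[0,T]$ at which $x\circ y$ is continuous, so we may assume continuity at $T$ throughout and work on $D[0,T]$. In both cases the strategy is to exhibit an explicit time-change $\nu_n$ for $x_n\circ y_n$ witnessing $J_1$ convergence to $x\circ y$.

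For the case $x\in C^d$ and $y\in D_{\uparrow}$: since $x$ is continuous, $J_1$ convergence $x_n\to x$ is equivalent to locally uniform convergence. Pick $\mu_n\in\Lambda_T$ with $\mu_n\to\mathrm{id}$ uniformly and $y_n\circ\mu_n\to y$ uniformly on $[0,T]$; I would take $\nu_n:=\mu_n$ as the candidate time-change. Write
\[
x_n(y_n(\mu_n(t)))-x(y(t))=\bigl(x_n-x\bigr)(y_n(\mu_n(t)))+\bigl(x(y_n(\mu_n(t)))-x(y(t))\bigr).
\]
The first summand tends to $0$ uniformly in $t\in[0,T]$ because $\{y_n(\mu_n(t)):n,\,t\in[0,T]\}$ is bounded (it is uniformly convergent) and $x_n\to x$ uniformly on bounded sets. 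The second summand tends to $0$ uniformly by uniform continuity of $x$ on compacts together with the uniform convergence $y_n\circ\mu_n\to y$. So $x_n\circ y_n\circ\mu_n\to x\circ y$ uniformly, as required.

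For the case $y\in C_{\upuparrows}$: now $y$ is a homeomorphism of $[0,T]$ onto $[y(0),y(T)]$, and $y_n\to y$ in $J_1$ forces $y_n\to y$ uniformly on $[0,T]$. Let $\lambda_n\in\Lambda_{y(T)+1}$ be time-changes realising $x_n\circ\lambda_n\to x$ uniformly on $[0,y(T)+1]$. The key construction is to transport $\lambda_n$ through $y$ to produce a time-change for the composition. I would define $\nu_n(t):=y_n^{\leftarrow}(\lambda_n(y(t)))$ using the right-continuous generalised inverse, and verify that (i) $\nu_n\in\Lambda_T$ for large $n$, (ii) $\nu_n\to\mathrm{id}$ uniformly (using uniform continuity of $y^{-1}$ on compacts, $\lambda_n\to\mathrm{id}$, and $y_n\to y$ uniformly), and (iii) $y_n(\nu_n(t))=\lambda_n(y(t))+o(1)$ uniformly (the generalised inverse satisfies $y_n(y_n^{\leftarrow}(s))=s+O(\text{jump of }y_n)$ and the jumps vanish in the uniform limit to the continuous $y$). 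Then $x_n(y_n(\nu_n(t)))$ differs from $x_n(\lambda_n(y(t)))$ by a uniformly small amount via a further small time-change, and $x_n(\lambda_n(y(t)))\to x(y(t))$ uniformly in $t$ since $y([0,T])$ is compact.

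The main obstacle is the second case: neither $x$ nor $y_n$ need be continuous, so the construction of $\nu_n$ must simultaneously accommodate the jumps of $x_n$ (which is why we need $\lambda_n$) and the jumps of $y_n$ (which is where the generalised inverse enters). The crucial use of $y\in C_{\upuparrows}$ is twofold: strict monotonicity gives a well-defined continuous inverse $y^{-1}$, and continuity of $y$ rules out the pathology where a small perturbation of $y$ that flattens a jump interacts badly with a jump of $x$ at the image point. Both are genuinely needed, as standard counterexamples show that composition fails to be continuous on $D^d\times D_{\uparrow}$.
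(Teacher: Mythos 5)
The paper offers no proof of this statement; it is quoted directly from \cite[Theorem 13.2.2]{W}, so your attempt can only be measured against the standard argument. Your first case (\(x\in C^d\), \(y\in D_{\uparrow}\)) is correct and is essentially that argument: \(J_1\)-convergence to a continuous limit upgrades to locally uniform convergence, and your two-term decomposition then closes the case.

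The second case contains a genuine gap at the step where you pass from \(x_n(y_n(\nu_n(t)))\) to \(x_n(\lambda_n(y(t)))\) ``by a further small time-change''. There are two problems. First, \(\nu_n=y_n^{\leftarrow}\circ\lambda_n\circ y\) need not lie in \(\Lambda_T\): if \(y_n\) is constant on intervals (which is compatible with \(y_n\to y\) uniformly and \(y\in C_{\upuparrows}\), e.g.\ step-function approximations of \(y\)), then \(y_n^{\leftarrow}\) has jumps and \(\nu_n\) is not continuous. Second, and more seriously, even granting \(\sup_t|y_n(\nu_n(t))-\lambda_n(y(t))|\le j_n\to 0\) with \(j_n\) the largest jump of \(y_n\), the function \(x_n\) is discontinuous, so its values at two arguments differing by \(o(1)\) need not be close: whenever a jump point of \(x_n\) falls in the gap \((\lambda_n(y(t)),\,y_n(\nu_n(t))]\), i.e.\ in an interval skipped by \(y_n\), the discrepancy is of the order of that jump, uniformly in \(n\). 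No further time-change of the composition can repair this, because \(y_n\circ\nu_n\) is constrained to take values in the closure of the range of \(y_n\), which may omit the jump points of \(x_n\) altogether. The missing ingredient is oscillation control on \(x\): first reduce to fixed \(x\) by writing \(x_n\circ y_n=x\circ(\lambda_n^{-1}\circ y_n)+e_n\circ y_n\) with \(\|e_n\|\to 0\) and \(\lambda_n^{-1}\circ y_n\to y\) uniformly in \(D_{\uparrow}\); then choose a finite partition \(0=s_0<\dots<s_m\) of the relevant range with \(\mathrm{osc}(x;[s_{i-1},s_i))<\varepsilon\), and take \(\nu_n\) to be the piecewise-linear homeomorphism matching \(y^{-1}(s_i)\) with the crossing times \(\inf\{t:(\lambda_n^{-1}\circ y_n)(t)\ge s_i\}\). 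Then \((\lambda_n^{-1}\circ y_n)(\nu_n(t))\) and \(y(t)\) lie in the same cell \([s_{i-1},s_i)\), so the values of \(x\) differ by less than \(\varepsilon\); continuity and strict monotonicity of \(y\) are used precisely to guarantee that the crossing times converge to \(y^{-1}(s_i)\) and that the pulled-back mesh stays bounded below. Your closing remarks on why both hypotheses on \(y\) are needed are accurate, but as written the proposal does not establish the second case.
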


\begin{theorem}\label{J1M1L1}
The composition map from \(D^d\times D_{\uparrow}\) with respect to \(J_1\times M_1\)-topology  taking \((x,y)\) into \(x\circ y\) with respect to \(L^1_{loc}\)-topology is continuous at \((x,y)\in (C^d \times D_{\uparrow}) \cup (D^d \times D_{\upuparrows})\).
\end{theorem}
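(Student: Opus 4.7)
The plan is to prove the composition $x_n \circ y_n$ converges to $x \circ y$ in $L^1_{loc}$ by establishing pointwise almost-everywhere convergence together with a uniform bound on compact time-intervals, and then invoking the dominated convergence theorem. The argument naturally splits into two cases according to which part of the continuity locus the pair $(x,y)$ belongs to.

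In the first case $x \in C^d$, convergence $x_n \to x$ in $J_1$-topology to a continuous limit upgrades to local uniform convergence. Since $y_n \to y$ in $M_1$-topology and $y$ is non-decreasing, $y_n(t) \to y(t)$ for all but countably many $t$, and $\sup_n \sup_{t \leq T} y_n(t) < \infty$ for every continuity point $T$ of $y$. For such $t$ one writes
\[
x_n(y_n(t)) - x(y(t)) = \bigl(x_n(y_n(t)) - x(y_n(t))\bigr) + \bigl(x(y_n(t)) - x(y(t))\bigr),
\]
and the two summands tend to zero respectively by local uniform convergence of $x_n$ (evaluated along the bounded sequence $y_n(t)$) and by continuity of $x$ at $y(t)$. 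Local boundedness of $x_n \circ y_n$ in $n$ follows, and dominated convergence yields the $L^1_{loc}$-convergence.

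In the second case $y \in D_{\upuparrows}$, let $J \subset [0,\infty)$ denote the at most countable set of jump points of $x$. Strict monotonicity of $y$ forces the preimage $y^{-1}(\{a\})$ to contain at most one point for each $a \in J$ (the single $t$ at which $y(t)=a$, or at which $y$ jumps across $a$), so $y^{-1}(J)$ is Lebesgue-null; together with the countable discontinuity set $D_y$ of $y$, this carves out a null set outside of which, for every $t$, both $y_n(t) \to y(t)$ holds (by $M_1$-convergence at continuity points of the limit) and $y(t)$ is a continuity point of $x$. The standard $J_1$-fact that $x_n(s_n) \to x(s)$ whenever $s_n \to s$ and $s$ is a continuity point of $x$ --- obtained by writing $s_n = \lambda_n(\lambda_n^{-1}(s_n))$ with the Skorokhod reparametrisation $\lambda_n$ and using $\lambda_n^{-1} \to \mathrm{id}$ uniformly on compacts --- then yields $x_n(y_n(t)) \to x(y(t))$ for almost every $t$. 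Uniform boundedness on $[0,T]$ is handled as before, and dominated convergence concludes.

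The main obstacle is the second case: strict monotonicity of $y$ is the precise hypothesis needed so that $y$ does not spend positive Lebesgue time at any jump height of $x$. Without it, a set of positive measure of times could map into the jumps of $x$, on which $x_n(y_n(t))$ need not converge to $x(y(t))$ (it might converge to $x(y(t)-)$ instead, depending on the side from which the Skorokhod reparametrisation approaches), destroying any hope of $L^1_{loc}$-convergence. It is also this case that justifies working with $L^1_{loc}$ rather than $J_1$ in the conclusion --- under only $M_1$-convergence of $y_n$ one cannot expect to control jump matching, so $x_n \circ y_n$ need not converge in $J_1$ even when it converges in $L^1_{loc}$.
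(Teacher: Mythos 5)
Your proposal is correct and follows essentially the same route as the paper: the paper likewise cites the continuous-limit case to an existing lemma, and for $y\in D_{\upuparrows}$ it uses the Skorokhod reparametrisation to get $x^n(y^n_t)\to x(y_t)$ for a.e.\ $t$ (the key point being, exactly as you note, that strict monotonicity of $y$ makes the preimage of the jump set of $x$ at most countable) before integrating. The only cosmetic difference is that the paper splits the integral via the reparametrisation explicitly rather than packaging it as the standard pointwise $J_1$ evaluation fact.
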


\begin{proof}
The continuity of this composition map at the point \((x,y)\in C^d \times D_{\uparrow}\) is proved in \cite[Lemma A.6]{CM}, so we prove the continuity at \((x,y)\in D^d \times D_{\upuparrows}\).
We assume that \(x^n\in D^d\) converges to \(x\in D^d\) with respect to \(J_1\)-topology and \(y^n\in D_{\uparrow}\) converges to \(y\in  D_{\upuparrows}\) with respect to \(M_1\)-topology. For any \(T\), there exists \(\tilde{T}\) such that \(\sup_{t\leq T}|y_t^n|\leq \tilde{T}\) holds. Then, there exists strictly increasing homeomorphism \(\lambda^n : [0,\tilde{T}]\to [0,\tilde{T}]\) such that
\begin{equation}
\lim_{n\to \infty}(\sup_{t\leq \tilde{T}}|x^n_t-x_{\lambda^n(t)}|+\sup_{t\leq \tilde{T}}|\lambda^n(t)-t|)=0.\label{eq:sk3-1}
\end{equation}

So we have
\begin{equation}
\int_0^T |x^n_{y_t^n}-x_{y_t}|dt \leq \int_0^T |x^n_{y_t^n}-x_{\lambda^n(y^n_t)}|dt  +\int_0^T |x_{\lambda^n(y^n_t)}-x_{y_t}|dt \label{eq:J1M1L1-1}
\end{equation}
The first term of (\ref{eq:J1M1L1-1}) is bounded by \(T \sup_{t\leq \tilde{T}}|x^n_t-x_{\lambda^n(t)}|\) and this converges to \(0\) as \(n\to \infty\) by (\ref{eq:sk3-1}). The space \(\{t\leq T : y_t=y_{t-}\}\) is dense in \([0,T]\), and the convergence of \(y^n\) with respect to \(M_1\)-topology yields that \(\lim_{n\to \infty}|y_t^n-y_t|=0\) for \(t\) satisfying \(y_t=y_{t-}\) by \cite[Theorem 12.5.1]{W}. So we have 
\[\varlimsup_{n\to \infty}|\lambda^n(y_t^n)-y_t| \leq \varlimsup_{n\to \infty}\sup_{s\leq \tilde{T}}|\lambda^n(s)-s|+\varlimsup_{n\to \infty}|y_t^n-y_t|=0\]
for almost every \(t>0.\)
Since \(x\) is strictly increasing, there exist at most countable \(t>0\) satisfying \(y_t=y_{t-}\) and \(x_{y_t}\neq x_{y_{t-}}\). Thus the second term of (\ref{eq:J1M1L1-1}) converges to \(0\) as \(n\to \infty\). This completes the proof.
\end{proof}

\begin{remark}
In the above theorem, the strict increase of \(y\) is necessary when \(x \in D^d\setminus C^d\). Indeed, we can construct the counterexample as follows:

Define \(x^n,x\) and \(y \in D[0,1]\) by \(x_t^n:={\bf 1}_{[\frac{1}{2}+\frac{1}{n},1]}(t),\ \  x_t:={\bf 1}_{[\frac{1}{2},1]}(t)\) and \(y_t:=\frac{3t}{2}{\bf 1}_{[0,\frac{1}{3})}(t)+\frac{1}{2}{\bf 1}_{[\frac{1}{3},\frac{2}{3})}(t)+(\frac{3t}{2}-\frac{1}{2}){\bf 1}_{[\frac{2}{3},1]}(t).\) Then, for the homeomorphism \(\lambda^n_t:=(1+2/n)t{\bf 1}_{[0,1/2)}+((1-2/n)t+2/n){\bf 1}_{[1/2,1]}\), we have
\(x^n_{\lambda^n_t}-x_t=0\) and \(|\lambda^n_t-t|\leq 2/n\), so \(x^n\) converges to \(x\) with respect to \(J_1\)-topology.
On the other hand, \(x^n_{y_t}={\bf 1}_{[2/3+2/3n,1]}\) and \(x_{y_t}={\bf 1}_{[1/3,1]}\) and it holds that
\[\lim_{n\to \infty}\int_0^1 |x_{y_t}^n-x_{y_t}|dt =\lim_{n\to \infty} \int_{1/3}^{2/3+2/3n} dt=\lim_{n\to \infty}\left(\frac{1}{3}+\frac{2}{3n}\right)=\frac{1}{3}.\] So \(x^n_{y}\) does not converge to \(x_y\) with respect to \(L^1_{loc}\)-topology.
\end{remark}




\end{appendices}


\end{document}